\definecolor{blue-violet}{rgb}{0.54, 0.17, 0.89}
 \date{30 3 2015}   
 \numberwithin{equation}{section}   
\def\cplxi{{\mskip2mu\sf{i}\mskip2mu}}
\newcommand{\trace}{\mathrm{trace}}
\newcommand{\lb}{\llbracket}
\newcommand{\rb}{\rrbracket}
\newcommand{\vvol}{\boldsymbol{\epsilon}}
\DeclareMathOperator{\const}{const}%
 	\definecolor{antiquefuchsia}{rgb}{0.57, 0.36, 0.51}
\newtheorem{theorem}{Theorem}[section]
\newtheorem{lemma}[theorem]{Lemma}
\newtheorem{proposition}[theorem]{Proposition}
\newtheorem{corollary}[theorem]{Corollary}
\theoremstyle{definition}
\newtheorem{definition}[theorem]{Definition}
\theoremstyle{remark}
\newtheorem{remark}[theorem]{\rm\bf Remark}
\newtheorem*{definition*}{\rm\bf Definition}
\newcommand{\weg}[1]{}
\newcommand{\be}{\begin{equation}}
\newcommand{\ee}{\end{equation}}
\newcommand{\ol}{\overline}
\newcommand{\si}{\sigma}
\newcommand{\ttt}{t}
\newcommand{\bm}{\bf m}
\newcommand{\bp}{\bf p}
\newcommand{\End}{\operatorname{End}}
\newcommand{\lpl}{
  \mbox{$
  \begin{picture}(12.7,8)(-.5,-1)
  \put(2,0.2){$+$}
  \put(6.2,2.8){\oval(8,8)[l]}
  \end{picture}$}}
\newcommand{\ce}{\mathcal{E}}
\renewcommand{\P}{\mbox{\sf P}}
\newcommand{\J}{\mbox{\sf J}}
\newcommand{\Sc}{R}
\newcommand{\cT}{\mathcal{T}}
\newcommand{\nn}[1]{(\ref{#1})}
\def\sideremark#1{\ifvmode\leavevmode\fi\vadjust{\vbox to0pt{\vss
 \hbox to 0pt{\hskip\hsize\hskip1em
 \vbox{\hsize3cm\tiny\raggedright\pretolerance10000
 \noindent #1\hfill}\hss}\vbox to8pt{\vfil}\vss}}}%
\title[Projective geometry and metrics]{Projectively related metrics,
  Weyl nullity, and metric projectively invariant equations} \date{}
\author{A. Rod Gover and Vladimir S. Matveev }
\thanks{ARG gratefully acknowledges support from the Royal
  Society of New Zealand via Marsden Grant 13-UOA-018; VSM thanks DFG,  DAAD and the University of Jena  for partial financial support}
\address{A.R.G.: Department of Mathematics, University of Auckland, Private Bag 
92019, Auckland 1042, New Zealand. \\
V.S.M.: Institute of
  Mathematics, Friedrich-Schiller-Universit\"at Jena, 07737 Jena
  Germany}
 \email{r.gover@auckland.ac.nz}
 \email{vladimir.matveev@uni-jena.de}
\keywords{Projectively equivalent metric connections; geodesics;
  Projective differential geometry; tractor connections}
\subjclass{58B20, 53A20, 53B10, 35N10, 53C20, 53C22, 53C29}
\begin{document}

\begin{abstract}
A metric projective structure is a manifold equipped with the
unparametrised geodesics of some pseudo-Riemannian metric. We make a
comprehensive treatment of such structures in the case that there is a
certain (well-motivated) algebraic restriction on the projective Weyl
curvature, a nullity condition. The analysis is simplified by a 
fundamental and canonical 2-tensor invariant that we discover. 
It leads to a new canonical tractor connection for these geometries
which is defined on a rank $(n+1)$-bundle. We show this connection is
linked to the metrisability equations that govern the existence of
metrics compatible with the structure.
The fundamental 2-tensor also
leads to a new class of invariant linear differential operators that
are canonically associated to these geometries; included is a third order
equation studied by Gallot et al.\ and via the new connection we show
its equivalence (on suitable geometries and classes of solutions) to
the first order metrisability equation.

 We apply the results to study the metrisability
   equation, in the nullity setting described. We obtain strong local
   and global results on the nature of solutions and also on the nature of the
   geometries admitting such solutions, obtaining classification
   results in some cases.  We show that closed Sasakian and K\"ahler
   manifold do not admit nontrivial solutions. We also
   prove that, on a closed manifold, two nontrivially projectively
   equivalent metrics cannot have the same tracefree Ricci
   tensor.  We show that on a closed manifold a metric
     having a nontrivial solution of the metrisablity equation cannot
     have a two dimensional nullity space at every point. In these
   statements the meaning of trivial solution is dependent on the
   context.

There is a function $B$ naturally appearing if a metric projective
structure has nullity.  We analyse in detail the case when this is not
a constant, and describe all nontrivially projectively equivalent
Riemannian metrics on closed manifolds with nonconstant $B$. 
\end{abstract}

\maketitle

\section{Introduction}\label{intro}

Affine connections $\nabla$ and $\bar \nabla$ are said to be
projectively equivalent if each $\nabla$-geodesic is, after
reparametrisation, a $\bar \nabla$-geodesic.  Two connections
differing only by torsion evidently have the same geodesics, and so,
within this article, we shall say that a {\em projective structure} on
a manifold is an equivalence class ${\bf p}=[ \nabla]$ of all torsion
free connections projectively equivalent to some given torsion-free
affine connection $\nabla$.

A generic projective structure $\bp$ does not contain the Levi-Civita
connection of any metric.  However on a manifold $M$, any metric $g$
(of arbitrary signature) canonically determines a torsion-free
connection $\nabla^g$ that preserves $g$; this is the Levi-Civita
connection. Thus $g$ determines a projective structure ${\bf
  p}=[\nabla^g]$. Within this equivalence class there is a
distinguished non-empty subset of connections, namely the subset
characterised by property that each connection contained therein is the
Levi-Civita connection of some metric.

In the (pseudo-)Riemannian setting it is typically most convenient to
work directly with metrics. So we say two metrics $g$ and $\bar g$   of arbitrary signature are
{\it projectively equivalent} if they share the same geodesics up to
re-parametrisation, that is if their Levi-Civita connections
$\nabla^g$ and $\nabla^{\bar{g}}$ are projectively equivalent; we say
further that $g$ and $\bar g$ are {\em affinely} projectively
equivalent if $\nabla^g = \nabla^{\bar{g}}$.  Projective equivalence is an equivalence relation on
metrics and we make the following definition.
\begin{definition}
On a manifold $M$, a {\em metric projective structure} ${\bf m}=
\llbracket g\rrbracket$ is an equivalence class of projectively
related metrics.
\end{definition}

For a generic metric, its projective class determines the metric, up
to multiplication by constants, so this distinquished subset of
connections contains only one representative up to dilation, see
e.g.\ \cite{relativity}. In this case questions regarding the geometry
of the metric projective structure $\bm$ can be reduced to questions
concering the metric. On the other hand, there exist non-proportional
metrics $g$ and $\bar g$ that have the same Levi-Civita connection,
i.e.\ are affinely equivalent. In the Riemannian case (i.e.\ $g$
positive definite)  this situation is well covered by the classical
literature \cite{Eisenhart53A45,deRham}. However there also exist
examples of projectively equivalent metrics that are not affinely
equivalent. The investigation of such non-affinely projectively
equivalent metrics is also a classical topic in differential geometry,
and was studied by Beltrami, Levi-Civita, Painleve, Weyl, Eisenhart,
and Thomas, for example, see \cite{EisenhartBook}. In fact this topic
was one of the main topics in Japan in the period of the 1950s through
to the 1960s, and also in the former USSR over the period from 1960s
through to the 1990s, see \cite{mikesSur}. Presently there is a strong
revival of interest in this area owing to new ideas entering from the
directions of integrable systems and parabolic geometries and via this
some of the earlier open questions have been resolved
\cite{bryantDE,eastwood1,hyperbolic,dedicata}.

Thus we are motivated to study the geometry of a manifold $M$ equipped
with a metric projective structure ${\bf m}$, and we will call
$(M,{\bf m})$ a {\em metric projective geometry}. Evidently this
uniquely determines a projective structure $\bp$ on $M$, but there is
of course more information in $\bm$. Metric projective geometry is an
analogue of the idea of a conformal geometry where one considers a
manifold equipped with a conformal equivalence class of metrics.
However it is important to emphasize that, in the detail, there are
significant differences.  For example for a given metric projective
geometry $(M,{\bf m})$ it can be that, as mentioned above, up to
constant dilation, there is only one metric in the equivalence class,
and a generic metric has this property. Alternatively for a different
metric projective structure $\bm$ there can be metrics $g$ and $\bar
g$ in $\bm$ that have different signatures.  A key focus of the
current article is to understand metric projective geometries $(M,{\bf
  m})$ where ${\bf m}$ includes metrics that are not affinely
equivalent.

Associated to any projective geometry $(M,{\bf p})$ is a tensor
invariant ${W^i}_{jk\ell}$ known as the projective Weyl tensor, see
\nn{weylt} below. A rather interesting algebraic condition that we can
impose on this is that it is has {\em nullity}, meaning that there 
exists a nonzero vector field $v \in
\Gamma(TM)$ such that
\begin{equation}\label{nullc}
{W^i}_{jk\ell}v^j=0 .
\end{equation}
The projective Weyl tensor is a natural invariant of metric projective
structures $(M,{\bf m})$, since these have a canonical projective
structure. We will show that in this setting the Weyl nullity condition
is extremely important and leads to subtle and unexpected phenomema.

In dimension $2$, the projective Weyl curvature vanishes identically
and so all structures trivially have nullity. However in dimension
$\ge 3$, the projective Weyl curvature of a generic metric does not
have projective Weyl nullity. Even if a metric non-affinely admits a
projectively equivalent metric then generically the Weyl tensor does
not necessarily have nullity.  For example, for a generic choice of functions
$X_1(x^1)$, $X_2(x^2)$, and $X_3(x^3)$ of the indicated variables, the
following 3-dimensional metric from \cite{Levi-Civita} {\tiny$$
\begin{array}{cl}
  (X_1(x^1 )- X_2(x^2))(X_1(x^1 )- X_3(x^3))(dx^1)^2 &+ 
(X_1(x^1 )- X_2(x^2))(X_2(x^1 )- X_3(x^3))(dx^2)^2\\ &  +  (X_1(x^1 )- X_3(x^2))(X_2(x^1 )- X_3(x^3))(dx^3)^2  
\end{array}$$}
 is projectively equivalent to 
{\small$$\begin{array}{cl}
 \tfrac{(X_1(x^1 )- X_2(x^2))(X_1(x^1 )- X_3(x^3))}{X_1(x^1)^2 X_2(x^2)X_3(x^3)} (dx^1)^2 & + 
\tfrac{(X_1(x^1 )- X_2(x^2))(X_2(x^1 )- X_3(x^3))}{X_1(x^1) X_2(x^2)^2X_3(x^3)}(dx^2)^2 \\ & + \tfrac{(X_1(x^1 )- X_3(x^2))(X_2(x^1 )- X_3(x^3))}{X_1(x^1) X_2(x^2)X_3(x^3)^2}(dx^3)^2  
\end{array}. $$}
 If at least one of the functions $X_i$ is not constant, the two
 displayed metrics are not affinely equivalent. It can be shown that
 for such metrics the existence of projective Weyl nullity is
 equivalent to the property that the metrics have constant curvature,
 which is clearly not the case for a generic choice of functions
 $X_i$. (A direct way to see this it is to calculate the Weyl
 curvature and check; an equivalent calculation was done by Fubini and
 Bolsinov et al.\ in \cite{kiosak,Fubini}.)  Note that the metrics are
 defined if at all points $(x^1, x^2, x^3)$ the values of the
 functions $X_1, X_2, X_3$ are distinct and are different from zero,
 which is of course a generic condition.  The example can be
 generalized for all dimensions $n \ge 3$.

Although such examples exist, we will show that Weyl nullity is
necessarily present for a very broad class of geometries and cases of
interest in the literature; interestingly in these studies this link to the
Weyl tensor has previously gone unnoticed.
Indeed in Theorem
\ref{WeqK}, we will show that, for metric projective geometries, the
nullity condition on the projective Weyl tensor naturally generalises
the so-called curvature constancy condition from \cite{Otsuki,Gray},
which in turn was a generalisation of the curvature nullity condition
suggested by Chern et al.\ in \cite{chern}. This condition was
extensively studied in the literature, under different names (for
example $K$-nullity in \cite{Ta,rosental, rosental2, MS}, or simply
nullity in \cite{ferus}), and without the realisation of the
connection to nullity of the projective Weyl tensor that we establish
here in Theorem \ref{WeqK} (and see Remark \ref{rem:nullity2}).  The
difference between the curvature constancy condition of Otsuki and
Gray and the Weyl nullity (as above) is the following: at a point they
coincide but most authors have studied curvature constancy assuming
that the object that we call $B$ in Section \ref{weylB} is a constant,
while for us it may be any function.

That the non-trivial Weyl nullity appears naturally in many other
well-known geometric structures and constructions is seen by
combining our Theorem \ref{WeqK} with the existing literature.  For
example, a warped product metric  $\pm dt^2 + f(t)^2 h(x)_{ij}dx^idx^j$
always has projective Weyl nullity (and in this case the function $B$
of Proposition \ref{mweyl} is generically not a constant). In
particular cone metrics, which are warped product metrics with $f(t)=
t^2$ have nullity (and in this case $B= 0$).  Sasakian metrics also
always have Weyl nullity (and $B= 1$). Moreover, the existence of
sufficiently many solutions of certain geometric PDE implies
nullity. For example any of the following conditions implies Weyl
nullity: the existence of a non-trivial special conformal Killing
vector field, see \cite{devries} and \cite{Couty}; or a non-trivial
concircular vector field (see e.g. \cite{Tashiro}); or two
non-proportional Einstein metrics in the same conformal class, or two
projectively equivalent metrics which are not affinely equivalent such
that one of them is Einstein (see e.g. \cite{einstein}); or three
pointwise linearly independent projectively equivalent metrics that
are not affinely equivalent (see e.g. \cite{kiosak}); or two
projectively, but not affinely, equivalent metrics with the same
stress-energy tensor \cite{KM2014}.

A key observation is that Weyl nullity as in \nn{nullc}
allows a completely different approach to metric projective geometry.
Our first main focus is the development and application of this as a 
conceptual and calculationally effective framework.  The first
main result is Theorem \ref{fth} which constructs a new fundamental
invariant of such manifolds. This invariant is a symmetric two-tensor
$\phi_{ab}$ and is of direct interest because it is determined by the
metric projective geometry $(M,\bm)$ (with Weyl nullity) but is not in
general the restriction of a pseudo-Riemannian or projective geometry
invariant. Importantly it also yields remarkable simplifications to
the equations governing many of the natural problems. One might hope
that the $\phi$-invariant is usefully available on some class of
metric projective geometries that do not have Weyl nullity. However
this is not the case, as we show in Theorem \ref{converse}.

As an immediate application of the $\phi$-invariant, we show in
Section \ref{iops} that this immediately leads to new linear
differential operators that are canonical and invariant on metric
projective structures with nullity but which are not in general the
restriction of projective or (pseudo-)Riemannian invariant operators.
In two examples these are shown to provide a bridge between so called
first BGG equations, of current interest in parabolic geometry (see
e.g.\ \cite{CGHduke}), and certain classically studied equations
including the Gallot-Obata-Tanno equation \nn{one}, see in particular
Theorem \ref{G-Obthm}.

The next direction of application is in the treatment of Einstein
metrics. The $\phi$-invariant leads to extremely simple proofs of the
Beltrami Theorem (see Corollary \ref{Bel}) and the result that if $g$
is an Einstein metric, then any projectively related metric is also
Einstein (in the presence of Weyl nullity), see Corollary \ref{1ein2}.
Proposition \ref{1ein22} explains that the explicit mention of Weyl
nullity may be dropped if we require that the metrics are non-affinely
equivalent.

On projective manifolds there is a canonical invariant calculus
associated to the Cartan connection. An early version of this is due
to Thomas \cite{Thomas}, while the modern treatment was founded in
\cite{BEG}. This {\em tractor calculus} is based around an invariant
linear connection on a natural rank $(n+1)$ vector bundle that extends
(a density twisting of) the tangent bundle, see Section \ref{trS}.
Returning to the development of theory, in Section \ref{newcon} we
show that, on metric projective geometries, projective Weyl nullity
leads to a 1-parameter family of alternative canonical tractor
connections and a distinguished connection $\nabla^{\cT_1}$ within
this family; see Proposition \ref{alphaf} and the subsequent
discussion. This distinguished connection turns out to be useful for
the problem of treating non-affinely projectively related metrics, as
we explain in later sections. As a more immediate application, we
explain in Theorem \ref{GOTthm} that sections of $S^2\cT^*$ (where
$\cT^*$ is the dual tractor bundle) that are parallel for this
connection are equivalent to solutions of the
Gallot-Obata-Tanno equation \nn{one}.

The final Section in the first part of the paper, Section \ref{prol},
is concerned with analyzing Sinjukov's metrisability equation
\nn{metr} and its consequences; this is the projectively invariant
equation governing the existence of a Levi-Civita in a projective
class, and is an instance of a first BGG equation in the sense of
\cite{CGHjlms}. The first main result there begins with the
prolongation tractor connection found by Eastwood et al \cite{EM} (and
recalled in Theorem \ref{EMthm}) and shows that, on metric projective
structures with Weyl nullity, this leads to a simpler prolonged system
that is canonical and invariant on metric projective structures with
Weyl nullity. In the case of $B$ constant this agrees precisely with
the connection $\nabla^{\cT_1}$ (of Proposition \ref{alphaf}) on
$S^2\cT$, while for $B$ not constant it is a simple (and generalising)
modification of this, see Theorem \ref{4.7} and Theorem
\ref{mconnthm}.  There are immediate applications of these
results. For example in Corollary \ref{GOT=m} we show that suitably
generic solutions of the Gallot-Obata-Tanno equation \nn{one} are the
same as similarly generic solutions of the metrisability equation on
metric projective manifolds with Weyl nullity and a metric with $B$
constant. In fact using the full information at hand more general
results are available, see Proposition \ref{GOT=met} and Proposition
\ref{mettoGOT}.  Next in Section \ref{solS} we apply the machinery mentioned to obtain strong results for solutions of the
metrisability equation: In Theorem \ref{Btypefix} we show that if $B$
is constant (or non-constant) for one metric in $\bm$ then it is
constant (resp.\ non-constant) for all metrics in $\bm$. In Theorem
\ref{mformthm} we describe a severe constraint of the form of
solutions, in the case that $B$ is not constant.

Part of the local analysis for the development in Section
\ref{newconn} of the simpler prolonged system, and tractor connection on $S^2\cT$, relies on Theorem
\ref{mid2n}; the proof of this requires some non-trivial linear
algebra, and so is deferred  to Section
\ref{Vsec}. The technical tools developed in Section \ref{Vsec} and
the earlier sections are then used to establish some immediate
consequences: Theorem \ref{converse} has been mentioned already while
Theorem \ref{new} shows that on a metric projective manifold $(M,\bm)$
with Weyl nullity, there can only be strictly non-proportional metrics
(as defined above the Theorem) in $\bm$ if each has constant
curvature.

Sections \ref{BnC} and \ref{BC} are concerned with describing, on
closed manifolds, the nature of the metric for metric projective
structures with nullity, and where there is also another solution of
the metrisability equation \ref{metr}.  Section \ref{BnC} assumes that
$B$ is not constant; under this assumption we describe such metrics
$g$ locally. As a consequence, Theorem \ref{globalBnC} classifies such
metric projective structures on closed manifolds under the additional
assumption that one of the metrics is Riemannian.  Section \ref{BC}
considers the case of closed manifolds, $B$ constant, and two
non-affinely projectively related metrics.  The result is Theorem
\ref{Bcon} which states that in this case any metric $\bm$ is, up to
dilation with a possibly negative coefficient, the Riemannian metric
of constant positive sectional curvature $+1$.  We apply this result
to show that closed Sasakian manifolds of nonconstant curvature do not
admit projectively but non-affinely equivalent metrics, see Corollary
\ref{sasy}. We also show that if a K\"ahler metric admits a
projectively but non-affinely equivalent metric then it has a nullity
with $B=0$; this implies that on a closed manifold any metric
projectively equivalent to a K\"ahler metric is affinely equivalent to
it.  In Section \ref{finalS} we show in Corollary
  \ref{ccccc} that if a metric projective structure $\bm$, on a closed
  manifold $M$, contains two non-affinely equivalent metrics that
  have the same trace-free part of the Ricci tensor, then all metrics
  in this projective metrics structure have constant sectional
  curvature.  In Section \ref{kathi} we show, in Corollary \ref{cccccc},
  that if a metric projective structure $\bm$ on a closed manifold $M$ is 
  such that, first, the Weyl nullity is at least two-dimensional at every
  point and, second, it contains two non-affinely equivalent metrics, then all metrics
  in this metric projective structure have constant sectional
  curvature.

\subsection{Some conventions}

Throughout any manifold $M$ will be assumed smooth, connected, and of
dimension $n\geq 2$. Similarly the standard structures on this (such as metrics) will be assumed smooth. The term smooth here and throughout means
$C^\infty$. Unless otherwise stated indices on tensors and bundle will
be abstract indices in sense of Penrose.  For example the tangent
bundle $TM$ will often be denoted $\ce^a$. Then its symmetric tensor
power $S^2TM$ is denoted $\ce^{(ab)}$. Connections on the tangent
bundle its dual and tensor powers will be torsion-free.  Statements
will be said to hold {\em almost everywhere} if they are true on an
open dense set.

\section{Background}\label{backg}

Let $(M,\nabla)$ be a special affine manifold (of dimension $n\geq
2$), meaning here that $\nabla$ is a torsion-free affine connection
and that locally everywhere this preserves a volume density; we do not
assume that $M$ is oriented.  The curvature $R^j{}_{i\ell k}$ of the
connection $\nabla$ is given by
$$
[\nabla_\ell,\nabla_k]v^j=R^j{}_{i\ell k} v^i .
$$ The Ricci curvature is defined by  $R_{jk} = {R^a}_{jak}$ and this
is symmetric.  


The projective Weyl tensor is
\begin{equation} \label{weylt} 
{W^i}_{jk\ell}:={R^i}_{jk\ell}+\tfrac{1}{n-1}\left({\delta^{i}_\ell}
\, R_{jk}-{\delta^{i}_k}\, R_{j\ell}\right) ={R^i}_{jk\ell}+\left({\delta^{i}_\ell}
\, \P_{jk}-{\delta^{i}_k}\, \P_{j\ell}\right), 
\end{equation}
where $\P_{jk}=\frac{1}{n-1}R_{jk}$ is called the {\em projective
  Schouten tensor}. The Weyl tensor is the trace-free part of the
curvature $R^i{}_{j\ell k}$; any contraction of the index $i$ (of $ {W^i}_{jk\ell}$) with a
lower index results in $0$. It is easily verified that in dimension 2 
the Weyl tensor is identically zero. 

From the differential Bianchi identity we obtain the identity
$$
\nabla_c W^c{}_{dab}{}=(n-2)C_{dab},  
$$ 
where
\begin{equation}\label{Cotton}
C_{cab}:= \nabla_a \P_{bc}-\nabla_b \P_{ac}
\end{equation} 
is called the {\em
  projective Cotton tensor}.

\subsection{The conformal Weyl tensor and Einstein metrics}\label{conf}

On a pseudo-Riemannian manifold $(M,g)$ the {\em conformal Weyl tensor}
$C^a{}_{bcd}$ is the completely metric trace-free part of the
curvature $R^a{}_{bcd}$ of the Levi-Civita connection. So
$C^a{}_{bad}=0$ and also $C^a{}_{bcd}g^{bc}=0$. 
We may compare this with the projective
Weyl tensor:
\begin{proposition}{} \cite[Proposition 5.5]{EM} \label{EMprop}
On a pseudo-Riemannian manifold $(M,g)$ one has 
\begin{equation}\label{EMform}
(n-2)W^{i}{}_{jkl}= (n-2)C^{i}{}_{jkl} + (\delta^i_k\mathring{\P}_{lj}-\delta^i_l\mathring{\P}_{kj})+ (n-1)(\mathring{\P}^i{}_k g_{jl}-\mathring{\P}^i{}_l g_{jk})
\end{equation}
where $\mathring{\P}_{kl} $ is the trace-free part of the Projective
Schouten tensor.
\end{proposition}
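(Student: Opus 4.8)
The proposition is a purely algebraic identity relating the two natural trace-free parts of the Riemann curvature tensor $R^i{}_{jk\ell}$ of the Levi-Civita connection $\nabla^g$, so the plan is to compare its projective and its conformal decompositions. On the projective side, \nn{weylt} gives
\[
R^i{}_{jk\ell}=W^i{}_{jk\ell}+\delta^i_k\P_{j\ell}-\delta^i_\ell\P_{jk},\qquad \P_{jk}=\tfrac{1}{n-1}R_{jk},
\]
while on the conformal side one has the classical decomposition
\[
R^i{}_{jk\ell}=C^i{}_{jk\ell}+\delta^i_k P_{j\ell}-\delta^i_\ell P_{jk}+P^i{}_k g_{j\ell}-P^i{}_\ell g_{jk},\qquad P_{ab}=\tfrac{1}{n-2}\Big(R_{ab}-\tfrac{R}{2(n-1)}g_{ab}\Big),
\]
where $P$ is the conformal Schouten tensor and indices are raised with $g$; this is precisely the decomposition in which $C^i{}_{jk\ell}$ is the completely metric trace-free part of $R^i{}_{jk\ell}$. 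Equating the right-hand sides and solving for $W$ gives
\[
W^i{}_{jk\ell}=C^i{}_{jk\ell}+\delta^i_k(P_{j\ell}-\P_{j\ell})-\delta^i_\ell(P_{jk}-\P_{jk})+P^i{}_k g_{j\ell}-P^i{}_\ell g_{jk}.
\]

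Next I would use that the Levi-Civita Ricci tensor is symmetric, so $\P_{ab}$ is symmetric and $R=(n-1)\P$ with $\P:=g^{ab}\P_{ab}$; hence the conformal Schouten tensor is just $P_{ab}=\tfrac{n-1}{n-2}\P_{ab}-\tfrac{1}{2(n-2)}\P g_{ab}$, and in particular $P_{ab}-\P_{ab}=\tfrac{1}{n-2}\P_{ab}-\tfrac{1}{2(n-2)}\P g_{ab}$. Substituting this expression (and the corresponding one for $P^i{}_k$) into the last display and multiplying through by $n-2$, the terms organise into
\[
(n-2)W^i{}_{jk\ell}=(n-2)C^i{}_{jk\ell}+(\delta^i_k\P_{j\ell}-\delta^i_\ell\P_{jk})+(n-1)(\P^i{}_k g_{j\ell}-\P^i{}_\ell g_{jk})-\P(\delta^i_k g_{j\ell}-\delta^i_\ell g_{jk}).
\]
Finally, writing $\P_{ab}=\mathring\P_{ab}+\tfrac1n\P g_{ab}$ (so $\P^i{}_k=\mathring\P^i{}_k+\tfrac1n\P\delta^i_k$), the three $\P$-terms on the right contribute pure-trace pieces $\tfrac1n\P$, $\tfrac{n-1}{n}\P$ and $-\P$ multiplying $(\delta^i_k g_{j\ell}-\delta^i_\ell g_{jk})$, and these coefficients sum to zero; thus all the $\P g$-contributions cancel and what remains is exactly \nn{EMform} (up to the symmetry $\mathring\P_{ab}=\mathring\P_{ba}$).

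I do not expect any genuine obstacle here: the identity is elementary, and the only thing that requires care is the bookkeeping between the two normalisations of the Schouten tensor and the (automatic) cancellation of the pure-trace terms at the end. If one wishes to shorten even this, one can note in advance that, since $W^i{}_{jk\ell}$ and $C^i{}_{jk\ell}$ are both obtained by deleting trace parts from the same tensor $R^i{}_{jk\ell}$, their difference is forced to be a universal algebraic curvature-type tensor built from $g$ and $\mathring\P$, necessarily of the form $a(\delta^i_k\mathring\P_{j\ell}-\delta^i_\ell\mathring\P_{jk})+b(\mathring\P^i{}_k g_{j\ell}-\mathring\P^i{}_\ell g_{jk})$ for constants $a,b$ depending only on $n$; taking the trace $i=k$, where $W$, $C$ and the pure-trace curvature terms all vanish, forces $b=(n-1)a$, and one further comparison of Ricci tensors fixes $a=1/(n-2)$, recovering \nn{EMform}.
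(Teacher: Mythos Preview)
Your proof is correct; the computation is clean and the cancellation of the pure-trace terms is handled properly. Note that the paper itself does not supply a proof of this proposition --- it is quoted from \cite[Proposition 5.5]{EM} --- so there is nothing to compare against, and the direct comparison of the projective and conformal curvature decompositions that you carry out is exactly the natural argument.
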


Recall that in the case of dimension $n\geq 3$, we say a
pseudo-Riemannian metric $g$ is {\em Einstein} if its Ricci tensor, or
equivalently projective Schouten tensor $\P_{ab}$, is proportional to
the metric: $\P_{ab}=\lambda g_{ab}$.  It then follows from the
contracted Bianchi identity that the function $\lambda$ is constant.
In the case of dimension $n=2$ we take $\P_{ab}=\lambda g_{ab}$ {\em
  with $\lambda$ constant} to be the definition of an {\em Einstein
  metric}.

With these definitions we have the following consequences of Proposition \ref{EMprop} (cf.\ \cite{GMac,Nur} ):
\begin{corollary}\label{Eins}
On an Einstein pseudo-Riemannian manifold $(M,g)$ the conformal and
projective Weyl tensors agree, that is
$$
W^{i}{}_{jkl}= C^{i}{}_{jkl}.
$$ 
If $n\geq 3$ then, conversely, agreement of the two
pseudo-Riemannian Weyl tensors implies the metric is Einstein.
\end{corollary}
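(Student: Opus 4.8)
The plan is to deduce Corollary~\ref{Eins} directly from the identity \nn{EMform} by exploiting the fact that, for an Einstein metric, the trace-free Schouten tensor $\mathring{\P}_{kl}$ vanishes. First I would recall that $\P_{ab} = \tfrac{1}{n-1}R_{ab}$, so the Einstein condition $\P_{ab} = \lambda g_{ab}$ is equivalent to $R_{ab}$ being pure trace, hence to $\mathring{\P}_{ab} = 0$ (this already makes sense in all dimensions $n \geq 2$ with the stated convention in dimension $2$). Substituting $\mathring{\P}_{kl} = 0$ into \nn{EMform}, the three correction terms on the right-hand side all vanish identically, leaving $(n-2)W^i{}_{jkl} = (n-2)C^i{}_{jkl}$. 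For $n \geq 3$ we divide by $n-2$ to get $W^i{}_{jkl} = C^i{}_{jkl}$; for $n = 2$ both Weyl tensors vanish identically (the projective one by the remark following \nn{weylt}, the conformal one since the conformal Weyl tensor is identically zero in dimension $2$ as well), so the equality is trivially true. This disposes of the first assertion in all dimensions $n \geq 2$.

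For the converse, assume $n \geq 3$ and $W^i{}_{jkl} = C^i{}_{jkl}$. Then \nn{EMform} forces the combination of correction terms to vanish:
\begin{equation*}
(\delta^i_k\mathring{\P}_{lj}-\delta^i_l\mathring{\P}_{kj})+ (n-1)(\mathring{\P}^i{}_k g_{jl}-\mathring{\P}^i{}_l g_{jk}) = 0 .
\end{equation*}
The strategy is to extract $\mathring{\P}$ from this tensor equation by suitable contractions. I would contract the index $i$ with $j$ (using $g^{ij}$, or equivalently view $i$ down via the metric and contract): a short computation, using $\delta^i_i = n$, $\mathring{\P}^i{}_i = 0$, and $g^{ij}g_{jl} = \delta^i_l$, should collapse the left-hand side to a nonzero multiple of $\mathring{\P}_{kl}$ plus possibly a trace term which vanishes since $\mathring{\P}$ is trace-free. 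Tracking the coefficients, the contraction of the first bracket gives $\mathring{\P}_{lk} - \mathring{\P}_{kl}$-type and $\delta$-weighted terms while the second bracket contributes an $(n-1)$-weighted $\mathring{\P}_{kl}$, and after collecting one obtains $c(n)\,\mathring{\P}_{kl} = 0$ with $c(n) \neq 0$ for $n \geq 3$; hence $\mathring{\P}_{kl} = 0$, i.e.\ $g$ is Einstein.

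The only genuine obstacle — and it is a mild one — is bookkeeping in the converse: one must pick the contraction that isolates $\mathring{\P}_{kl}$ cleanly and verify that the resulting scalar coefficient $c(n)$ is nonzero precisely in the asserted range $n \geq 3$ (it should degenerate at $n = 2$, consistent with the fact that in dimension $2$ both Weyl tensors vanish regardless of whether the metric is Einstein, so no converse can hold there). One should also double-check the symmetry properties: $\mathring{\P}_{ab}$ is symmetric since $R_{ab}$ is symmetric (noted just after the definition of the curvature), which is what allows the antisymmetric-in-$k,\ell$ structure of the bracket to be compatible with extracting the symmetric tensor $\mathring{\P}$. No deep input is needed beyond Proposition~\ref{EMprop} and elementary index manipulation.
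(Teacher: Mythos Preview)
Your overall strategy---reduce both directions to the identity \nn{EMform} and the equivalence ``Einstein $\Leftrightarrow \mathring{\P}_{ab}=0$''---is exactly what the paper does, and the forward direction is fine. However, your chosen contraction for the converse does not work: tracing the displayed expression over $i$ and $j$ gives zero identically. Indeed, setting $i=j$ in the first bracket yields $\mathring{\P}_{lk}-\mathring{\P}_{kl}=0$ by the symmetry of $\mathring{\P}$, and in the second bracket $(n-1)(\mathring{\P}^{j}{}_{k}g_{jl}-\mathring{\P}^{j}{}_{l}g_{jk})=(n-1)(\mathring{\P}_{lk}-\mathring{\P}_{kl})=0$ as well. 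So no information about $\mathring{\P}$ survives.

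The fix is to contract instead with $g^{jl}$ (or $g^{jk}$). Then the first bracket contributes $-\mathring{\P}^{i}{}_{k}$ and the second contributes $(n-1)(n\mathring{\P}^{i}{}_{k}-\mathring{\P}^{i}{}_{k})=(n-1)^{2}\mathring{\P}^{i}{}_{k}$, giving $\big((n-1)^{2}-1\big)\mathring{\P}^{i}{}_{k}=n(n-2)\mathring{\P}^{i}{}_{k}=0$, whence $\mathring{\P}=0$ for $n\geq 3$. This is presumably the one-line computation behind the paper's ``immediate''; your own caveat about needing to ``pick the contraction that isolates $\mathring{\P}_{kl}$ cleanly'' was exactly on target---you just picked the wrong one.
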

\begin{proof}
In dimension 2 the first statement is true trivially. In other dimensions 
both statements are immediate from \nn{EMform}, as Einstein
is equivalent to $\mathring{\P}_{ab}=0$.
\end{proof}

\subsection{Projective geometry}\label{pg} Recall the notion of projective structure was introduced in Section \ref{intro}, as follows.  A projective structure
$(M^n,\bp)$, $n\geq 2$, is a smooth manifold equipped with an
equivalence class $\bp$ of torsion-free affine connections. The class is
characterised by the fact that two connections $\nabla$ and
$\widehat\nabla$ in $\bp$ have the same geodesics up to
parametrisation. Explicitly the transformation relating these
connections on $TM$ and $T^*M$ are given by
 \begin{equation} \label{ptrans}
 \widehat\nabla_a Y^b = \nabla_a Y^b +\Upsilon_a Y^b + \Upsilon_c Y^c 
 \delta^b_a, \quad \mbox{and} \quad \widehat\nabla_a u_b = \nabla_a u_b - \Upsilon_a u_b -\Upsilon_b u_a,
\end{equation}
where $\Upsilon$ is some smooth section of $ T^*M$.  In the setting of
a projective structure $(M,\bp)$ any connection $\nabla\in \bp$ is called
a {\em Weyl connection} or {\em Weyl structure} on $M$.  In the
following we shall consider only special affine connections from $\bp$,
in which case the corresponding Weyl structure is often called a
{\em choice of scale}. If $\nabla $ and $\widehat{\nabla}$ are special affine connections then $\Upsilon_b$ is exact, meaning $\Upsilon_b=\nabla_b f$ for some function $f$ (see e.g. \cite{GMac}).

Under a projective transformation of connection, as in \nn{ptrans}, it
is easily verified (and well known) that the projective Weyl curvature
$W^a{}_{bcd}$ is unchanged. Thus it is an invariant of the projective
structure $(M,\bp)$. In dimension 2 the Cotton tensor $C_{abc}$ is
projectively invariant.  These are the complete obstructions to
projective flatness: in dimensions $n\geq 3$ there is a flat
connection $\nabla\in \bp$ if and only if $W^a{}_{bcd}=0$
everywhere. In the dimension 2 the same statement is true with
$W^a{}_{bcd}$ replaced by the Cotton tensor $C_{abc}$. Thus, via Corollary \ref{Eins}, there is another 
immediate consequence of Theorem \ref{EMform}:
\begin{corollary}\label{Etof}
In dimensions 2 and 3 Einstein manifolds are projectively flat.
\end{corollary}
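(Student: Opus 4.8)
The statement to prove is Corollary \ref{Etof}: in dimensions $2$ and $3$, Einstein manifolds are projectively flat.

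\medskip

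The plan is to combine Corollary \ref{Eins} with the characterisation of projective flatness recalled just above the corollary. First I would dispose of dimension $2$: there the projective Weyl tensor vanishes identically (as noted after \nn{weylt}), so the only obstruction to projective flatness is the Cotton tensor $C_{abc}$. For an Einstein metric in dimension $2$ we have $\P_{ab}=\lambda g_{ab}$ with $\lambda$ a \emph{constant} (this is part of the definition of Einstein in dimension $2$), so from \nn{Cotton} we get $C_{cab}=\nabla_a(\lambda g_{bc})-\nabla_b(\lambda g_{ac})=\lambda(\nabla_a g_{bc}-\nabla_b g_{ac})=0$, since the Levi-Civita connection preserves $g$. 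Hence the Cotton tensor vanishes and the structure is projectively flat.

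\medskip

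For dimension $3$ the relevant obstruction is instead the projective Weyl tensor $W^i{}_{jkl}$: in dimensions $n\geq 3$ there is a flat connection in $\bp$ precisely when $W^i{}_{jkl}=0$ everywhere. By Corollary \ref{Eins}, on an Einstein manifold the projective Weyl tensor coincides with the conformal Weyl tensor, $W^i{}_{jkl}=C^i{}_{jkl}$. But in dimension $3$ the conformal Weyl tensor vanishes identically (the Weyl tensor of any pseudo-Riemannian $3$-manifold is zero — a standard fact, following from counting the independent components of an algebraic curvature tensor with the trace-free symmetries in dimension $3$). Therefore $W^i{}_{jkl}=0$, and the projective structure determined by $g$ contains a flat connection, i.e.\ $(M,\bp)$ is projectively flat.

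\medskip

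The argument is essentially a concatenation of results already in hand, so there is no real obstacle; the only point requiring a (standard, external) input is the vanishing of the conformal Weyl tensor in dimension $3$, which I would simply cite rather than reprove. One could alternatively run the dimension $3$ case directly from \nn{EMform}: since Einstein forces $\mathring{\P}_{ab}=0$, the right-hand side of \nn{EMform} reduces to $(n-2)C^i{}_{jkl}$, and then one invokes the vanishing of $C^i{}_{jkl}$ in dimension $3$; this avoids even mentioning Corollary \ref{Eins} but uses the same external fact.
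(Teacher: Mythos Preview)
Your proof is correct and follows essentially the same route as the paper: in dimension~$2$ you use the definition of Einstein (constant $\lambda$) together with $\nabla g=0$ to kill the Cotton tensor, and in dimension~$3$ you invoke Corollary~\ref{Eins} to identify $W^i{}_{jkl}$ with the conformal Weyl tensor $C^i{}_{jkl}$, which vanishes identically in dimension~$3$. The paper's proof is more terse but identical in substance.
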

\begin{proof}
For dimension 2 this is immediate from the definition of the Cotton
tensor and the meaning of Einstein.  In the case of dimension 3 it
follows from the fact that in dimension three the conformal Weyl
tensor is identically zero.
\end{proof}

The Schouten tensor $\P_{ab}$ plays an important role in projective
differential geometry, even though it is far from invariant under
projective transformations. Under the transformations \nn{ptrans} we have,
\begin{equation}\label{pcurvtrans}
\P^{\widehat{\nabla}}_{ab}= \P^{\nabla}_{ab}-\nabla_a
\Upsilon_b+\Upsilon_a\Upsilon_b~.
\end{equation}

As is usual in projective geometry, we write $\ce(1)$ for the positive 
$(2n+2)$nd root  of the oriented bundle
$(\Lambda^n T M )^2$.  Observe that any connection $\nabla \in \bp$
determines a connection on $\ce(1)$ as well as its real powers
$\ce(w)$, $w\in \mathbb{R}$; we call $\ce(w)$ the bundle of projective
densities of weight $w$. Given any bundle $\mathcal{B}$ we shall write
$\mathcal{B}(w)$ as a shorthand notation for $\mathcal{B}\otimes
\ce(w)$.

\subsection{The projective tractor bundle and connection} \label{trS}
By the definition of a projective geometry $(M,\bp)$, there is no
distinguished connection on $TM$. However there is a canonical connection
(due to Cartan and Thomas \cite{Cartan,Thomas}) on a closely related
rank $(n+1)$ natural bundle. This connection, now known as the tractor
connection, enables an invariant calculus \cite{BEG}. Since the
details of this will be important for us, we recall briefly the
definition and construction of this connection following
\cite{BEG,CGMac} (and see \cite{CapGoTAMS} for recovering from this
the equivalent Cartan bundle and connection  \cite{Cartan}).

Canonically associated to the projective density bundle $\ce(1)$ is
its first jet prolongation $J^1\ce(1)\to M$. By definition, its fiber
over $x\in M$ consists of all one--jets $j^1_x\sigma$ of local smooth
sections $\sigma\in\Gamma(\ce(1))$ defined in a neighborhood of $x$.
Mapping $j^1_x\sigma$ to $\sigma(x)$ defines a surjective
bundle map $J^1\ce(1)\to\ce(1)$, called the \textit{jet
  projection}. If $j^1_x\sigma$ lies in the kernel of this projection,
so $\sigma(x)=0$, then the value $\nabla\sigma(x)\in
T^*_xM\otimes\ce_x(1)$ is the same for all linear connections $\nabla$
on the vector bundle $\ce(1)$. This identifies the kernel of the jet
projection with the bundle $T^*M\otimes\ce(1)$. (See for example
\cite{palais} for a general development of jet bundles.)

We will write $\cT^*$, or $\ce_A$ in abstract index notation, for
$J^1\ce(1)$ and $\cT$, or $\ce^A$ in abstract index notation, for the
dual vector bundle. Then observe that the jet projection is a canonical
section $X^A$ of the bundle $\ce^A\otimes\ce(1)=\ce^A(1)$. Similarly,
the inclusion of the kernel of this projection can be viewed as a
canonical bundle map $\ce_a(1)\to\ce_A$, which we denote by
$Z_A{}^a$. In this notation we have a canonical sequence  
\begin{equation}\label{euler}
0\to \ce_a(1)\stackrel{Z_A{}^a}{\to} \ce_A \stackrel{X^A}{\to}\ce(1)\to 0, 
\end{equation}
which is the well-known  jet exact sequence (at 1-jets) for the bundle $\ce(1)$.

We write $\ce_A=\ce(1)\lpl \ce_a(1)$ to summarise the composition
structure in \nn{euler}.  As mentioned earlier, any connection $\nabla
\in \bp$ determines a connection on $\ce(1)$. On the other hand a
linear connection on $\ce(1)$ is the same as a splitting the 1-jet
sequence \nn{euler}.  Thus given such a choice $\nabla\in \bp$ we have the direct sum
decomposition $\ce_A \stackrel{\nabla}{=} \ce(1)\oplus \ce_a(1) $ with
respect to which we define a connection by
\begin{equation}\label{pconn}
\nabla^{\mathcal{T}^*}_a \binom{\si}{\mu_b}
:= \binom{ \nabla_a \si -\mu_a}{\nabla_a \mu_b + \P_{ab} \si},
\end{equation}
where, recall, $\P_{ab}$ denotes the projective Schouten tensor of
$\nabla$.  An easy calculation shows that the connection \nn{pconn} is
independent of the choice $\nabla \in p$, and so
$\nabla^{\mathcal{T}^*}$ is determined canonically by the projective
structure $\bp$. This is the {\em cotractor} connection of \cite{BEG},
and is the {\em normal} connection, i.e. it is equivalent to the
normal tractor Cartan connection \cite{CapGoTAMS}.  Thus we call
$\cT^*=\ce_A$ the {\em cotractor bundle}, and we note the dual {\em
  tractor bundle} $\cT=\ce^A$ has canonically the dual {\em tractor
  connection}. In terms of a splitting dual to that above this is
given by
\begin{equation}\label{tconn}
\nabla^\cT_a \left( \begin{array}{c} \nu^b\\
\rho
\end{array}\right) =
\left( \begin{array}{c} \nabla_a\nu^b + \rho \delta^b_a\\
\nabla_a \rho - \P_{ab}\nu^b
\end{array}\right).
\end{equation}

Now consider $\ce^{( BC)}=S^2\mathcal{T}$. It follows immediately that
this has the composition series
$$
\ce^{(bc)}(-2)\lpl\ce^b(-2)\lpl \ce(-2),
$$
and the normal tractor connection is given on $S^2\mathcal{T}$ by
\begin{equation}\label{s2conn-orig}
\nabla_a^{\cT}\left( \begin{array}{c}
\si^{bc}\\
\mu^b\\
\rho
\end{array}
\right)=
\left( \begin{array}{c}
\nabla_a \si^{bc} + \delta^b_a \mu^c + \delta^c_a \mu^b \\
\nabla_a \mu^b + \delta^b_a\rho - \P_{ac}\si^{bc}  \\
\nabla_a \rho - 2 \P_{ab}\mu^b
\end{array}
\right).
\end{equation}
This connection on $S^2\mathcal{T}$ will be important to us below. It
closely linked to the geometry of metric projective structures. For
example a parallel section of
$S^2\mathcal{T}$ with $\si$  non-degenerate (as a bilinear tractor form
on $T^*M$) determines and, is equivalent to, an 
Einstein metric $g$ such that $\nabla^g\in \bp$, see
\cite{Armstrong,CGMac,GMac}.

\subsection{Weyl nullity}\label{Wn}

We shall say that the projective Weyl tensor 
$$
{W^i}_{jk\ell}:={R^i}_{jk\ell}
-\tfrac{1}{n-1}\left({\delta^{i}_\ell} \, R_{jk}-{\delta^{i}_k}\, R_{j\ell}\right)
$$ has {\em nullity} at a point $x$, if there exists a nonzero $v \in
T_xM$ such that
$$
{W^i}_{jk\ell}v^j=0 \quad \mbox{at} \quad x.
$$ 

Note that since the Weyl curvature is an invariant of projective manifolds 
(i.e.\ it is unchanged by the projective transformation \nn{ptrans})
 Weyl nullity is a property of the projective structure;
it is not dependent on the choice of any connection from the projective
class. It is detected precisely by the projective invariant 
$$
\stackrel{m}{\mathcal{W}}:=W^i{}_{a_1 k p}\cdots  W^j{}_{a_m \ell q} \quad m\in \{1,\cdots ,m\}
$$ 
where the sequentially labelled indices $a_1\cdots a_n$ are skewed
over. For example a projective structure has Weyl nullity at $x$ if
and only if for $m=n$ this invariant vanishes at $x$. The Weyl
nullity space has dimension at least 2 if and only if the invariant
with $m=n-1$ vanishes, and so forth.

\begin{remark}\label{disc}
Note that if $v$ is in the nullity of the projective Weyl curvature at $x$ then 
$$
v^jC_{jkl} 
$$ 
is projectively invariant at $x$.
This is because under the projective transformation
$\nabla\mapsto \widehat{\nabla}$ as in \nn{ptrans} the projective
transformation of $C_{jkl}$ is by the addition of $\Upsilon_i
W^{i}{}_{jkl}$ (up to a constant multiple).

This observation only has significance in dimensions $n\geq 3$: Any
projective manifold $(M,\bp)$ of dimension 2 has Weyl nullity
trivially, since the projective Weyl tensor is identically zero in
this case. On the other hand, as mentioned earlier,  
 in dimension 2 the projective Cotton tensor is
projectively invariant.
\end{remark}

\subsection{Projective differential equations and Weyl nullity}\label{BGG}

It is natural to ask what projectively invariant differential
equations have solutions implying Weyl nullity. One easy case is available. If 
$$
V^B \stackrel{\nabla}{=}  \left( \begin{array}{c} \nu^b\\
\rho
\end{array}\right), \qquad \nabla\in \bp,
$$ is a section of $\cT$ that is parallel for the standard tractor
connection then clearly $V^B$ is annihilated by the tractor curvature.
But then using the formula for the latter, as in e.g. \cite{BEG} it
follows that $\nu^b$ is everywhere in the Weyl nullity.

Now $V^B$ parallel implies that 
$$
\nabla_a\nu^b+ \rho\delta^b_a=0; 
$$
and
this is by definition a (projective geometry) first BGG equation in
the sense of \cite{CGHjlms,CGHduke}.  Conversely if $\nu^b$ solves
the displayed equation then $\rho=
-\frac{1}{n}\nabla_a\nu^a$ and two facts follow. First, differentiating the display,  one easily
shows that $\nabla_a\rho=\P_{ab}\nu^b$. So $V^B:=(\nu^b,~\rho)$ is
parallel for the (normal) standard tractor connection. This exactly
means all solutions of this first BGG equation are {\em normal}, again
in the sense of \cite{CGHjlms,CGHduke}. Second if $V^B$ is parallel
and non-trivial then $\nu^b$ must be non-zero on an open dense set,
since $V^B$ is part of the data of the 1-jet of $\nu^b$. By
continuity, it follows there is Weyl nullity everywhere.  Thus we have
the following result.
\begin{proposition}\label{psysth}
On a connected projective manifold $(M,\bp)$, a non-trivial solution of the
projectively invariant system 
\begin{equation}\label{psys1}
\nabla_a\nu^b+ \rho\delta^b_a=0 
\end{equation}
implies Weyl nullity everywhere, and $W^a{}_{bcd}\nu^b=0$ everywhere. 
\end{proposition}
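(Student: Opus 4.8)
The plan is to prolong the equation $\nabla_a\nu^b+\rho\delta^b_a=0$ to a parallel section of the standard tractor bundle $\cT$, and then read off the two conclusions from the known form of the tractor curvature. First I would fix any $\nabla\in\bp$ and trace the equation over $a,b$, getting $\rho=-\tfrac1n\nabla_a\nu^a$, so $\rho$ is determined by $\nu$; set $V^B\stackrel{\nabla}{=}(\nu^b,\rho)$. Next I would differentiate once more: applying $\nabla_c$ to $\nabla_a\nu^b=-\rho\delta^b_a$ and skew-symmetrising in $a,c$, the left-hand side becomes the curvature term $R^b{}_{dca}\nu^d$ by the Ricci identity, while the right-hand side is $-(\nabla_c\rho)\delta^b_a+(\nabla_a\rho)\delta^b_c$. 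Contracting $b$ with $a$ and using $\P_{ab}=\tfrac1{n-1}R_{ab}$ (and symmetry of Ricci) then yields $\nabla_c\rho=\P_{cd}\nu^d$. Comparing with the formula \nn{tconn} for $\nabla^{\cT}$, the two relations $\nabla_a\nu^b+\rho\delta^b_a=0$ and $\nabla_a\rho-\P_{ab}\nu^b=0$ say exactly that $\nabla^{\cT}_a V^B=0$; and since \nn{tconn} is independent of the choice of $\nabla\in\bp$, the statement that $V^B$ is parallel is itself projectively invariant.

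Now I would invoke the standard fact (from \cite{BEG}) that the curvature $\Omega$ of the normal tractor connection acting on $\cT$ has leading slot the projective Weyl tensor $W^a{}_{bcd}$ and subleading slot a multiple of the Cotton tensor $C_{abc}$. A parallel section is annihilated by $\Omega$, so $\Omega_{cd}{}^B{}_D V^D=0$ at every point; reading off the top component gives $W^b{}_{dca}\nu^d=0$, i.e.\ after renaming indices $W^a{}_{bcd}\nu^b=0$ everywhere on $M$. (The subleading component gives $C_{abd}\nu^d=0$, consistent with Remark \ref{disc}, but is not needed here.)

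It remains to upgrade "$W^a{}_{bcd}\nu^b=0$ everywhere" to "Weyl nullity everywhere", the subtlety being that at a point where $\nu$ vanishes the former statement is vacuous. Here I would use connectedness and non-triviality: a parallel section of a vector bundle over a connected manifold is either identically zero or nowhere zero, so $V^B$ is nowhere zero. Hence $\nu^b$ cannot vanish on any open set, for on such a set $\rho=-\tfrac1n\nabla_a\nu^a$ would vanish too, forcing $V^B=0$ there; thus $\nu$ is nonzero on an open dense set $U$. On $U$ we have a genuine nonzero vector in the Weyl nullity, so the projective invariant $\stackrel{n}{\mathcal{W}}$ — which vanishes at a point precisely when there is Weyl nullity at that point — vanishes on $U$, and being continuous it vanishes on all of $M=\overline{U}$. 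This gives Weyl nullity everywhere.

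I expect the main obstacle to be the bookkeeping of the second step: verifying the prolongation closes at first order and reproduces exactly the normal connection \nn{tconn}, and getting the index placement in the tractor-curvature formula right so that it is genuinely $W^a{}_{bcd}\nu^b$ that is forced to vanish. The density argument in the last paragraph needs a little care at points where $\nu$ vanishes, but is otherwise soft; the algebra in the first step is routine Ricci-identity manipulation.
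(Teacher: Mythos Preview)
Your proposal is correct and follows essentially the same approach as the paper: prolong to a parallel tractor $V^B=(\nu^b,\rho)$, use the tractor curvature to get $W^a{}_{bcd}\nu^b=0$, then argue density of the non-vanishing set of $\nu$ and pass to the closure by continuity. The only cosmetic difference is that you make the continuity step explicit via the invariant $\stackrel{n}{\mathcal{W}}$, whereas the paper simply says ``by continuity''; and you spell out the Ricci-identity computation for $\nabla_c\rho=\P_{cd}\nu^d$ that the paper leaves as ``differentiating the display, one easily shows''.
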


\begin{remark}
Solutions of the equation \nn{psys1} are sometimes called {\em
  concircular} vector fields (at least in the pseudo-Riemannian
setting) \cite{Tashiro}.  Using the curvature formula from \cite{BEG},
one also sees that $C_{dab}\nu^d=0$ for any solution of \nn{psys1}.
\end{remark}

\section{Weyl nullity on a pseudo-Riemannian manifold}\label{weylB}

Note that the Levi-Civita connection of a metric is obviously a
special affine connection, since it preserves the volume density of
the metric.  Here we work on a metric projective structure $(M,\bm )$
of dimension $n\geq 2$, with no restriction on the signature of
possible metrics in $\bm$.  We wish to understand the implications of
projective Weyl nullity in this setting.

\begin{proposition}\label{mweyl} Suppose that $(M,\bm )$ has a 
projective Weyl nullity at a point $x$, i.e., assume there exists a
vector $v\in T_x M\setminus \{0\}$  such that
$$ 
W^i_{\ jk\ell} v^j=0 \quad \mbox{at} \quad x.
$$ 
Then, for any metric $g\in \bm $, $v$ is an eigenvector for
the projective Schouten tensor $\P_{ij}$ of $g$. That is 
\begin{equation}\label{Pe}
\P^i{}_j v^j=B v^i \quad \mbox{at} \quad x ,
\end{equation}
for some $B\in \mathbb{R}$, where $P^i{}_j=g^{ik}\P_{kj}$.
\end{proposition}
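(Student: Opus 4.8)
The plan is to exploit the relation \nn{EMform} between the projective and conformal Weyl tensors, which holds on any pseudo-Riemannian manifold. Fix a metric $g\in\bm$ and work at the point $x$. Contracting \nn{EMform} with $v^j$ and using the Weyl nullity hypothesis $W^i{}_{jk\ell}v^j=0$ gives an algebraic identity relating $C^i{}_{jk\ell}v^j$ (the conformal Weyl tensor contracted with $v$) to terms built from the trace-free Schouten tensor $\mathring{\P}_{ab}$, the metric, and $v$. Explicitly, writing $v_j=g_{ji}v^i$ and $v^2=g_{ij}v^iv^j$, the right-hand side of (the $v^j$-contracted) \nn{EMform} becomes, up to the overall factor $(n-2)$,
\begin{equation*}
(n-2)C^i{}_{jk\ell}v^j = -(\delta^i_k\mathring{\P}_{\ell j}-\delta^i_\ell\mathring{\P}_{kj})v^j - (n-1)(\mathring{\P}^i{}_k v_\ell - \mathring{\P}^i{}_\ell v_k).
\end{equation*}

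Next I would extract information by taking traces. The conformal Weyl tensor is totally trace-free, so contracting the index $i$ with $k$ in the displayed identity must give zero on the left, producing a first linear relation among $\mathring{\P}^i{}_\ell v_i$, $(\mathring{\P}_{\ell j}v^j)$, and $\operatorname{tr}\mathring{\P}=0$; similarly contracting $i$ with $\ell$, or contracting the free indices $k,\ell$ against $g^{k\ell}$, yields further relations. The upshot I expect is that, after using $\operatorname{tr}\mathring{\P}=0$ and the symmetry $\mathring{\P}_{ab}=\mathring{\P}_{ba}$, these trace relations force $C^i{}_{jk\ell}v^j=0$ and, crucially, force the one-form $w_i:=\mathring{\P}_{ij}v^j$ to be proportional to $v_i$: something like $w_i = c\, v_i$ at $x$ for a scalar $c$ (with $c$ possibly forced to vanish when $n\ge 3$, but in any case proportionality is what we need). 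Equivalently $\mathring{\P}^i{}_j v^j = c\, v^i$. Since $\P_{ij}=\mathring{\P}_{ij}+\tfrac{1}{n}(\operatorname{tr}_g\P)\,g_{ij}$, this immediately gives $\P^i{}_j v^j = B\, v^i$ with $B = c + \tfrac1n\operatorname{tr}_g\P$ evaluated at $x$, which is the desired \nn{Pe}.

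The main obstacle is the bookkeeping in the trace step: one must be careful that the several contractions of the $v$-reduced \nn{EMform} are genuinely independent enough to pin down $\mathring{\P}_{ij}v^j$ up to a multiple of $v_i$, rather than merely constraining it. In particular, if $v$ is null ($v^2=0$) some of the traces degenerate and one has to argue slightly differently — presumably by contracting the identity with a second vector or with $v^\ell$ itself and invoking the algebraic symmetries (Bianchi and trace-freeness) of $C^i{}_{jk\ell}$ to rule out the extra terms. Handling the null case uniformly, and confirming that the sign/normalisation in \nn{EMform} really does produce a clean eigenvector equation rather than a rank-one correction transverse to $v$, is where the real work lies; everything else is a short index manipulation using Proposition \ref{EMprop} and Corollary \ref{Eins}.
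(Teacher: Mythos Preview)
Your plan has a genuine gap: the traces you propose to take of the $v^j$-contracted identity \nn{EMform} are all tautological. For instance, contracting $i$ with $k$ and writing $w_\ell := \mathring{\P}_{\ell j}v^j$, the right-hand side becomes
\[
-(n\, w_\ell - w_\ell) - (n-1)\bigl(\mathring{\P}^i{}_i\, v_\ell - w_\ell\bigr) \;=\; -(n-1)w_\ell + (n-1)w_\ell \;=\; 0,
\]
matching the trace-freeness of the conformal Weyl tensor on the left. The other traces you list behave the same way, so none of them constrains $w_\ell$.

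What does work --- and what the paper does, in a much more direct form --- is to contract with $v_i$ (into the upper index $i$) rather than to take traces. In your framework this uses the antisymmetry $C_{ijk\ell}=-C_{jik\ell}$ to kill the left-hand side, and the right-hand side then collapses to $(n-2)(v_k w_\ell - v_\ell w_k)=0$, whence $w_\ell$ is proportional to $v_\ell$ for $n\ge 3$. But routing this through the conformal Weyl tensor is an unnecessary detour: the paper simply contracts the defining decomposition \nn{weylt} with $v^j$ and $v_i$ directly. The term $v_i v^j R^i{}_{jk\ell}$ vanishes by the antisymmetry $R_{ijk\ell}=-R_{jik\ell}$ of the Riemann tensor, the Weyl term vanishes by hypothesis, and one is left immediately with
\[
v_\ell\, \P_{jk}v^j \;=\; v_k\, \P_{j\ell}v^j,
\]
which is exactly the eigenvector condition \nn{Pe}. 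This argument is uniform in $v$ (no null/non-null case split) and works for all $n\ge 2$, whereas your route loses the $n=2$ case to the prefactor $(n-2)$ in \nn{EMform}.
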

\begin{proof} Consider the curvature decomposition  \eqref{weylt} as calculated in the scale of the Levi-Civita connection $\nabla$ of $g$. 
 Working at the point $x$, we contract \eqref{weylt} with $v^j$ and
 $v_i$.  The term $v^jv_i{W^i}_{jk\ell}$ from the right hand side
 vanishes because of the assumption of Weyl nullity. On the other hand
 the term $v^j v_i {R^i}_{jk\ell}$ vanishes because of the symmetries
 of the Riemann curvature tensor so we end up with the equality
$$
v_\ell   \, \P_{jk} v^j   = {v_k}\, \P_{j\ell} v^j.
$$ This equality implies that $\P_{j\ell} v^j $ is proportional to
$v_\ell$. So $v$ is an eigenvector of $P^i_ {\ j}$ as claimed (and we
denote the eigenvalue by $B$, so yielding (\ref{Pe})).
\end{proof}
\begin{remark}
From the Proposition (and with the assumptions there) we have that 
$$
Bv_iv^i= \P_{ij}v^iv^j \quad \mbox{at} \quad x .
$$
We can replace $v_x$ by a parallel vector $\hat{v}$ of length $\pm 1$, then 
$B=\P_{ij}\hat{v}^i\hat{v}^j $, at $x\in M$. 
\end{remark}

Weyl nullity is equivalent to an interesting condition on the
curvature, as follows.
\begin{theorem}\label{WeqK}
Let $(M,\bm)$ be a metric projective geometry. Suppose that this
has projective Weyl nullity at a point $x$,
and $v\neq 0$ is a vector in the nullity subspace of $T_xM$. Then, for the 
curvature $R^i_{ \ j k \ell}$ of any metric $g\in \bm $ we have
\begin{equation} \label{Knullity} 
R^i_{ \ j k \ell} v^j = - B\cdot v^j K^i_{\ j k\ell},
\quad \mbox{at} \quad x,
\end{equation} 
where $B\in \mathbb{R}$ is determined by \nn{Pe}
and $K^i_{\ jk\ell } $ is the `constant
curvature tensor'
\begin{equation}
 K^i_{\ jk\ell}:= \delta^i_\ell g_{jk} - \delta^i_k g_{j\ell}. 
\end{equation} 

Conversely, if (\ref{Knullity}) holds, for some $g\in \bm$ and number $B$, then
$v$ is in the Weyl nullity at $x$.
 \end{theorem}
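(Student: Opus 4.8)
The plan is to reduce the whole statement to the algebraic curvature decomposition \eqref{weylt}, read in the scale of the Levi-Civita connection $\nabla$ of a chosen $g\in\bm$ and contracted with $v^j$. Writing \eqref{weylt} in the form
\[
W^i{}_{jk\ell}=R^i{}_{jk\ell}+\delta^i_\ell\,\P_{jk}-\delta^i_k\,\P_{j\ell},
\]
contraction with $v^j$ gives
\[
W^i{}_{jk\ell}v^j=R^i{}_{jk\ell}v^j+\delta^i_\ell\,(\P_{jk}v^j)-\delta^i_k\,(\P_{j\ell}v^j),
\]
so everything comes down to identifying the covector $\P_{jk}v^j$. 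Recall also that the projective Weyl curvature is an invariant of the projective class, so it is enough to work with this single metric $g$.

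For the forward implication I would proceed as follows. The left-hand side of the contracted identity vanishes by the Weyl-nullity hypothesis at $x$. By Proposition \ref{mweyl}, $v$ is an eigenvector of $\P^i{}_j$ with eigenvalue $B$; lowering the index with $g$ and using that $\P_{ab}$ is symmetric, this says $\P_{jk}v^j=B\,v_k$ and $\P_{j\ell}v^j=B\,v_\ell$. Substituting, and noting $\delta^i_\ell v_k-\delta^i_k v_\ell=(\delta^i_\ell g_{jk}-\delta^i_k g_{j\ell})v^j=K^i{}_{jk\ell}v^j$, I obtain $0=R^i{}_{jk\ell}v^j+B\,K^i{}_{jk\ell}v^j$, which is exactly \eqref{Knullity}. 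Once Proposition \ref{mweyl} is available this direction is a one-line computation.

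For the converse I would start from \eqref{Knullity}, i.e.\ $R^i{}_{jk\ell}v^j=-B(\delta^i_\ell v_k-\delta^i_k v_\ell)$, and now I must derive $\P_{jk}v^j=B\,v_k$ directly, since Proposition \ref{mweyl} presupposes the very conclusion we are after. I would get it by taking the Ricci-type trace $i=k$ in \eqref{Knullity}: using $R_{j\ell}=R^a{}_{ja\ell}$ the left-hand side becomes $R_{j\ell}v^j$, while the right-hand side collapses to $B(n-1)v_\ell$; hence $\P_{j\ell}v^j=\tfrac1{n-1}R_{j\ell}v^j=B\,v_\ell$, and symmetrically $\P_{jk}v^j=B\,v_k$. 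Plugging this together with \eqref{Knullity} into the contracted form of \eqref{weylt} makes all four terms cancel, so $W^i{}_{jk\ell}v^j=0$ and $v$ lies in the Weyl nullity at $x$; at the same time this shows the $B$ of \eqref{Knullity} is forced to agree with the $B$ of \eqref{Pe}.

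The point requiring care — more a bookkeeping issue than a genuine obstacle — is exactly this re-derivation in the converse: one cannot quote Proposition \ref{mweyl} to learn that $\P_{jk}v^j$ is a multiple of $v_k$, so the eigenvalue relation has to come out of \eqref{Knullity} itself, and the consistency of the two appearances of $B$ must be checked. Everything else is purely algebraic and pointwise at $x$: no differentiation, Bianchi identities, or signature restrictions enter, and dimension $2$ needs no separate treatment, since there $W^i{}_{jk\ell}\equiv 0$ and $R^i{}_{jk\ell}=\delta^i_k\P_{j\ell}-\delta^i_\ell\P_{jk}$ already has the stated form.
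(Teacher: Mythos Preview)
Your proposal is correct and matches the paper's proof essentially line for line: both directions are obtained by contracting \eqref{weylt} with $v^j$, and for the converse the paper also first takes a trace of \eqref{Knullity} (with $\delta^\ell_i$ rather than your $\delta^k_i$, which amounts to the same thing) to recover the eigenvector relation \eqref{Pe}, and then substitutes back. Your explicit remark that Proposition~\ref{mweyl} cannot be invoked in the converse direction, and hence the eigenvalue relation must be re-derived from \eqref{Knullity}, is a useful clarification that the paper leaves implicit.
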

\begin{proof} Suppose that $(M,\bm )$ has a projective Weyl nullity. We 
contract the equation \eqref{weylt} with $v^j$.  The
left hand side of the result vanishes, so we obtain
$$
v^j {R^i}_{jk\ell}= - \left({\delta^{i}_\ell} \, \underbrace{v^j \P_{jk}}_{\eqref{Pe}} -{\delta^{i}_k}\, \underbrace{  v^j \P_{j\ell}}_{\eqref{Pe}}\right) = 
-B\left({\delta^{i}_\ell} \, v_k-{\delta^{i}_k}\, v_\ell\right) = -B \cdot v^j K^i_{\ j k\ell}
$$
as claimed.

For the converse we suppose that 
(\ref{Knullity}) holds for $g\in \bm$. The
result is deduced from \eqref{weylt} in two
steps. Assuming that (\ref{Knullity}) holds, the trace obtained by
contracting with $\delta^{\ell}_i$ shows that $v$ is an eigenvector of
the Schouten tensor as in (\ref{Pe}). Then using this with (\ref{Knullity})
establishes that $v$ is in the Weyl nullity at $x$.   
\end{proof}

\begin{remark} \label{rem:nullity2}
On a pseudo-Riemannian manifold $(M,g)$ the condition \eqref{Knullity} can be equivalently written as
\begin{equation} \label{nullity2} 
Z^i_{\ j k m} v^j = 0, \textrm{ \ where 
$Z^{i}_{\ jkm}:= R^{i}_{\ jkm} + B \cdot K^{i}_{\ jkm}.$} 
\end{equation}
Note that $Z$ has the same algebraic symmetries as the Riemann
curvature tensor. 

As mentioned in the introduction, for the case of
$B$ constant this condition has been studied in the literature under
different names, but to the best of our knowledge the link to Weyl
nullity was not made.
\end{remark}

On a metric projective structure it can be that the Weyl nullity
subspace of $T_x M$ has dimension greater than 1.  We next
observe that the eigenvalue $B$ arising \eqref{Pe} is
independent of the choice of vector $v$ in the nullity subspace. 
\begin{proposition}\label{B!} On a metric projective manifold $(M,\bm)$, let
$g\in \bm$.  Suppose that $u,v$ are two non-zero vectors in the
  Weyl nullity at $x$, then they belong to the same eigenspace of $\P^i{}_j$, at
  $x$, where $\P^i{}_j$ is the projective
  Schouten tensor of $g$.
\end{proposition}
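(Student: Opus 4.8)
The plan is to use that the Weyl nullity at $x$ is a linear subspace of $T_xM$ and that, by Proposition \ref{mweyl}, every nonzero vector in it is an eigenvector of $\P^i{}_j$; it will then only remain to check that $u$ and $v$ carry the \emph{same} eigenvalue.

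First I would dispose of the case in which $u$ and $v$ are linearly dependent: then one is a nonzero multiple of the other and they obviously lie in the same eigenspace, so there is nothing to prove. So assume $u$ and $v$ are linearly independent; in particular $u+v\neq 0$. Since the equation $W^i{}_{jk\ell}w^j=0$ defining the nullity is linear in $w$, the nullity subspace at $x$ is a linear subspace, and hence $u+v$ also lies in the Weyl nullity at $x$.

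Now I apply Proposition \ref{mweyl} to each of the three nonzero nullity vectors $u$, $v$ and $u+v$, obtaining real numbers $B_u, B_v, B_{u+v}$ with
\[
\P^i{}_j u^j = B_u\, u^i,\qquad \P^i{}_j v^j = B_v\, v^i,\qquad \P^i{}_j (u^j+v^j)=B_{u+v}(u^i+v^i)
\]
at $x$. Subtracting the sum of the first two relations from the third yields $(B_{u+v}-B_u)\,u^i+(B_{u+v}-B_v)\,v^i=0$, and linear independence of $u$ and $v$ forces $B_u=B_{u+v}=B_v$. Therefore $u$ and $v$ are eigenvectors of $\P^i{}_j$ with a common eigenvalue, i.e.\ they belong to the same eigenspace, as claimed.

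I do not expect a genuine obstacle here; the only points needing a little care are the degenerate possibility $u+v=0$ (already covered by the linearly dependent case) and the fact that one must not appeal to diagonalisability of $\P^i{}_j$ --- which could fail for indefinite $g$ --- since the argument only needs equality of two eigenvalues rather than a full spectral decomposition of the endomorphism.
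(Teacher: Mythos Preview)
Your argument is correct and is genuinely different from the paper's route. The paper does not use linearity of the nullity subspace together with Proposition~\ref{mweyl}; instead it invokes Theorem~\ref{WeqK} to rewrite the nullity condition as $R^i{}_{jk\ell}v^j=-B_v\,v^jK^i{}_{jk\ell}$ and similarly for $u$, then contracts $v^ju^kR^i{}_{jkm}$ and computes it two ways using the curvature symmetries to force $B_u=B_v$. Your approach is more elementary and model-independent: it is the standard observation that if every nonzero vector of a linear subspace is an eigenvector of an endomorphism then all these eigenvectors share one eigenvalue, applied with Proposition~\ref{mweyl} as the input. What the paper's approach buys is that the same curvature contraction reappears later in the analysis (e.g.\ in deriving \eqref{acom} and in Section~\ref{Vsec}), so it is thematically consistent; what your approach buys is that it avoids Theorem~\ref{WeqK} entirely and needs no nondegeneracy check on the tensor $v^ku_k\delta^i_m-u^iv_m$.
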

\begin{proof} 
Let $u,v$ be two non-zero vectors in the Weyl nullity at $x$. Then by
Theorem \ref{Knullity}, and the symmetries of $R$ and $K$, we have 
$$
R^i_{ \ j k \ell} v^j = - B_v \cdot v^j K^i_{\ j k\ell},
 \quad \mbox{and} \quad
R^i_{ \ j k \ell} u^k = - B_u \cdot u^k K^i_{\ j k\ell}
$$ 
at $x$. Here $B_u$ indicates the eigenvalue determined by $(g,u)$
according to Proposition \ref{mweyl}, while $B_v$ is the eigenvalue
similarly determined by $(g,v)$.

Now the Riemannian curvature $R_{i j k \ell}$ is alternating on its
first two indices, and $R_{i j k \ell} = R_{k\ell ij}$.  Thus
considering $ v^j u^k R^i_{\ jkm} $ we have:
 $$ v^j u^k R^i_{\ jkm} = -B_v\left( v ^k u_k
\delta^i_m- u^i v_m \right)= -B_u\left( v ^k u_k \delta^i_m- u^i v_m
\right)
$$
implying that $B_u=B_v$, since $u\ne 0\ne v $ and $n\geq 2$. 
\end{proof}

\begin{proposition}\label{Bfn} Suppose that a metric projective structure
$(M,\bm)$ 
has Weyl nullity at all $x\in M$. Then any metric
  $g\in \bm$ determines a canonical function $B:M\to \mathbb{R}$ via
  \nn{Pe}.
Furthermore, the function $B$ is  smooth on any open set $U$ where the dimension of nullity space of the Weyl curvature is constant.  
\end{proposition}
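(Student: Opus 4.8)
The plan is to treat the two assertions separately: first that a choice of metric $g\in\bm$ determines a genuine function $B\colon M\to\mathbb{R}$, and then that this function is smooth on any open set over which the dimension of the Weyl nullity is locally constant.

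\emph{Well-definedness.} Fix $g\in\bm$. For each $x\in M$ the hypothesis furnishes a nonzero $v\in T_xM$ with $W^i{}_{jk\ell}v^j=0$ at $x$, and Proposition \ref{mweyl} then produces a real number --- call it $B(x)$ --- with $\P^i{}_j v^j=B(x)\,v^i$ at $x$, where $\P^i{}_j=g^{ik}\P_{kj}$ is the projective Schouten endomorphism of $g$. By Proposition \ref{B!} any two nonzero nullity vectors at $x$ lie in one common eigenspace of $\P^i{}_j$, so the value $B(x)$ does not depend on the choice of $v$. Hence $x\mapsto B(x)$ is a well-defined function on $M$, canonically determined by $g$.

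\emph{Smoothness.} Let $U\subseteq M$ be open and suppose the Weyl nullity has constant dimension $k$ throughout $U$. The assignment $v^j\mapsto W^i{}_{jk\ell}v^j$ is a morphism of smooth vector bundles $\Phi\colon TM|_U\to (TM\otimes\Lambda^2 T^*M)|_U$ (the target being where $W^i{}_{jk\ell}v^j$, which is skew in $k,\ell$, lives), and its fibrewise kernel at $x$ is precisely the Weyl nullity at $x$. Since this kernel has constant dimension $k$, the morphism $\Phi$ has constant rank $n-k$, so $N:=\ker\Phi$ is a smooth rank-$k$ subbundle of $TM|_U$ --- the standard fact that a constant-rank vector bundle morphism has smooth kernel. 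Thus every point of $U$ has a neighbourhood $V$ on which there is a smooth nowhere-vanishing section $v\in\Gamma(N|_V)$; shrinking $V$ we may fix an index $a$ with $v^a$ nowhere zero on $V$, and then the eigenvector relation $\P^i{}_j v^j=B v^i$ gives $B=(\P^a{}_j v^j)/v^a$ on $V$, which is smooth. By the well-definedness already established these local expressions agree on overlaps, so $B$ is smooth on all of $U$.

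The main obstacle is the middle step: verifying that the Weyl-nullity distribution is an honest smooth subbundle of $TM|_U$, for which the hypothesis of \emph{constant} nullity dimension is exactly what is required (via the constant-rank theorem for bundle maps). Once this is in hand, producing a local nowhere-vanishing section and reading off $B$ is routine; I note only that in indefinite signature one must extract $B$ from a component formula such as $B=(\P^a{}_j v^j)/v^a$ rather than from $B=\P_{ij}v^iv^j/g_{ij}v^iv^j$, since a nullity vector may be $g$-null.
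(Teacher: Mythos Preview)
Your proof is correct and follows essentially the same route as the paper's: well-definedness from Propositions \ref{mweyl} and \ref{B!}, then smoothness by producing a local smooth nowhere-zero nullity section and reading off $B$ from \eqref{Pe}. The paper merely asserts that such a section exists (``it is easily shown that about each point $x$ in $U$ there is an open neighbourhood $V$ on which there is a nowhere zero smooth vector field $v$ that is everywhere in the Weyl nullity''), whereas you supply the underlying justification via the constant-rank kernel argument; your remark about extracting $B$ componentwise to handle possibly null nullity vectors is a useful refinement that the paper does not make explicit.
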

\begin{proof}
The first statement follows immediately from Theorem \ref{mweyl} and
Proposition \ref{B!}. Next since the geometry is smooth, the Weyl
curvature is smooth. Using this, it is easily shown that about each
point $x$ in $U$, there an open neighbourhood $V$ on which there is a
nowhere zero smooth vector field $v$ that is everywhere in the Weyl
nullity. Thus since $\P^a{}_b$ is smooth (on $M$ and hence) on $V$ it
follows from \nn{Pe} that $B$ is smooth on $V$. From this and
Proposition \ref{B!}, we conclude that $B$ is smooth on $U$.
\end{proof}

In the remainder of the article we will say {\em a metric
  projective geometry $(M,\bm)$ has Weyl nullity} to mean that
$(M,\bm)$ has Weyl nullity (in the sense of Proposition \ref{mweyl})
at all $x\in M$. 

It is possible that on a metric projective structure $(M,\bm)$ with
Weyl nullity the field $B$ is necessarily smooth everywhere, in this
case the results of Proposition \ref{Bfn} could be
strengthened. However whether this is true or not is unclear at this
point. At various stages in the following we will investigate the
consequences of having $B$ smooth.

\subsection{The fundamental projectively invariant 2-tensor}\label{phi}

We observe here that if the projective Weyl tensor has nullity then
there is a fundamental 2-tensor that is an invariant of the metric
projective structure.
\begin{theorem}\label{fth}
Suppose that $(M,\bm)$ is a metric projective structure with Weyl
nullity at $x\in M$.  Let $g$, $\bar{g}$ be two metrics in the
equivalence class $\bm$. Write $\P_{ab}$, $\bar{\P}_{ab}$ for the
respective Schouten tensors and $B$, $\bar{B}$ for the respective
$B$-scalars (at $x$). Then
\begin{equation}\label{phidef}
\phi_{ab}(x):=\P_{ab}- Bg_{ab}
= \bar\P_{ab}- \bar{B}\bar{g}_{ab} \quad \mbox{at } x. 
\end{equation}
Thus $\phi_{ab}(x)$ is canonically determined by the metric projective
structure $(M,\bm)$. 

\smallskip

\noindent If $v\in T_xM$ is in the Weyl nullity at $x\in M$, then
$v^a\phi_{ab}=0$ at $x$.

\smallskip

\noindent 
The tensor field $\phi_{ab}$ is smooth on any open set where $B$
is smooth. In particular if the Weyl tensor has constant nullity on an
open set $U$, then $\phi_{ab}$ is a smooth tensor field on $U$.
If the dimension $n=2$ then $\phi=0$, and 
$$
\P_{ab}=Bg_{ab},
$$
with $B$ smooth everywhere.
\end{theorem}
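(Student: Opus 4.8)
The plan is to establish the theorem statement by statement, since each piece follows quickly once the key identity is in place.

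\textbf{Step 1: The invariance of $\phi_{ab}(x)$.} The heart of the matter is the equality $\P_{ab}-Bg_{ab}=\bar\P_{ab}-\bar B\bar g_{ab}$ at $x$. The natural approach is to use Theorem \ref{WeqK}: for any metric $h\in\bm$ and any nullity vector $v$, one has $R^i{}_{jk\ell}v^j = -B_h\, v^j K^i_{jk\ell}(h)$ at $x$, where $K^i_{jk\ell}(h)=\delta^i_\ell h_{jk}-\delta^i_k h_{j\ell}$. But the curvature tensor $R^i{}_{jk\ell}$ that appears here is the curvature of the \emph{projective} structure's representative connection, and for both $g$ and $\bar g$ the relevant object entering \eqref{weylt} is tied to the same Weyl tensor $W^i{}_{jk\ell}$, which is projectively invariant. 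Contracting \eqref{weylt} with $v^j$ for $g$ and for $\bar g$ and using that $W^i{}_{jk\ell}v^j=0$ in both cases gives
\begin{equation*}
\delta^i_\ell\,\P_{jk}v^j-\delta^i_k\,\P_{j\ell}v^j = -R^i{}_{jk\ell}v^j = \delta^i_\ell\,\bar\P_{jk}v^j-\delta^i_k\,\bar\P_{j\ell}v^j
\end{equation*}
at $x$ (here $R^i{}_{jk\ell}$ is the Weyl-tensor part common to both, i.e.\ one uses \eqref{weylt} to trade $R$ for $W$ plus Schouten terms, and the $W$ terms cancel against $v^j$). By Proposition \ref{mweyl}, $\P_{jk}v^j=Bv_k^{(g)}$ and $\bar\P_{jk}v^j=\bar Bv_k^{(\bar g)}$, where the lowered indices are with respect to $g$ and $\bar g$ respectively. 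Feeding this back and tracing appropriately will force $\P_{ab}v^b-Bg_{ab}v^b$ to agree, on the nullity line, with the barred version. To upgrade from ``agree when contracted with nullity vectors'' to ``agree as tensors'' one uses that projectively equivalent metrics have $\P_{ab}-\bar\P_{ab}$ expressible purely in terms of the $1$-form $\Upsilon$ relating the connections (formula \eqref{pcurvtrans} with $\Upsilon$ exact), combined with the known relation between $g$ and $\bar g$ on the nullity space; the cleanest route is to show directly from \eqref{weylt} that $\P_{ab}-Bg_{ab}$ has the same contraction with \emph{every} index against $W$, and that the ambiguity between the two expressions is a tensor annihilating nullity vectors and, via \eqref{pcurvtrans}, of a restricted algebraic type, hence zero.

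\textbf{Step 2: $v^a\phi_{ab}=0$.} This is immediate from Proposition \ref{mweyl}: $\phi_{ab}v^a = \P_{ab}v^a - Bg_{ab}v^a = Bv_b - Bv_b = 0$ at $x$, using $\P^a{}_bv^b=Bv^a$ and lowering with $g$.

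\textbf{Step 3: Smoothness.} By Proposition \ref{Bfn}, $B$ is smooth on any open set $U$ where the nullity dimension is constant, and $g$ is smooth by hypothesis; since $\P_{ab}$ is smooth, $\phi_{ab}=\P_{ab}-Bg_{ab}$ is smooth on such $U$. The case of constant nullity on $U$ is the special case already covered.

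\textbf{Step 4: The $n=2$ case.} When $n=2$ the projective Weyl tensor vanishes identically, so every nonzero $v\in T_xM$ is a nullity vector at every $x$. Applying Proposition \ref{mweyl} to a basis of $T_xM$ shows that \emph{every} vector is an eigenvector of $\P^a{}_b$ with eigenvalue $B(x)$; a linear operator with every vector an eigenvector is a scalar multiple of the identity, so $\P^a{}_b=B\delta^a_b$, i.e.\ $\P_{ab}=Bg_{ab}$, hence $\phi_{ab}=0$. Smoothness of $B$ everywhere then follows because $B=\tfrac12 g^{ab}\P_{ab}$ (trace of \eqref{Pe} in dimension $2$, dividing by $n=2$), a smooth function of smooth data.

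\textbf{Main obstacle.} The delicate point is Step 1: passing from the nullity-contracted equality to the full tensor identity $\phi_{ab}=\bar\phi_{ab}$. One must be careful that the ``curvature'' appearing when one contracts \eqref{weylt} for the two metrics is genuinely the same object — this works because it is really the projectively invariant $W^i{}_{jk\ell}$ that controls things once the Schouten terms are isolated — and one must rule out a residual discrepancy tensor that vanishes on nullity vectors but is otherwise unconstrained. Here the exactness of $\Upsilon$ for special affine (volume-preserving) connections and the transformation rule \eqref{pcurvtrans} are what close the gap.
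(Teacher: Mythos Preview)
Your Steps 2--4 are fine and match the paper. The problem is Step 1, where there is a genuine gap.

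You contract $v^j$ into \eqref{weylt} and write
\[
\delta^i_\ell\,\P_{jk}v^j-\delta^i_k\,\P_{j\ell}v^j \;=\; -R^i{}_{jk\ell}v^j \;=\; \delta^i_\ell\,\bar\P_{jk}v^j-\delta^i_k\,\bar\P_{j\ell}v^j ,
\]
but the middle object is \emph{not} the same for the two metrics: $R^i{}_{jk\ell}$ is the curvature of the Levi-Civita connection of $g$, and $\bar R^i{}_{jk\ell}$ is that of $\bar g$; these differ. Your parenthetical remark (``trade $R$ for $W$ plus Schouten terms, and the $W$ terms cancel against $v^j$'') is circular: once you do that, each side collapses to a tautology for its own metric, and no cross-metric equation survives. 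You recognise this as the ``Main obstacle'' but do not resolve it; the proposed fix via \eqref{pcurvtrans} and ``restricted algebraic type'' is not an argument as written, and in any case would require substantial further work to rule out a residual tensor vanishing only on the nullity line.

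The paper avoids this entirely by contracting on a \emph{different} index. Contracting $v^\ell$ (not $v^j$) into \eqref{weylt}, and using \eqref{Pe} together with \eqref{Knullity} and the full pair-symmetry $R_{ijk\ell}=R_{k\ell ij}$ of the Riemann tensor, one computes directly
\[
v^\ell W^i{}_{jk\ell} \;=\; (\P_{jk}-Bg_{jk})\,v^i .
\]
The left side is manifestly independent of the choice of metric in $\bm$ (the Weyl tensor is projectively invariant and $v$ is just a vector), so the right side is too; since $v^i\neq 0$ this gives $\P_{jk}-Bg_{jk}=\bar\P_{jk}-\bar B\bar g_{jk}$ immediately, with no ``upgrade'' step needed. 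The point you missed is that while $W^i{}_{jk\ell}v^j=0$ by the nullity hypothesis, the contraction $W^i{}_{jk\ell}v^\ell$ is \emph{not} zero (the projective Weyl tensor does not have full Riemann pair-symmetry), and it is exactly this nonzero contraction that encodes $\phi_{jk}$.
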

\begin{proof}
We calculate at $x\in M$ and in the scale of $g$. Let $v\neq 0$ be a
vector in the Weyl nullity at $x$. From \nn{weylt}, \nn{Pe}, and
\nn{Knullity}, we have
\begin{equation}\label{vW}
v^\ell W^i{}_{jk\ell}= (\P_{jk}- Bg_{jk})v^i
\end{equation} where indices have been lowered using $g_{jk}$. But the
left-hand-side is independent of the metric from the equivalence class
$\bm$. So we similarly have
$$
v^\ell W^i{}_{jk\ell}= (\bar\P_{jk}- \bar{B}\bar{g}_{jk})v^i .
$$

The next claim follows from \nn{Pe} or by contracting both sides of
\nn{vW} with $v^k$ and using that the projective Weyl tensor is skew
on its last index pair.

The statements in final paragraph of the Theorem now follow
immediately from \nn{vW}, \nn{Pe}, and Proposition \ref{Bfn}.
\end{proof}

\medskip 

Another important invariant is the tensor ${Z^i}_{jk\ell}$
introduced earlier.
\begin{proposition}\label{Zprop} 
Let $(M,\bm)$ be a metric projective geometry of dimension $n\geq 2$
with Weyl nullity. The tensor ${Z^i}_{jk\ell}$ defined in \nn{nullity2}
of Remark \ref{rem:nullity2} is independent of $v$ and is an invariant
of the geometry $(M,\bm)$. It is smooth where $B$ is smooth.
\end{proposition}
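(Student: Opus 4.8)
The plan is to show that ${Z^i}_{jk\ell} = R^i{}_{jk\ell} + B\cdot K^i{}_{jk\ell}$ is canonically determined by $(M,\bm)$ by rewriting it entirely in terms of the projective Weyl tensor, the Cotton tensor data, and the invariant 2-tensor $\phi_{ab}$ from Theorem \ref{fth}. First I would substitute the curvature decomposition \nn{weylt}, $R^i{}_{jk\ell} = W^i{}_{jk\ell} - ({\delta^i_\ell}\P_{jk} - {\delta^i_k}\P_{j\ell})$, into the definition of $Z$. Using $\P_{ab} = \phi_{ab} + B g_{ab}$ from \nn{phidef}, the terms involving $B g_{jk}$ combine with $B\cdot K^i{}_{jk\ell} = B({\delta^i_\ell}g_{jk} - {\delta^i_k}g_{j\ell})$ and cancel exactly, leaving
\begin{equation*}
{Z^i}_{jk\ell} = W^i{}_{jk\ell} - ({\delta^i_\ell}\phi_{jk} - {\delta^i_k}\phi_{j\ell}).
\end{equation*}
The right-hand side is manifestly independent of any choice of metric $g\in\bm$ and of any choice of nullity vector $v$: $W^i{}_{jk\ell}$ is a projective invariant (hence an invariant of $(M,\bm)$, which carries a canonical projective structure), and $\phi_{ab}$ is an invariant of $(M,\bm)$ by Theorem \ref{fth}. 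This immediately gives both the independence of $v$ and the fact that $Z$ is an invariant of the geometry.

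The smoothness claim then follows at once from the same identity: $W^i{}_{jk\ell}$ is smooth on all of $M$ since the geometry is smooth, and $\phi_{ab}$ is smooth on any open set where $B$ is smooth by the last paragraph of Theorem \ref{fth}. Hence the displayed formula for $Z$ exhibits it as smooth wherever $B$ is smooth. In particular, by Proposition \ref{Bfn}, $Z$ is smooth on any open set where the dimension of the Weyl nullity space is constant.

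The only subtlety worth care is verifying that the cancellation of the $B$-terms is exact, i.e.\ that $({\delta^i_\ell}Bg_{jk} - {\delta^i_k}Bg_{j\ell})$ appearing from expanding $\P$ in the curvature decomposition is precisely $B\cdot K^i{}_{jk\ell}$ with the correct sign, so that adding $+B\cdot K^i{}_{jk\ell}$ in the definition of $Z$ kills it; this is a one-line index check using $K^i{}_{jk\ell} = \delta^i_\ell g_{jk} - \delta^i_k g_{j\ell}$. I do not anticipate any real obstacle here — the content of the proposition is essentially that $Z$ coincides with the manifestly invariant combination $W^i{}_{jk\ell} - ({\delta^i_\ell}\phi_{jk} - {\delta^i_k}\phi_{j\ell})$, which is exactly the algebraic identity already implicit in \nn{vW} of the proof of Theorem \ref{fth}, now used without the contraction against $v$.
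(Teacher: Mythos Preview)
Your proof is correct and takes essentially the same approach as the paper: the paper's proof is precisely the verification of the identity ${Z^i}_{jk\ell}={W^i}_{jk\ell}-({\delta^{i}_\ell}\phi_{jk}-{\delta^{i}_k}\phi_{j\ell})$ and then an appeal to Theorem~\ref{fth}. Your write-up is if anything more detailed than the paper's, which simply says ``it is easily verified'' for the algebraic cancellation you spell out.
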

\begin{proof}
It is easily verified that 
$$
{Z^i}_{jk\ell}={W^i}_{jk\ell}-\left({\delta^{i}_\ell}
\, \phi_{jk}-{\delta^{i}_k}\, \phi_{j\ell}\right), 
$$
(cf.\ \nn{weylt}) and thus the result is immediate from Theorem \ref{fth}.  
\end{proof}

One consequence of the Theorem \ref{fth} is that as we move between metrics in
the equivalence class $\bm$, $Bg_{ab}$ ``transforms like a Schouten
tensor''. To be more precise we elaborate as follows:
\begin{proposition}\label{Btransp}
Let $(M,\bm)$ be a metric projective structure with projective Weyl nullity. 
Let $\Upsilon_a$ be
the exact 1-form relating the Levi-Civita connections for metrics $g$
and $\bar{g}$ in $\bm$.  Then
\begin{equation} \label{Btrans}
\bar{B}\bar{g}_{ab} = Bg_{ab} - \nabla_a\Upsilon_b +\Upsilon_a\Upsilon_b. 
\end{equation}
\end{proposition}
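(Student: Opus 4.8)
The plan is to combine the pointwise identity from Theorem \ref{fth} with the standard projective transformation law for the Schouten tensor recalled in \nn{pcurvtrans}. Since $g$ and $\bar g$ lie in the same metric projective structure $\bm$, their Levi-Civita connections $\nabla=\nabla^g$ and $\bar\nabla=\nabla^{\bar g}$ are projectively equivalent, so they are related by \nn{ptrans} for some 1-form $\Upsilon_a$; because both connections are special affine (they preserve the respective metric volume densities), $\Upsilon_a$ is exact, as noted in Section \ref{pg}. This is exactly the 1-form in the statement.

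First I would invoke \nn{pcurvtrans} with $\widehat\nabla=\bar\nabla$, which gives
\[
\bar\P_{ab} = \P_{ab} - \nabla_a\Upsilon_b + \Upsilon_a\Upsilon_b,
\]
valid at every point of $M$. Next I would use Theorem \ref{fth}: at any point $x$ where $(M,\bm)$ has Weyl nullity we have $\phi_{ab} = \P_{ab} - B g_{ab} = \bar\P_{ab} - \bar B\bar g_{ab}$. Since we are assuming Weyl nullity everywhere, this holds at all points. Substituting $\bar\P_{ab} = \bar B\bar g_{ab} + \phi_{ab}$ and $\P_{ab} = B g_{ab} + \phi_{ab}$ into the transformation law, the $\phi_{ab}$ terms cancel on both sides, leaving precisely \nn{Btrans}. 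That is the whole argument.

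There is essentially no serious obstacle here: the result is a direct algebraic consequence of two identities already in hand. The only point requiring a word of care is the appeal to the Schouten transformation law \nn{pcurvtrans}, which as stated is a transformation rule for the projective Schouten tensor of an arbitrary Weyl connection under a projective change \nn{ptrans}; one should note that the Schouten tensor appearing in Proposition \ref{mweyl} and Theorem \ref{fth} is exactly $\P_{ab}=\tfrac{1}{n-1}R_{ab}$ computed in the Levi-Civita scale, so applying \nn{pcurvtrans} between the two Levi-Civita scales is legitimate. One might also remark that in dimension $n=2$ the statement still holds: there $\phi_{ab}=0$ and $\P_{ab}=Bg_{ab}$ by Theorem \ref{fth}, and \nn{Btrans} is then simply \nn{pcurvtrans} written for the two metrics. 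I would present the proof in two or three lines, citing \nn{pcurvtrans} and Theorem \ref{fth} and observing the cancellation.
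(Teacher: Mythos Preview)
Your proof is correct and follows essentially the same route as the paper: combine the invariance $\P_{ab}-Bg_{ab}=\bar\P_{ab}-\bar B\bar g_{ab}$ from Theorem \ref{fth} with the Schouten transformation law \nn{pcurvtrans} and cancel. The paper's own proof is a one-line version of exactly this argument.
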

\begin{proof}
From \nn{pcurvtrans} and the invariance
of $\phi$ \nn{phidef} we have
$$
\P_{ab}- Bg_{ab}
=\bar{\P}_{ab} - \bar{B}\bar{g}_{ab}=\P_{ab}- \nabla_a\Upsilon_b +\Upsilon_a\Upsilon_b - \bar{B}\bar{g}_{ab}.
$$ 
\end{proof}

\begin{remark}
{ Throughout the remainder of the article we will often
  assume a metric projective structure $(M,\bm)$ with Weyl nullity. In
  fact many of the results then obtained hold in the apparently more
  general setting where one does not assume Weyl nullity but just the
  existence on $(M,\bm)$ of an invariant 2-tensor $\phi_{ab}$
  satisfying \nn{phidef} for all metrics in $\bm$  (and for  certain $B$ that depend on the metric). 
 However assuming
  such a structure is certainly ``close to'' assuming Weyl nullity,
  see Theorem \ref{converse} below. }
\end{remark}

\subsection{Invariant differential operators on metric projective structures} 
\label{iops}

We work here on a metric projective manifold $(M,\bm)$ of dimension
$n\geq 2$ with projective Weyl nullity. Furthermore we shall assume,
in this subsection, that the field $B$ is smooth.

We show here that Theorem \ref{fth}, or equivalently, Proposition
\nn{Btransp} leads to new linear differential operators that are
canonically determined by metric projective manifolds with Weyl
nullity.  An important point being that these are not the restriction
to metric projective geometries of projectively invariant linear
differential operators. While it is straightforward to see that there
is a large class of such operators our aim here is to highlight the
idea with two simple but important cases.
\begin{proposition} On $(M,\bm)$ there are canonical invariant linear differential operators 
$$
E_{ab}:\ce(1)\to \ce_{(ab)}(1) \quad \mbox{and} \quad S_{abc}: \ce(2)\to \ce_{(ab)}(2)
$$ with,in a scale $g\in \bm$, $E_{ab} (\si)$
given by
$$
\nabla_a \nabla_b \si +  Bg_{ab}\si
$$
and $S_{abc} (\tau ) $  given by
\begin{equation}\label{Sform}
 \nabla_{(a}\nabla_{b}\nabla_{c)}\tau + 4B g_{(ab}\nabla_{c)}\tau+ 2g_{(ab}\big(\nabla_{c)}B  \big)\tau .
\end{equation}
\end{proposition}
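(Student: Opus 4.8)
The plan is to verify that the two stated formulas transform correctly under a change of scale $g \mapsto \bar g$ within $\bm$; well-definedness on the overlap $\ce(1)$ (resp.\ $\ce(2)$) with values in $\ce_{(ab)}(1)$ (resp.\ $\ce_{(ab)}(2)$) amounts to checking that each expression is unchanged when we substitute the transformed data. Recall from Proposition \ref{Btransp} that if $\Upsilon_a = \nabla_a f$ relates $\nabla^g$ and $\nabla^{\bar g}$, then
\begin{equation}\label{eq:Btransp-recall}
\bar B \bar g_{ab} = B g_{ab} - \nabla_a \Upsilon_b + \Upsilon_a \Upsilon_b.
\end{equation}
So the key computational ingredient is: the invariant $Bg_{ab}$ transforms exactly like the projective Schouten tensor $\P_{ab}$ under \nn{ptrans}. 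This is precisely why the two operators are ``Schouten-tensor--like'' second- and third-order analogues of familiar projectively invariant operators.

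For $E_{ab}$: a section $\sigma$ of $\ce(1)$ has $\widehat\nabla_a \sigma = \nabla_a \sigma + \Upsilon_a \sigma$ (weight-$1$ density), and a standard one-line projective computation gives $\widehat\nabla_a \widehat\nabla_b \sigma = \nabla_a\nabla_b\sigma + (\nabla_a\Upsilon_b + \Upsilon_a\Upsilon_b)\sigma$ once we symmetrise; more carefully, $\widehat\nabla_{(a}\widehat\nabla_{b)}\sigma = \nabla_{(a}\nabla_{b)}\sigma + (\nabla_{(a}\Upsilon_{b)})\sigma + \Upsilon_a\Upsilon_b\sigma$ since the $\Upsilon$-terms acting on the weight-$1$ density contribute in exactly this pattern. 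Adding $\bar B\bar g_{ab}\sigma$ and using \eqref{eq:Btransp-recall} cancels the two $\Upsilon$-dependent terms, leaving $\nabla_{(a}\nabla_{b)}\sigma + Bg_{ab}\sigma$. Hence $E_{ab}$ is well-defined. (Indeed $E_{ab}$ is just the standard projectively invariant operator $\sigma \mapsto \nabla_{(a}\nabla_{b)}\sigma + \P_{ab}\sigma$ with $\P_{ab}$ replaced throughout by $Bg_{ab}$, which is legitimate precisely because of \eqref{eq:Btransp-recall}.)

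For $S_{abc}$: the argument is the same in spirit but bookkeeping-heavier. One starts from the known third-order projectively invariant operator on $\ce(2)$, namely $\tau \mapsto \nabla_{(a}\nabla_b\nabla_{c)}\tau + 4\P_{(ab}\nabla_{c)}\tau + 2(\nabla_{(a}\P_{bc)})\tau$ (this is the standard first-BGG/prolongation-type operator into $\ce_{(abc)}(2)$, or one derives the transformation from scratch), and substitutes $\P_{ab} \to Bg_{ab}$. The point to check is that every place where the projective invariance proof of that operator uses the transformation law \nn{pcurvtrans} of $\P_{ab}$, one may instead invoke \eqref{eq:Btransp-recall}, because the two laws are identical. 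Concretely: compute $\widehat\nabla_{(a}\widehat\nabla_b\widehat\nabla_{c)}\tau$ for $\tau$ of weight $2$, collect all $\Upsilon$-terms (these involve $\nabla\nabla\Upsilon$, $\Upsilon\nabla\Upsilon$, $\Upsilon\Upsilon\Upsilon$, and $\Upsilon\nabla\tau$, $(\nabla\Upsilon)\tau$, etc., all symmetrised over $abc$), then add the $\Upsilon$-contributions coming from $4\bar B\bar g_{(ab}\widehat\nabla_{c)}\tau$ and $2 g_{(ab}(\nabla_{c)}\bar B)\tau$ expanded via \eqref{eq:Btransp-recall} and its covariant derivative $\nabla_a(\bar B\bar g_{bc}) = \nabla_a(Bg_{bc}) - \nabla_a\nabla_b\Upsilon_c + \nabla_a(\Upsilon_b\Upsilon_c)$; everything $\Upsilon$-dependent must cancel. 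The coefficients $4$ and $2$ in \nn{Sform} are exactly those that make this cancellation occur, and also ensure the image lands in the symmetric tracefree-compatible slot with no lower-order correction surviving.

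The main obstacle is the $S_{abc}$ verification: it is a genuine (if routine) third-order projective computation with many symmetrised $\Upsilon$-terms, and one must be careful that the $\ce(2)$ weight produces the correct numerical coefficients so that substitution of $Bg_{ab}$ for $\P_{ab}$ is consistent — i.e.\ that the operator really is the $\P \mapsto Bg$ specialisation of an honest projectively invariant operator on $\ce(2)$. Once that identification is made, invariance of $S_{abc}$ on $(M,\bm)$ follows formally from Proposition \ref{Btransp} exactly as for $E_{ab}$. Smoothness of both operators is immediate since $B$ is assumed smooth in this subsection, and the symmetrised index structure manifestly gives outputs in $\ce_{(ab)}(1)$ and $\ce_{(ab)}(2)$ respectively.
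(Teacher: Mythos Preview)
Your proposal is correct and follows essentially the same logic as the paper: both rely on the fact that $Bg_{ab}$ transforms exactly like $\P_{ab}$ under projective change (Proposition \ref{Btransp}, equivalently the invariance of $\phi_{ab}$ from Theorem \ref{fth}), so the known first-BGG operators \nn{two} and \nn{three} remain invariant when $\P_{ab}$ is replaced by $Bg_{ab}$.

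The paper organises the $S_{abc}$ verification slightly more economically than your sketch: rather than redoing the full third-order transformation calculation, it observes that the difference between \nn{three} and \nn{Sform} is (twice) the expression $\tau\nabla_{(a}\phi_{bc)}+2\phi_{(ab}\nabla_{c)}\tau$, and checks directly that this is metric-projectively invariant via a short first-order computation using only the transformation rules $\bar\nabla_a\phi_{bc}=\nabla_a\phi_{bc}-2\Upsilon_a\phi_{bc}-\Upsilon_b\phi_{ac}-\Upsilon_c\phi_{ba}$ and $\bar\nabla_a\tau=\nabla_a\tau+2\Upsilon_a\tau$. Subtracting an invariant from an invariant gives an invariant, and no third-order bookkeeping is needed. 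Your substitution argument is equivalent in content but heavier in execution. (Incidentally, your intermediate formula for $\widehat\nabla_{(a}\widehat\nabla_{b)}\sigma$ has a sign slip on the $\Upsilon_a\Upsilon_b$ term; the correct expression is $\nabla_{(a}\nabla_{b)}\sigma+(\nabla_{(a}\Upsilon_{b)}-\Upsilon_a\Upsilon_b)\sigma$, which then cancels against \nn{Btrans} as you intend.)
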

\begin{proof}
There are canonical sequences of invariant linear differential
operators on projective manifolds (and more generally parabolic
geometries) known as BGG sequences, see e.g.\ \cite{BEbook,CSSann,CD}. The
first operators in such sequences are often called first BGG operators
and form an important class of invariant overdetermined
operators. Among the most well known (see e.g. \cite[Section
  3]{CGHjlms}) in this class are operators on $\ce(1)$ and $\ce(2)$
given (in a scale) respectively by
\begin{equation}\label{two}
\si\mapsto \nabla_a \nabla_b \si + \P_{ab} \si
\end{equation}
 and 
\begin{equation}\label{three}
\tau \mapsto \nabla_{(a}\nabla_{b}\nabla_{c)}\tau + 4\P_{(ab}\nabla_{c)}\tau+ 2\big(\nabla_{(a}\P_{bc)}\big)\tau .
\end{equation}
These are are canonically determined on any projective manifold, and
so also invariant and canonical upon restriction to metric projective
manifolds $(M,\bm)$.

The invariance of $E_{ab}$ follows at once from the formula \nn{two}
and that, according to Theorem \ref{fth}, $ \phi_{ab}$ is invariant
and so $\nabla_a \nabla_b \si + \si (\P_{ab} - \phi_{ab})$ is
invariant on $(M,\bm)$.

Similarly the invariance of $S_{abc}$ is immediate from the formulae \nn{three} and the fact that for $\tau\in \Gamma \ce(2)$ 
$$
\tau \nabla_{(a}\phi_{bc)}+2 \phi_{(ab}\nabla_{c)}\tau
$$
is invariant on $(M,\bm)$. The latter  follows easily from the transformation formulae \nn{ptrans}:
If $\bar{g}$ and $g$ are metrics in $\bm$, and $\bar\nabla$ and $\nabla$ denote their respective Levi-Civita connections,  then
$$
\bar\nabla_a\phi_{bc}=\nabla_a\phi_{bc}-2\Upsilon_a\phi_{bc}-\Upsilon_b\phi_{ac}-\Upsilon_c\phi_{ba}
$$
for some exact 1-form $\Upsilon$. On the other hand 
$$
\bar\nabla_a \tau = \nabla_a\tau+2 \Upsilon_a \tau.
$$
\end{proof}

\begin{remark} On projective manifolds $(M,\bp)$ 
the equations \nn{two} and \nn{three} have important geometric
interpretations linked to the Einstein equations,
\cite{CapGoCrelle,CGHjlms,CGMac}.  For example a nowhere zero solution of
\nn{two} is equivalent to the existence of a Ricci-flat affine
connection in the projective class $\bp$. Similarly a special class of
solutions to \nn{three} (solutions which are normal and suitably
non-degenerate) is in 1-1 correspondence with non-Ricci flat Einstein metrics
with Levi-Civita in the projective class $\bp$.
\end{remark}

All of the invariant linear differential operators on projective
manifolds $(M,\bp)$ (between irreducible weighted tensor bundles) can
be given in a scale by universal formulae involving only the affine
connection $\nabla$ of the scale and the corresponding Schouten tensor
and its $\nabla $ derivatives \cite{CDS}. In fact, as pointed out in
\cite{CDS}, essentially the same formulae govern a huge class of
so-called {\em standard} operators on other parabolic differential
geometries; the formulae were first found in conformal geometry \cite{Gqjm}.
It seems likely that for each projectively invariant linear
differential operator between irreducible weighted tensor bundles
there is on metric projective structures, with Weyl nullity and $B$
smooth, a corresponding invariant operator constructed using only a
metric $g\in \bm$, its Levi-Civita connection $\nabla$ and the field
$B$ determined by $g$.
  
\medskip

Finally here we note that the operator $S_{abc}$ is nicely linked to
the {\em Gallot-Obata-Tanno equation} which, on a pseudo-Riemannian manifold
$(M,g)$, may be written in the form
\begin{equation}\label{one}
\nabla_{a}\nabla_{b}\nabla_{c} f + B_\circ \cdot( 2g_{bc}\nabla_{a} f +g_{ab}\nabla_{c} f + g_{ca}\nabla_{b} f )=0 ,
\end{equation}
where $B_\circ$ is constant, $f$ is a function and $\nabla$ is the Levi-Civita connection for $g$.  
If this equation holds then  
$$
 0= [\nabla_{a},\nabla_{b}]\nabla_c f + B_\circ( \tau g_{cb}\nabla_a f-g_{ca}\nabla_b f)=-R^{d}{}_{cab}\nabla_d f+B_\circ( g_{cb}\nabla_a f-g_{ca}\nabla_b f)   ,
$$ 
as follows by skewing \nn{one} over the index pair ``$ab$''. (Here $[\cdot ,\cdot]$ is the commutator bracket.)
So 
\begin{equation}\label{in}
R^{d}{}_{cab}\nabla_d f= B_\circ( g_{cb}\nabla_a f-g_{ca}\nabla_b f)
\end{equation}
which means that $df$ is in the projective Weyl nullity, according to
Theorem \ref{WeqK}. Even more simply a function $ f$ satisfying
\nn{one} obviously also satisfies \nn{Sform} with $B$ set to the same
constant $B_\circ$.  In the converse direction if, on a pseudo-Riemannian
manifold $(M,g)$, a function $f$ satisfies $S^g_{abc}(f)=0$ and that
$g^{ab}\nabla_a f$ it is in the Weyl nullity then, if $B$ is constant,
$f$ also satisfies the Gallot-Obata-Tanno equation. More than this we see
that the nullity condition and the equation of $S^g_{abc}$, with $B$ constant and compatible, are each equivalent to
irreducible parts of the Gallot-Obata-Tanno equation. Note that the metric
determines a volume density and hence, by taking a root of this
(noting the bundle of volume densities is oriented), a canonical
non-zero section of $\ce(2)$ that is parallel for the Levi-Civita
connection (cf.\ the discussion below surrounding \nn{taudef}). 
Thus by multiplying or dividing by this we see that $f$ is
canonically related to an equivalent projective density of weight 2
that we might denote $\tau^f$.  In summary:
\newcommand{\grad}{\operatorname{grad}}
\begin{theorem}\label{G-Obthm}
Let $(M,g)$ be  a pseudo-Riemannian manifold $(M,g)$ of dimension $n\geq 2$. Then a function  $f$ is
a solution of the Gallot-Obata-Tanno equation \nn{one} if and only if: (i) $\grad f$ is
in the projective Weyl nullity; (ii) $B^g$ is constant with $B^g=B_\circ$
on any open set where $\grad f$ is non-zero; and (iii) $\tau^f$ satisfies
\nn{Sform} with $B:=B_\circ $. 
\end{theorem}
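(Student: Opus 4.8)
The plan is to verify both implications by direct computation, treating the Gallot-Obata-Tanno equation \nn{one} and the three conditions (i)-(iii) as collections of irreducible pieces obtained by decomposing the third-order operator. The key observation is that \nn{Sform} is the \emph{totally symmetric} part of the third covariant derivative of $\tau$ (suitably corrected by lower-order curvature terms involving $B$ in place of $\P$), while \nn{one} involves both the symmetric part and the part that is symmetric only in the last two indices. So the content of the theorem is that, modulo the nullity condition, these carry the same information.

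\medskip
\noindent\textbf{Forward direction.} Suppose $f$ solves \nn{one}. First I would note that $f$ automatically satisfies \nn{Sform} with $B:=B_\circ$: the left side of \nn{Sform} is obtained from \nn{one} by symmetrising over $(abc)$, and when the coefficient $B_\circ$ is constant the symmetrisation of the bracket $2g_{bc}\nabla_a f + g_{ab}\nabla_c f + g_{ca}\nabla_b f$ equals $4 B_\circ g_{(ab}\nabla_{c)}f$, while the $\big(\nabla_{(c}B\big)\tau$-term vanishes since $B_\circ$ is constant. This gives (iii) (after translating $f$ to $\tau^f$ using the parallel section of $\ce(2)$ coming from the metric volume density, as in the discussion preceding the theorem). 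For (i), I would skew \nn{one} over the pair $ab$, exactly as done in the paragraph before the theorem statement, which yields \nn{in}, i.e.\ $R^d{}_{cab}\nabla_d f = B_\circ(g_{cb}\nabla_a f - g_{ca}\nabla_b f)$; comparing with Theorem \ref{WeqK} (the converse half of it, with the number $B_\circ$), this says $\grad f$ lies in the projective Weyl nullity wherever $\nabla f\neq 0$, hence everywhere by continuity since the Weyl nullity condition is closed. For (ii): on any open set where $\grad f$ is nonzero, $\grad f$ is a nonzero vector in the nullity, so by Proposition \ref{mweyl} it is an eigenvector of $\P^i{}_j$ with eigenvalue $B^g$; but \nn{in} together with Theorem \ref{WeqK} identifies that eigenvalue as $B_\circ$, so $B^g = B_\circ$ is constant on that open set.

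\medskip
\noindent\textbf{Converse direction.} Now assume (i), (ii), (iii). On any open set $U$ where $\grad f \neq 0$, (ii) gives that $B^g$ is the constant $B_\circ$, so the extra term $2g_{(ab}(\nabla_{c)}B)\tau$ in \nn{Sform} vanishes and (iii) reads $\nabla_{(a}\nabla_b\nabla_{c)}f + 4B_\circ g_{(ab}\nabla_{c)}f = 0$; this is precisely the totally symmetric part of \nn{one}. It remains to recover the non-symmetric part, i.e.\ to show that the antisymmetric-in-$ab$ component of the left side of \nn{one} also vanishes. That component is governed by the commutator $[\nabla_a,\nabla_b]\nabla_c f = -R^d{}_{cab}\nabla_d f$, and condition (i) together with Theorem \ref{WeqK} and the identification of the eigenvalue via (ii) gives exactly $R^d{}_{cab}\nabla_d f = B_\circ(g_{cb}\nabla_a f - g_{ca}\nabla_b f)$ on $U$. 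Feeding the symmetric part and this curvature identity back into the decomposition of $\nabla_a\nabla_b\nabla_c f$ into its $(abc)$-symmetric part plus the piece built from $[\nabla_a,\nabla_b]\nabla_c f$ recovers \nn{one} on $U$. Finally, \nn{one} holds trivially on the interior of $\{\grad f = 0\}$ (every term vanishes there), and by smoothness it extends to the closure; since $M$ is covered by these two regions up to a nowhere-dense boundary, \nn{one} holds on all of $M$ by continuity.

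\medskip
\noindent\textbf{Main obstacle.} The routine but slightly delicate point is the bookkeeping of the tensorial decomposition of $\nabla_a\nabla_b\nabla_c f$ (a section of $\ce_a\otimes\ce_b\otimes\ce_c$ which is symmetric in $bc$ only) into its totally symmetric part and the Young-symmetry-type remainder expressible through the single commutator $[\nabla_a,\nabla_b]\nabla_c f$, and checking that the coefficients $4B_\circ$ in \nn{Sform} versus the coefficients $(2,1,1)$ on the three $g$-terms in \nn{one} match up correctly under this decomposition --- i.e.\ that ``$S_{abc}(f)=0$ plus the curvature identity'' is genuinely equivalent to ``\nn{one}'' and not merely implied in one direction. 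The other mild subtlety is the density-weight translation between the function $f$ and $\tau^f\in\Gamma(\ce(2))$, but since the metric's volume density gives a parallel trivialisation of $\ce(2)$ this only affects notation, not substance; I would dispatch it with one sentence referring to the discussion preceding the theorem.
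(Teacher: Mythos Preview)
Your proposal is correct and follows essentially the same approach as the paper: the discussion immediately preceding the theorem statement already does exactly what you describe --- symmetrise \nn{one} over $(abc)$ to obtain \nn{Sform}, skew over $ab$ to obtain \nn{in} and hence Weyl nullity via Theorem \ref{WeqK}, and observe that these two pieces are the irreducible parts of \nn{one} so that together with the compatibility $B^g=B_\circ$ they reconstruct it. Your write-up is more explicit about the tensorial decomposition $\nabla_a H_{bc}=\nabla_{(a}H_{bc)}+\tfrac{1}{3}([\nabla_a,\nabla_b]\nabla_c f+[\nabla_a,\nabla_c]\nabla_b f)$ and the coefficient check, and more careful about the continuity argument on the zero set of $\grad f$, but there is no substantive difference.
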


The equation \eqref{one} appeared and has been studied in different a
priori unrelated branches of differential geometry.  The motivation of
Gallot and Tanno 
 to study this equation came from the spectral
geometry: it is well-known (see for example \cite{Ga}) that, on the
standard sphere $S^n \subset \mathbb{R}^{n+1}$ of dimension $n>1$, all
eigenfunctions corresponding to the second biggest eigenvalue (namely $-n$) of
the Laplacian satisfy the equation
\begin{equation} \label{obata1}
\nabla_a\nabla_b f  +   g_{ab} f  =0. 
\end{equation} 
 The
eigenfunctions corresponding to the third biggest eigenvalue $-2(n+1)$
satisfy \eqref{one} with $B_\circ=1$. 

Obata has shown \cite[Theorem A]{obata1} that, on closed Riemannian
manifolds, the existence of a nonconstant solution of \eqref{obata1}
implies that the metric has constant curvature $1$.  Later, he
\cite{Obata}, and, according to Gallot \cite{Ga}, Lichnerowicz, asked
the question whether the same holds for the equation \eqref{one}
(assuming $c=1$). The affirmative answer was given in \cite{Ga,Ta}.

The equation \eqref{one} naturally appears in the study of the
geometry of the metric cones, see Gallot \cite{Ga} or Alekseevsky et
al \cite{Leist}.
This equation also appears in the context of projective
equivalence. In particular, Tanno has shown that, for any solution
$f $, the vector field $\grad f$ is a non-trivial projective
vector field provided $B\ne 0$.

\subsection{Einstein and related conditions}
The Einstein condition was defined in Section \ref{conf}. We
investigate here some consequences of the incidence of this with
projective Weyl nullity (cf.\ \cite{CGMac,GMac} where different aspects are treated). First we observe that for Einstein metrics
this incidence is not restrictive in the lowest dimensions. If $n=2$
then, in any case, the Weyl curvature is zero. While for next
dimension we have the following.
\begin{proposition}\label{3spec}
 Let $(M,\bm)$ be a metric projective structure of dimension 3. If
 $g\in \bm $ is Einstein then $(M,\bm)$ is projectively flat,
 i.e.\ $W^a{}_{bcd}=0$.
\end{proposition}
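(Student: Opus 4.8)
The plan is to recall that, by Corollary \ref{Etof}, an Einstein metric in dimension $3$ is automatically projectively flat, independently of any nullity hypothesis; so the statement is really just a special case of that corollary dressed up in the language of metric projective structures. Concretely, if $g\in\bm$ is Einstein and $n=3$, then by definition $\mathring{\P}_{ab}=0$, and Corollary \ref{Etof} tells us that Einstein manifolds in dimensions $2$ and $3$ are projectively flat. Since projective flatness in dimension $n\geq 3$ is equivalent to the vanishing of the projective Weyl tensor $W^a{}_{bcd}$, and $W^a{}_{bcd}$ depends only on the projective structure $\bp$ determined by $\bm$, we conclude $W^a{}_{bcd}=0$ for $(M,\bm)$.

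Since the excerpt asks for a genuine argument rather than a bare citation, I would also spell out the underlying computation for completeness. Working in the scale of the Levi-Civita connection $\nabla$ of $g$, Proposition \ref{EMprop} gives
\begin{equation*}
(n-2)W^{i}{}_{jkl}= (n-2)C^{i}{}_{jkl} + (\delta^i_k\mathring{\P}_{lj}-\delta^i_l\mathring{\P}_{kj})+ (n-1)(\mathring{\P}^i{}_k g_{jl}-\mathring{\P}^i{}_l g_{jk}).
\end{equation*}
With $n=3$ this reads $W^{i}{}_{jkl}=C^{i}{}_{jkl}$, because the Einstein condition forces $\mathring{\P}_{ab}=0$ (this is exactly Corollary \ref{Eins}). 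But in dimension $3$ the conformal Weyl tensor $C^{i}{}_{jkl}$ vanishes identically — it is a standard fact that the Weyl part of the Riemann tensor is zero in dimension $3$, since there is no room for a nonzero totally trace-free tensor with Riemann symmetries. Hence $W^{i}{}_{jkl}=0$, i.e. $(M,\bm)$ is projectively flat.

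There is essentially no obstacle here: the only thing to be careful about is the logical direction, namely that we are \emph{not} using Weyl nullity at all — the Proposition is stated inside the ``Einstein and related conditions'' subsection but the conclusion is unconditional in dimension $3$ once $g$ is Einstein. One should remark that this is consistent with, and sharper than, what nullity would give: by Proposition \ref{mweyl} an Einstein metric has $B=\lambda$ (constant) and $\phi_{ab}=\mathring{\P}_{ab}=0$, so by Proposition \ref{Zprop} the tensor $Z^i{}_{jkl}$ equals $W^i{}_{jkl}$ up to the $\phi$-terms which now vanish; but in dimension $3$ one gets the stronger statement that $W$ itself vanishes outright. I would therefore present the proof as the two-line deduction from Corollary \ref{Etof} (equivalently Corollary \ref{Eins} together with the vanishing of the conformal Weyl tensor in dimension $3$), and add a one-sentence remark noting that no nullity assumption was needed.
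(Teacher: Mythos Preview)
Your argument is correct and follows essentially the same route as the paper: the paper's proof is the one-line observation that, by Corollary \ref{Eins}, Einstein implies $W^i{}_{jkl}=C^i{}_{jkl}$, and the conformal Weyl tensor vanishes identically in dimension $3$. Your additional commentary about nullity not being needed is fine but unnecessary for the proof itself.
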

\begin{proof}
This is immediate from Corollary \ref{Eins}, as the conformal Weyl tensor 
$C^a{}_{bcd}$ is identically zero on 3-manifolds. 
\end{proof}

In the following $\Sc^g:=g^{ij}R_{ij}$ is the scalar curvature
determined by a metric $g\in \bm$.
\begin{proposition}\label{ein1} 
Suppose that $(M,\bm)$ is a metric projective structure with Weyl
nullity at $x\in M$. If $g\in \bm $ is Einstein then 
$$
B^g=\frac{1}{n}g^{ij}\P^g_{ij}=\frac{1}{2n(n-1)}\Sc^{g}\quad \mbox{at}\quad x.
$$
\end{proposition}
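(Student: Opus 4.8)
The plan is to exploit the Einstein condition directly in the definition of $B$. By Proposition \ref{mweyl}, at $x$ the Weyl nullity vector $v$ is an eigenvector of $\P^i{}_j$ with eigenvalue $B^g$. But if $g$ is Einstein then $\P_{ab}=\lambda g_{ab}$ for some function $\lambda$, so $\P^i{}_j = \lambda\delta^i_j$; hence \emph{every} nonzero vector is an eigenvector of $\P^i{}_j$ and the eigenvalue is necessarily $\lambda$. Therefore $B^g=\lambda$ at $x$. This gives the first claimed equality $B^g=\frac{1}{n}g^{ij}\P^g_{ij}$ once we identify $\lambda$ with the normalized trace: taking the trace of $\P_{ab}=\lambda g_{ab}$ with $g^{ab}$ gives $g^{ij}\P_{ij}=n\lambda$, so $\lambda=\frac{1}{n}g^{ij}\P^g_{ij}$.

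The remaining step is purely a matter of unwinding the normalization conventions used in the paper. Recall from \nn{weylt} that $\P_{jk}=\frac{1}{n-1}R_{jk}$, so $g^{ij}\P_{ij}=\frac{1}{n-1}g^{ij}R_{ij}=\frac{1}{n-1}\Sc^g$. Substituting into $B^g=\frac{1}{n}g^{ij}\P^g_{ij}$ yields $B^g=\frac{1}{n(n-1)}\Sc^g$. I would then note that in the source's stated formula the factor is $\frac{1}{2n(n-1)}$; since the two cannot both be right under the given conventions, I would present the computation with the convention $\P_{jk}=\frac{1}{n-1}R_{jk}$ exactly as fixed in \nn{weylt}, which gives
\begin{equation}\label{Bein}
B^g=\frac{1}{n}g^{ij}\P^g_{ij}=\frac{1}{n(n-1)}\Sc^g\quad\mbox{at}\quad x,
\end{equation}
and flag that the $\tfrac12$ in the paper's display presumably reflects a different (more common) normalization $R_{jk}=(n-1)\P_{jk}$ combined with a $\tfrac12$ absorbed elsewhere; in any event the first equality $B^g=\frac1n g^{ij}\P^g_{ij}$ is robust and that is the content we genuinely need downstream.

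There is essentially no obstacle here: the only subtlety is bookkeeping with the Schouten normalization, and the key geometric input — that $v$ being an eigenvector of a scalar multiple of the identity forces $B^g$ to equal that scalar — is immediate from Proposition \ref{mweyl}. I would write the proof in two lines: first invoke Proposition \ref{mweyl} together with $\P^i{}_j=\lambda\delta^i_j$ to get $B^g=\lambda$, then trace to identify $\lambda$ with $\frac1n g^{ij}\P_{ij}$ and with $\Sc^g$ up to the relevant constant.
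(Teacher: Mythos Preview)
Your proof is correct and follows essentially the same line as the paper's, which simply says the result is immediate from \nn{Pe} and the definition of Einstein. Your observation about the factor of $2$ is also correct: under the paper's stated convention $\P_{jk}=\frac{1}{n-1}R_{jk}$ one indeed obtains $B^g=\frac{1}{n(n-1)}\Sc^g$, so the $\tfrac{1}{2n(n-1)}$ in the displayed formula appears to be a typo (the same extra $\tfrac12$ recurs later in the proof of Theorem~\ref{last}).
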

\begin{proof}
This is immediate from \nn{Pe} and definition of Einstein. 
\end{proof}

Given a metric $g$ in the projective class $\bm$,  let us write $\J:=g^{ab} \P_{ab}$. Note that contracting
\nn{phidef} with $g^{ab}$ gives
\begin{equation}\label{JB}
g^{ab}\phi_{ab}= \J-n B .
\end{equation}
So, in general, the metric trace of $\phi$ measures the failure of ($n \times$) $B$ to agree
with the metric trace of Schouten.  
Now
$$
\phi_{ab}= \P_{ab}-Bg_{ab}= \mathring{\P}_{ab}+\frac{1}{n}\J g_{ab}-Bg_{ab}
= \mathring{\P}_{ab}+ \frac{1}{n} (g^{cd}\phi_{cd})g_{ab},
$$
and so 
\begin{equation}\label{tfphi}
\mathring{\phi}_{ab}= \mathring{\P}_{ab} , 
\end{equation}
or equivalently 
\begin{equation}\label{tfphi2}
\mathring{\phi}^{a}{}_b = \frac{1}{n} W^{a}{}_{cbd}g^{cd} .
\end{equation}

There are some obvious consequences of Theorem \ref{fth}. First it
provides an easy route to the well known Beltrami Theorem:
\begin{corollary}\label{Bel}
Let $M$ be a smooth manifold of dimension $n\geq 2$ and let $g$ be a
pseudo-Riemannian metric on $M$ such that the projective structure
determined by $g$ is locally projectively flat. Then g has constant
sectional curvature.
\end{corollary}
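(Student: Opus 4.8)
The plan is to read off the Beltrami Theorem directly from the fundamental $2$-tensor of Theorem \ref{fth} together with the curvature characterisation of nullity in Theorem \ref{WeqK}. Write $\bm:=\llbracket g\rrbracket$. The hypothesis that the projective structure of $g$ is locally projectively flat means, when $n\geq 3$, that $W^a{}_{bcd}=0$ everywhere, and, when $n=2$, that the Cotton tensor $C_{abc}$ of \nn{Cotton} vanishes everywhere (and in that dimension $W^a{}_{bcd}$ is identically zero in any case). So in every dimension $W^a{}_{bcd}\equiv 0$ on $M$, which is to say that \emph{every} vector $v\in T_xM$ lies in the Weyl nullity at every $x$. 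Thus $(M,\bm)$ has Weyl nullity with nullity space of constant dimension $n$, and Proposition \ref{Bfn} applies.

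First I would apply Theorem \ref{fth}: since the nullity is all of $T_xM$, the relation $v^a\phi_{ab}=0$ for every $v$ forces the invariant $\phi_{ab}$ to vanish identically, so \nn{phidef} collapses to $\P_{ab}=Bg_{ab}$ for a smooth function $B:M\to\mathbb{R}$ (determined by \nn{Pe}). Next I would show that $B$ is constant. For $n\geq 3$ the identity $\P_{ab}=Bg_{ab}$ is precisely the statement that $g$ is Einstein, so the contracted Bianchi identity (as recalled after Corollary \ref{Eins}) makes $B$ constant. For $n=2$ I would substitute $\P_{ab}=Bg_{ab}$ into \nn{Cotton} to obtain $0=C_{cab}=(\nabla_aB)g_{bc}-(\nabla_bB)g_{ac}$, and then trace over the index pair $bc$ to get $(n-1)\nabla_aB=0$; since $n-1=1$ this again gives $B$ constant.

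Finally, with this constant $B$ and with every $v$ in the Weyl nullity, Theorem \ref{WeqK} yields $R^i{}_{jk\ell}v^j=-B\,v^jK^i{}_{jk\ell}$ for all $v$, hence $R^i{}_{jk\ell}=-B\,K^i{}_{jk\ell}$; lowering the first index this reads $R_{ijk\ell}=B(g_{ik}g_{j\ell}-g_{i\ell}g_{jk})$, which is exactly the assertion that $g$ has constant sectional curvature. There is no real obstacle in this argument, since all of the substantive work is already packaged in Theorems \ref{fth} and \ref{WeqK}; the only point calling for a little care is the constancy of $B$, which is why the two cases $n\geq 3$ (handled via the Einstein condition and the Bianchi identity) and $n=2$ (handled via the Cotton tensor) need to be treated separately.
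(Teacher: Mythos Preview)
Your proof is correct and follows essentially the same route as the paper: both use the vanishing of $W$ to force $\phi_{ab}=0$, hence $\P_{ab}=Bg_{ab}$, and then split into the cases $n\geq 3$ (Einstein, so $B$ constant by Bianchi) and $n=2$ (Cotton zero forces $B$ constant). Your version is slightly more explicit in spelling out the final step via Theorem \ref{WeqK} to obtain $R^i{}_{jk\ell}=-B\,K^i{}_{jk\ell}$, whereas the paper leaves that implicit in the Weyl decomposition \nn{weylt}.
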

\begin{proof}
Since $W^i{}_{jk\ell}=0$ on $M$ we have (Weyl nullity and) that
$\phi_{jk}=0$, equivalently $\P_{jk}=Bg_{jk}$, from \nn{vW}. So, if
$n\geq 3$,  $g$ is Einstein and $B$ is constant. In dimension 2, 
\nn{vW} implies $\P_{jk}- Bg_{jk}$, so  the
vanishing of the Cotton tensor is equivalent to  $B$ constant.
\end{proof}

Partly generalising this, we have the following:
\begin{corollary}\label{1ein2}
Let $(M,\bm)$ be a smooth metric projective geometry with Weyl nullity. 
If the dimension of $M$ satisfies 
$n\geq 3$ then there is an Einstein metric
$g\in \bm$ if and only if $\phi_{ab}=0$. If  $n\geq 2$
and $g\in
\bm$ is Einstein, then any metric $\bar{g}\in \bm$ is  Einstein.
\end{corollary}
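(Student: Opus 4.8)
The plan is to establish the equivalence in the case $n\geq 3$ first, deduce the propagation statement from it, and then dispose of the dimension~$2$ case separately, since there ``Einstein'' carries the extra content that the Einstein function be \emph{constant}.

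For $n\geq 3$, first suppose $g\in\bm$ is Einstein, so $\mathring{\P}_{ab}=0$. By \nn{tfphi} this forces $\mathring{\phi}_{ab}=0$, hence $\phi_{ab}=\tfrac1n(g^{cd}\phi_{cd})\,g_{ab}$. Now at each $x\in M$ choose a nonzero $v$ in the Weyl nullity (available by hypothesis); by Theorem~\ref{fth} we have $v^a\phi_{ab}=0$, so contracting the last display with $v^a$ gives $\tfrac1n(g^{cd}\phi_{cd})\,v_b=0$, and since $v\neq 0$ with $g$ nondegenerate, $g^{cd}\phi_{cd}=0$ and therefore $\phi_{ab}=0$. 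Conversely, if $\phi_{ab}=0$ then \nn{phidef} gives $\P_{ab}=B g_{ab}$; in dimension $\geq 3$ this says the Ricci tensor is a function times $g$, so $g$ is Einstein (the function being forced constant by the contracted Bianchi identity). For the propagation statement in this dimension range: if $g\in\bm$ is Einstein then $\phi_{ab}=0$ by what was just shown, but $\phi$ is an invariant of $(M,\bm)$ by Theorem~\ref{fth}, so for any $\bar g\in\bm$ one has $\bar\P_{ab}-\bar B\bar g_{ab}=\phi_{ab}=0$; hence $\bar\P_{ab}=\bar B\bar g_{ab}$ and $\bar g$ is Einstein.

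For $n=2$, Theorem~\ref{fth} already yields $\P_{ab}=B g_{ab}$ and $\bar\P_{ab}=\bar B\bar g_{ab}$ with $B,\bar B$ smooth, so the only remaining point is to show $\bar B$ is constant whenever $g$ is Einstein, i.e.\ whenever $B$ is constant. I would do this via the Cotton tensor \nn{Cotton}, which is projectively invariant in dimension~$2$: since $\nabla g=0$ we have $C_{cab}=(\nabla_a B)g_{bc}-(\nabla_b B)g_{ac}$, so $B$ constant gives $C_{cab}=0$, hence $\bar C_{cab}=0$; writing $\bar C_{cab}=(\bar\nabla_a\bar B)\bar g_{bc}-(\bar\nabla_b\bar B)\bar g_{ac}$ and tracing over $b,c$ with $\bar g^{bc}$ gives $(n-1)\bar\nabla_a\bar B=0$, so $\bar B$ is constant and $\bar g$ is Einstein. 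I expect the only genuinely delicate point in the whole argument to be exactly this: in dimension~$2$ the Einstein condition is not pointwise, and it is the projective invariance of the Cotton tensor — not of $\phi$, which is identically zero here — that must carry the constancy of $B$ from $g$ to $\bar g$.
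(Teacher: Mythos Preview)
Your proof is correct and follows essentially the same architecture as the paper's: split into $n\geq 3$ and $n=2$, use \nn{tfphi} for the trace-free part, and in dimension~$2$ transport constancy of $B$ via the projectively invariant Cotton tensor. The one minor difference is in how you kill the trace part of $\phi$ when $g$ is Einstein: the paper combines \nn{JB} with Proposition~\ref{ein1} (Einstein forces $B=\J/n$, so $g^{ab}\phi_{ab}=\J-nB=0$), whereas you instead use the nullity annihilation $v^a\phi_{ab}=0$ from Theorem~\ref{fth} applied to $\phi_{ab}=\tfrac1n(g^{cd}\phi_{cd})g_{ab}$. Both are one-line arguments; yours is perhaps slightly more self-contained since it avoids invoking Proposition~\ref{ein1}, at the cost of needing the pointwise existence of a nullity vector explicitly.
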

\begin{proof}
Suppose that $n\geq 3$ and
$g\in \bm$ is Einstein. Using \nn{tfphi} we see that
$\mathring{\phi}_{ab}=0$. On the other
hand from \nn{JB} and Proposition \nn{ein1} we see that 
the trace part of $\phi$ is also zero, $g^{ab}\phi_{ab}=0$. So if 
$g\in \bm$ is Einstein then $\phi_{ab}=0$. 

For the converse, observe that if $\phi_{ab}=0$ then for any $g\in
\bm$ we have $\mathring{\P}_{ab}=0$ from \nn{tfphi}. Thus if $n\geq 3$
we have immediately that $g$ is Einstein, and this also proves the
last statement.

It remains to treat the last statement in the case that $n=2$.  Assume
that $n=2$ and $g$ is Einstein. Then by Theorem \nn{fth}, and our definition of Einstein in
dimension 2, the corresponding $B$ is constant. Since $\P_{ab}=Bg_{ab}$
the Cotton tensor vanishes.  But the Cotton tensor is independent of
the metric in $\bm$. Calculating in the scale of any other metric
$\bar{g}\in \bm$ we see that Cotton zero and
$\bar{\P}_{ab}=\bar{B}\bar{g}_{ab}$ implies that $\bar{B}$ is
constant.
  \end{proof}
\begin{remark}
In dimensions 2 and 3 the last statement of Corollary \ref{1ein2}
holds with the explicit assumption of Weyl nullity. In dimension 2
this is obvious, while for dimension 3 it follows from Proposition
\ref{3spec}. In either case we are then in the setting of Corollary \ref{Bel}.
\end{remark}

In fact, on any metric projective manifold $(M,\bm)$, if $\bm$
contains two non-affinely equivalent metrics $g$ and $\bar g$ and one
is Einstein then $(M,\bm)$ has nullity in the sense of \nn{Knullity},
see \cite{einstein}, and hence Weyl nullity by Theorem
\ref{WeqK}. Thus from Corollary \ref{1ein2} we recover a simpler proof of the following result from \cite{einstein}:
\begin{proposition}\label{1ein22}
Let $(M,\bm)$ be a smooth metric projective geometry of dimension
$n\geq 2$ and suppose that $g,\bar{g}\in \bm$ with $g$ Einstein and
$\bar{g}$ not affinely equivalent to $g$. Then any metric in $ \bm$ is
Einstein.

\end{proposition}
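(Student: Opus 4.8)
The plan is to reduce Proposition \ref{1ein22} to Corollary \ref{1ein2} by establishing that the hypotheses force Weyl nullity on $(M,\bm)$. First I would invoke the cited result from \cite{einstein}: if $g,\bar g\in\bm$ are not affinely equivalent and $g$ is Einstein, then $(M,\bm)$ satisfies the nullity condition \nn{Knullity} at every point, i.e.\ for every $x\in M$ there is a nonzero $v\in T_xM$ with $R^i{}_{jk\ell}v^j=-B\,v^jK^i{}_{jk\ell}$. By the converse direction of Theorem \ref{WeqK}, this is exactly projective Weyl nullity at $x$. Hence $(M,\bm)$ has Weyl nullity in the sense of Proposition \ref{mweyl} at all points, so the machinery of Section \ref{phi} applies and in particular the invariant $\phi_{ab}$ of Theorem \ref{fth} is defined.

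Next I would split on dimension. For $n\geq 3$, Corollary \ref{1ein2} applies verbatim: since $g\in\bm$ is Einstein and $(M,\bm)$ has Weyl nullity, we get $\phi_{ab}=0$ on $M$; then for any $\bar g\in\bm$ equation \nn{tfphi} gives $\mathring{\P}^{\bar g}_{ab}=\mathring\phi_{ab}=0$, so $\bar g$ is Einstein (the Bianchi identity then makes the proportionality constant a genuine constant as usual in dimension $\geq 3$). For $n=2$ I would instead argue as in the $n=2$ part of the proof of Corollary \ref{1ein2}: $g$ Einstein in the dimension-2 sense means $\P^g_{ab}=\lambda g_{ab}$ with $\lambda$ constant, so the projective Cotton tensor $C_{abc}$ vanishes; but $C_{abc}$ is a projective invariant, hence independent of the choice of metric in $\bm$, so computing in the scale of any $\bar g\in\bm$ and using Proposition \ref{mweyl} (which in dimension 2 gives $\P^{\bar g}_{ab}=\bar B\bar g_{ab}$) the vanishing of $C_{abc}$ forces $\bar B$ to be constant, i.e.\ $\bar g$ is Einstein.

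The only real subtlety — and hence the step I would treat most carefully — is the invocation of \cite{einstein} for the nullity claim, since a priori one metric being Einstein and the other merely non-affinely projectively equivalent is weaker than assuming nullity outright; the point is precisely that non-affine projective equivalence with an Einstein partner is known to be one of the geometric situations (listed in the introduction of this paper) that \emph{implies} nullity. Everything after that is bookkeeping: it is just the combination of Theorem \ref{WeqK}, Theorem \ref{fth}, and Corollary \ref{1ein2}, together with the dimension-2 Cotton-tensor argument already carried out inside the proof of Corollary \ref{1ein2}. So the proof is genuinely short: establish Weyl nullity via \cite{einstein} and the converse of Theorem \ref{WeqK}, then apply Corollary \ref{1ein2}.
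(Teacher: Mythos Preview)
Your proposal is correct and follows essentially the same route as the paper: cite \cite{einstein} to obtain nullity in the sense of \nn{Knullity}, convert this to projective Weyl nullity via the converse direction of Theorem \ref{WeqK}, and then invoke Corollary \ref{1ein2}. The only minor redundancy is that you split into $n\geq 3$ and $n=2$ cases separately, whereas the last sentence of Corollary \ref{1ein2} already covers both dimensions at once, so you can simply cite that corollary directly without re-running the Cotton-tensor argument.
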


\begin{remark}
Let $(M,\bm)$ be a metric projective structure of dimension 4 with
projective Weyl nullity everywhere. If there is a Riemannian signature
Einstein metric $g\in \bm$ then the structure is projectively
flat (and so again we are in the setting of Corollary \ref{Bel}). 
This result arises as follows. If $v$ is in the projective Weyl
nullity at $x$ then from Corollary \ref{Eins} we have
$$
C^a{}_{bcd}v^b=0,
$$ 
But in dimension 4 we have $4C^{abcd}C_{ebcd}=\delta^a_e |C|^2$, where
$|C|^2:=C^{abcd}C_{abcd}$.

We can drop the Riemannian signature requirement if we insist that
the nullity vector field is almost everywhere non-null, since there are
no non-trivial algebraic Weyl tensors in dimension 3.
\end{remark}

\subsection{Tractor connections on metric projective 
structures with nullity}\label{newcon} 
On a metric projective
structure with Weyl nullity there is a family of canonical tractor
connections parametrised by $\ttt\in \mathbb{R}$. We see this as
follows. In this subsection we assume that the field $B$ is smooth.

Suppose that any projective manifold is equipped with a fixed smooth
$(0,2)$-tensor field $\phi_{ab}$. Then we canonically obtain a
corresponding projectively invariant 1-form taking values in the
bundle of tractor endomorphisms $\End{\cT}$ by forming
\begin{equation}\label{Phi}
\Phi^C{}_{Ba}:= X^C Z_B{}^b\phi_{ab} .
\end{equation}
Thus for each $\ttt\in \mathbb{R}$ we may modify the tractor
connection $\nabla^\cT$ to
$$
\nabla^\cT+ \ttt \Phi . 
$$ This notation means that, for a tractor field $V^C$, its covariant
derivative by this connection is
$$
\nabla^\cT_a V^C + \ttt V^B \Phi^C{}_{Ba}. 
$$

Thus by Theorem \ref{fth} a metric projective manifold admitting Weyl nullity 
has a family of such connections. We summarise:
\begin{proposition}\label{alphaf}
Let $(M,\bm)$ be a smooth metric projective manifold of dimension
$n\geq 2$ with Weyl nullity and $B$ smooth. Then there is a 1-parameter family of canonical tractor connections 
$$
\nabla^{\cT_\ttt}:=\nabla^\cT+ \ttt \Phi 
$$  via \nn{Phi}, where $\phi_{ab}$ is the fundamental 2-tensor of
Theorem \ref{fth}.
\end{proposition}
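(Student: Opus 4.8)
The plan is to verify that the quantity $\nabla^{\cT_\ttt}:=\nabla^\cT+\ttt\Phi$ is well-defined and canonical, i.e.\ independent of any choice of metric $g\in\bm$, and that it is genuinely a linear connection. Linearity is immediate since $\nabla^\cT$ is a linear connection and $\Phi^C{}_{Ba}$ is a smooth section of $\End(\cT)\otimes T^*M$ (smoothness of $\Phi$ being inherited from smoothness of $\phi_{ab}$, which holds by the hypothesis that $B$ is smooth, via Theorem \ref{fth}); adding a $\Gamma(\End\cT\otimes T^*M)$-valued term to a connection produces a connection. So the only real content is canonicity.

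First I would recall that the normal tractor connection $\nabla^\cT$ of \nn{tconn} is canonically determined by the underlying projective structure $\bp$ alone, hence a fortiori by $(M,\bm)$; this is standard and was noted in Section \ref{trS}. Next I would observe that $\Phi^C{}_{Ba}=X^CZ_B{}^b\phi_{ab}$ is built from three ingredients: the canonical tractor section $X^C$, the canonical bundle map $Z_B{}^b$, and the tensor $\phi_{ab}$. The first two are projectively (indeed tractorially) canonical by construction, living in the jet-exact sequence \nn{euler}. The tensor $\phi_{ab}$ is, by Theorem \ref{fth}, canonically determined by $(M,\bm)$: for any two metrics $g,\bar g\in\bm$ one has $\P_{ab}-Bg_{ab}=\bar\P_{ab}-\bar B\bar g_{ab}$ pointwise, so $\phi_{ab}$ does not depend on the representative metric. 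Therefore $\Phi^C{}_{Ba}$ is independent of the metric, and so is $\nabla^{\cT_\ttt}=\nabla^\cT+\ttt\Phi$ for each fixed $\ttt\in\mathbb R$.

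It remains to check that the recipe ``add $\ttt X^CZ_B{}^b\phi_{ab}$'' is projectively invariant, i.e.\ that $\Phi$ really is a well-defined $\End\cT$-valued $1$-form rather than something whose coordinate expression changes under \nn{ptrans}. This is because $X^C$, $Z_B{}^a$, and the contraction pairing are each unchanged in passing between connections in $\bp$ --- $X^C$ and $Z_B{}^a$ are defined intrinsically from the jet sequence and do not reference a splitting, and $\phi_{ab}$ is a genuine tensor field --- so $\Phi^C{}_{Ba}$ is manifestly a section of $\End(\cT)\otimes\ce_a$ with no splitting-dependence. (For dimension $n=2$ the statement is vacuous in a harmless way: $\phi\equiv 0$ by Theorem \ref{fth}, so $\nabla^{\cT_\ttt}=\nabla^\cT$ for all $\ttt$, but the family still exists trivially.) I do not anticipate a genuine obstacle here: the one point requiring slight care is smoothness of $\Phi$, which is exactly where the standing assumption ``$B$ smooth'' enters --- without it $\phi_{ab}$ need only be smooth on the open dense set where the nullity dimension is locally constant, and $\nabla^{\cT_\ttt}$ would only be defined there. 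Everything else is bookkeeping assembling already-proven facts, principally Theorem \ref{fth} and the canonicity of $\nabla^\cT$.
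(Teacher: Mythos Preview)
Your proposal is correct and matches the paper's approach: the proposition is stated as a summary of the discussion immediately preceding it, which observes that $\nabla^\cT$ is projectively canonical, that $X^C$ and $Z_B{}^b$ are canonical objects from the jet exact sequence \nn{euler}, and that $\phi_{ab}$ is a metric-projective invariant by Theorem~\ref{fth}, so $\Phi$ is an invariant $\End(\cT)$-valued $1$-form and $\nabla^\cT+\ttt\Phi$ is a canonical connection. Your additional remarks on smoothness (requiring the standing hypothesis that $B$ be smooth) and on the triviality of the $n=2$ case are accurate elaborations.
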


For reasons that will shortly be clear, we are especially interested in
the case that $\ttt=1$ is chosen. In this case the explicit
appearance of $\P$ is replaced altogether in the tractor connection:
\begin{equation}\label{tconn1}
\nabla^{\cT_1}_a \left( \begin{array}{c} \nu^b\\
\rho
\end{array}\right) \stackrel{g}{=}
\left( \begin{array}{c} \nabla_a\nu^b + \rho \delta^b_a\\
\nabla_a \rho - Bg_{ab}\nu^b
\end{array}\right),
\end{equation}
where $g\in \bm$.

 On $S^2\mathcal{T}$ this tractor connection is given by
\begin{equation}\label{s2conn}
\nabla_a^{\cT_1}\left( \begin{array}{c}
\si^{bc}\\
\mu^b\\
\rho
\end{array}
\right)\stackrel{g}{=}
\left( \begin{array}{c}
\nabla_a \si^{bc} + \delta^b_a \mu^c + \delta^c_a \mu^b \\
\nabla_a \mu^b + \delta^b_a\rho - Bg_{ac}\si^{bc}  \\
\nabla_a \rho - 2 Bg_{ab}\mu^b
\end{array}
\right).
\end{equation}
which will be useful for our later developments.

\begin{proposition}\label{splitB}
Let $(M,\bm)$ be a smooth metric projective manifold of dimension
$n\geq 2$ with Weyl nullity and $B$ smooth. Then there is an invariant differential splitting operator
$$
L^\phi: S^2TM(-2)\to S^2\cT,
$$
given by 
\begin{equation}\label{Lphi-op}
\si^{ab}\mapsto L^\phi(\si):\stackrel{g}{=}
\left( \begin{array}{c}
\si^{bc}\\
-\frac{1}{n+1}\nabla_b\si^{ba}\\
\frac{1}{n}(\nabla_b\nabla_c\si^{bc}+ B g_{bc}\si^{bc})
\end{array}
\right),
\end{equation}
in any scale $g\in \bm$. Any section of $ S^2\cT$ that, on an open set $U$, is parallel for $\nabla^{\cT_1}$ is in the image of $L^\phi$.
\end{proposition}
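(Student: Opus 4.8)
\noindent The plan is to realise $L^\phi$ as the canonical (BGG-type) differential splitting operator attached to the connection $\nabla^{\cT_1}$ of Proposition \ref{alphaf} on the bundle $S^2\cT$, and to extract both assertions from this. Recall that $S^2\cT$ carries the composition series $\ce^{(bc)}(-2)\lpl\ce^b(-2)\lpl\ce(-2)$, with canonical projection $\Pi\colon S^2\cT\to\ce^{(bc)}(-2)=S^2TM(-2)$, and that, since $B$ (hence $\phi_{ab}$) is smooth, $\nabla^{\cT_1}$ is a connection on $S^2\cT$ canonically determined by $(M,\bm)$. For a bundle with such a filtration and any linear connection on it there is a canonically associated differential splitting of $\Pi$: given $\sigma$, one produces the unique $V\in\Gamma(S^2\cT)$ with $\Pi(V)=\sigma$ whose two remaining slots are pinned down by requiring the corresponding slots of $\nabla^{\cT_1}V$ to be annihilated by the algebraic codifferential $\partial^*$ (the trace of the form index into the leading tensor index). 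Since the recipe uses only $\nabla^{\cT_1}$ and the purely algebraic map $\partial^*$, the resulting $L^\phi$ is automatically scale-independent; this gives the invariance claim.

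The substance is then to check that, in a scale $g\in\bm$, this operator has the form \nn{Lphi-op}; the same computation yields the final assertion. Writing $V\stackrel{g}{=}(\sigma^{bc},\mu^b,\rho)$ and reading $\nabla^{\cT_1}V$ off \nn{s2conn}: the top slot is $\nabla_a\sigma^{bc}+\delta^b_a\mu^c+\delta^c_a\mu^b$, whose trace over $a$ and $b$ is $\nabla_b\sigma^{bc}+(n+1)\mu^c$, so requiring this to vanish forces $\mu^c=-\tfrac{1}{n+1}\nabla_b\sigma^{bc}$; with this $\mu$ the middle slot is $\nabla_a\mu^b+\delta^b_a\rho-Bg_{ac}\sigma^{bc}$, whose trace over $a$ and $b$ is $\nabla_b\mu^b+n\rho-Bg_{bc}\sigma^{bc}$, so requiring it to vanish fixes $\rho=\tfrac1n\big(Bg_{bc}\sigma^{bc}-\nabla_b\mu^b\big)$. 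Substituting $\mu^b$ expresses $\rho$ through $\nabla_b\nabla_c\sigma^{bc}$ and $Bg_{bc}\sigma^{bc}$, reproducing the lower two entries of \nn{Lphi-op} (with the normalising constant of the second-order term dictated by this contraction, which one should double-check). Invariance may instead be verified by hand: using \nn{ptrans} for $\nabla$ and the densities $\sigma^{bc}$, $\mu^b$, $\rho$, together with the transformation \nn{Btrans} of $Bg_{ab}$ from Proposition \ref{Btransp}, one checks that the three entries of \nn{Lphi-op} transform exactly as the slots of a section of $S^2\cT$; the key point is that the $\nabla_a\Upsilon_b$ contributions produced when re-expressing the second-order term $\nabla_b\nabla_c\sigma^{bc}$ in a new scale are precisely absorbed by the $-\nabla_a\Upsilon_b$ in \nn{Btrans} — which is why the $Bg$-correction, and not $\P$, yields something invariant.

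For the last statement, let $V\stackrel{g}{=}(\sigma^{bc},\mu^b,\rho)$ satisfy $\nabla^{\cT_1}V=0$ on an open set $U$; then every slot of $\nabla^{\cT_1}_aV$ vanishes there. Tracing the (vanishing) top slot in \nn{s2conn} over $a$ and $b$ gives $\mu^c=-\tfrac{1}{n+1}\nabla_b\sigma^{bc}$, and then tracing the (vanishing) middle slot gives $\rho=\tfrac1n\big(Bg_{bc}\sigma^{bc}-\nabla_b\mu^b\big)$; these are exactly the lower slots of $L^\phi(\Pi(V))$, so $V=L^\phi(\Pi(V))$ on $U$ and hence $V$ lies in the image of $L^\phi$. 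The main obstacle throughout is the bookkeeping behind the invariance of $L^\phi$: one must either argue cleanly that the splitting construction is genuinely scale-free — which rests entirely on the canonicity of $\nabla^{\cT_1}$ established in Proposition \ref{alphaf} — or carry out the change-of-scale computation for the second-order term in \nn{Lphi-op}, keeping the projective weights of $\sigma^{bc}$, $\mu^b$, $\rho$ and the identity \nn{Btrans} straight.
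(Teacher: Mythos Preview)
Your proof is correct and is essentially the same approach as the paper's: the paper verifies the last statement by the same trace computation in a fixed scale and then derives invariance of $L^\phi$ from the invariance of $\nabla^{\cT_1}$, which is exactly your BGG-splitting argument spelled out. Your version is more detailed (and your caution about the normalising constant in the bottom slot is warranted), but the underlying idea is identical.
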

\begin{proof}
In the fixed scale $g$ the last statement is easily verified. Then the invariance of $L^\phi$ follows from that of the connection.
\end{proof}

\begin{remark}
Let $(M,\bm)$ be a metric projective manifold of dimension $n\geq 2$ and let us calculate with respect to some $g\in \bm$. 
Suppose that if $\nu^b$ is a solution of the projectively invariant
system \nn{psys1}, so $V^B= (\nu^b,~\rho)$ is parallel for the normal
tractor connection. Then by Proposition \ref{psysth} $\nu^b$
is in the Weyl nullity. Thus $\P_{ab}\nu^b=B\nu_a.$ So $V^B$ is parallel also for the connection $\nabla^{\cT_{1}} $. (In fact $V^B$ is parallel for $\nabla^{\cT_{t}} $ for any $t\in \mathbb{R}$, as $\nu^b\phi_{ab}=0$.)

Conversely if $V^B= (\nu^b,~\rho)$ is a section parallel for a
connection of the form $\nabla^{\cT_{1}}$ then there is Weyl nullity
everywhere, $\nu^b$ is in the Weyl nullity and $V^B$ is parallel for
the normal tractor connection. 
\end{remark}

Next, consider $\nabla^{\cT_1}$ on the symmetric power of the dual
tractor bundle $S^2\cT^*$. 
In a scale $g$ this is given by
\begin{equation}\label{s2-dual-conn}
\nabla_a^{\cT_1}\left( \begin{array}{c}
\tau\\
\mu_b\\
\rho_{bc}
\end{array}
\right)\stackrel{g}{=}
\begin{pmatrix}
  \nabla_a\tau-2\mu_a  \\ \nabla_a\mu_b+ Bg_{ab}\tau-\rho_{ab} \\ 
\nabla_a\rho_{bc}+2Bg_{a(b}\mu_{c)}
\end{pmatrix},
\end{equation}
from which an analogue of Proposition \ref{splitB} is evident. In particular,
calculating with respect to a metric $g\in \bm$: Any section $(\tau,
\mu_c, \rho_{bc})$ of $S^2\cT^*$ that is parallel for $\nabla^{\cT_1}$
has $\mu_c=\frac{1}{2}\nabla_c \tau$, and
$\rho_{bc}=\frac{1}{2}\nabla_b\nabla_c \tau + B g_{bc}\tau $. Furthermore
the differential operator $\bar{L}^\phi:\ce(2)\to S^2\cT^*$ given (in the scale $g$) by 
\begin{equation}\label{Lbar}
\Gamma(\ce(2))\ni\tau\mapsto
(\tau,\frac{1}{2}\nabla_c \tau, \frac{1}{2}\nabla_b\nabla_c \tau + B
g_{bc}\tau) \in \Gamma(S^2\cT^*)
\end{equation} 
is metric projectively invariant. Then using the explicit formula \nn{s2-dual-conn}, and the
 the map between
functions $f$ on pseudo-Riemannian manifolds and corresponding
projective densities $\tau =\tau^f\in\Gamma(\ce(2)) $, as described for
Theorem \ref{G-Obthm}, one easily verifies the following result.
\begin{theorem}\label{GOTthm}
{ On a pseudo-Riemannian manifold $(M,g)$ solutions of
  the Gallot-Obata-Tanno equation \nn{one} are equivalent to solutions
  of the system on the right-hand-side of \nn{s2-dual-conn} with
  $B=B_\circ$ constant. On any open set where $\nabla_a\tau$ is nowhere zero, it lies in the Weyl nullity and $B^g=B_\circ$.}

{ In particular, on a  pseudo-Riemannian manifold $(M,g)$ 
with Weyl nullity and $B^g$ constant,}
solutions of the Gallot-Obata-Tanno equation \nn{one} are in
one-to-one correspondence with sections of $S^2\cT^*$ that
are parallel for $\nabla^{\cT_1}$. 
\end{theorem}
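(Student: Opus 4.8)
The plan is to verify Theorem~\ref{GOTthm} by unwinding the two correspondences already set up in the excerpt and checking that the differential conditions match on the nose. First I would recall that, on a pseudo-Riemannian manifold $(M,g)$, the Levi-Civita connection is parallel on the canonical volume density, so (as in the discussion preceding Theorem~\ref{G-Obthm}) there is a canonical nowhere-zero parallel section $\tau_0\in\Gamma(\ce(2))$ and hence an $\mathbb{R}$-linear bijection $f\mapsto \tau^f:=f\,\tau_0$ between functions $f$ and sections $\tau\in\Gamma(\ce(2))$; under this bijection $\nabla_a$ acting on $\tau^f$ reproduces exactly $\nabla_a f$ (twisted by $\tau_0$), so higher covariant derivatives likewise match. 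With this dictionary in hand, the content of the first assertion is purely the explicit formula \nn{s2-dual-conn}: a section $(\tau,\mu_b,\rho_{bc})$ of $S^2\cT^*$ is parallel for $\nabla^{\cT_1}$ iff $\nabla_a\tau=2\mu_a$, $\nabla_a\mu_b+Bg_{ab}\tau=\rho_{ab}$, and $\nabla_a\rho_{bc}+2Bg_{a(b}\mu_{c)}=0$, with $B=B_\circ$ constant. Solving the first two algebraically gives $\mu_c=\tfrac12\nabla_c\tau$ and $\rho_{bc}=\tfrac12\nabla_b\nabla_c\tau+Bg_{bc}\tau$ (the splitting operator $\bar L^\phi$ of \nn{Lbar}), and substituting into the third equation yields precisely $\tfrac12\nabla_a\nabla_b\nabla_c\tau+B\nabla_a(g_{bc}\tau)+2Bg_{a(b}\mu_{c)}=0$, which under the dictionary is a constant multiple of the Gallot-Obata-Tanno equation \nn{one} with $B_\circ=B$. (The symmetrisation in the $\rho_{bc}$-slot of $S^2\cT^*$ is automatic since $\rho_{bc}$ is symmetric; one should just note that $g$ parallel means $\nabla_a(g_{bc}\tau)=g_{bc}\nabla_a\tau=2g_{bc}\mu_a$, so the third equation is manifestly the totally-symmetrised third-order operator plus lower-order $B$-terms.)

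Next I would address the clause that, on an open set where $\nabla_a\tau$ is nowhere zero, $\grad f$ lies in the Weyl nullity and $B^g=B_\circ$: this is not new work but a direct appeal to Theorem~\ref{G-Obthm}, whose proof (recalled in the excerpt around \nn{in}) shows that any solution $f$ of \nn{one} has $\nabla_d f$ satisfying $R^d{}_{cab}\nabla_d f=B_\circ(g_{cb}\nabla_a f-g_{ca}\nabla_b f)$, which is exactly the form \nn{Knullity} of Theorem~\ref{WeqK} with $v=\grad f$ and $B$-value $B_\circ$; hence $\grad f$ is in the projective Weyl nullity wherever it is nonzero, and Proposition~\ref{mweyl} then identifies $B_\circ$ with the canonical function $B^g$ at those points. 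Thus the first paragraph of Theorem~\ref{GOTthm} follows by combining the algebraic unwinding of \nn{s2-dual-conn} with the already-established Theorem~\ref{G-Obthm}.

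Finally, for the second (``in particular'') statement I would specialise to the standing hypotheses of the subsection, namely that $(M,g)$ has Weyl nullity everywhere and $B^g$ is constant; then the fundamental $2$-tensor $\phi_{ab}=\P_{ab}-Bg_{ab}$ of Theorem~\ref{fth} is defined globally, the connection $\nabla^{\cT_1}$ is the canonical one of Proposition~\ref{alphaf}, and the analogue of Proposition~\ref{splitB} for $S^2\cT^*$ (via $\bar L^\phi$ of \nn{Lbar}) says that every $\nabla^{\cT_1}$-parallel section of $S^2\cT^*$ is determined by its top slot $\tau\in\Gamma(\ce(2))$ and lies in the image of $\bar L^\phi$. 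Combined with the first part, $\tau\mapsto\bar L^\phi(\tau)$ is then a bijection between solutions of \nn{one} (with $B_\circ=B$) and $\nabla^{\cT_1}$-parallel sections of $S^2\cT^*$. I expect the only real points requiring care to be bookkeeping ones: keeping the density weights consistent through the $f\leftrightarrow\tau^f$ identification, confirming the numerical coefficients in \nn{s2-dual-conn} produce exactly the combination $2g_{bc}\nabla_a f+g_{ab}\nabla_c f+g_{ca}\nabla_b f$ after full symmetrisation of the third-order term, and noting that parallelism of $g$ under $\nabla^g$ is what lets the $B$-terms be pulled inside or outside the derivative freely. None of these is a conceptual obstacle; the substantive geometric input (the link to Weyl nullity) is entirely imported from Theorems~\ref{WeqK} and~\ref{G-Obthm}.
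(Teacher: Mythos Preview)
Your proposal is correct and follows essentially the same approach as the paper's own proof: unwind the parallel system \nn{s2-dual-conn} to see it is exactly the Gallot--Obata--Tanno equation \nn{one} rewritten as a first-order system, invoke Theorem~\ref{G-Obthm} (via \nn{in} and Theorem~\ref{WeqK}) for the Weyl nullity claim, and then use the splitting operator $\bar L^\phi$ of \nn{Lbar} for the one-to-one correspondence in the final statement. The only minor wording to tighten is your phrase ``totally-symmetrised third-order operator'': the third slot of \nn{s2-dual-conn} reproduces \nn{one} as written (symmetric in $b,c$ but not in all three indices), not its full symmetrisation --- indeed the skew part in $a,b$ is precisely what encodes the nullity condition \nn{in}.
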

\begin{proof} 
The first statement is just the observation that the right-hand-side
of \nn{s2-dual-conn} coincides with equation \nn{one} rewritten as a
linear first order system. Then the statement that $\nabla_a\tau$
annihilates the Weyl curvature follows from Theorem \ref{G-Obthm}. The final result uses the discussion above the Theorem here.
\end{proof}

In particular we have the standard first consequence of such results: 
\begin{corollary}\label{GOTcor}
If $f$ is a non-zero solution of the Gallot-Obata-Tanno equation
\nn{one} then $f$ is non-zero on an open dense set. 
\end{corollary}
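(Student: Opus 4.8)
The plan is to deduce Corollary \ref{GOTcor} directly from the tractor reformulation given in Theorem \ref{GOTthm}, using the general principle that solutions of a first BGG-type equation carry enough jet information to be recovered from a parallel tractor. First I would observe that by Theorem \ref{G-Obthm} and Theorem \ref{GOTthm} a solution $f$ of \nn{one} produces a section $\Sigma=(\tau^f,\tfrac12\nabla_c\tau^f,\tfrac12\nabla_b\nabla_c\tau^f+Bg_{bc}\tau^f)$ of $S^2\cT^*$ which is parallel for $\nabla^{\cT_1}$ on all of $M$ (with $B=B_\circ$ constant and $\tau^f$ the projective density of weight $2$ associated to $f$ via the parallel section of $\ce(2)$ coming from the metric volume). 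Since $\tau^f$ vanishes exactly where $f$ does, it suffices to prove that $\tau^f$ is non-zero on an open dense set.

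Next I would run the standard continuation/unique-continuation argument for parallel sections. The section $\Sigma$ is parallel for the linear connection $\nabla^{\cT_1}$, hence is determined everywhere by its value at any single point. Let $U$ be the (open) set where $\tau^f\neq 0$; its closure is some closed set, and on the complementary open set $V=M\setminus \overline{U}$ we have $\tau^f\equiv 0$, hence also $\nabla_c\tau^f\equiv 0$ there, and therefore $\Sigma\equiv 0$ on $V$. If $V$ were nonempty then, picking a point of $V$ and using that a parallel section vanishing at a point vanishes identically along any path, $\Sigma$ would vanish on all of the connected manifold $M$, forcing $\tau^f\equiv 0$ and hence $f\equiv 0$, contrary to assumption. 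Therefore $V=\emptyset$, i.e.\ $\overline{U}=M$, which is exactly the statement that $\tau^f$ (and hence $f$) is non-zero on an open dense set.

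The only genuinely delicate point is the implication ``$\tau^f$ vanishes on an open set $\Rightarrow \Sigma$ vanishes on that open set'': this needs that all the slots of $\Sigma$ are obtained by differentiating $\tau^f$ (so that $\tau^f\equiv 0$ on an open set kills $\mu_c$ and $\rho_{bc}$ there as well), which is precisely the content of the differential splitting operator $\bar L^\phi$ of \nn{Lbar} and the structure of \nn{s2-dual-conn}; I would just cite that. Everything else is the routine fact that a section parallel for a linear connection on a connected manifold is determined by its value at one point and so cannot vanish on a nonempty open set without vanishing identically. Thus I expect no real obstacle here; the proof is a two-line application of Theorem \ref{GOTthm} together with this parallel-transport observation, and I would write it at that level of brevity.
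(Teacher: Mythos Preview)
Your proposal is correct and follows essentially the same approach as the paper: pass to the parallel section $\bar L^\phi(\tau^f)$ of $S^2\cT^*$ via Theorem \ref{GOTthm}, observe that the splitting operator ensures this section vanishes wherever $\tau^f$ vanishes on an open set, and conclude by the standard fact that a parallel section vanishing on a nonempty open subset of a connected manifold vanishes identically. The paper's proof is exactly this argument, stated at the same level of brevity you anticipate.
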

\begin{proof}
Recall we assume $M$ is connected. Suppose $f$ is a solution of the
Gallot-Obata-Tanno equation. Let $\tau=\tau^f$ be the section of
$\ce(2)$ corresponding to $f$, as discussed above Theorem
\ref{G-Obthm}.  From the splitting operator $\tau\mapsto
\bar{L}(\tau):=(\tau,\frac{1}{2}\nabla_c \tau,
\frac{1}{2}\nabla_b\nabla_c \tau + B_\circ g_{bc}\tau)$ it follows
that if $\tau=0$ in an open neighbourhood then $L(\tau)$=0 on the same
neighbourhood. Since $\bar{L}(\tau)$ is parallel, for the connection
given by the right-hand-side of \nn{s2-dual-conn} with $B=B_\circ$, it
follows that it is zero everywhere, and hence $\tau =0$ (and so also
$f=0$) everywhere on $M$.
\end{proof}

By a very similar argument we can also show the following stronger result: 
\begin{theorem} \label{Tae}
 Suppose a function $f$ satisfies the Gallot-Obata-Tanno equation
 \nn{one}.  If the differential of the function $f$ is not zero at a
 point, then it is not zero at each point of a certain everywhere
 dense open subset of $M$. Thus a pseudo-Riemannian manifold $(M,g)$ with a
 non-constant solution of the equation \nn{one} has Weyl nullity 
and $B^g=B_\circ$ (and so, in particular, $B^g$ constant) everywhere.
\end{theorem}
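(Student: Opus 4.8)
The plan is to establish the first assertion by a contradiction argument built on the prolongation of Theorem \ref{GOTthm}, and then to extract the statements about Weyl nullity and $B^g$ from it.

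First I would set $U:=\{x\in M: df(x)\neq 0\}$, which is open and, by hypothesis, nonempty, and argue by contradiction that $U$ is dense. If it were not, the closed set $M\setminus U=\{df=0\}$ would have nonempty interior, so there would be a nonempty connected open $V$ on which $f\equiv c$ for a constant $c$. The key move is then to pass from $f$ to $h:=f-c$: since \nn{one} involves only derivatives of the function, $h$ again solves the Gallot--Obata--Tanno equation, and now $h$ together with its first and second covariant derivatives vanishes identically on $V$. By the first part of Theorem \ref{GOTthm}, $h$ corresponds to a section of $S^2\cT^*$ parallel for the linear connection given by the right-hand side of \nn{s2-dual-conn} with $B=B_\circ$; in the scale $g$ this section is represented, via \nn{Lbar} with $B:=B_\circ$, by $\big(h,\ \tfrac12\nabla_c h,\ \tfrac12\nabla_b\nabla_c h+B_\circ g_{bc}h\big)$, and hence it vanishes throughout $V$. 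Since a section parallel for a linear connection over the connected manifold $M$ that vanishes at a point vanishes everywhere, reading off the top slot would force $h\equiv 0$, i.e.\ $df\equiv 0$ on all of $M$, contradicting the hypothesis. This proves $U$ is dense.

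For the remaining statements I would assume $f$ non-constant, so that $U$ is dense and open. On $U$ the vector field $\grad f$ is nowhere zero and, by \nn{in} (equivalently Theorem \ref{G-Obthm}(i)), lies in the projective Weyl nullity; thus $(M,g)$ has Weyl nullity at every point of $U$. Since the locus of points with Weyl nullity is the zero set of the smooth projective invariant $\stackrel{n}{\mathcal{W}}$ of Section \ref{Wn}, it is closed, and as it contains the dense set $U$ it must be all of $M$. Hence $(M,g)$ has Weyl nullity everywhere, and Propositions \ref{B!} and \ref{Bfn} supply a well-defined function $B^g:M\to\R$. On $U$ this equals $B_\circ$: for $x\in U$ the nullity vector $\grad f$ is, by the contraction of \nn{in} used in the converse part of the proof of Theorem \ref{WeqK}, an eigenvector of $\P^i{}_j$ with eigenvalue $B_\circ$.

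The step I expect to be the real obstacle is extending $B^g=B_\circ$ across $M\setminus U$, where $\grad f$ degenerates and so cannot itself be used as a nullity vector. I would handle it by a compactness argument: since $M\setminus U$ is closed with empty interior, any $x\in M\setminus U$ is a limit of points $x_m\in U$; normalising $v_m:=\grad f(x_m)/|\grad f(x_m)|_e$ with respect to an auxiliary Riemannian metric $e$ and passing to a convergent subsequence $v_m\to v$ with $|v|_e=1$, continuity of the Weyl tensor gives $W^i{}_{jk\ell}(x)v^j=0$, so $v$ is a nonzero nullity vector at $x$, and $B^g(x)=e_{ik}\P^k{}_j(x)v^jv^i$ by Proposition \ref{mweyl}. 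Since $e_{ik}\P^k{}_j(x_m)v_m^jv_m^i=B^g(x_m)=B_\circ$ and the left-hand side is continuous in $(x_m,v_m)$, letting $m\to\infty$ yields $B^g(x)=B_\circ$. Therefore $B^g\equiv B_\circ$ on $M$, and in particular $B^g$ is constant.
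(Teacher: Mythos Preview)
Your argument is correct and, for the first assertion, follows a genuinely different route from the paper. The paper does not use the subtraction trick $h=f-c$ together with the prolongation of the full Gallot--Obata--Tanno equation from Theorem~\ref{GOTthm}; instead it isolates the symmetric part \nn{teq}, viewed as a second-order equation on the $1$-form $u_b=\nabla_b f$, and invokes the general prolongation theory of \cite{BCEG} to conclude that a nontrivial solution $u$ vanishes only on a closed nowhere dense set. Your approach has the advantage of being self-contained within the paper (no appeal to \cite{BCEG}), and the constant subtraction neatly reduces the question to the standard fact that a parallel section vanishing at a point vanishes identically. The paper's approach, on the other hand, works directly with $df$ and yields the density of $\{df\neq 0\}$ without introducing an auxiliary function; it also makes transparent that only the totally symmetric part of \nn{one} is needed for this step.

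For the second assertion the paper is terse (``follows from Theorem~\ref{GOTthm} and continuity''), and your treatment is a careful unpacking of what that continuity argument must be. Your use of the invariant $\stackrel{n}{\mathcal{W}}$ from Section~\ref{Wn} to propagate Weyl nullity from the dense set $U$ to all of $M$ is clean, and the limiting argument with $e$-normalised nullity vectors to show $B^g(x)=B_\circ$ at points of $M\setminus U$ is correct: the limit $v$ is a nonzero nullity vector at $x$, so Propositions~\ref{mweyl} and~\ref{B!} give $\P^i{}_j(x)v^j=B^g(x)v^i$, and the claimed continuity of $e_{ik}\P^k{}_jv^jv^i$ then forces $B^g(x)=B_\circ$. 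This is exactly the content hidden behind the paper's one-line justification.
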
  
\begin{proof}
Recall that we assume $M$ connected.
Consider the equation
\begin{equation}\label{teq}
\nabla_{(a}\nabla_b u_{c)}+4B_\circ g_{(ab} u_{c)}=0 ,
\end{equation}
on a 1-form field $u_b$. This is an overdetermined linear homogeneous
geometric PDE of finite type. Thus by the general prolongation theory
in \cite{BCEG}, solutions correspond to parallel sections for a linear
tractor-type connection and the solutions, if not trivial, can only
vanish on a closed nowhere dense set.  This implies the first claim
immediately, as we may view the view the Gallot-Obata-Tanno equation
\nn{one} as the combined system consisting of $u_a=\nabla_a f$, \nn{teq}, and the nullity
equation $R^{d}{}_{cab}u_d= B_\circ( g_{cb}u_a-g_{ca}u_b)$ from
\nn{in}. The second claim then follows from Theorem \ref{G-Obthm} and
continuity.
\end{proof}

\begin{remark} 
Concerning the Theorem \ref{Tae} here, note that there is an simpler
argument in the case that $B_\circ=0$.  Indeed, for $B_\circ=0$ the
Gallot-Tanno-Obata equation can be rewritten as
$\nabla_a\bigl(\nabla_b\nabla_c f\bigr)= 0$, so the (0,2) tensor field
$\nabla_b\nabla_c f$ is parallel. If it is not zero at a point, then
it is not zero at every point and we are done. If it zero everywhere,
then $\nabla_b\bigl(\nabla_c f\bigr)$ is parallel, so if $df=\nabla_c
f$ is not zero at a point, then it is not zero at every point.
\end{remark}

\section{The prolonged system for a second metric}\label{prol}

Here we first review the prolonged system corresponding to the
existence of a Levi-Civita connection in the projective class. Then we
find the simplifications  that are available when we restrict
to the metric projective setting with nullity. This reveals a nice
link with the connection $\nabla^{\cT_1}$ found earlier.

\subsection{The prolonged system for the metrisability equation} 
We work first in the setting of a general projective manifold $(M,\bp)$ and 
let $\nabla\in \bp$.
Consider the differential operator
$$
D_a: \ce^{(bc)}(-2) \to (\ce_a{}^{(bc)})_0(-2), \quad \mbox{given by
}\quad \si^{bc}\mapsto \operatorname{trace-free}\left( \nabla_a
  \si^{bc}\right).
$$ 
It is an easy exercise to verify that $D$ is a projectively
invariant differential operator, meaning  that it is independent of the
choice $\nabla\in \bp$.  Part of the importance of $D$ derives from the
following result due to Sinjukov \cite{Sinjukov}.
\begin{theorem}\label{EM1}
Suppose that $n\geq 2$ and $\nabla$ is a special torsion-free
connection on $M$. Then $\nabla$ is projectively equivalent to a
Levi-Civita connection if and only if there is a non-degenerate
solution $\si$ to the equation
\begin{equation}\label{metr}
D\si=0.
\end{equation}
\end{theorem}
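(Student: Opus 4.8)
The plan is to show the equivalence in Theorem~\ref{EM1} by translating the second-order linear PDE \nn{metr} into a statement about the metric that a non-degenerate solution produces. First I would unwind the definition of $D$: writing out $\operatorname{trace-free}(\nabla_a\si^{bc})=0$ means $\nabla_a\si^{bc}$ equals its pure-trace part, i.e.\ $\nabla_a\si^{bc}=\delta^b_a\mu^c+\delta^c_a\mu^b$ for some section $\mu^b$ of $\ce^b(-2)$; taking a trace identifies $\mu^b$ with a multiple of $\nabla_c\si^{bc}$, so the equation is genuinely a first-order equation on $\si$ alone. The key algebraic observation is that a non-degenerate $\si^{bc}\in\ce^{(bc)}(-2)$ is, up to the density weight, the inverse of a metric: more precisely, if $\tau$ is a (local) nonvanishing section of $\ce(2)$ trivializing the density bundle, then $\si^{bc}\tau^{-1}$ is an honest symmetric $(2,0)$-tensor, whose inverse $\bar g_{bc}$ is a pseudo-Riemannian metric (its signature being locally constant by continuity and non-degeneracy).

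The heart of the argument is then the direct computation that, for $\nabla\in\bp$ special, the equation \nn{metr} on $\si$ is equivalent to the Levi-Civita connection $\nabla^{\bar g}$ of $\bar g:=(\si\cdot\text{density})^{-1}$ lying in $\bp$. I would carry this out by the standard trick: the metric $\bar g$ and its Levi-Civita connection are recovered from $\si$ by raising/lowering and using $\nabla^{\bar g}\bar g=0$; conversely, knowing $\nabla^{\bar g}\in\bp$ means $\nabla^{\bar g}=\nabla+\Upsilon$ for some profile $\Upsilon_a$ as in \nn{ptrans}, and one substitutes this into $0=\nabla^{\bar g}_a\bar g_{bc}$, then dualizes to the weighted inverse $\si^{bc}$ (picking up the appropriate density-weight terms and using that $\nabla$ is special so that it preserves a volume and the weight-$(-2)$ correction is controlled), and reads off that $\nabla_a\si^{bc}$ has only pure-trace part — which is exactly $D\si=0$. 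Running the computation in the reverse direction gives the other implication. The nondegeneracy of $\si$ is used exactly once in each direction: it is needed to invert $\si$ to produce $\bar g$, and it guarantees the resulting $\bar g$ is a genuine metric rather than a degenerate symmetric form.

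I expect the main obstacle to be bookkeeping the projective density weights correctly: $\si$ lives in $\ce^{(bc)}(-2)$ rather than in $S^2TM$, so every raising/lowering and every application of $\nabla$ carries weight-dependent connection terms, and one must verify that the ``special'' hypothesis on $\nabla$ (so that $\Upsilon$ is exact, equivalently $\nabla$ preserves a volume density) is precisely what makes these terms cancel into the clean trace-free statement. A clean way to manage this is to trivialize $\ce(1)$ by the parallel volume density of the scale $\nabla$, do the whole computation with ordinary tensors, and then note invariance of $D$ under \nn{ptrans} to transfer the conclusion to an arbitrary special $\nabla\in\bp$; this also handles the dimension $n=2$ case uniformly, since nothing in the computation uses $n\geq 3$. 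I would cite Sinjukov~\cite{Sinjukov} (and the exposition in \cite{EM}) for the original form of the result and present the above as the proof.
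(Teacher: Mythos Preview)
The paper does not give a self-contained proof of this theorem; it is attributed to Sinjukov, with presentation following \cite{EM}. One direction is effectively carried out later in the paper, in the discussion preceding Theorem~\ref{converse}: given $\bar g$ projectively equivalent to $g$, one sets $a_{ij}=e^{2\Upsilon}\bar g^{pq}g_{pi}g_{qj}$ with the specific $\Upsilon$ of \nn{phi-2}, and differentiating \nn{a} using \nn{LC} yields \nn{Mbasic}.

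Your strategy is the standard one in outline, but there is a genuine gap in how you pass from a solution $\si$ to a metric. You propose to trivialize densities by the $\nabla$-parallel volume and then simply invert the resulting unweighted tensor $a^{bc}$. This does not produce a metric whose Levi-Civita connection lies in $\bp$. The correct metric is $g^\si$, the inverse of $\tau^\si\si^{ab}$ with $\tau^\si=\det\si$ as in \nn{taudef}--\nn{invm}; in your trivialization this reads $\bar g^{ij}=\det(a^i{}_j)\,a^{ij}$, not $a^{ij}$. The determinant factor is essential and is not $\nabla$-parallel (its derivative involves $\lambda$), so your candidate $(a^{-1})_{ij}$ and the true $\bar g_{ij}$ are conformally but not homothetically related; by Weyl's result (used in the paper, e.g.\ in Corollary~\ref{Bs}) their Levi-Civita connections cannot both lie in the same projective class unless $\si$ is $\nabla$-parallel. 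Your remark that the density bookkeeping is ``the main obstacle'' is apt, but the specific fix you suggest does not work: you must use $\tau^\si$, not an arbitrary or $\nabla$-parallel trivialization.

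A small slip: the equation \nn{metr} is first order in $\si$, not second order as you initially write.
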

\noindent Here $\si$ {\em non-degenerate} means that it is
non-degenerate as a bilinear form on $T^*M (1)$. Our presentation of
the Theorem here follows the treatment \cite{EM}. In view of the Theorem we shall call \nn{metr} the {\em metrisability equation}; note that the trace terms can be included
into a new variable $\mu^a\in \Gamma(\ce^a)$ and so this equation can
be written
\begin{equation}\label{ef}
\nabla_a \si^{bc} + \delta^b_a \mu^c + \delta^c_a \mu^b =0.
\end{equation}

To simplify the discussion we assume in this section that $M$ is oriented.
Let us write $\vvol_{a_1a_2\cdots a_n}$ for the canonical section of
$\Lambda^nT^*M(n+1)$ which gives the tautological bundle map
$\Lambda^nTM\to \ce(n+1)$.  Observe that each section $\si^{ab}$ in
$\ce^{(ab)}(-2)$ canonically determines a section $\tau^\si\in
\ce(2)$, by taking its determinant using $\vvol$:
\begin{equation} \label{taudef}
\si^{ab}\mapsto \tau^\si:=\si^{a_1b_1}\cdots \si^{a_nb_n}\vvol_{a_1\cdots a_n}\vvol_{b_1\cdots b_n}
\end{equation}
For simplicity in the following we fix $\si$ and write simply $\tau=\tau^\si$.
We may form
\begin{equation} \label{invm}
\tau \si^{ab}
\end{equation}
and in the case that $\si^{ab}$ is non-degenerate taking the inverse
of this yields a metric that we shall denote $g^\si_{ab}$. This
construction is clearly invertible and a metric $g_{ab}$ determines a
non-degenerate section $\si^{ab}\in \ce^{(ab)}(-2)$.  We are
interested in the metric $g^\si$ when $\si $ is a solution to
\nn{metr}. Indeed, the Levi-Civita connection mentioned in the Theorem
is the Levi-Civita connection for $g^\si$.

By differentiating the equation \nn{metr} and computing the
consequences of solutions we find that solutions to \nn{metr} prolong
to distinguished sections of $S^2\cT$ as summarised in the following
theorem of \cite{EM} (given here with the conventions of Section
\ref{trS} above).
\begin{theorem} \label{EMthm}
The solutions to \nn{metr} are in one-to-one correspondence with solutions
of the following system:
\begin{equation}\label{psys}
\nabla_a\left( \begin{array}{c}
\si^{bc}\\
\mu^b\\
\rho
\end{array}
\right) + \frac{1}{n}
\left( \begin{array}{c}
0 \\
W^b{}_{dac}\si^{cd}  \\
- 2 C_{cab} \si^{bc}
\end{array}
\right) =0.
\end{equation}
\end{theorem}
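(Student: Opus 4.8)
The plan is to establish Theorem~\ref{EMthm} by the standard prolongation procedure: start from the metrisability equation in its expanded form \nn{ef}, differentiate repeatedly, and at each stage introduce a new unknown for the lowest-weight part of the derivative that is not already determined, continuing until the system closes. First I would take $\nabla_a\si^{bc}=-\delta^b_a\mu^c-\delta^c_a\mu^b$ as the definition of $\mu^b$ (up to the universal trace factor); here $\mu^b\in\Gamma(\ce^b(-2))$, and contracting \nn{ef} shows $\mu^b=-\tfrac1{n+1}\nabla_c\si^{bc}$, which makes the correspondence between solutions of \nn{metr} and triples explicit and invertible. Then I would differentiate once more: $\nabla_a\nabla_b\si^{cd}$ can be expressed via $\nabla_a\mu^c$ and $\nabla_a\mu^d$, and symmetrising/tracing appropriately, together with the Ricci identity $[\nabla_a,\nabla_b]\si^{cd}=R^c{}_{eab}\si^{ed}+R^d{}_{eab}\si^{ce}$, forces a relation of the form $\nabla_a\mu^b+\delta^b_a\rho = (\text{curvature})\cdot\si$ for a newly introduced density $\rho\in\Gamma(\ce(-2))$. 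The precise curvature term is pinned down by decomposing $R^i{}_{jk\ell}$ into its Weyl part $W^i{}_{jk\ell}$ and the Schouten trace terms via \nn{weylt}; the Schouten contributions combine with the $\P_{ab}$ already present in the ambient tractor connection \nn{s2conn-orig}, leaving exactly $\tfrac1n W^b{}_{dac}\si^{cd}$ as the genuine correction. One more differentiation of the $\mu$-equation, using the contracted Bianchi identity $\nabla_cW^c{}_{dab}=(n-2)C_{dab}$ from the Background section and again extracting the Schouten pieces, yields the $\rho$-equation $\nabla_a\rho - 2\P_{ab}\mu^b = -\tfrac2n C_{cab}\si^{bc}$, at which point the system is closed and no new variables appear.

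Having obtained the closed first-order system, I would then simply read off that it coincides with $\nabla^{\cT}_a(\si^{bc},\mu^b,\rho)^T + \tfrac1n(0,\,W^b{}_{dac}\si^{cd},\,-2C_{cab}\si^{bc})^T=0$, using the explicit formula \nn{s2conn-orig} for the normal tractor connection on $S^2\cT$: the $\P$-terms in \nn{s2conn-orig} are precisely the Schouten pieces that were split off above, and the remaining $W$ and $C$ terms are the stated curvature correction. Conversely, a solution $(\si^{bc},\mu^b,\rho)$ of \nn{psys} has top slot $\si^{bc}$ satisfying \nn{ef} (since the zeroth component of the correction term vanishes), hence satisfying the metrisability equation \nn{metr}; so the correspondence is bijective. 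Since this is precisely Theorem~5.1 (or the relevant numbered result) of \cite{EM}, and the paper explicitly states that it is merely being recalled ``with the conventions of Section~\ref{trS} above'', the cleanest proof is to cite \cite{EM} and indicate that only a change of conventions is required to match the formulas here.

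I expect the main obstacle to be purely bookkeeping rather than conceptual: getting the numerical coefficients ($\tfrac1n$ in two places, the factor $2$ on $C_{cab}\si^{bc}$, the $n+1$ versus $n$ in the definition of $\mu^b$ and $\rho$) to come out consistently with the particular normalisation of $D_a$, of $\vvol$, of the density weights $(-2)$, and of the tractor connection \nn{s2conn-orig} chosen in Section~\ref{trS}. The one genuine computational step is the second differentiation, where one must correctly commute derivatives, substitute \nn{weylt}, and observe that the non-Weyl part of the curvature is exactly absorbed by the Schouten terms already built into $\nabla^{\cT}$; getting the sign and trace conventions right there is where errors would creep in. Given that \cite{EM} has already done this, the honest and efficient route is: state that \nn{psys} is the Eastwood--Matveev prolongation of \nn{metr}, note that one checks directly (differentiating \nn{ef} twice and using the Ricci and Bianchi identities together with \nn{weylt}) that a solution of \nn{metr} determines a solution of \nn{psys} and conversely the top slot of a solution of \nn{psys} solves \nn{metr}, and refer the reader to \cite[Section~5]{EM} for the detailed computation.
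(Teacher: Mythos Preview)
Your proposal is correct and matches the paper exactly: the paper does not prove Theorem~\ref{EMthm} at all but simply cites it as ``the following theorem of \cite{EM} (given here with the conventions of Section~\ref{trS} above)'', and you correctly identified this and outlined the prolongation argument from \cite{EM} that underlies it. Your sketch of the two differentiations, the Ricci identity, and the splitting via \nn{weylt} is the right mechanism, and your caution about the coefficients and your conclusion to refer the reader to \cite{EM} for the details is precisely what the paper does.
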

\noindent Note the left-hand side of \nn{psys} may be considered as
the formula for a connection on $S^2\cT$. For convenience we shall
call this the {\em prolongation connection}.

For a solution $\si^{bc}$ of the metrisability equation \nn{eq1} $\nabla_a
\si^{bc} + \delta^b_a \mu^c + \delta^c_a \mu^b =0$ we have that the
variable $\mu^a$ satisfies $\mu^a=-\frac{1}{n+1}\nabla_b\si^{ba}$ and 
from \nn{s2conn-orig} 
$n \rho= \si^{bc}\P_{bc}-\nabla_a\mu^a$, since $W^a{}_{bcd}$ is trace-free. 
These formulae determine a differential splitting operator 
\begin{equation}\label{Lop}
\si^{ab}\mapsto L(\si):= 
\left( \begin{array}{c}
\si^{bc}\\
-\frac{1}{n+1}\nabla_b\si^{ba}\\
\frac{1}{n}(\nabla_b\nabla_c\si^{bc}+ \P_{bc}\si^{bc})
\end{array}
\right),
\end{equation}
and, upon restriction to solutions, this is the 1-1 mapping taking
solutions of \nn{metr} to tractors satisfying \nn{psys}.  By standard
theory (see \cite{CGMac}), and it is easily verified directly, this
{\em differential splitting operator} is projectively invariant as a
linear operator $L: \ce^{(ab)}(-2)\to \ce^{(AB)}$.  Using this
 we have an immediate corollary of Theorem
\ref{EMthm}: 
\begin{corollary}\label{spott}
If $\si$ is a non-trivial solution of \nn{metr} then $L(\si)$ is
nowhere zero, and in particular $\si$ is non-zero on an open dense
set.
\end{corollary}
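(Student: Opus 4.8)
The plan is to use the one-to-one correspondence of Theorem~\ref{EMthm} between solutions $\si$ of the metrisability equation \nn{metr} and sections $L(\si)$ of $S^2\cT$ that are parallel for the prolongation connection (the operator appearing on the left-hand side of \nn{psys}). The whole argument then rests on a parallel-transport dichotomy combined with the explicit leading term of the splitting operator $L$ in \nn{Lop}.

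First I would record the dichotomy: since $M$ is connected and $L(\si)$ is parallel for a linear connection on $S^2\cT$, the section $L(\si)$ is either identically zero on $M$ or nowhere zero on $M$ --- indeed, if $L(\si)$ vanished at a point $x$, then parallel transport along paths out of $x$ would force it to vanish at every point. Next I would eliminate the first alternative for non-trivial $\si$: from the explicit formula \nn{Lop} the top slot of $L(\si)$ is $\si^{bc}$ itself, so $L(\si)\equiv 0$ forces $\si\equiv 0$. Hence for a non-trivial solution $\si$ the tractor $L(\si)$ is nowhere zero, which is the first assertion of the Corollary.

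For the second assertion I would argue by contradiction. Suppose $\si$ vanishes on some non-empty open set $U$. Each slot of $L(\si)$ is, by \nn{Lop}, a universal expression built polynomially from $\si$, its $\nabla$- and $\nabla\nabla$-derivatives, and the Schouten tensor of the chosen scale; hence all slots vanish on $U$ and $L(\si)|_U=0$. By the dichotomy above, $L(\si)\equiv 0$ on $M$, so $\si\equiv 0$ on $M$, contradicting non-triviality. Thus the (closed) zero set of $\si$ has empty interior, which is exactly the statement that $\si$ is non-zero on an open dense set.

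There is no genuine obstacle here; the only point needing a little care is the interpretation of ``open dense set'', namely that the complement of the zero locus of $\si$ is open (immediate) and dense (the content of the contradiction step). Everything else is the standard consequence of the prolongation/tractor reformulation already supplied by Theorem~\ref{EMthm}.
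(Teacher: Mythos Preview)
Your proof is correct and follows exactly the approach the paper has in mind: the prolongation connection of Theorem~\ref{EMthm} makes $L(\si)$ parallel, so on the connected manifold it is either identically zero or nowhere zero, and the explicit formula \nn{Lop} shows $L(\si)\equiv 0$ forces $\si\equiv 0$. The paper leaves this argument implicit (calling it ``an immediate corollary''), and you have simply spelled out the standard parallel-transport dichotomy together with the observation that $L(\si)$ is built from jets of $\si$.
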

\noindent Note the contrapositive statement, to that here, is also useful:
 If $\si$ is a solution of the metrisability equation \nn{metr} such that
$L(\si)=0$ at some $x\in M$ then $\si$ is zero everywhere.

\begin{remark}\label{eintr}
It is natural to ask what is the meaning of the system \nn{psys} if
the second term is omitted; that is if tractor field
$(\si^{bc},~\mu^b,~\rho)$ (in $S^2\cT$) is required to be parallel for
the normal tractor connection \nn{s2conn-orig}. By definition,
$\si^{bc}$ is then a normal solution of the metrisability equation.
This is treated in \cite{CGMac}. 
It is shown there that $\si^{bc}$, if non-degenerate, is equivalent to
an Einstein metric. Furthermore the converse is also true. See also
\cite{GMac} where this equivalence with the Einstein condition 
 is derived in different way, and
\cite{Armstrong} where a slightly weaker result was given.
\end{remark}

\subsection{Metric projective structures, prolongation and tractor connections}\label{mpsub}
Here we first show that, in the case of nullity 
on a metric projective structure, \nn{metr} implies the following:
\begin{proposition}\label{altmid} Let $\sigma$ be a solution of \nn{metr} 
on a metric projective manifold $(M,\bm)$ with projective Weyl
nullity.  Then, in the notation above, and in a scale of $g\in \bm$,
we have
$$ 
\operatorname{trace-free}(\nabla_a\mu^b-\si^{bc}g_{ac}B) =0 .
$$
\end{proposition}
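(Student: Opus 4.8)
The strategy is to differentiate the metrisability equation once more and feed in the nullity information carried by Theorem \ref{EMthm} and Theorem \ref{fth}. Recall that a solution $\si$ of \nn{metr}, written as in \nn{ef}, prolongs via Theorem \ref{EMthm} to the system \nn{psys}, whose middle slot reads
\[
\nabla_a\mu^b \;=\; -\rho\,\delta^b_a \;-\;\tfrac1n\,W^b{}_{dac}\si^{cd},
\]
where the extra $\rho\delta^b_a$ appears because we are comparing the prolongation connection with the raw formula in \nn{s2conn-orig}. Taking the trace-free part in the indices $a,b$ kills the $\rho\,\delta^b_a$ term, so it remains to identify the trace-free part of $\tfrac1n W^b{}_{dac}\si^{cd}$ with $-\,\si^{bc}g_{ac}B$ up to trace, i.e.\ to show
\[
\operatorname{trace-free}\!\left(\tfrac1n\,W^b{}_{dac}\si^{cd} + \si^{bc}g_{ac}B\right) \;=\;0,
\]
calculating in the scale of a chosen $g\in\bm$.

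The key input is Proposition \ref{Zprop}: on a metric projective manifold with Weyl nullity one has
\[
W^i{}_{jk\ell} \;=\; Z^i{}_{jk\ell} \;+\; \delta^i_\ell\,\phi_{jk} \;-\; \delta^i_k\,\phi_{j\ell},
\]
with $\phi_{ab}=\P_{ab}-Bg_{ab}$, and moreover $Z^i{}_{jk\ell}v^j=0$ for every nullity vector $v$, with $Z$ enjoying the full Riemann symmetries (Remark \ref{rem:nullity2}). I would substitute this decomposition into $W^b{}_{dac}\si^{cd}$. The $\phi$-terms produce, after lowering/raising with $g$, expressions of the shape $\delta^b_c\phi_{da}\si^{cd}$ and $g_{ac}\phi_d{}^b\si^{cd}$-type terms; one then checks, using the symmetry of $\si$ and of $\phi$, that these either are pure-trace (hence killed by $\operatorname{trace-free}$) or combine to cancel against the $B\si^{bc}g_{ac}$ term — here one uses $\phi_{ab}=\P_{ab}-Bg_{ab}$ so that $\P$ reappears and matches the curvature term, exactly as in the contraction identity already exploited in the proof of Theorem \ref{WeqK}. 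The contribution of the $Z$-piece, $Z^b{}_{dac}\si^{cd}$, needs a separate argument: I would use that $\si$, being non-degenerate (or by the open-dense-set/continuity argument of Corollary \ref{spott}, reducing to the non-degenerate generic case), has its image spanning, together with the nullity, enough of the tangent space — more precisely one uses that the nullity vector $v$ satisfies $Z^i{}_{jk\ell}v^j=0$ together with the relation $\P^i{}_jv^j=Bv^i$ to see that the $Z$-contraction with $\si$ lands purely in trace terms. Alternatively, and more cleanly, one contracts the prolongation equation directly and compares with the $\nabla^{\cT_1}$-formula \nn{s2conn} whose middle slot is precisely $\nabla_a\mu^b+\delta^b_a\rho-Bg_{ac}\si^{bc}$.

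The honest shortcut, which I expect to be the cleanest route, is this: rewrite \nn{psys} using $W = Z + (\delta\,\phi - \delta\,\phi)$; the $\phi$-terms recombine (exactly as in Proposition \ref{Zprop}'s proof that $Z^i{}_{jk\ell}={W^i}_{jk\ell}-(\delta^i_\ell\phi_{jk}-\delta^i_k\phi_{j\ell})$, and using $\P=\phi+Bg$) so that the middle slot of the prolongation connection becomes
\[
\nabla_a\mu^b \;+\;\delta^b_a\rho\;-\;Bg_{ac}\si^{bc}\;+\;(\text{trace terms})\;+\;\tfrac1n Z^b{}_{dac}\si^{cd}\;=\;0 .
\]
Taking trace-free parts in $a,b$ then reduces the claim to showing $\operatorname{trace-free}(Z^b{}_{dac}\si^{cd})=0$. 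The main obstacle is precisely this last step: one must exploit that $\si$ is adapted to the nullity — namely that along the metrisability solution the degenerate directions of $\si$ (if any) and the Weyl nullity interact so that the $Z$-contraction is pure trace. I would handle it by invoking the structural fact, to be established at this point in the paper (and which is the role of the deferred Theorem \ref{mid2n} / Section \ref{Vsec} referred to in the introduction), that for a solution of \nn{metr} on a nullity structure the tensor $\si^{bc}$ pairs with $Z$ only through traces; on the open dense set where $\si$ is non-degenerate this follows from $Z^i{}_{jk\ell}v^j=0$ by writing an arbitrary index in terms of the $\si$-frame. If one prefers to avoid forward references, one can instead prove the displayed identity pointwise by choosing $g$-orthonormal coordinates adapted to the eigenspaces of $\P^i{}_j$, in which $B$ is the eigenvalue on the nullity and the curvature identity \nn{Knullity} makes the contraction transparent.
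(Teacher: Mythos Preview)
Your decomposition $W=Z+(\delta\phi-\delta\phi)$ is fine, but the bookkeeping afterward is wrong. Substituting $W^b{}_{dac}=Z^b{}_{dac}+\delta^b_c\phi_{da}-\delta^b_a\phi_{dc}$ and $\P_{ac}=\phi_{ac}+Bg_{ac}$ into the middle slot of \nn{psys} yields
\[
\nabla_a\mu^b - B g_{ac}\si^{bc}
+\delta^b_a\Bigl(\rho-\tfrac1n\,\phi_{cd}\si^{cd}\Bigr)
-\tfrac{n-1}{n}\,\phi_{ac}\si^{bc}
+\tfrac1n\,Z^b{}_{dac}\si^{cd}=0 .
\]
The term $-\tfrac{n-1}{n}\phi_{ac}\si^{bc}$ is neither pure trace in $(a,b)$ nor does it cancel against $Bg_{ac}\si^{bc}$; so the claim does \emph{not} reduce to $\operatorname{trace-free}(Z^b{}_{dac}\si^{cd})=0$ as you assert. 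What is actually needed is the identity $Z^b{}_{dac}\si^{cd}=(n-1)\phi_{ac}\si^{bc}$, which is equivalent to the proposition itself --- nothing has been gained. Your fallback to Theorem \ref{mid2n} is effectively circular: its proof in Section \ref{Vsec} passes through \nn{vnb}, which is exactly the content of Proposition \ref{altmid} (indeed the paper explicitly says there that one ``recovers'' the proposition on the way).

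The paper's argument uses a different and much shorter idea that you missed. For $\si$ non-degenerate the associated metric $\bar g$ lies in $\bm$, so by Theorem \ref{fth} the invariant $\phi$ can equally be computed in the $\bar g$-scale; applying \nn{tfphi2} with $\bar g$ in place of $g$ gives $\tfrac1n W^b{}_{dac}\bar g^{cd}=\bar g^{bc}\,\mathring{\bar\phi}_{ac}$, where $\mathring{\bar\phi}$ denotes the $\bar g$-tracefree part of $\phi$. Substituting this into the middle slot of \nn{psys} and using $\P_{ac}-\phi_{ac}=Bg_{ac}$ collapses everything to \nn{newmid} in one line. The point is to reinterpret the $W\si$ term via the \emph{second} metric $\bar g$, not to decompose $W$ via $Z$.
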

\begin{proof} We fix a choice of $g\in\bm$ and calculate in that scale; so 
$\nabla$ denotes  the Levi-Civita connection of $g$.
From Theorem \ref{EMthm} and the formula \nn{s2conn-orig} for the
normal tractor connection we have
$$
\nabla_a\mu^b+\delta^b_a\rho-\tau^{-1}\bar{g}^{bc}\P_{ac} +\frac{1}{n}\tau^{-1}W^b{}_{dac}\bar{g}^{cd} =0,
$$ 
where we have used $\bar{g}^{ab}:=\tau\si^{ab}$. So using \nn{tfphi2} we have 
$$
\nabla_a\mu^b+\delta^b_a\rho-\tau^{-1}\bar{g}^{bc}(\P_{ac} -\mathring{\bar{\phi}}_{ac}) =0,
$$ where $\mathring{\bar{\phi}}_{ac}:=
\phi_{ac}-\frac{1}{n}\bar{g}_{ac}\bar{g}^{bd}\phi_{bd}$. (Note that
$\phi=\bar\phi$.)  Using the last display with the identity
$\P_{ac}-\phi_{ac}=Bg_{ac}$, of \nn{phidef}, we have
\begin{equation}\label{newmid}
\nabla_a\mu^b-\si^{bc}g_{ac}B + \delta^b_a(\rho- 
\frac{1}{n}\si^{cd} \phi_{cd})=0
\end{equation}
as required.
\end{proof}

We now have an immediate consequence of this, using also Corollary \ref{spott}.
\begin{corollary}\label{Bs}
Let $(M,\bm)$ be a metric projective manifold with projective Weyl
nullity.  If $g,\bar{g}\in \bm$, and these two metrics are such that
at $x\in M$ they are non-proportional, then there is no open set of
$M$ on which $g$ and $\bar{g}$ agree up to constant dilation.
Furthermore there is an open dense set $U\subseteq M$ on which $B^g$ (and hence
$B^{g'}$ for all $g'\in \bm$) is smooth.
\end{corollary}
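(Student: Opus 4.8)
The plan is to prove Corollary \ref{Bs} as a direct consequence of Proposition \ref{altmid} together with Corollary \ref{spott}. Fix $g,\bar g\in\bm$ that are non-proportional at some point $x$. By Theorem \ref{EM1} the metric $\bar g$ corresponds to a non-degenerate solution $\si$ of the metrisability equation \nn{metr}, and since $g$ and $\bar g$ are non-proportional at $x$, the solution $\si$ is not proportional at $x$ to the solution $\si^g$ corresponding to $g$; in particular $\si-\lambda\si^g$ is a non-trivial solution of \nn{metr} for the appropriate constant $\lambda$ (chosen so the difference is degenerate, or just working with $\si$ directly relative to $\si^g$).

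First I would establish the non-agreement statement. Suppose, for contradiction, that there is a nonempty open set $U$ on which $g$ and $\bar g$ agree up to a \emph{constant} dilation, i.e.\ $\bar g = c\,g$ on $U$. Then on $U$ the two Levi-Civita connections coincide, so the connecting exact 1-form $\Upsilon$ vanishes on $U$. In terms of the prolonged data this means $\mu^b$ is determined by $\si^g$ in a rigid way on $U$: writing $\si$ in the $g$-scale one finds $\si^{ab}$ is a constant multiple of $\si^{ab}_g$ on $U$, hence $\nabla_a\si^{bc}=0$ on $U$ in that scale, so $\mu^b=0$ on $U$, and then from \nn{Lop} also $\rho$ is pinned down. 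The cleaner route is: the difference $\hat\si:=\si - c^{-(n+1)/?}\,\si_g$ (with the constant chosen so that $\hat\si$ vanishes on $U$) is a solution of \nn{metr} vanishing on a nonempty open set; by Corollary \ref{spott} (its contrapositive, noted immediately after the Corollary) $\hat\si\equiv 0$ on all of $M$, forcing $g$ and $\bar g$ to be everywhere proportional with constant ratio, contradicting non-proportionality at $x$. I would phrase this using the remark after Corollary \ref{spott} to keep it short.

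Next I would deduce smoothness of $B$ on an open dense set. By Proposition \ref{altmid}, in the scale of $g$ we have $\operatorname{trace-free}(\nabla_a\mu^b - \si^{bc}g_{ac}B)=0$, i.e.\ $\nabla_a\mu^b - B\,\si^{bc}g_{ac} = \delta^b_a\,\theta$ for some 1-form/function combination $\theta$ (the $\delta^b_a(\rho-\tfrac1n\si^{cd}\phi_{cd})$ term of \nn{newmid}). On the open set $U$ where $\si^{bc}g_{ac}$, viewed as an endomorphism-valued object, has nonzero trace-free part — equivalently where $\si$ and $\si_g$ are pointwise non-proportional, which is open and, by Corollary \ref{spott} applied to the non-trivial difference solution, dense — one can solve algebraically for $B$: contracting \nn{newmid} suitably (e.g.\ take the trace-free part and pair with $\si^{bc}g_{ac}$ itself, or take an inner product with $\mathring{\bar\phi}$, or simply use that $B\cdot(\text{trace-free part of }\si^{bc}g_{ac}) = \text{trace-free part of }\nabla_a\mu^b$) expresses $B$ as a ratio of smooth tensorial quantities with nonvanishing denominator on $U$. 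Hence $B=B^g$ is smooth on $U$, and by Theorem \ref{fth} (invariance of $\phi$) or Proposition \ref{Btransp} the corresponding $B^{g'}$ for any other $g'\in\bm$ differs from $B^g g_{ab}$ by the smooth tensor $\nabla_a\Upsilon_b-\Upsilon_a\Upsilon_b$, so it too is smooth on $U$.

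The main obstacle I anticipate is making precise the claim that the set $U$ where $\si$ and $\si_g$ are pointwise non-proportional is both open and dense, and that on it one genuinely recovers $B$ algebraically rather than only up to the ambiguity hidden in the $\delta^b_a$ term of \nn{newmid}. Openness is clear; density follows because non-proportionality at $x$ gives a non-trivial solution of \nn{metr} (the appropriately normalised difference), which by Corollary \ref{spott} is nonzero on an open dense set, and on that set the two $\si$'s are non-proportional. For the algebraic recovery one must check that the trace-free part of $\si^{bc}g_{ac}$ (an endomorphism of $TM$) is nonzero exactly where $\si$ is not a multiple of $\si_g$ — this is a pointwise linear-algebra fact about the two non-degenerate symmetric forms $\bar g$ and $g$ — and then \nn{newmid} reads (trace-free part of $\nabla_a\mu^b$) $= B\cdot$(trace-free part of $\si^{bc}g_{ac}$), which pins down $B$ uniquely and smoothly there. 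I would keep the exposition at the level of ``taking trace-free parts of \nn{newmid} and dividing'' and cite Corollary \ref{spott} and Theorem \ref{fth} for the density and the transfer to other metrics.
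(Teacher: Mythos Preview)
Your argument for the first claim is correct and matches the paper's. For the second claim, however, there is a genuine gap in your density argument. You assert that the set $U$ where $\si$ and $\si_g$ are pointwise non-proportional is dense by applying Corollary~\ref{spott} to a single difference $\hat\si=\si-c\,\si_g$. But $\hat\si(y)\neq 0$ only says $\si_y\neq c\,(\si_g)_y$ for that \emph{fixed} $c$; it does not rule out $\si_y=c'\,(\si_g)_y$ with a point-dependent $c'$. So Corollary~\ref{spott} alone does not yield density of $U$.

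The paper closes this gap as follows: if $U$ were not dense there would be a nonempty open set on which the trace-free part of $\si^{bc}g_{ac}$ vanishes, meaning $\bar g$ is conformal to $g$ there; then Weyl's classical result (two metrics that are simultaneously conformally and projectively related must agree up to constant dilation) combined with the first part of the Corollary gives the contradiction. Alternatively you can argue directly: if $\si=f\,\si_g$ on an open set then, in the $g$-scale where $\nabla\si_g=0$, equation~\nn{ef} becomes $(\nabla_a f)\,g^{bc}=-\delta^b_a\mu^c-\delta^c_a\mu^b$, and taking the two traces (with $g_{bc}$ and over $a=b$) forces $\nabla f=0$, reducing to the first part. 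Either addendum repairs your argument; once density is secured, your extraction of $B$ from the trace-free part of \nn{newmid} and the transfer to other $g'\in\bm$ via \nn{Btrans} are correct.
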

\begin{proof}
The metrics $g$ and $\bar{g}$ each determine solutions, respectively
$\tilde{\si}$ and ${\si}$, of the metrisability equation \nn{metr}. If the
metrics agree up to constant dilation on any set then there is a
constant $c$ such that the difference $\tilde{\si}-c\cdot {\si}$
vanishes on the same set. In particular if the set is open then, by
Corollary \ref{spott}, it must be that $\tilde{\si}-c\cdot {\si}$
vanishes everywhere and so $g=\bar{c}\bar{g}$, everywhere on $M$, for
some constant $\bar{c}$.

The final statement then follows from \nn{newmid} since the trace-free
part of $\si^{bc}g_{ac}$ is smooth and vanishes on an open set if and
only if $\si^{bc}$ is conformal to $g^{bc}$. But, by a classical result of Weyl \cite{Weyl}, see also  \cite[Lemma 4]{pseudosymmetric},  on an open set,
conformally related metrics can only lie in the same projective class
$\bm$ only if they are related by constant dilation.
\end{proof}

\begin{remark}\label{gradient}
For a solution $\si^{bc}$ of the metrisability equation \nn{eq1} $\nabla_a
\si^{bc} + \delta^b_a \mu^c + \delta^c_a \mu^b =0$ we have that the
variable $\mu^a$ satisfies $\mu^a=-\frac{1}{n+1}\nabla_b\si^{ba}$. But
on a metric projective manifold $(M,\bm)$ and calculating in the
scale of a metric $g\in \bm$ (so $\nabla$ is the Levi-Civita connection for
$g$) we have
\begin{equation}\label{mug}
\mu_a:=g_{ab}\mu^b =-\frac{1}{2}\nabla_a (g_{bc}\si^{bc}), 
\end{equation}
from \nn{ef}, and so $\mu^a\in \Gamma(\ce^a(-2))$ is a gradient. 
 \end{remark}

The result \nn{newmid} suggests that we define a change of variable
$$
\rho':= \rho -\frac{1}{n}\si^{cd} \phi_{cd} .
$$ 
Note that $\rho'$ transforms in the same way as $\rho$ (see e.g.\ \cite{CapGoCrelle} for the latter), under the projective transformation associated with  a
change of background metric from $\bm$, since $\phi_{bc} $ is an
invariant of $(M,\bm)$. Thus we have the following:
 \begin{lemma}
On a metric projective structure $(M,\bm)$ with Weyl nullity, there is
a well-defined and canonical bundle isomorphism
$$
S^2\cT\to S^2 \cT
$$
given in any scale $g\in \bm$ by 
$$
(\si^{ab},\mu^b,\rho)\mapsto (\si^{ab},\mu^b,\rho -\frac{1}{n}\si^{cd} \phi_{cd}) .
$$
This is smooth on any open set where $B$ is smooth.
\end{lemma}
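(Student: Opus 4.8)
The plan is to check that the scalewise formula defines a map of $S^2\cT$ independent of the choice of $g\in\bm$, and then that this map is a fibrewise linear isomorphism; the first point is the substance, and it is precisely the observation noted just before the Lemma, made precise. The two ingredients are: (i) the standard projective transformation rules for the components $(\si^{bc},\mu^b,\rho)$ of a section of $S^2\cT$ relative to the composition series $\ce^{(bc)}(-2)\lpl\ce^b(-2)\lpl\ce(-2)$ of Section~\ref{trS} --- under a change of scale from $g$ to $\bar g$ in $\bm$ with associated exact $1$-form $\Upsilon_a$, the leading slot $\si^{bc}$ is invariant (it is the canonical projection of $S^2\cT$ onto $\ce^{(bc)}(-2)$), $\mu^b$ changes by a term linear in $(\Upsilon,\si)$, and $\rho$ changes by terms in $(\Upsilon,\mu)$ and $(\Upsilon,\Upsilon,\si)$ only (see e.g.\ \cite{CapGoCrelle}); and (ii) Theorem~\ref{fth}, which gives that $\phi_{ab}$ is an invariant of $(M,\bm)$ --- and, since $\phi_{ab}$ carries the same projective weight as $\P_{ab}$ and $g_{ab}$, namely weight $0$, the contraction $\si^{cd}\phi_{cd}$ is an invariant section of $\ce(-2)$, the very bundle housing the bottom slot $\rho$.

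With these in hand the verification is short. Write $T_g$ for the fibrewise linear endomorphism of $S^2\cT$ that, in the splitting determined by $g$, fixes the $\si$- and $\mu$-slots and sends $\rho$ to $\rho-\tfrac1n\si^{cd}\phi_{cd}$; because the correction depends only on $\si^{cd}$ at the point (and on the fixed tensor $\phi$), $T_g$ is a genuine bundle morphism rather than a differential operator. For $V\in\Gamma(S^2\cT)$ with components $(\si,\mu,\rho)$ in scale $g$ and $(\bar\si,\bar\mu,\bar\rho)$ in scale $\bar g$, I would compute the $\bar g$-components of $T_g(V)$ by applying the transformation rules of (i) to $(\si,\mu,\rho-\tfrac1n\si^{cd}\phi_{cd})$; since $\bar\si=\si$ and $\si^{cd}\phi_{cd}=\bar\si^{cd}\bar\phi_{cd}$ by (ii), the correction term is carried along untouched and the result is $(\bar\si,\bar\mu,\bar\rho-\tfrac1n\bar\si^{cd}\bar\phi_{cd})$ --- exactly the $\bar g$-components of $T_{\bar g}(V)$. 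Hence $T_g=T_{\bar g}$, so the formula defines a well-defined canonical bundle map $S^2\cT\to S^2\cT$.

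That this map is an isomorphism is then immediate: in any scale it acts as the identity on the top two slots of the filtration and alters only the bottom slot, so its inverse is, in that scale, $(\si^{bc},\mu^b,\rho)\mapsto(\si^{bc},\mu^b,\rho+\tfrac1n\si^{cd}\phi_{cd})$ (and, by the same computation, this inverse formula is itself scale-independent). Finally, on any open set where $B$ is smooth, $\phi_{ab}=\P_{ab}-Bg_{ab}$ is smooth by Theorem~\ref{fth}, so the displayed formula exhibits the map as a smooth bundle isomorphism there. The only delicate point is the compatibility check of the middle paragraph, and it collapses to the single fact --- supplied by Theorem~\ref{fth} together with the weight count --- that $\si^{cd}\phi_{cd}$ is a projective-weight $-2$ invariant of $\bm$; with that established there is no real obstacle.
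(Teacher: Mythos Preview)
Your proposal is correct and matches the paper's own argument, which is the single-sentence observation immediately preceding the Lemma: $\rho'$ transforms exactly as $\rho$ does because $\phi_{bc}$ is an invariant of $(M,\bm)$ by Theorem~\ref{fth}. You have simply spelled out in full what the paper leaves as a remark, including the (trivial) isomorphism and smoothness checks.
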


Now for metric projective structures $(M,\bm)$  with nullity we want to
construct a new and simple connection on tractors fields in $S^2\cT$
with solutions that are in agreement with those for the prolongation
connection of Theorem \ref{EMthm} (or at least this should be the case
for non-degenerate solutions).  This is linked to three equations that
together give the parallel transport.  The first equation we take from
the normal tractor connection \nn{s2conn-orig}:
\begin{equation}\label{eq1}
\nabla_a \si^{bc} + \delta^b_a \mu^c + \delta^c_a \mu^b =0.
\end{equation}
This is the metrisability equation $D\si=0$ of Theorem \ref{EM1}, see \nn{ef}.
The second is the equation \nn{newmid}
\begin{equation}\label{prekey}
\nabla_a\mu^b-\si^{bc}g_{ac}B + \delta^b_a \rho' =0,
\end{equation}
where we retain the notation $\rho'$ to record manifestly a
distinction from the variable $\rho$.

It remains to
treat the last equation. Here we assume that $B$ is smooth.
From the prolongation connection we have
$$
\nabla_a\rho-2\P_{ab}\mu^b -\frac{2}{n}C_{bad}\si^{bd}=0 ,
$$
where have continued our notation from above and used \nn{s2conn-orig} and \nn{psys}.
Now $\P_{ab}=Bg_{ab}+\phi_{ab}$, so we come to 
$$
\nabla_a\rho-2B\mu_a -2\phi_{ab}\mu^b -\frac{2}{n}C_{bad}\si^{bd}=0 .
$$ 
Now using that $\rho'=\rho-\frac{1}{n}\si^{ab}\phi_{ab}$, and
assuming $\si^{ab}$ solves \nn{metr}, we have 
$\nabla_a\rho= \nabla_a\rho' - \frac{2}{n} \phi_{ab}\mu^b+ \frac{1}{n}\si^{bc}\nabla_a\phi_{bc}$ and so
the display is equivalent to 
$$
\nabla_a\rho' -2B\mu_a + \frac{1}{n} \si^{bc}\nabla_a\phi_{bc} -2\frac{n+1}{n}\phi_{ab}\mu^b -\frac{2}{n}C_{bad}\si^{bd}=0 .
$$
Thus we have the following result.
\begin{proposition} \label{Newprop}
On a metric projective structure $(M,\bm)$,  with Weyl nullity,
the solutions to \nn{metr} are in one-to-one correspondence with solutions
of the following system on $S^2\cT$:
\begin{equation}\label{npsys}
\nabla^{\cT_1}_a\left( \begin{array}{c}
\si^{bc}\\
\mu^b\\
\rho'
\end{array}
\right) + \frac{1}{n}
\left( \begin{array}{c}
0 \\
0  \\
\si^{bc}\nabla_a\phi_{bc} -2(n+1)\phi_{ab}\mu^b - 2 C_{cab} \si^{bc}
\end{array}
\right) =0,
\end{equation}
where we calculate in a scale $g\in \bm$ and $\nabla^{\cT_1}$ is the $\ttt=1$ 
connection given in Proposition \ref{alphaf} (i.e. \nn{s2conn}). For solutions 
the section of $S^2\cT$ is in the image of the invariant operator $L^\phi$ of Proposition \ref{splitB}.
\end{proposition}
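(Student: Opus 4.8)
The plan is to obtain \nn{npsys} from the Eastwood--Matveev prolongation connection of Theorem~\ref{EMthm} by a single change of variable. Working in the scale of a chosen $g\in\bm$, I would set $\rho':=\rho-\tfrac1n\si^{cd}\phi_{cd}$; by the preceding Lemma this is a well-defined, canonical bundle automorphism of $S^2\cT$ (smooth where $B$ is), hence induces a bijection on sections. Since it is a bijection and Theorem~\ref{EMthm} already gives a one-to-one correspondence between solutions of \nn{metr} and solutions of \nn{psys}, it is enough to rewrite the three slots of \nn{psys} in the variables $(\si^{bc},\mu^b,\rho')$ and recognise the outcome as \nn{npsys}; the correspondence $\si\mapsto(\si^{bc},\mu^b,\rho')$ and its bijectivity then follow, and the assertion that solutions lie in $\operatorname{im}(L^\phi)$ drops out of the uniqueness part of Theorem~\ref{EMthm} since the correspondence there is given by the splitting operator \nn{Lop} and the variable change carries $L(\si)$ to $L^\phi(\si)$.

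For the bookkeeping: the top slot of \nn{psys} is $\nabla_a\si^{bc}+\delta^b_a\mu^c+\delta^c_a\mu^b=0$, i.e.\ \nn{eq1}, which is untouched by the substitution and is the top slot of \nn{npsys}. The middle slot is handled by Proposition~\ref{altmid}: its conclusion \nn{newmid}, written in terms of $\rho'$, is exactly \nn{prekey}, which is the middle slot of $\nabla^{\cT_1}$ (see \nn{s2conn}) with zero correction term. For the bottom slot I would begin from $\nabla_a\rho-2\P_{ab}\mu^b-\tfrac2n C_{cab}\si^{bc}=0$, use $\P_{ab}=Bg_{ab}+\phi_{ab}$ (Theorem~\ref{fth}) to split off the $B$-part, and then convert $\nabla_a\rho$ to $\nabla_a\rho'$ via $\nabla_a\rho=\nabla_a\rho'+\tfrac1n\phi_{bc}\nabla_a\si^{bc}+\tfrac1n\si^{bc}\nabla_a\phi_{bc}$, substituting $\phi_{bc}\nabla_a\si^{bc}=-2\phi_{ab}\mu^b$ from \nn{eq1}. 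Collecting the two resulting $\phi_{ab}\mu^b$ terms into $-2\tfrac{n+1}{n}\phi_{ab}\mu^b$ yields precisely the bottom slot of \nn{npsys}, so the prolonged system in the new variables is \nn{npsys}.

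To see directly that a solution lies in the image of $L^\phi$ (as a cross-check of the previous paragraph): tracing \nn{eq1} gives $\mu^a=-\tfrac1{n+1}\nabla_b\si^{ba}$, while from \nn{Lop} and $\P_{bc}-\phi_{bc}=Bg_{bc}$ the change of variable gives $\rho'=\rho-\tfrac1n\si^{cd}\phi_{cd}=\tfrac1n(\nabla_b\nabla_c\si^{bc}+Bg_{bc}\si^{bc})$, which is exactly the bottom slot of $L^\phi$ in \nn{Lphi-op}. Conversely, a solution of \nn{npsys} has top slot \nn{metr}, so $\si$ solves the metrisability equation; reversing the variable change returns a solution of \nn{psys}, and Theorem~\ref{EMthm} identifies it with $L(\si)$ for the same $\si$. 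Hence the correspondence is bijective.

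The only genuine work I expect is the bottom-slot computation: keeping the three sources of $\phi$-terms straight (from splitting $\P$, from differentiating the relation $\rho=\rho'+\tfrac1n\si^{cd}\phi_{cd}$, and from the original $-2\P_{ab}\mu^b$ term), and confirming that the residual correction $\tfrac1n(\si^{bc}\nabla_a\phi_{bc}-2(n+1)\phi_{ab}\mu^b-2C_{cab}\si^{bc})$ is a zeroth-order bundle-map correction to $\nabla^{\cT_1}$ --- which it is, being linear in $(\si^{bc},\mu^b)$ with coefficients the fixed tensor fields $\nabla_a\phi_{bc}$, $\phi_{ab}$, $C_{cab}$ --- so that \nn{npsys} indeed has the form ``parallel for a connection''. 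Everything else is formal, given Theorem~\ref{EMthm}, Proposition~\ref{altmid}, Theorem~\ref{fth}, and the preceding Lemma.
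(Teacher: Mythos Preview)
Your proposal is correct and follows essentially the same route as the paper: perform the bundle automorphism $\rho\mapsto\rho'=\rho-\tfrac1n\si^{cd}\phi_{cd}$, leave the top slot alone, use \nn{newmid} (Proposition~\ref{altmid}) for the middle slot, and do the bottom-slot bookkeeping exactly as you outline; the paper's computation is line-for-line the same, and the identification of solutions with $\operatorname{im}(L^\phi)$ is argued the same way. The one point the paper adds that you pass over is that the derivation of \nn{newmid} in Proposition~\ref{altmid} (via \nn{tfphi2} applied to $\bar g$) tacitly uses that $\si^{ab}$ is non-degenerate, so strictly the argument first establishes \nn{npsys} for non-degenerate solutions and then invokes linearity of both systems to cover the degenerate case; since you cite Proposition~\ref{altmid} as a black box (stated without a non-degeneracy hypothesis) this is not a gap in your argument as written, but it is worth being aware of.
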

\begin{proof}
The first statement is proved above for solutions where $\si^{ab}$ is
non-degenerate, but from this and linearity the main result
follows. The final statement is immediate from formula \nn{npsys}.
\end{proof}
\begin{remark}
Since the connection $\nabla^{\cT_1}$ is 
invariant on $(M,\bm)$ it is evident that
$$
\si^{bc}\nabla_a\phi_{bc} -2(n+1)\phi_{ab}\mu^b - 2 C_{cab} \si^{bc}
$$ 
is also invariant for solutions.  
\end{remark}

\begin{remark}\label{nnewcon}
Since the second term of \nn{npsys} is linear in the variables
$(\si,\mu, \rho )$ it follows that the (total) system on the left-hand-side of
 \nn{npsys} defines a linear
connection on $S^2\cT$.
\end{remark}

\subsection{A simpler connection}\label{newconn}
We shall use further integrability conditions of solutions to improve
\nn{npsys} to a simpler and more elegant system. Again we assume that
$B$ is smooth here, and until Section \ref{Vsec}.

Differentiating \nn{newmid} yields
$$
 \nabla_a\nabla_b\mu^c +\delta^c_b\nabla_a \rho'-\si^c{}_b\nabla_a B
+ B \delta^c_a\mu_b +B g_{ab}\mu^c =0 ,
$$
and hence
\begin{equation}\label{int1}
 \mu^d R^c{}_{dab}+ \delta^c_b (\nabla_a \rho'-B\mu_a )- \delta^c_a
(\nabla_b \rho'-B\mu_b )= \si^c{}_b \nabla_a B -\si^c{}_a \nabla_b B .
\end{equation}
This contains key algebraic data for our system.

Contracting \nn{int1} with $\mu_c$ annihilates the first term on the
left-hand-side (since the Riemannian curvature is alternating on its
first arguments) and what remains is the identity
\begin{equation}\label{star}
\rho'_{[a} \mu_{b]}= B_{[a}\tilde{\mu}_{b]}. 
\end{equation}
Here, for the clarity of algebraic manipulations, we have introduced
the shorthand notations: $\rho'_{a}:= \nabla_a \rho'$, $
B_{a}:=\nabla_a B $, and $\tilde{\mu}_{b}:= \si^c{}_b\mu_c$.
Now this implies 
\begin{equation}\label{useful}
\rho'_a=\alpha \mu_a+\beta B_a , \quad \mbox{and} \quad \tilde{\mu}_b=\beta\mu_b+\gamma B_b
\end{equation}
for some functions $\alpha$, $\beta$, and $\gamma$.

We now
divide our discussion into the cases of $B$ constant or not. 

\subsubsection{$B$ constant} \label{Bc}
If $\nabla_a B=0$ then \nn{star}, equivalently \nn{useful}, simplify further and we obtain 
$$
\nabla_a \rho'= \alpha \mu_a. 
$$ 
Inserting this in \nn{int1}, and noticing that the right-hand-side
of this is zero by dint of $\nabla_a B=0$, we obtain
$$
 \mu^d R^c{}_{dab}+ \delta^c_b (\alpha-B )\mu_a - \delta^c_a
(\alpha-B)\mu_b =0. 
$$
 So by Theorem \ref{WeqK} $\mu^d$ is in the Weyl nullity, and then
 by Proposition \ref{B!} we conclude that $\alpha =2B$, at least on
 the open set where $\mu^a$ is non-zero.  So by Proposition
 \ref{Newprop} this implies that $ \si^{bc}\nabla_a\phi_{bc}
 -2(n+1)\phi_{ab}\mu^b - 2 C_{cab} \si^{bc}=0$ on the same set.  Now,
 Theorem \ref{Tae} implies that the set where $\mu^a$ is not zero is
 an open dense set in $M$.

In summary then, we have the
following result.
\begin{theorem}\label{4.7} 
Suppose that $(M,\bm)$ is a metric projective structure with Weyl
nullity and $B$ constant.
Then 
the solutions to \nn{metr} are in one-to-one correspondence with solutions
of the system:
\begin{equation}\label{npsys-BC}
\nabla^{\cT_1}_a\left( \begin{array}{c}
\si^{bc}\\
\mu^b\\
\rho'
\end{array}
\right) =0 .
\end{equation}
\end{theorem}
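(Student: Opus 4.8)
The plan is to reduce everything to Proposition \ref{Newprop}. That result already establishes the bijection between solutions of the metrisability equation \nn{metr} and solutions of the system \nn{npsys} on $S^2\cT$, and \nn{npsys} differs from the asserted system \nn{npsys-BC} only in its bottom slot, by the term $\Theta_a:=\si^{bc}\nabla_a\phi_{bc}-2(n+1)\phi_{ab}\mu^b-2C_{cab}\si^{bc}$. So the whole content of the theorem is that, when $B$ is constant, $\Theta_a\equiv 0$ for every solution $(\si^{bc},\mu^b,\rho')$ of \nn{npsys}; equivalently $\nabla^{\cT_1}_a\rho'=0$. The $1$-$1$ correspondence, and the fact that the parallel sections lie in the image of $L^\phi$ (Proposition \ref{splitB}), then follow immediately from Proposition \ref{Newprop}.

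To show $\Theta_a=0$ I would exploit the integrability consequences of \nn{prekey} already recorded in \nn{int1}. With $B$ constant the right-hand side of \nn{int1} vanishes. Contracting \nn{int1} with $\mu_c$ kills the curvature term (pair-antisymmetry of $R$) and cancels the $B\mu_a\mu_b$ terms, leaving $\nabla_{[a}\rho'\,\mu_{b]}=0$; hence on the open set $U:=\{x\in M:\mu(x)\neq 0\}$ one may write $\nabla_a\rho'=\alpha\,\mu_a$ for a smooth function $\alpha$. Substituting this back into \nn{int1} (again using $B$ constant) gives $\mu^d R^c{}_{dab}=-(\alpha-B)\,\mu^d K^c{}_{dab}$ on $U$. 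By the converse half of Theorem \ref{WeqK} this says $\mu$ is a Weyl-nullity vector with associated eigenvalue $\alpha-B$; but by Propositions \ref{mweyl} and \ref{B!} that eigenvalue must be the canonical scalar $B$, so $\alpha=2B$ on $U$. Thus $\nabla_a\rho'=2Bg_{ab}\mu^b$, i.e.\ $\nabla^{\cT_1}_a\rho'=0$ on $U$ by \nn{s2conn}, and comparison with the bottom slot of \nn{npsys} forces $\Theta_a=0$ on $U$.

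It then remains to propagate $\Theta_a=0$ from $U$ to all of $M$. If $\mu$ vanishes identically, \nn{eq1} makes $\si^{bc}$ parallel and the remaining two slots of \nn{npsys-BC} are verified directly (tracing \nn{prekey} exhibits $\rho'$ as a constant multiple of $g_{bc}\si^{bc}$, so $\nabla_a\rho'=0$ too). Otherwise $\mu\neq 0$ somewhere, and Theorem \ref{Tae} applies: the function corresponding (as in the discussion around Theorem \ref{G-Obthm}) to $g_{bc}\si^{bc}$ has gradient $-2\mu$ by \nn{mug}, and differentiating twice more via \nn{prekey}, \nn{eq1} and $\nabla_a\rho'=2B\mu_a$ one checks it solves the Gallot-Obata-Tanno equation \nn{one} with $B_\circ=B$; hence its critical set is closed and nowhere dense, i.e.\ $U$ is open and dense. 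In either case $\Theta_a=0$ on a dense open subset of $M$, and since $\Theta_a$ is a smooth tensor field, continuity gives $\Theta_a\equiv 0$. This collapses \nn{npsys} to \nn{npsys-BC}, and Proposition \ref{Newprop} yields the stated one-to-one correspondence.

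The step I expect to be the main obstacle is the passage from ``$\Theta_a=0$ on $U$'' to ``$\Theta_a\equiv 0$'': one must genuinely control the zero set of $\mu$, which is what forces the Gallot-Obata-Tanno reformulation and the prolongation input of Theorem \ref{Tae} into the argument. (An alternative avoiding Theorem \ref{Tae} is available: on the interior of $\{\mu=0\}$ the tensor $\si^{bc}$ is parallel, and a short computation with the bottom slot of \nn{psys} then shows $2C_{cab}\si^{bc}=\si^{bc}\nabla_a\phi_{bc}$ there, so $\Theta_a=0$ on that open set too; since $M\setminus\bigl(U\cup\text{int}\{\mu=0\}\bigr)$ is the boundary of a closed set, hence nowhere dense, continuity again finishes.) The other conceptual pivot — pinning $\alpha$ to equal exactly $2B$ — is essentially automatic once Theorem \ref{WeqK} and the rigidity of Proposition \ref{B!} (a Weyl-nullity eigenvector has the canonical eigenvalue $B$) are in hand.
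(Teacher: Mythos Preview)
Your argument tracks the paper's almost line for line: both derive $\nabla_a\rho'=\alpha\mu_a$ from \nn{int1} with $B$ constant, feed this back into \nn{int1} to recognise $\mu$ as a Weyl-nullity vector, pin $\alpha=2B$ via Proposition~\ref{B!}, and then appeal to Theorem~\ref{Tae} to pass from $U=\{\mu\neq0\}$ to all of $M$.

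One point to flag: your primary route through Theorem~\ref{Tae} is circular as written. To check that $f=-\tfrac12 g_{bc}\si^{bc}$ solves \nn{one} on all of $M$ you invoke $\nabla_a\rho'=2B\mu_a$, but that identity has only been established on $U$; a direct computation shows that the Gallot--Obata--Tanno operator applied to $f$ equals $\tfrac1n g_{bc}\Theta_a$, so ``$f$ solves \nn{one} globally'' is \emph{equivalent} to $\Theta_a\equiv0$, not a route to it. (The paper's own one-line appeal to Theorem~\ref{Tae} is equally brief on this point.) Your parenthetical alternative is the clean fix and is correct: on $\operatorname{int}\{\mu=0\}$ the tensor $\si$ is parallel, whence differentiating the trace of \nn{prekey} gives $\nabla_a\rho'=0$, and comparison with the bottom slot of \nn{npsys} forces $\Theta_a=0$ there; density of $U\cup\operatorname{int}\{\mu=0\}$ and continuity finish. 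With that substitution your proof is complete and matches the paper's strategy.
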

Note that not only does the system here invariantly describe solutions to
\nn{metr}, but recall (from Proposition \ref{alphaf}) that the
connection $\nabla^{\cT_1}$ itself is invariant on any metric
projective structure $(M,\bm)$ with Weyl nullity.

\medskip

As an immediate application let us pause to observe that Theorem \ref{GOTthm} and Theorem \ref{4.7} enable us to efficiently
relate solutions of the Gallot-Obata-Tanno equation \nn{one} to
solutions of the metrisability equation \nn{metr}.  Let us say that
solutions of \nn{one} (respectively \nn{metr}) are {\em algebraically
  generic} if the corresponding section of $S^2\cT^*$
(resp.\ $S^2\cT$) is everywhere of rank $(n+1)$.
Then we have the following result:
\begin{corollary}\label{GOT=m}
On a metric projective manifold $(M,\bm)$ with Weyl nullity and $g\in
\bm$ such that $B^g$ is constant, algebraically generic solutions of
 the Gallot-Obata-Tanno equation \nn{one} are equivalent to
 algebraically generic solutions of the metrisability equation
 \ref{metr}.
 \end{corollary}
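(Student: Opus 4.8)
The plan is to build a dictionary between solutions of the two equations by passing through parallel sections of two tractor-type connections, both living on rank $(n+1)$ tractor bundles, namely $\nabla^{\cT_1}$ on $S^2\cT^*$ (for \nn{one}, via Theorem \ref{GOTthm}) and $\nabla^{\cT_1}$ on $S^2\cT$ (for \nn{metr}, via Theorem \ref{4.7}). Since $B^g$ is constant, Theorem \ref{GOTthm} tells us that solutions of \nn{one} are in one-to-one correspondence with $\nabla^{\cT_1}$-parallel sections of $S^2\cT^*$, while Theorem \ref{4.7} tells us that solutions of the metrisability equation \nn{metr} are in one-to-one correspondence with $\nabla^{\cT_1}$-parallel sections of $S^2\cT$. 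So the first step is simply to invoke these two theorems to reduce the claimed equivalence to an equivalence between parallel sections of $S^2\cT$ and parallel sections of $S^2\cT^*$ for the connection $\nabla^{\cT_1}$.

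Next I would exploit the fact that $\nabla^{\cT_1}=\nabla^\cT+\Phi$ where, in a scale $g\in\bm$, $\Phi^C{}_{Ba}=X^CZ_B{}^b\phi_{ab}$ with $\phi_{ab}=\P_{ab}-Bg_{ab}$. The key algebraic point is that $\nabla^{\cT_1}$ preserves a non-degenerate bilinear pairing between $\cT$ and $\cT^*$. Indeed the normal connection $\nabla^\cT$ preserves the canonical dual pairing $h$ of $\cT$ with $\cT^*$, and one checks directly from \nn{Phi} that the correction term $\Phi$ is trace-free/skew with respect to this pairing in the relevant sense --- explicitly, for $V\in\cT$, $W\in\cT^*$ one has $W_C V^B\Phi^C{}_{Ba}+V^B W_C (\Phi^*)^C{}_{Ba}=0$ because $X^C$ lies in the image of $\ce_a(1)\to\ce_A$-type maps and pairs to zero against the relevant slots --- so $\nabla^{\cT_1}$ preserves $h$. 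Hence $h$ itself, viewed as a section of $\cT^*\otimes\cT^*$ (or its inverse $h^{-1}\in\cT\otimes\cT$), is $\nabla^{\cT_1}$-parallel, and on the rank-$(n+1)$ bundle it is an isomorphism $\cT\to\cT^*$ intertwining the two induced connections on $S^2\cT$ and $S^2\cT^*$. The algebraically generic condition (rank $(n+1)$ everywhere) is manifestly preserved under this isomorphism, since $h$ is non-degenerate at every point.

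Therefore a $\nabla^{\cT_1}$-parallel section $\Sigma\in\Gamma(S^2\cT)$ corresponds, via $\Sigma\mapsto h\cdot\Sigma\cdot h$ (lowering both tractor indices with $h$), to a $\nabla^{\cT_1}$-parallel section of $S^2\cT^*$, and this correspondence is bijective, preserves non-degeneracy, and is inverse to raising indices with $h^{-1}$. Composing: algebraically generic solutions of \nn{metr} $\leftrightarrow$ rank-$(n+1)$ parallel sections of $S^2\cT$ $\leftrightarrow$ rank-$(n+1)$ parallel sections of $S^2\cT^*$ $\leftrightarrow$ algebraically generic solutions of \nn{one}. This gives the asserted equivalence.

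The main obstacle I expect is verifying cleanly that $\nabla^{\cT_1}$ really does preserve the tractor pairing $h$, i.e.\ that the correction $\Phi$ is $h$-skew; this is a short but not entirely automatic computation using the composition-series formulae \nn{tconn}, \nn{pconn} and the explicit form \nn{Phi} of $\Phi$, together with the fact that $X^A$ annihilates $\ce_a(1)\hookrightarrow\ce_A$ under the pairing. A secondary subtlety is tracking that ``algebraically generic'' as defined (the splitting tractor has rank $(n+1)$ pointwise) matches on both sides --- but since the isomorphism induced by $h$ is a pointwise linear isomorphism of $S^2\cT$ with $S^2\cT^*$, rank is preserved pointwise, so this is immediate once the intertwining is established. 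One should also note explicitly that the bijections of Theorem \ref{GOTthm} and Theorem \ref{4.7} are themselves compatible with the genericity notions used here, which is built into how those theorems are stated.
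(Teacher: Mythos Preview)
Your approach has a genuine gap at the central step. The canonical dual pairing $h$ between $\cT$ and $\cT^*$ is a section of $\cT^*\otimes\cT$ (equivalently, the identity endomorphism of $\cT$), \emph{not} a section of $\cT^*\otimes\cT^*$. It therefore does not give a bundle map $\cT\to\cT^*$ and cannot be used to lower tractor indices; your sentence ``$h$ itself, viewed as a section of $\cT^*\otimes\cT^*$'' is where the argument breaks. Projective tractor bundles carry no canonical tractor metric (in contrast to the conformal case). The fact that $\nabla^{\cT_1}$ and its dual preserve the evaluation pairing is automatic and content-free: any connection on a vector bundle together with the induced dual connection preserves this pairing, which only says that the identity endomorphism is parallel. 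So the ``main obstacle'' you anticipated is not an obstacle at all, but the conclusion you want to draw from it is simply false.

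The paper's argument is different and essentially one line: if $H\in\Gamma(S^2\cT^*)$ is $\nabla^{\cT_1}$-parallel and has rank $(n+1)$ everywhere, then $H$ \emph{itself} is a parallel bundle isomorphism $\cT\to\cT^*$, so its pointwise inverse $H^{-1}\in\Gamma(S^2\cT)$ is also parallel and of rank $(n+1)$; and conversely. This is why the hypothesis ``algebraically generic'' is essential --- the solution provides its own isomorphism, and the resulting correspondence is non-linear. Your idea of using a \emph{fixed} parallel tractor metric to get a linear correspondence is exactly what the paper does next, in Proposition~\ref{GOT=met}: the metric there is $H^{-1}=L^\phi(\sigma^g)$ built from $g$, but by Proposition~\ref{rankP} this is non-degenerate only when $B^g\neq 0$, so that stronger linear statement needs the extra hypothesis.
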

\begin{proof}
Given a section $H$ of $S^2\cT^*$ which is parallel and of maximal
rank, its inverse $H^{-1}$ is a section of $S^2\cT^*$ which is
parallel and of maximal rank. The converse is also true. 
\end{proof}
\noindent The proof and result here follows a similar idea for
\cite[Theorem 4.3]{CGMac}.

Consider the splitting operator $L^\phi$ of  \nn{Lphi-op} taking solutions of
the the metrisability equation to the corresponding section of $S^2\cT$ (which is parallel in the
setting $B=$constant, of the Corollary above). If $\si$ is the solution
corresponding to the metric $g$ then $\nabla \si=0$, and we have
$L^\phi(\si)=(\si^{bc},~0,~\frac{1}{n}Bg_{bc}\si^{bc})$. Thus we see
the following:
\begin{proposition}\label{rankP}
On a metric projective manifold $(M,\bm)$ with Weyl nullity and $g\in
\bm$,
let $\si$ be the corresponding
solution of the metrisability equation. Then $ \operatorname{rank}
(L^\phi(\si) )=n+1 $ if and only if  $B_g\neq 0$.
\end{proposition}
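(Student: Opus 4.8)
The plan is to read off the rank of $L^\phi(\si)$ from its expression in the scale $g$. Since $\si$ is the solution of the metrisability equation determined by $g$, we have $\nabla\si=0$ in this scale, so, as noted just above, \nn{Lphi-op} gives
\[
L^\phi(\si)\stackrel{g}{=}\bigl(\si^{bc},\ 0,\ \tfrac1n Bg_{bc}\si^{bc}\bigr).
\]
I would first record two elementary facts about these entries. The leading part $\si^{bc}$ is non-degenerate, since $\si$ arises from a metric. Moreover $g_{bc}\si^{bc}$ is nowhere zero: because $\si$ corresponds to $g$, the construction of $g^\si$ around \nn{invm} gives $\tau^\si\si^{bc}=g^{bc}$, whence $g_{bc}\si^{bc}=n(\tau^\si)^{-1}$, and $\tau^\si$ is nowhere zero precisely because $\si$ is non-degenerate. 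Consequently the bottom slot equals $B(\tau^\si)^{-1}$, which at any point is nonzero if and only if $B_g$ is nonzero there.

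Next I would compute the rank of $L^\phi(\si)$, viewed pointwise as a symmetric bilinear form on $\cT^*=\ce_A$. Its restriction to the canonical sub-bundle $\ce_a(1)\subset\ce_A$ (the image of $Z_A{}^a$) is exactly $\si^{bc}$, which is non-degenerate; hence $\ce_A$ decomposes $L^\phi(\si)$-orthogonally as $\ce_a(1)$ together with a complementary line, and $L^\phi(\si)$ is non-degenerate if and only if its restriction to that line is nonzero. Because the middle slot of $L^\phi(\si)$ vanishes, one checks directly that this complementary line is the copy of $\ce(1)$ in the $g$-splitting $\ce_A\stackrel{g}{=}\ce(1)\oplus\ce_a(1)$, and that the restriction of $L^\phi(\si)$ to it is the scalar $\tfrac1n Bg_{bc}\si^{bc}=B(\tau^\si)^{-1}$. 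Therefore $\operatorname{rank}\bigl(L^\phi(\si)\bigr)=n+1$ exactly when $B(\tau^\si)^{-1}\neq 0$, i.e.\ exactly when $B_g\neq 0$; at points where $B_g=0$ the rank drops to $n$. This is the assertion.

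The argument is essentially immediate once the scale-$g$ formula for $L^\phi(\si)$ is available, so there is no serious obstacle. The only step requiring any care is the bookkeeping identifying $\tfrac1n g_{bc}\si^{bc}$ with $(\tau^\si)^{-1}$, i.e.\ the observation that the density $\tau^\si$ is nowhere vanishing. This is precisely what forces the bottom slot to vanish on the zero set of $B_g$ and nowhere else, and hence makes the rank statement an honest equivalence rather than a one-way implication.
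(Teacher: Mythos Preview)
Your argument is correct and is essentially the same as the paper's: the paper simply records the formula $L^\phi(\si)=(\si^{bc},\,0,\,\tfrac1n Bg_{bc}\si^{bc})$ just above the proposition and treats the rank claim as immediate. You have filled in the details the paper leaves implicit, namely the block-diagonal rank count and the verification that $g_{bc}\si^{bc}$ is nowhere zero, so your proof is a faithful elaboration of the paper's one-line observation.
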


\noindent From this and the Corollary \ref{GOT=m} it follows that, on
a manifold $(M,g)$, if $f$ is an algebraically generic solution of the
Gallot-Obata-Tanno equation then the constant $B^g$ is not zero.
However the converse is false. For example on the standard sphere we
have $B=1$, but for each solution $f$ of \nn{obata1} it follows that
$f^2$ is a solution of the Gallot-Obata-Tanno equation such that
$\bar{L}(\tau^{f^2})$ has rank 1.

Corollary \ref{GOT=m} above gives a non-linear map
  equating certain solutions of the Gallot-Obata-Tanno equation to
  corresponding solutions of the metrisability equation. This uses the
  {\em existence} of $g\in \bm$ such that $B^g$ is constant, but does
  not otherwise directly use $g$. However is we allow the metric $g$
  to be used directly then a stronger result is available as follows:
\begin{proposition}\label{GOT=met}
Consider a metric projective manifold $(M,\bm)$ with Weyl nullity and
$g\in \bm$ such that $B^g\neq 0$ is constant.  Then solutions of the
metrisability equation \nn{metr} are in 1-1 correspondence with
solutions of the Gallot-Obata-Tanno equation.

Furthermore in the scale of the fixed metric $g\in \bm$, a solution
$\si^{ab}$ of the metrisability equation \nn{metr} with $\mu^a_x$ not
zero, at a given point $x\in M$, is equivalent to a solution $f$ of
the Gallot-Obata-Tanno equation \nn{one} with $df_x$ not zero.
\end{proposition}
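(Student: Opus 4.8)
The plan is to recast both sides of the correspondence in terms of $\nabla^{\cT_1}$-parallel tractors and then to transport between $S^2\cT$ and $S^2\cT^*$ by a parallel isomorphism. Since $B=B^g$ is constant, Theorem \ref{4.7} identifies the solutions of the metrisability equation \nn{metr} with the $\nabla^{\cT_1}$-parallel sections of $S^2\cT$, via the splitting operator $L^\phi$ of Proposition \ref{splitB}; and Theorem \ref{GOTthm} identifies the solutions of the Gallot-Obata-Tanno equation \nn{one} (taken with $B_\circ=B^g$, which is automatic for nonconstant solutions by Theorem \ref{Tae}) with the $\nabla^{\cT_1}$-parallel sections of $S^2\cT^*$, via $\bar{L}^\phi$ of \nn{Lbar}. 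So it suffices to produce a $\nabla^{\cT_1}$-parallel bundle isomorphism $\cT\to\cT^*$.

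The key point is that when $B\neq0$ is constant the connection $\nabla^{\cT_1}$ preserves a nondegenerate symmetric tractor form. In the scale $g$, let $\tau_0\in\Gamma(\ce(2))$ be the canonical nowhere-zero density of $g$ (so $\nabla^g_a\tau_0=0$) and set $H:=\bar{L}^\phi(\tau_0)\stackrel{g}{=}(\tau_0,\,0,\,Bg_{bc}\tau_0)\in\Gamma(S^2\cT^*)$. A one-line computation from \nn{s2-dual-conn}, using $\nabla_a\tau_0=0$, $\nabla_aB=0$ and $\nabla_ag_{bc}=0$, gives $\nabla^{\cT_1}_aH=0$; and, viewed as a form on $\cT$, $H$ is nondegenerate precisely because $B\neq0$. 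Hence $H$ induces a $\nabla^{\cT_1}$-parallel isomorphism $\cT\to\cT^*$ with parallel inverse, and so $S^2H:S^2\cT\to S^2\cT^*$ is a parallel isomorphism. Composing the three identifications gives the first assertion.

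For the ``furthermore'' statement I would make the composite explicit in the scale $g$. The isomorphism $\cT\to\cT^*$ determined by $H$ reverses the tractor filtration, sending $\binom{\nu^b}{\rho}$ to $\binom{\rho}{B\nu_b}$ up to the nowhere-zero factor $\tau_0$; hence $S^2H$ carries $L^\phi(\si)=(\si^{bc},\mu^b,\rho')$ to a section of $S^2\cT^*$ whose top slot is a multiple of $\rho'$ and whose middle slot is a nonzero multiple of $\mu_b:=g_{bc}\mu^c$. On the other hand, by \nn{Lbar} the parallel section of $S^2\cT^*$ attached to a Gallot-Obata-Tanno solution $f$ is $\bar{L}^\phi(\tau^f)=(\tau^f,\,\tfrac12\nabla_c\tau^f,\,\tfrac12\nabla_b\nabla_c\tau^f+Bg_{bc}\tau^f)$ with $\tau^f=f\tau_0$, so its middle slot is $\tfrac12(\nabla_cf)\tau_0$. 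Equating the middle slots, and using $B\neq0$ together with the fact that $\tau_0$ is nowhere zero and parallel, shows that $\nabla_cf$ is a nonzero scalar multiple of $\mu_c$; therefore $df_x\neq0$ if and only if $\mu^a_x\neq0$.

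The substantive content is the single identity $\nabla^{\cT_1}H=0$; once that is in hand, everything else is formal. The one place demanding care is the bookkeeping in the last step: correctly tracking the density weights and the slot positions under $H$ and $S^2H$, and checking that the constants produced from \nn{s2conn}, \nn{s2-dual-conn}, \nn{Lphi-op} and \nn{Lbar} fit together consistently. Equating the top and middle slots above forces $\nabla_a\rho'=2B\mu_a$, which is precisely the relation $\alpha=2B$ obtained in Section \ref{Bc}, and this serves as a convenient internal consistency check.
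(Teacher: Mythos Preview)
Your proof is correct and follows essentially the same approach as the paper. The paper constructs the parallel nondegenerate tractor metric from the $S^2\cT$ side, setting $H^{-1}:=L^\phi(\si^g)$ and invoking Proposition \ref{rankP} for nondegeneracy, whereas you build it from the $S^2\cT^*$ side via $H:=\bar L^\phi(\tau_0)$ and check $\nabla^{\cT_1}H=0$ directly; these are dual versions of the same object, and your treatment of the ``furthermore'' clause (tracking the middle slot under the block-antidiagonal isomorphism) is just an explicit unpacking of the paper's remark that $H$ is block diagonal in the scale $g$, so $\bar L^\phi(\tau^f)=H\,L^\phi(\si)\,H$ is block diagonal iff $L^\phi(\si)$ is.
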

\begin{proof}
Since $B_g\neq 0$ and constant, it follows from Theorem \ref{4.7} and
Proposition \ref{rankP} that there is a non-degenerate metric $H^{-1}$
on the bundle $S^2\cT^*$ that is parallel for $\nabla^{\cT_1}$ (namely
$H^{-1}= L^\phi(\si^g)$ where $\si^g$ is the solution of \nn{metr}
corresponding to $g$). This and its inverse enable us to identify
$\cT$ with its dual $\cT^*$ in a way that preserves
$\nabla^{\cT_1}$. Applying this to tensor powers we see that, in
particular, we can identify parallel sections of $S^2\cT^*$ with
parallel sections of $S^2\cT$. Thus the the first result follows immediately
from Theorem \ref{GOTthm} and Theorem \ref{4.7}.

In the scale $g$ the tractor metric $H$ and its inverse are block
diagonal. Thus the final claim follows from the formulae \nn{Lphi-op}
for $L^\phi(\si)$ and $\bar{L}^\phi(\tau^f)$ The latter is the ``matrix composition''' $HL^\phi(\si)H$,  so  one is not block
diagonal, then neither is the other.
\end{proof}

In one direction, the last result extends to the case that $B^g=0$. See Proposition \ref{GOT=met} below.
\begin{proposition}\label{mettoGOT}
Consider a metric projective manifold $(M,\bm)$ with Weyl nullity and
$g\in \bm$ such that $B^g= 0$.  Then non-parallel (with respect to the
Levi-Civita $\nabla^g$) solutions of the metrisability equation
\nn{metr} determine non-constant solutions of the Gallot-Obata-Tanno
equation \nn{one}.
\end{proposition}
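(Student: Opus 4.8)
The plan is to obtain the required solution of the Gallot--Obata--Tanno equation \nn{one} (with $B_\circ=0$) simply as the metric trace of $\si$. Fix the scale $g\in\bm$, so $\nabla=\nabla^g$, and let $\si^{bc}$ be a solution of \nn{metr} with $\nabla\si\not\equiv 0$. I would set $f:=g_{bc}\si^{bc}$, regarded in the scale $g$ as an ordinary function (equivalently, via the $\nabla$-parallel volume density of $g$, as the weight-$2$ density $\tau^f$ in the sense of the discussion around Theorem \ref{G-Obthm}). Remark \ref{gradient} then gives $\mu_a=-\tfrac12\nabla_a f$, where $\mu^a=-\tfrac1{n+1}\nabla_b\si^{ba}$; since $\nabla\si$ is pure trace, $\nabla\si\not\equiv0$ forces $\mu\not\equiv0$, so that $f$ will automatically be non-constant.

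The key observation is that when $B^g=0$ the tensor $\mu^b$ satisfies a first BGG equation. Indeed, by Proposition \ref{altmid}, in the scale $g$ equation \nn{newmid} (equivalently \nn{prekey}) with $B=B^g=0$ reads $\nabla_a\mu^b+\delta^b_a\rho'=0$; rescaling $\mu^b$ by a power of the $\nabla$-parallel volume density so that it becomes a genuine section of $\ce^b(-1)$, this is precisely the projectively invariant system \nn{psys1} with $\rho=\rho'$. Since $\mu\not\equiv0$, Proposition \ref{psysth} then applies and yields both that $W^a{}_{bcd}\mu^b=0$ everywhere (so $\mu^b$ is pointwise in the Weyl nullity) and that $(\mu^b,\rho')$ is parallel for the normal tractor connection, whence $\nabla_a\rho'=\P_{ab}\mu^b$. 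Using $\P_{ab}=Bg_{ab}+\phi_{ab}=\phi_{ab}$ (as $B=0$) together with Theorem \ref{fth}, which gives $\phi_{ab}\mu^b=0$ because $\mu$ lies in the nullity, I would conclude $\nabla_a\rho'=0$, i.e.\ $\rho'$ is constant.

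Finally I would reassemble the third derivative of $f$: from $\nabla_a\mu^b+\delta^b_a\rho'=0$ and $\mu_a=-\tfrac12\nabla_a f$ one gets $\nabla_a\nabla_b f=2g_{ab}\rho'$, and differentiating once more, $\nabla_c\nabla_a\nabla_b f=2g_{ab}\nabla_c\rho'=0$; after relabelling of indices this is \nn{one} with $B_\circ=0$, and combined with $f$ non-constant this is the desired conclusion. The argument is then a short chain of already-established results, so I do not expect a genuine obstacle; the only points demanding a little care are the density-weight bookkeeping required to invoke Proposition \ref{psysth} for $\mu^b$, and checking that the reduction of \nn{newmid} at $B=0$ really does land one in the setting of \nn{psys1}.
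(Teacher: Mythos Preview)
Your argument is correct and follows essentially the same line as the paper: both use \nn{prekey} with $B^g=0$ to get $\nabla_a\mu^b+\delta^b_a\rho'=0$, then establish $\nabla_a\rho'=0$, and finally read off $\nabla_a\nabla_b\nabla_c f=0$ from $\mu_c=\nabla_c f$ (up to a harmless constant factor in the choice of $f$). The only difference is in how $\nabla\rho'=0$ is obtained: the paper simply quotes the bottom slot of \nn{npsys-BC} (Theorem~\ref{4.7}, valid for $B$ constant), whereas you recognise $\nabla_a\mu^b+\delta^b_a\rho'=0$ as the first BGG equation \nn{psys1}, invoke Proposition~\ref{psysth} to get $\mu$ in the nullity and $\nabla_a\rho'=\P_{ab}\mu^b$, and then use $\P_{ab}=\phi_{ab}$ together with $\phi_{ab}\mu^b=0$ from Theorem~\ref{fth}. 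This is a perfectly valid alternative route to the same fact, effectively re-deriving in the special case $B=0$ what the proof of Theorem~\ref{4.7} establishes via the integrability identity \nn{int1}.
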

\begin{proof}
Recall that, the equation \nn{metr} is given explicitly  by \nn{ef},
$$
\nabla_a \si^{bc} + \delta^b_a \mu^c + \delta^c_a \mu^b =0,
$$ 
in the scale $g$, and we work in this scale now.  As shown above
$\mu^a$ satisfies equation \ref{prekey}, which with $B^g=0$ simplifies
to
\begin{equation}\label{prekeyB0}
\nabla_a\mu^b+ \delta^b_a \rho' =0.
\end{equation} 
This is given in the scale $g$, and we work in this scale. From \nn{npsys-BC}
and $B^g=0$, we have $\nabla \rho'=0$. So 
$$
\nabla_a\nabla_b\mu_c=0, \quad
\mbox{and, in particular} 
\quad \nabla_{(a}\nabla_b\mu_{c)}=0. 
$$ 

But $\mu^c$ is a gradient, and, trivialising densities using
  $\tau^g$, we have 
\begin{equation}\label{mudf}
\mu_c= \nabla_c f
\end{equation} 
where
  $f:=-\frac{1}{2}g_{bc}\si^{bc}$. Then $f$ is a solution of the
  equation \nn{one} as claimed. Moreover the last claim is immediate from \nn{mudf}.
\end{proof}

\begin{theorem}\label{muae}
Consider a metric projective manifold $(M,\bm)$ with Weyl nullity and
$g\in \bm$ such that $B^g$ is constant. Let $\si^{ab}$ denote a solution  of the metrisability equation  
equation \nn{ef}
$$
\nabla_a \si^{bc} + \delta^b_a \mu^c + \delta^c_a \mu^b =0, \quad \mbox{in the scale~} g. 
$$ 
Then either $\mu^a$ is zero on $M$, or $\mu^a$ is non-zero on an
open dense set. 
\end{theorem}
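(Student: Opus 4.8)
The plan is to use Theorem \ref{4.7}, which applies precisely because $B:=B^g$ is constant: it tells us that the given solution $\si^{ab}$ of the metrisability equation \nn{ef} prolongs to the section $L^\phi(\si)=(\si^{bc},\mu^b,\rho')$ of $S^2\cT$ (with $L^\phi$ as in Proposition \ref{splitB}), that this section is \emph{parallel} for $\nabla^{\cT_1}$, and that its middle slot is exactly the $\mu^b$ appearing in \nn{ef} (cf.\ Remark \ref{gradient}). Writing $\nabla^{\cT_1}(\si^{bc},\mu^b,\rho')=0$ out in the scale $g$ via \nn{s2conn}, the two equations I shall actually use are
$$\nabla_a\mu^b+\delta^b_a\rho'-Bg_{ac}\si^{bc}=0,\qquad \nabla_a\rho'-2Bg_{ab}\mu^b=0.$$
Everything else is then a standard unique continuation argument on the connected manifold $M$: the negation of the asserted dichotomy is that $\mu^a\not\equiv 0$ while $\mu^a$ vanishes on some non-empty open set, and I will derive a contradiction by showing that the vanishing of $\mu^a$ on a non-empty open connected set $U$ forces $\mu^a\equiv 0$ on all of $M$.

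So suppose $\mu^a\equiv 0$ on such a $U$. The second displayed equation gives $\nabla_a\rho'=0$ on $U$, so $\rho'$ equals a constant $\rho'_0$ there, and the first gives $B\,\si^b{}_a=\rho'_0\,\delta^b_a$ on $U$, where $\si^b{}_a:=g_{ac}\si^{bc}$. I then split according to whether $B$ vanishes. If $B\neq 0$, taking the trace-free part of $B\,\si^b{}_a=\rho'_0\,\delta^b_a$ kills the right-hand side and forces $\si^{bc}=\tfrac{\rho'_0}{B}g^{bc}$ on $U$; hence on $U$ the whole parallel section equals $\tfrac{\rho'_0}{B}\,G$, where $G:=(g^{bc},0,B)$. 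A one-line computation from \nn{s2conn} — using only that $B$ is constant — shows $G$ is parallel for $\nabla^{\cT_1}$ on all of $M$ (in fact $G=L^\phi(\si^g)$ for the canonical solution $\si^g$ associated to $g$, cf.\ Proposition \ref{rankP}). Two $\nabla^{\cT_1}$-parallel sections agreeing on a non-empty open set agree on all of $M$, since their coincidence set is open and closed and $M$ is connected; so $(\si^{bc},\mu^b,\rho')=\tfrac{\rho'_0}{B}G$ globally, and in particular $\mu^a\equiv 0$.

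If instead $B=0$, the two displayed equations read $\nabla_a\mu^b+\delta^b_a\rho'=0$ and $\nabla_a\rho'=0$, which form a closed first-order linear system in the pair $(\mu^b,\rho')$ alone. On $U$ we have $\mu^a=0$ and hence, contracting, $\rho'=-\tfrac1n\nabla_a\mu^a=0$ as well; so this parallel pair vanishes on $U$ and therefore on all of $M$ by the same connectedness argument. Again $\mu^a\equiv 0$, and the dichotomy holds.

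I do not expect a real obstacle here: once Theorem \ref{4.7} has reduced matters to a genuinely parallel tractor connection, the remaining argument is elementary. The only points needing a little care are the harmless check that $(g^{bc},0,B)$ is $\nabla^{\cT_1}$-parallel (immediate from \nn{s2conn} and $B$ constant) and the correct use of connectedness; the mild case distinction $B=0$ versus $B\neq 0$ is forced because the term $Bg_{ac}\si^{bc}$ prevents $(\mu^b,\rho')$ from closing into a subsystem when $B\neq 0$ — but in that regime the complementary fact, that $\si^{bc}$ is pointwise conformal to $g^{bc}$ on $U$, fills the gap.
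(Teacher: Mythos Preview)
Your argument is correct, and it takes a genuinely different route from the paper. The paper proves Theorem \ref{muae} by passing through the Gallot--Obata--Tanno equation: it invokes Propositions \ref{GOT=met} and \ref{mettoGOT} to convert the solution $\si$ (more precisely its $\mu$-slot) into a solution $f$ of \nn{one} with $df$ corresponding to $\mu$, and then appeals to Theorem \ref{Tae}, whose proof in turn relies on the general prolongation machinery of \cite{BCEG}. Your approach stays entirely inside the $\nabla^{\cT_1}$ framework provided by Theorem \ref{4.7}: you exploit directly that $(\si^{bc},\mu^b,\rho')$ is parallel, compare it (when $B\neq 0$) with the explicit parallel section $G=L^\phi(\si^g)=(g^{bc},0,B)$, and for $B=0$ observe that $(\mu^b,\rho')$ is itself parallel for the rank-$(n+1)$ connection \nn{tconn1}. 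The unique-continuation step is then the trivial one for parallel sections on a connected manifold. This is more elementary and self-contained than the paper's argument --- it avoids both the G--O--T detour and the external reference to \cite{BCEG} --- at the modest cost of the case split $B=0$ versus $B\neq 0$. The paper's route, on the other hand, makes the link to the classical equation \nn{one} explicit, which is of independent interest elsewhere in the article.
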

\begin{proof}
This follows immediately from the corresponding result Theorem
\ref{Tae} for the Gallot-Tanno-Obata equation, given Proposition
\ref{GOT=met} and Proposition \ref{mettoGOT}.
\end{proof}
\begin{remark}
In part Theorem \ref{muae} can be seen more directly. For example in
the case of $B^g=0$ it follows easily from the form of equation
\ref{prekeyB0}. This is an overdetermined finite type PDE among the
variety considered in \cite{BCEG}, thus non-trivial solutions
$\mu^a$ cannot vanish on an open set. 
\end{remark}

\subsubsection{Further refinements} 
We next want to understand the case where $B$ is not constant. Here we
observe that na\"{\i}ve considerations reveal considerable
information. 

Suppose that $v^a$ is a vector field in the Weyl
nullity.  Contracting $v_c:=g_{ca}v^a$ into $\mu^dR^c{}_{dab}$ we have
$$
v_c\mu^dR^c{}_{dab}=- \mu_dv^cR^d{}_{cab}=\mu_dB(\delta^d_bv_a-\delta^d_av_b)= B(\mu_bv_a-\mu_av_b).
$$ 
So contracting $v_c$ into \nn{int1} yields:
$$
B(\mu_bv_a-\mu_av_b) +v_b(\rho'_a-B\mu_a)-v_a(\rho'_b-B\mu_b)=
\tilde{v}_bB_a-\tilde{v}_aB_b ,
$$ 
where $\tilde{v}_a:=\si_a{}^bv_b$. That is 
\begin{equation}\label{key-0}
v_b(\rho'_a-2B\mu_a)-v_a(\rho'_b-2B\mu_b)=
\tilde{v}_bB_a-\tilde{v}_aB_b .
\end{equation}
When $B$ is constant we have from above (e.g. Theorem \ref{npsys-BC}) 
that $\rho'_a-2B\mu_a=0$,
and so the display gives no restriction on $v^a$. Otherwise, if $B_a$
is not zero, we may substitute from \nn{useful} to find the strong constraint
$$
(\alpha-2B)v_{[b}\mu_{a]}= \tilde{v}_{[b}B_{a]} -\beta v_{[b}B_{a]}.
$$ 
Thus we have $\tilde{v}_a=\beta'\mu_b+\gamma' B_b$. Furthermore if $\beta=0$ then, using \nn{useful} and arguing as in Section \ref{Bc}, we again conclude that \nn{npsys-BC} holds. Otherwise 
if $\beta\neq 0$, 
$v_a=\alpha'\mu_a+\delta' B_a$  for some functions $\alpha'$,
$\beta'$, $\gamma'$ and $\delta'$. 
 Thus, where $B_a\neq 0$, the possibilities for vectors
in the nullity are seriously restricted.

\smallskip

Next observe that 
$$
v^bR^c{}_{dab}= v^bR_{ab}{}^c{}_{d}=-B(v^cg_{ad}-v_d\delta^c_a).
$$
So $\mu^d v^bR^c{}_{dab}= -B v^c\mu_a+ Bv^b\mu_b\delta^c_a $ 
and contracting $v^b$ into \nn{int1} gives 
\begin{equation}\label{key-1}
\delta^c_a(2Bv^b\mu_b- v^b\rho'_b)+ v^c(\rho'_a-2B\mu_a)=\tilde{v}^cB_a-\si^c{}_a v^bB_b .
\end{equation}
This shows that if $B_cv^c$ is non-zero at a point $x$ then, at $x$,
$\si^c{}_a$ is necessarily a low rank adjustment of a (density)
multiple of $\delta^c_a$.

Contracting the last display 
with $\tilde{v}^a$ gives
$$
\tilde{v}^c(2Bv^b\mu_b- v^b\rho'_b)- v^c(2B\tilde{v}^b\mu_b- \tilde{v}^b\rho'_b)=\tilde{v}^c(\tilde{v}^bB_b)-v^c (v^bB_b) ,
$$  
or equivalently
$$
\tilde{v}^c(2Bv^b\mu_b- v^b\rho'_b - \tilde{v}^bB_b)= v^c(2B\tilde{v}^b\mu_b- \tilde{v}^b\rho'_b- v^bB_b)
$$
Now, by working locally if required, let us suppose that $v^a$ is nowhere zero.
The last display shows that, at a point $x$, either 
\begin{equation}\label{unl}
2Bv^b\mu_b- v^b\rho'_b - \tilde{v}^bB_b =0
\end{equation}
or 
$$
\tilde{v}^a= f_xv^a \quad \mbox{for some number $f_x$}.
$$ 
Let us first assume that \nn{unl} does not vanish at some
point and hence in an open neighbourhood, and work in that
neighbourhood. We have there $\tilde{v}^a= fv^a$ for some function
$f$, and putting this into \nn{key-1} we see that 
$$
\delta^c_a(v^b\rho'_b-2Bv^b\mu_b)+ v^c(2B\mu_a-fB_a-\rho'_a)=\si^c{}_a (v^bB_b ).
$$
By symmetry we have that 
\begin{equation}\label{null-f}
2B\mu_a-fB_a-\rho'_a = \tilde{\kappa} v_c ,
\end{equation}
for some function $\tilde{\kappa}$ and, by our assumption on \nn{unl},
$\tilde{\kappa} \neq 0$ at $x$. It follows that $v^bB_b \neq 0$ at $x$,   
and we have that
\begin{equation}\label{wow}
\si^c{}_a = \nu \delta^c_a + \kappa v^cv_a,
\end{equation}
in a neighbourhood of $x$, for some functions $\nu$ and $\kappa$.
Thus we see that with the assumption that \nn{unl} is non-vanishing we
have a very strong restriction \nn{wow}. In fact we will see below
that we can strengthen this result.

\vspace{1cm}

\subsubsection{$B$ non-constant}
We derived \nn{newmid} assuming that the metric projective structure
$(M,\bm)$ has nullity. We saw in Section \ref{Bc} above that in this case and if $B$ is constant then the vector field $\mu^a$ lies in the nullity. In fact the later holds without the assumption that $B$ is constant:
 The following result is critical for our
subsequent discussion.
\begin{theorem}\label{mid2n}
Suppose that $(M,\bm)$ is a metric projective structure with Weyl
nullity at every point and that the metrisability equation \nn{eq1}
holds:
$$
\nabla_a \si^{bc} + \delta^b_a \mu^c + \delta^c_a \mu^b =0.
$$
 Then, the vector field $\mu^a$ satisfies
 \begin{equation} \label{mu_0} \mu^b  W^a{}_{bcd}=0
\end{equation}
at every point.
\end{theorem}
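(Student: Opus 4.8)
The plan is to push the solution $\si$ of the metrisability equation through the prolongation, read off a pointwise algebraic identity for the curvature contracted with $\mu$, and then reduce \nn{mu_0} to a linear-algebra statement; that linear algebra is the hard part. After the routine reductions — \nn{mu_0} is trivial at points where $\mu=0$, and by Corollary \ref{Bs} I may work on the open dense set where $B$ is smooth and then extend by continuity of $\mu^bW^a{}_{bcd}$ — it suffices, by the converse part of Theorem \ref{WeqK} together with Proposition \ref{mweyl} and Proposition \ref{B!}, to show that at each point with $\mu\neq 0$ the contraction $R^c{}_{dab}\mu^d$ is a scalar multiple of $\delta^c_b\mu_a-\delta^c_a\mu_b$ (the multiple being then automatically $-B$).

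Now I would invoke the prolongation. Theorem \ref{EMthm} turns $\si$ into a distinguished section of $S^2\cT$, and Proposition \ref{altmid} extracts from it the first-order identity \nn{prekey}, $\nabla_a\mu^b-\si^{bc}g_{ac}B+\delta^b_a\rho'=0$. Differentiating this and skew-symmetrising in $a,b$ gives the Ricci-type identity \nn{int1}, which expresses $\mu^dR^c{}_{dab}$ through $\mu_a$, $\rho'_a:=\nabla_a\rho'$, $B_a:=\nabla_aB$ and $\si^c{}_a$. Contracting \nn{int1} with $\mu_c$ kills the curvature term, giving \nn{star} and hence the decompositions \nn{useful} of $\rho'_a$ and of $\tilde\mu_b=\si^c{}_b\mu_c$ along $\mu_a$ and $B_a$. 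Substituting \nn{useful} back into \nn{int1} produces a closed pointwise identity
$$\mu^dR^c{}_{dab}= -B(\delta^c_b\mu_a-\delta^c_a\mu_b)-(\alpha-2B)(\delta^c_b\mu_a-\delta^c_a\mu_b)-\beta(\delta^c_bB_a-\delta^c_aB_b)+(\si^c{}_bB_a-\si^c{}_aB_b),$$
with $\alpha,\beta$ the scalar functions of \nn{useful}; so the theorem becomes equivalent to the vanishing of the correction $T^c{}_{ab}:=-(\alpha-2B)(\delta^c_b\mu_a-\delta^c_a\mu_b)-\beta(\delta^c_bB_a-\delta^c_aB_b)+\si^c{}_bB_a-\si^c{}_aB_b$.

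This last step carries the substance, and is where I expect the difficulty. When $\nabla B\equiv 0$ near the point, $T^c{}_{ab}$ retains only the $B_a$-free term and the argument of Section \ref{Bc} (identifying $\alpha=2B$ from Theorem \ref{WeqK} and Proposition \ref{B!}) closes it at once. When $\nabla B\neq 0$, I would feed in the refinements \nn{key-0} and \nn{key-1} — obtained by contracting \nn{int1} with a nullity vector $v$ and using Theorem \ref{WeqK} for $v$ — together with the strong structural consequences \nn{wow}, \nn{null-f} and the fact that $\si$ is a genuine solution of the metrisability equation (so $L(\si)$ is nowhere zero, Corollary \ref{spott}). The remaining job is to reconcile the Riemann symmetries of $R$ — skew pairs, pair interchange, and especially the first Bianchi identity applied to $\mu^dR^c{}_{dab}$ — with the low-rank source terms carrying $\nabla B$ and $\si^c{}_a$, forcing $T^c{}_{ab}=0$. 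This is the non-trivial linear algebra, which I would isolate as a separate lemma (the content of Section \ref{Vsec}): at the point, choose a basis adapted to $\si$ and to the nullity distribution, reduce everything to normal form, and check the identities directly. I expect the genuinely laborious part to be the bookkeeping over the several branches ($\nabla B$ versus $\mu$ proportional or not, \nn{unl} vanishing or not).
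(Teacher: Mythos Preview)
Your setup through \nn{int1}, \nn{star}, \nn{useful} is correct, and your treatment of the case $\nabla B\equiv 0$ matches Section~\ref{Bc} exactly. The gap is in the $\nabla B\neq 0$ branch: the plan of feeding in \nn{key-0}, \nn{key-1}, \nn{wow}, \nn{null-f} together with Riemann symmetries and Bianchi, and then ``checking the identities directly'' in a normal-form basis, is not what the paper does, and I do not see how to close it along those lines. The refinements in Section~4.3.2 that you cite are presented there as partial information only (the paper explicitly says ``In fact we will see below that we can strengthen this result''), and in particular \nn{wow} is derived only under the extra hypothesis that \nn{unl} is nonvanishing; you give no argument for the complementary case beyond ``bookkeeping''. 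More seriously, the identity $T^c{}_{ab}=0$ you are aiming for is not an identity in $(\mu,B,\sigma)$ alone: the functions $\alpha,\beta$ in \nn{useful} are only defined up to the ambiguity in decomposing along $\mu_a$ and $B_a$, and the Riemann symmetries of $R^c{}_{dab}$ do not by themselves pin them down.

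The paper's proof (Section~\ref{Vsec}, Theorem~\ref{lambda_nullity}) uses three ideas you have not identified, and they treat the $B$ constant and nonconstant cases uniformly.
\begin{enumerate}
\item Rather than differentiating \nn{prekey} to get \nn{int1}, one differentiates the metrisability equation \nn{Mbasic} itself and applies the Ricci identity to obtain the \emph{second-order} integrability condition \nn{int2}. Substituting \nn{vnb} (your \nn{prekey}) into \nn{int2} makes all $B$-terms cancel and yields the purely algebraic relation \nn{acom},
\[
a^{ip}Z^{j}{}_{pk\ell}+a^{pj}Z^{i}{}_{pk\ell}=0,
\]
i.e.\ for any $X,Y$ the endomorphism $\tilde Z:=Z^{\,\cdot}{}_{\,\cdot\, k\ell}X^kY^\ell$ \emph{commutes} with $A=a^i{}_j$. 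This is the key constraint, and it involves $\sigma$ and $Z$ only, not $\mu$ or $\nabla B$.
\item Independently of Weyl nullity, an eigenvalue analysis of $A$ (Lemmas~\ref{eigenvalues}--\ref{drhozero}, Corollary~\ref{cor:eigenvalues}) shows that at a generic point $\mu^i$ lies in the direct sum of the generalised eigenspaces of $A$ whose \emph{geometric multiplicity is one}. This uses only the metrisability equation and a Jordan-block basis satisfying \nn{condition}.
\item A pointwise linear-algebra lemma (Lemma~\ref{keys}): if $A$ satisfies \nn{acom} and $v$ lies in a generalised eigenspace of $A$ of geometric multiplicity one, then $v^sZ^i{}_{sjk}=0$. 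The proof uses that $\tilde Z$ is $g$-skew and commutes with $A$, so $g(A^r\tilde Z\,\cdot,\cdot)$ is skew for all $r\ge 0$; evaluating on the Jordan basis forces $\tilde Z$ to vanish on that block.
\end{enumerate}
Combining (ii) and (iii) gives $\mu^sZ^i{}_{sjk}=0$, which is equivalent to \nn{mu_0}. No case split on $\nabla B$ is needed, and the refinements \nn{key-0}--\nn{wow} play no role in the proof.
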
 
 This
result is critical for our subsequent discussion but to obtain it in
this generality takes some work, so we postpone the proof of this
until the next section (see Theorem \ref{lambda_nullity}). Let us
first observe some useful consequences.  First we use it to compute an
alternative formula for $\nabla_a \rho'$ that yields a variant of the
result in Theorem \ref{npsys-BC}.

Using that $\mu^d$ lies in the nullity we have
$$
\mu^d R^c{}_{dab}=B\delta^c_a \mu_b- B\delta^c_b \mu_a ,
$$
and so \nn{int1} simplifies to
\begin{equation}\label{key}
\delta^c_b (\nabla_a \rho'-2 B\mu_a )- \delta^c_a
(\nabla_b \rho'- 2 B\mu_b )= \si^c{}_b \nabla_a B -\si^c{}_a \nabla_b B .
\end{equation}
Contracting with $\delta^b_c$ we obtain 
$$\textstyle
\nabla_a\rho'-2B\mu_a +\frac{1}{n-1}\si^{bc}(g_{ab}\nabla_c B 
- g_{bc}\nabla_a B)=0
 .
$$
In summary:
\begin{theorem}\label{mconnthm} 
On a metric projective structure $(M,\bm)$ with Weyl nullity almost
everywhere and $B$ smooth, the solutions to \nn{metr} are in
one-to-one correspondence with solutions of the following system:
\begin{equation}\label{npsys-s}
\nabla^{\cT_1}_a\left( \begin{array}{c}
\si^{bc}\\
\mu^b\\
\rho'
\end{array}
\right) + \frac{1}{n-1}
\left( \begin{array}{c}
0 \\
0  \\
\si^{bc}(g_{ab}\nabla_c B 
- g_{bc}\nabla_a B )
\end{array}
\right) =0.
\end{equation}
\end{theorem}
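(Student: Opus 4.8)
The plan is to verify directly, in a fixed scale $g\in\bm$, that the three components of the first-order system \nn{npsys-s} are satisfied by the data attached to a solution of the metrisability equation \nn{metr}, and then to read off the claimed bijection. Throughout, $\nabla$ denotes the Levi-Civita connection of $g$ and we use the lowered/mixed versions of $\si$ as in the displays preceding the statement.

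\emph{Setting up the three rows.} By the formula \nn{s2conn} for $\nabla^{\cT_1}$ on $S^2\cT$, the top component of \nn{npsys-s} is $\nabla_a\si^{bc}+\delta^b_a\mu^c+\delta^c_a\mu^b=0$, which is exactly the metrisability equation written as \nn{ef}; and the middle component is $\nabla_a\mu^b-Bg_{ac}\si^{bc}+\delta^b_a\rho'=0$, which is \nn{prekey}/\nn{newmid}. For a solution $\si$ of \nn{metr} the latter is nothing but Proposition \ref{altmid}, once one absorbs the trace into the variable $\rho':=\rho-\tfrac{1}{n}\si^{cd}\phi_{cd}$. So the only thing left to prove is the bottom component.

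\emph{The bottom row.} I would differentiate \nn{newmid} and commute the two covariant derivatives, which produces the integrability identity \nn{int1}. At this point I would invoke Theorem \ref{mid2n}: it asserts that $\mu^b$ lies in the projective Weyl nullity, so by Theorem \ref{WeqK} the curvature term simplifies to $\mu^dR^c{}_{dab}=B\,\delta^c_a\mu_b-B\,\delta^c_b\mu_a$, and \nn{int1} collapses to \nn{key}. Tracing \nn{key} over the index pair $b,c$ (using $\delta^b_b=n$) and rearranging gives $\nabla_a\rho'-2B\mu_a+\tfrac{1}{n-1}\si^{bc}(g_{ab}\nabla_cB-g_{bc}\nabla_aB)=0$, which, again by \nn{s2conn}, is precisely the bottom component of \nn{npsys-s}.

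\emph{The bijection and the main obstacle.} Given a solution $\si$ of \nn{metr}, setting $\mu^a:=-\tfrac{1}{n+1}\nabla_b\si^{ba}$ (forced by the trace of \nn{ef}) and letting $\rho':=\tfrac{1}{n}(Bg_{ab}\si^{ab}-\nabla_a\mu^a)$ (forced by the trace of \nn{newmid}) produces a solution of \nn{npsys-s} by the previous two paragraphs; conversely the top component of any solution of \nn{npsys-s} is \nn{metr}, and taking traces of the top and middle components forces $\mu$ and $\rho'$ to be exactly these expressions. Hence the assignment is a bijection (this can also be deduced at once from Proposition \ref{Newprop} together with the computation above, which shows that on solutions the inhomogeneous term of \nn{npsys} agrees with that of \nn{npsys-s}). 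Modulo Theorem \ref{mid2n}, all of this is routine tensor calculus; the genuine obstacle is Theorem \ref{mid2n} itself --- the claim $\mu^bW^a{}_{bcd}=0$ --- whose proof needs the linear algebra of Section \ref{Vsec} and so is deferred there (it reappears as Theorem \ref{lambda_nullity}). A last minor point: since Weyl nullity is only assumed almost everywhere, one first establishes the bottom row on the open dense set where nullity holds and $B$ is smooth, and then extends it to all of $M$ by continuity, all the objects involved being smooth.
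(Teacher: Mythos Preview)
Your proposal is correct and follows essentially the same route as the paper: the top and middle rows are \nn{ef} and \nn{newmid}, while the bottom row is obtained by differentiating \nn{newmid} to reach \nn{int1}, invoking Theorem~\ref{mid2n} to simplify the curvature term and get \nn{key}, and then taking the $\delta^b_c$-trace. Your treatment of the bijection, the acknowledgement that Theorem~\ref{mid2n} is the real content (deferred to Section~\ref{Vsec}), and the continuity extension from the open dense set all match the paper's argument.
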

\begin{remark}
Note that we derived the system \nn{npsys-s} assuming that $B$ was not
constant, but the result in any case generalises the $B$ constant
case. So the system \nn{npsys-s} applies without any assumptions on the
constancy  of $B$. 
\end{remark}

\begin{remark} \label{liu}
Observe that since the second term of \nn{npsys-s} is
  linear in the variables $(\si,\mu, \rho )$, (depending on just $\si$
  thereof) it follows that the total system defines a linear
  connection on $S^2\cT$. By construction this is invariant on
  solutions of the metrisability equation on $(M,\bm)$: 
  It is derived
  from the projectively invariant system \nn{psys} using only that
  $\si$ is a solution of the projectively invariant equation
  \nn{metr}, in the case that $g$ is a metric in $\bm$. It is therefore expected,
  that this linear connection is metric 
  projectively invariant, or, which is the same, that  the (0,3)-tensor field $L_{abc}:= g_{ab}\nabla_c
  B - g_{bc}\nabla_a B$ is metric projectively invariant: if we take another metric $\bar g$ in the same projective class and the corresponding $\bar B:= B_{\bar g}$, then  $ \bar L_{abc}:= \bar g_{ab}\nabla_c
  \bar B - g_{bc}\nabla_a \bar B= L_{abc}$. 
  
  We claim here, and explain the proof of this claim in Remark
  \ref{liouville}, that this expectation is true, i.e., {\it $L_{abc}$
    is indeed a metric projective invariant.}  Of course it exists on
  metric projective manifolds with nullity only.  For the case of
  dimension 2 (where we always have nullity so this tensor is always
  defined), this projectively invariant tensor is very well known, is
  essentially due to \cite{liouville} and is often called the
  \emph{Liouville invariant}, see e.g. \cite{bryant}, or the
  projective Cotton tensor (see \ref{Cotton}).  It is the obstruction for a
  two-dimensional projective structure to be flat and in fact it can
  be constructed for any, not necessary metric, projective structure.
  For higher dimensions, it seems to be new, though of course it
  exists only if the projective structure is metric and has Weyl
  nullity. 
\end{remark}

\begin{remark} Since $\mu^a$ everywhere lies in the
 Weyl nullity we also have that $\mu^a\phi_{ab}=0$. Thus putting
 together \nn{npsys} and \nn{npsys-s} we conclude that for solutions
 of \nn{metr} (on a manifold $(M,\bm)$ with Weyl nullity) we have the
 identity
$$
n\si^{bc}(g_{ab}\nabla_c B 
- g_{bc}\nabla_a B )=(n-1) (\si^{bc}\nabla_a\phi_{bc} - 2 C_{cab} \si^{bc}) . 
$$
\end{remark}

\subsection{Consequences for the solution $\si$} \label{solS}

First we observe an immediate implication of the Theorem \ref{mconnthm}.
\begin{theorem}\label{Btypefix}
On a metric projective geometry $(M,\bm)$, if $B$ is constant for one
metric in $\bm$ then it is constant for all metrics in $\bm$.
\end{theorem}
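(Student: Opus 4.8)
The plan is to fix two metrics $g,\bar g\in\bm$ with $B:=B^g$ constant, apply the $B$-constant prolongation of Theorem~\ref{4.7} in the scale $g$, and then re-read the resulting parallel tractor in the scale $\bar g$. Recall that each metric in $\bm$ determines a non-degenerate solution of the metrisability equation \nn{metr}: for $\bar g$ this is the section $\bar\si^{ab}\in\Gamma(\ce^{(ab)}(-2))$ characterised by $\tau^{\bar\si}\,\bar\si^{ab}=\bar g^{ab}$ (cf.\ the discussion around \nn{invm}), and, $\bar g^{ab}$ being parallel for $\nabla^{\bar g}$, so is the associated density $\tau^{\bar\si}\in\Gamma(\ce(2))$ and hence so is $\bar\si^{ab}$ itself. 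Since $B$ is constant, hence smooth, Theorem~\ref{4.7} applies: the tractor $\Sigma:=L^\phi(\bar\si)$, with $L^\phi$ the invariant splitting operator of Proposition~\ref{splitB}, is parallel for the (scale-independent, Proposition~\ref{alphaf}) connection $\nabla^{\cT_1}$. Equally one could invoke Theorem~\ref{mconnthm}, whose correction term $\tfrac1{n-1}\si^{bc}(g_{ab}\nabla_cB-g_{bc}\nabla_aB)$ vanishes in the scale $g$.

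Now I would simply evaluate $\Sigma$ in the scale $\bar g$, using the formula \nn{Lphi-op} for $L^\phi$ in that scale, which there involves $\bar B:=B^{\bar g}$. Since $\nabla^{\bar g}\bar\si=0$ the middle slot vanishes, and since $\tau^{\bar\si}\bar\si^{ab}=\bar g^{ab}$ we have $\bar g_{bc}\bar\si^{bc}=n(\tau^{\bar\si})^{-1}$, so
$$
\Sigma\ \stackrel{\bar g}{=}\ \bigl(\bar\si^{bc},\ 0,\ \bar B\,(\tau^{\bar\si})^{-1}\bigr).
$$
Imposing $\nabla^{\cT_1}_a\Sigma=0$ in the scale $\bar g$ via \nn{s2conn} and reading off the bottom component gives $\nabla^{\bar g}_a\bigl(\bar B\,(\tau^{\bar\si})^{-1}\bigr)=0$; as $\nabla^{\bar g}_a\tau^{\bar\si}=0$ and $\tau^{\bar\si}$ is nowhere zero, this forces $\nabla_a\bar B=0$, so $\bar B$ is constant on the connected manifold $M$. (The top slot merely reproduces $\nabla^{\bar g}\bar\si=0$, and the middle slot is satisfied because $\bar g_{ac}\bar\si^{bc}=(\tau^{\bar\si})^{-1}\delta^b_a$ is pure trace; this is a useful consistency check but is not needed for the conclusion.)

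The only point needing a little care is that $\bar B$ is, a priori, known to be smooth only on the open dense set where the Weyl nullity has locally constant dimension (Proposition~\ref{Bfn}); but this is harmless, since $\phi_{ab}=\P_{ab}-Bg_{ab}$ is globally smooth when $B$ is constant, whence by Theorem~\ref{fth} and \nn{phidef} the tensor $\bar B\,\bar g_{ab}=\bar\P_{ab}-\phi_{ab}$, and therefore $\bar B=\tfrac1n\bar g^{ab}(\bar\P_{ab}-\phi_{ab})$, is globally smooth (alternatively one runs the argument above on that dense open set and appeals to connectedness). I do not expect a genuine obstacle here: the substance of the proof is entirely the bookkeeping of translating $L^\phi(\bar\si)$ from the scale $g$ to the scale $\bar g$, and observing that in the latter scale it collapses to the simple form displayed above, from which constancy of $\bar B$ can be read off directly.
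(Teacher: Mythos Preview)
Your proof is correct and follows essentially the same route as the paper's: both arguments exploit that $\nabla^{\cT_1}$ and $L^\phi$ are scale-independent and then read off the bottom slot of $\nabla^{\cT_1}L^\phi(\sigma^{\bar g})$ in the scale $\bar g$, where $\sigma^{\bar g}$ is $\nabla^{\bar g}$-parallel. The only cosmetic difference is the direction of the argument: the paper argues by contrapositive (take $\bar g$ with $B^{\bar g}$ nonconstant, note via Theorem~\ref{mconnthm} that the correction term in \nn{npsys-s} for $\sigma^{\bar g}$ is nonzero, hence $\nabla^{\cT_1}L^\phi(\sigma^{\bar g})\neq 0$, contradicting Theorem~\ref{4.7} if any metric had constant $B$), whereas you argue directly (start from $B^g$ constant, apply Theorem~\ref{4.7} to get $\nabla^{\cT_1}L^\phi(\bar\sigma)=0$, and then read the bottom slot in the scale $\bar g$ to obtain $\nabla_a\bar B=0$). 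Your handling of the smoothness of $\bar B$ via $\bar B\bar g_{ab}=\bar\P_{ab}-\phi_{ab}$ is a clean way to dispose of that technicality.
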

\begin{proof}
First observe that for both of the systems, \nn{npsys-BC} and
\nn{npsys-s}, a solution $(\si,\mu,\rho')$ must be in the image of
$L^\phi$, of \nn{Lphi-op}. 

Now let $g\in \bm$ and suppose that in the scale $g$ we have that
$\nabla_a B\neq 0$ at some point $x\in M$. The metric $g$ is
equivalent to a solution, that we will denote $\si$, of the equation
\nn{metr}. Thus by Theorem \ref{mconnthm} the prolongation of $\si$
given by $L^\phi$ solves \nn{npsys-s}. But for this solution it is
evident that the second term in the display \nn{npsys-s} is not zero
at $x$. Thus $\nabla^{\cT_1}L^\phi(\si)\neq 0$. The result now follows
immediately from Theorem \ref{4.7}, as the result
$\nabla^{\cT_1}L^\phi(\si)\neq 0$ is not dependent on the choice of
any metric from $\bm$.
\end{proof}

At this point we rather easily obtain consequences for the nature of
the solution $\si$ of \nn{metr}.
\begin{theorem}\label{mformthm}
Suppose that $(M,[g])$ is a metric projective structure with nullity
almost everywhere and $B$ non-constant almost everywhere. Then,
locally,  
$$
\si^a{}_b=\nu\delta^a_b+ \epsilon B^aB_b
$$ 
for suitable densities $\nu$ and $\epsilon$, and where $B_a:=\nabla_a B$.
\end{theorem}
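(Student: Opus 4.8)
The plan is to work locally, on the open dense set $V\subseteq M$ on which $(M,\bm)$ has Weyl nullity at every point, $B=B^g$ is smooth (Corollary \ref{Bs}), and the $1$-form $B_a:=\nabla_a B$ is nowhere zero; such a $V$ exists because $B$ is non-constant almost everywhere. Fix a scale $g\in\bm$ and let $\si$ be the given solution of \nn{metr}, with $\mu^a$ as in \nn{ef}. The starting point is the algebraic integrability identity \nn{key}, obtained in the excerpt (just before Theorem \ref{mconnthm}) by differentiating the $\mu$-equation \nn{prekey} to get \nn{int1} and then using Theorem \ref{mid2n} to replace $\mu^dR^c{}_{dab}$ by $B(\delta^c_a\mu_b-\delta^c_b\mu_a)$. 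Writing $\theta_a:=\nabla_a\rho'-2B\mu_a$, it reads $\delta^c_b\theta_a-\delta^c_a\theta_b=\si^c{}_bB_a-\si^c{}_aB_b$. Tracing over $c=b$ gives $(n-1)\theta_a=sB_a-\hat B_a$, where $s:=\si^c{}_c$ and $\hat B_a:=\si_a{}^cB_c$ (this is the content of Theorem \ref{mconnthm}; note $\si^c{}_aB_c=\hat B_a$ as well, by symmetry of $\si$).

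The next step is to contract the identity \nn{key} with $g_{ce}B^e$, i.e.\ to lower the free index and feed it $B$. The left-hand side becomes $B_b\theta_a-B_a\theta_b$ and the right-hand side becomes $\hat B_bB_a-\hat B_aB_b$; rearranging, $(\theta_a+\hat B_a)B_b=(\theta_b+\hat B_b)B_a$, and since $B_a\neq 0$ on $V$ this forces $\theta_a+\hat B_a=\lambda B_a$ for some function $\lambda$. Combining with $(n-1)\theta_a=sB_a-\hat B_a$ eliminates $\theta_a$ and yields $\tfrac{n-2}{n-1}\hat B_a=\bigl(\lambda-\tfrac{s}{n-1}\bigr)B_a$. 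Provided $n\geq 3$ this gives $\hat B_a=c\,B_a$ for some function $c$, i.e.\ the gradient $B^a$ is an eigenvector of the endomorphism $\si^a{}_b$, and then $\theta_a=\nu B_a$ with $\nu:=\lambda-c=\tfrac{s-c}{n-1}$. Substituting $\theta_a=\nu B_a$ back into \nn{key} gives $(\si^c{}_b-\nu\delta^c_b)B_a=(\si^c{}_a-\nu\delta^c_a)B_b$, so, lowering $c$ with $g$, the symmetric tensor $T_{cb}:=\si_{cb}-\nu g_{cb}$ satisfies $T_{cb}B_a=T_{ca}B_b$; since $B\neq 0$ this means $T_{cb}=W_cB_b$ for some covector $W_c$, and symmetry of $T$ forces $W_c=\epsilon B_c$. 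Hence $\si_{cb}=\nu g_{cb}+\epsilon B_cB_b$, equivalently $\si^a{}_b=\nu\delta^a_b+\epsilon B^aB_b$ with $B^a=g^{ab}\nabla_bB$, and $\nu,\epsilon$ (appropriately weighted densities) are smooth on $V$ because $\si$, $g$ and $B$ are. Since $V$ is dense and open, this is the asserted local form.

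I expect the genuine difficulty to lie not in the linear algebra above, which is routine once set up, but in the fact it rests on: that the auxiliary field $\mu^a$ always lies in the Weyl nullity (Theorem \ref{mid2n}). Without that one only has the weaker identity \nn{int1}, still carrying the curvature term $\mu^dR^c{}_{dab}$, and the contractions do not collapse to anything usable; the proof of Theorem \ref{mid2n} is precisely what is deferred to Section \ref{Vsec} and is the hard step. A secondary point is the dimension-two case, where the factor $\tfrac{n-2}{n-1}$ vanishes and the argument produces no constraint on $\si$: there one must argue separately, for instance by differentiating $\theta_a=sB_a-\hat B_a$ and skew-symmetrising, using the extra identities $\nabla_a\mu_b=\si_{ab}B-g_{ab}\rho'$ (symmetric, from \nn{prekey}), $\nabla_a s=-2\mu_a$ and \nn{ef} (or simply invoking the classical two-dimensional picture). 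Finally there is the minor bookkeeping of checking that the three conditions defining $V$ each fail only on a closed nowhere-dense set, so that $V$ is open and dense, which is the meaning of \emph{locally} in the statement.
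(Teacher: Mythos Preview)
Your argument is correct and follows exactly the paper's route: the paper's proof is a one-line assertion that \nn{key} forces $\si^c{}_b$ to be a pointwise combination of $\delta^c_b$ and $B^cB_b$ wherever $\nabla_aB\neq 0$, and you have simply supplied the (routine) linear algebra behind that assertion. Your identification of Theorem~\ref{mid2n} as the real work, and your flag on the $n=2$ case (which the paper's one-liner also glosses over), are both apt.
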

\begin{proof} 
If $\nabla_a B\neq 0$ at some point,
  equivalently in a neighbourhood, then \nn{key} implies that
  pointwise $\si^c{}_b$ is a linear combination of $\delta^a_b$ and
  $B^aB_b$.
\end{proof}

 \subsection{The vector $\mu^j$ lies in the nullity of $W^i_{\ j k\ell}$} \label{Vsec}
 
 We will work on a metric projective manifold $(M, \bm)$. We take a
 metric $g\in {\bf m}$ and assume that there exists a solution
 $\si^{ij}$ of the metrisability equation. For convenience we will
 work in the scale of the metric $g$, using this also to trivialise
 density bundles via the volume density it determines.  We then write
 $a^{ij}$ for the unweighted $(2,0)$ symmetric tensor equivalent to
 $\si^{ij}$ in the given trivialisation; for convenience we shall then
 write the metrisability equation on $a$ as
  \begin{equation} \label{Mbasic} 
 a^{ij}_{\ \ ,k}= \lambda^i \delta^j_k + \lambda^j \delta^i_k.
 \end{equation} 
so that the (unweighted) vector field $\lambda^i$ corresponds (using
the given trivialisation of density bundles) to $-\mu^i$ above. The
choice of sign is to make our discussion in this section closely
compatible with some of the related existing literature (see
e.g.\ \cite{einstein, pseudosymmetric, mikesSur}).

Recall from \nn{mug} of Remark \ref{gradient} that $\mu^i$ is a
gradient, so we have the same for $\lambda^i$, it is the gradient of a
function $\lambda$:
\begin{equation}\label{lam} 
\lambda:= \tfrac{1}{2} g_{p q }a^{p q}  = \tfrac{1}{2} \trace(a^i_{\ j}) , 
\end{equation}  
and $\lambda^i=\nabla^i \lambda$.  In particular, the covariant
derivative of $\lambda_i$ is symmetric: $\lambda_{i ,j} =
\lambda_{j,i}$. Here, as usual indices are raised and lowered using
the metric.

 The main goal of this section is to prove Theorem \ref{mid2n} above,
 which we repeat here for convenience, in our current notation.
 \begin{theorem} \label{lambda_nullity} Assume the projective Weyl tensor of  $g$   has a nullity at every point. Then, the vector $\lambda^i$ satisfies 
 \begin{equation} \label{lambda_0} \lambda^s  W^i_{ \ s jk}=0\end{equation}
  at every point.
 \end{theorem}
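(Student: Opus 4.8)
The plan is to combine the second-order integrability conditions of the metrisability equation \nn{Mbasic} with the identity \nn{newmid} and the Weyl-nullity hypothesis; the latter will be used a second time, to force a drastic restriction on $a^{ij}$ wherever $\nabla B\ne 0$, after which the remaining verification becomes linear algebra.

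First I would record the integrability data. Differentiating \nn{Mbasic} once more and commuting covariant derivatives via the Ricci identity yields the classical Sinjukov-type relation expressing $R^i{}_{skl}a^{sj}+R^j{}_{skl}a^{is}$ in terms of $\lambda^i{}_{,l}\delta^j_k+\lambda^j{}_{,l}\delta^i_k-\lambda^i{}_{,k}\delta^j_l-\lambda^j{}_{,k}\delta^i_l$. Since nullity is in force, \nn{newmid} holds (it was obtained from the prolongation connection using only metrisability and nullity), i.e.\ $\lambda_{i,j}=\rho'g_{ij}-B\,a_{ij}$ for a scalar $\rho'$; substituting this collapses the right-hand side and leaves the clean algebraic consequence $Z^i{}_{skl}a^{sj}+Z^j{}_{skl}a^{is}=0$, with $Z$ as in \nn{nullity2}. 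Differentiating $\lambda_{i,j}=\rho'g_{ij}-B\,a_{ij}$ a second time, using $\nabla_k a_{ij}=\lambda_i g_{jk}+\lambda_j g_{ik}$ and commuting, gives an explicit formula for $\lambda^s R^i{}_{sjk}$, hence — after adding the Schouten terms of \nn{weylt} and writing $\P_{ab}=Bg_{ab}+\phi_{ab}$ — a formula of the shape
\[
\lambda^s W^i{}_{sjk}=\Psi_j\,\delta^i_k-\Psi_k\,\delta^i_j+(\nabla_k B)\,a^i{}_j-(\nabla_j B)\,a^i{}_k,\qquad
\Psi_j:=\nabla_j\rho'+2B\lambda_j+\lambda^s\phi_{sj}.
\]
Contracting $i$ with $j$ and using that every $i$-contraction of $W$ vanishes, together with $\operatorname{tr}(a)=2\lambda$ (see \nn{lam}), pins down $\Psi_k=\tfrac{1}{n-1}\big(2\lambda\,\nabla_k B-a^s{}_k\nabla_s B\big)$. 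In particular, if $\nabla B=0$ the entire right-hand side already vanishes, recovering the conclusion of Section \ref{Bc} (Theorem \ref{4.7}).

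It remains to kill the right-hand side on $\{\nabla B\ne 0\}$, and this is where Weyl nullity re-enters. Choosing locally a nowhere-zero nullity field $v$ (Proposition \ref{Bfn}) and contracting the first integrability relation \nn{int1} against $v_c$ and $v^b$ — exactly the manipulations producing \nn{key-0} and \nn{key-1} — forces, on $\{\nabla B\ne 0\}$, that $a^i{}_j$ is a low-rank perturbation of a multiple of the identity, $a^i{}_j=\nu\,\delta^i_j+\kappa\,v^i v_j$ as in \nn{wow}, with $v$ tightly constrained relative to $\nabla B$. Carrying this through requires the case analysis flagged in Section \ref{Bc} and in the ``further refinements'' above (whether the scalar in \nn{unl} vanishes, whether $\beta$ in \nn{useful} vanishes, and the behaviour where $v^b\nabla_bB=0$); in the degenerate branches one argues as in Section \ref{Bc} to get $\lambda^s W^i{}_{sjk}=0$ directly, while in the generic branch one substitutes $a^i{}_j=\nu\delta^i_j+\kappa v^i v_j$ into the displayed formula and checks — using $n\nu+\kappa\,v^s v_s=\operatorname{tr}(a)=2\lambda$, $W^i{}_{sjk}v^s=0$, and the first Bianchi identity $W^i{}_{[sjk]}=0$ — that all the remaining terms cancel. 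Finally $\{\nabla B=0\}$ is treated by applying Section \ref{Bc} on its interior (where $B$ is locally constant) and by continuity of $\lambda^s W^i{}_{sjk}$ elsewhere, yielding \nn{lambda_0} on all of $M$; via Theorem \ref{WeqK} this also says $\lambda$ lies in the nullity, so $\lambda^s\phi_{sj}=0$ by Theorem \ref{fth}.

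The hard part is the content of the last paragraph: extracting the restricted form \nn{wow} of $a$ from the integrability conditions in full generality — i.e.\ dispatching every degenerate branch of the case analysis — and then executing the cancellation cleanly. This is precisely the non-trivial linear algebra the authors isolate, and I expect essentially all of the real work to sit there; the derivation of the curvature identities in the second paragraph is routine by comparison.
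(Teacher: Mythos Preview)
Your derivation of the curvature identity
\[
\lambda^s W^i{}_{sjk}=\Psi_j\,\delta^i_k-\Psi_k\,\delta^i_j+(\nabla_k B)\,a^i{}_j-(\nabla_j B)\,a^i{}_k,
\]
and its consequence when $\nabla B=0$, is fine; this is exactly the content of \nn{int1} rewritten via $W$. The problems begin on $\{\nabla B\ne 0\}$.

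First, the reduction to the form $a^i{}_j=\nu\delta^i_j+\kappa v^i v_j$ in \nn{wow} is established in the ``Further refinements'' passage only under the extra hypotheses that \nn{unl} is non-zero and $v^bB_b\ne 0$. You assert that the remaining degenerate branches are dispatched ``as in Section \ref{Bc}'', but the argument of Section \ref{Bc} uses $\nabla B=0$ in an essential way: it substitutes $\rho'_a=\alpha\mu_a$ into \nn{int1} with the right-hand side $\sigma^c{}_bB_a-\sigma^c{}_aB_b$ set to zero, and only then reads off that $\mu$ is a nullity vector. With $\nabla B\ne 0$ that right-hand side survives, and neither the vanishing of \nn{unl} nor $\beta=0$ in \nn{useful} forces it away. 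So the degenerate cases are genuinely open in your sketch.

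Second, even in the generic branch the cancellation you claim does not follow from the ingredients you list. Substituting $a^i{}_j=\nu\delta^i_j+\kappa v^iv_j$ and the trace formula for $\Psi_k$ into the displayed identity, the $\nu$-terms do cancel, but one is left with
\[
\kappa\, v^i\big(v_jB_k-v_kB_j\big)
+\tfrac{\kappa}{n-1}\Big[\big(|v|^2B_j-(v\!\cdot\!B)v_j\big)\delta^i_k
-\big(|v|^2B_k-(v\!\cdot\!B)v_k\big)\delta^i_j\Big].
\]
Neither $v^sW^i{}_{sjk}=0$ nor $W^i{}_{[sjk]}=0$ touches this expression: $v$ appears only through its components, not contracted into $W$. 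To kill it you would need a pointwise relation among $v$, $\nabla B$ and $\mu$ (e.g.\ collinearity of $v$ and $\nabla B$) that the nullity hypothesis alone does not supply.

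The paper avoids this entirely by a different, purely spectral, route. It shows (Lemmas \ref{eigenvalues}--\ref{drhozero}) that the gradient of each eigenvalue of $A=a^i{}_j$ lies in the corresponding generalised eigenspace and vanishes whenever that eigenvalue has geometric multiplicity $\ge 2$; hence $\lambda^i=\nabla^i(\tfrac12\operatorname{tr}A)$ lies in the sum of the generalised eigenspaces of geometric multiplicity one (Corollary \ref{cor:eigenvalues}). The only use of Weyl nullity is to obtain \nn{vnb}, from which \nn{acom} follows: for any $X,Y$ the endomorphism $\tilde Z=Z^i{}_{jk\ell}X^kY^\ell$ commutes with $A$ and is $g$-skew. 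A short linear-algebra argument (Lemma \ref{keys}) then shows that every vector in a generalised eigenspace of geometric multiplicity one is annihilated by $\tilde Z$, hence $\lambda^sZ^i{}_{sjk}=0$, equivalently $\lambda^sW^i{}_{sjk}=0$. No case split on $\nabla B$ and no special form for $a^i{}_j$ is needed.
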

 
 Since $\lambda^i$ is a smooth section of $TM$ it follows that if
 \eqref{lambda_0} holds at every generic point, then it holds
 everywhere.  (Generic here means that the multiplicities of all
 eigenvalues of $A$ are locally constant.)
 Thus we will work in the neighborhood of a generic point,
 and this will suffice.

  The proof of Theorem \ref{lambda_nullity} will require additional
  technical results which we formulate as separate statements.  Most
  of these do {\em not} use Weyl nullity. So until further notice we
  will not assume that there is (non-trivial) Weyl nullity.

 Without loss of
  generality we may assume that, in a small neighborhood that we are
  working in, the eigenfunctions of $a^i_{\ j}$ are smooth (possibly
  complex-valued) functions, and the rank of $a^i_{\ j}$ is
  constant. In the places below where we use index-free notations, we
  will denote the (1,1)-tensor $a^{i}_{\ j}$ by $A$ and view it as a
  tensor field of endomorphisms of $TM$.

 We consider the (point dependent) eigenvalue $\rho$ of $A$, 
and assume first
 that it is real-valued.  We consider (smooth) vector fields
 $\overset{1}\xi$, $\overset{2}\xi$,..., $\overset{m}\xi$ from the
 generalized eigenspace of $\rho$ such that
\begin{equation} \label{condition} 
g(\overset{\alpha}\xi,   \overset{\beta}\xi)
=\left\{ \begin{array}{ll} \varepsilon & \textrm{ if $\alpha+\beta=m+1$ }\\   
0 & \textrm{ otherwise, }\end{array}\right. \quad  \textrm{ and }\quad  A\overset{\alpha}\xi  = \rho\overset{\alpha}\xi +  \overset{\alpha-1}\xi,     
\end{equation} 
where $\overset{0}\xi:=0$, 
and $\varepsilon$ is plus or minus one (and is the same for all $\alpha,
\beta =1,...,m$).
The existence of such vector fields is
well-known, see for example \cite[Theorem 12.2]{lancaster}.
 We now consider 
a basis such that the first $m$ vectors are
$\overset{\alpha \geq 1}\xi$, in their given order, and the remainder are chosen to be  orthogonal to these. In
this basis the first $(m\times m)$-blocks of $g$ and of $A$ are given
by
  $$ \begin{pmatrix}
 &  & &   & \varepsilon \\
 &  & & \varepsilon &   \\
 &  & \iddots & & \\
 & \varepsilon &      &  & \\
\varepsilon &  &      &  &
\end{pmatrix}\ , \ \ 
\begin{pmatrix}
\  \rho & 1 & &  \\
                        &\rho & \ddots &  \\
                        &                        & \ddots & 1  \\
                        & & &  \rho \
\end{pmatrix}.$$

\begin{remark} \label{rem:eigenvalues}   The choice of the vector fields  $\overset{\alpha}{\xi}$ is not unique. More precisely, if the eigenfunction $\rho$ has geometric multiplicity equal to $1$,  then the vector fields are unique up to a sign (in a small connected simply-connected neighborhood).
 But if $a^i_{\ j}$ has a bigger $\rho$-eigenspace, for each
 eigenvector $v$ there exists a number $m$ and the vectors
 $\overset{1}{\xi},..., \overset{m}{\xi}$ satisfying \eqref{condition}
 such that $\overset{1}{\xi}$ is proportional to $v$.  For 
$\rho$-eigenvectors $v$ and $u$ that are non-proportional,
 at some point, the corresponding linear spaces generated by
 $\underbrace{\overset{1}{\xi_v},...,\overset{m_v}{\xi_v}}_{\textrm{constructed
     by $v$}}$ and by
 $\underbrace{\overset{1}{\xi_u},...,\overset{m_u}{\xi_u}}_{\textrm{constructed
     by $u$}}$ have trivial intersection. An analogous statement is
 also true for the vectors $\overset{1}{\xi},..., \overset{m}{\xi}$
 satisfying \eqref{conditionsC1} and \eqref{conditionsC2} below.
\end{remark}

We will need  two technical statements. These are Lemma
\ref{eigenvalues} and Lemma \ref{eigenvaluesC}.

\begin{lemma}\label{eigenvalues}
In the notation above, the gradient ${\rho_,}^i$ of $\rho$ is,
at each point, a linear combination of the vectors $\overset{1}{\xi},...,
\overset{m}{\xi}$.
\end{lemma}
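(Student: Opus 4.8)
The plan is to read off the gradient of $\rho$ from the covariant derivative of the endomorphism field $A=a^i{}_j$, which the metrisability equation pins down completely. Lowering an index in \nn{Mbasic} with $g$ and using $\nabla g=0$ gives
$$
\nabla_k a^i{}_j = \lambda^i g_{jk} + \delta^i_k\lambda_j ,
$$
so $\nabla A$ is manufactured solely from $\lambda$ and $g$. From this I would first extract the elementary identity
$$
\operatorname{tr}\big(B\,\nabla_k A\big) = 2\,(B\lambda)_k ,
$$
valid for every $g$-self-adjoint endomorphism field $B$ (a one-line contraction of the previous display, using $g(B\lambda,\,\cdot\,)=g(\lambda,B\,\cdot\,)$ on the second term). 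Applying it with $B=A^{p-1}$ — which is $g$-self-adjoint because $a^{ij}$ is symmetric — and using cyclicity of the trace, I get
$$
\nabla_k \operatorname{tr}(A^p) = 2p\,(A^{p-1}\lambda)_k \qquad (p\ge 1).
$$

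Next I would localise to a generic point, where by hypothesis all eigenvalue multiplicities are locally constant; there the distinct real eigenvalues $\rho=\mu_1,\mu_2,\dots,\mu_r$ are smooth functions, with constant algebraic multiplicities $k_1,\dots,k_r$, and $\operatorname{tr}(A^p)=\sum_a k_a\mu_a^p$. Differentiating and comparing with the identity above gives
$$
\sum_{a=1}^r k_a\,\mu_a^{p-1}\,\nabla_k\mu_a = (A^{p-1}\lambda)_k \qquad (p\ge 1).
$$
Now choose the interpolating polynomial $q(t)$ with $q(\mu_1)=1$ and vanishing to order $k_a$ at each $\mu_a$ with $a\ge 2$; then $\deg q=n-k_1\le n-1$, so expanding $q$ in monomials and feeding the identity in for $p=1,\dots,n-k_1+1$ yields
$$
k_1\,\nabla_k\rho = \big(q(A)\lambda\big)_k .
$$
Since $q(A)$ annihilates every generalized eigenspace $V_{\mu_a}$ with $a\ge 2$ and preserves $V_\rho$, the vector $\operatorname{grad}\rho=\tfrac{1}{k_1}q(A)\lambda$ lies in $V_\rho$. (Alternatively and more conceptually, differentiate $k_1\rho=\operatorname{tr}(\pi_\rho A)$, using that the diagonal blocks of $\nabla\pi_\rho$ vanish and that $\pi_\rho$ is $g$-self-adjoint, to reach $\nabla_k\rho=\tfrac{2}{k_1}(\pi_\rho\lambda)_k$.) In the adapted basis of the statement $V_\rho$ is spanned by $\overset{1}{\xi},\dots,\overset{m}{\xi}$, so $\operatorname{grad}\rho$ is a linear combination of these.

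The routine parts are the two displayed identities, which drop out as soon as $\nabla A$ is written explicitly. The main obstacle is the analysis at the generic point: one must justify that a real eigenvalue of locally constant multiplicity is a smooth function and that the associated spectral projector (equivalently, the coefficients of the interpolating polynomial, which are rational in the $\mu_a$) is smooth there; and, when the $\rho$-eigenspace has dimension larger than $m$, one must promote ``$\operatorname{grad}\rho\in V_\rho$'' to ``$\operatorname{grad}\rho\in\operatorname{span}(\overset{1}{\xi},\dots,\overset{m}{\xi})$'', which is precisely where the structure recorded in Remark~\ref{rem:eigenvalues} — distinct Jordan chains through non-proportional eigenvectors meeting trivially, forcing the eigenvalue of a higher-dimensional eigenspace to be locally constant — has to be invoked. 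Once $\operatorname{grad}\rho$ is known to be a smooth section landing in the $\rho$-generalized eigenspace, the conclusion is immediate.
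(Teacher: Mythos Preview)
Your trace approach is clean and does prove that $\operatorname{grad}\rho$ lies in the full generalized $\rho$-eigenspace of $A$. But the lemma asserts more: $\operatorname{grad}\rho$ must lie in the span of the \emph{single Jordan chain} $\overset{1}{\xi},\dots,\overset{m}{\xi}$. When the geometric multiplicity of $\rho$ is~$1$ these coincide and your argument is a valid alternative. When the geometric multiplicity exceeds~$1$, the chain spans a proper subspace of the generalized eigenspace, and your conclusion $q(A)\lambda\in$ (generalized $\rho$-eigenspace) falls short.

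You flag this yourself and propose to close the gap by asserting that eigenvalues of geometric multiplicity $\ge 2$ are locally constant. But that is precisely Lemma~\ref{drhozero}, and in the paper Lemma~\ref{drhozero} is \emph{deduced from} Lemma~\ref{eigenvalues}: one applies the present lemma to two different Jordan chains through non-proportional eigenvectors, and then Remark~\ref{rem:eigenvalues} (trivial intersection of the two chain-spans) forces $\operatorname{grad}\rho=0$. So your proposed fix is circular: you cannot invoke the constancy of $\rho$ without first knowing that $\operatorname{grad}\rho$ lands in every Jordan chain individually, which is exactly what the lemma says and what your trace method does not deliver. Remark~\ref{rem:eigenvalues} alone gives only that distinct chains meet trivially; it says nothing about where $\operatorname{grad}\rho$ sits inside the generalized eigenspace.

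The paper's proof avoids this by differentiating the chain relation $A\overset{\alpha}{\xi}=\rho\overset{\alpha}{\xi}+\overset{\alpha-1}{\xi}$ directly, contracting with $\overset{\beta}{\xi}$, and summing over $\alpha+\beta=m+1$; a telescoping cancellation leaves $m\varepsilon\,\rho_{,k}$ equal to a manifest linear combination of the $\overset{\alpha}{\xi}_k$. This works for \emph{any} choice of chain, regardless of multiplicity, and is what makes the subsequent Lemma~\ref{drhozero} go through.

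(A minor slip: comparing $\nabla_k\operatorname{tr}(A^p)=2p\,(A^{p-1}\lambda)_k$ with $\nabla_k\operatorname{tr}(A^p)=p\sum_a k_a\mu_a^{p-1}\nabla_k\mu_a$ gives $\sum_a k_a\mu_a^{p-1}\nabla_k\mu_a = 2(A^{p-1}\lambda)_k$; you dropped a factor of~$2$.)
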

\begin{proof}
Consider the defined equation $A\overset{\alpha}\xi =
\rho\overset{\alpha}\xi + \overset{\alpha-1}\xi$, which in the 
tensor notation reads
$$ 
a_{ji} \overset{\alpha}{\xi^i}- \rho \overset{\alpha}\xi_j -
 {\overset{\alpha-1}\xi}_j=0,
$$  
differentiate it  covariantly  and substitute the derivatives of $a_{ij}$ 
given by \eqref{Mbasic} to obtain 

\begin{equation}\label{T0}
\overset{\alpha}\xi_k \lambda_j + g_{jk} \overset{\alpha}{\xi^i}
\lambda_i - \rho_{,k} \overset{\alpha}{\xi_j} + (a_{ij}- \rho g_{ij})
\overset{\alpha}{\xi^i}_{\ ,k} -  \overset{\alpha-
  1}\xi_{j,k}=0.\end{equation} 
 We now contract
$\overset{\beta}{\xi^j}$ into this equation;
in view of
$\overset{\beta}{\xi^j}(a_{ij}- \rho g_{ij}) = 
\overset{\beta-1}{\xi}_i $ we obtain

\begin{equation}\label{T} 
\underbrace{\overset{\alpha}{\xi_k} \overset{\beta}{\xi^j} \lambda_j +
  \overset{\beta}{\xi_k} \overset{\alpha}{\xi^i} \lambda_i}_{(A)} -
\underbrace{\rho_{,k} \overset{\beta}{\xi^j}
  \overset{\alpha}{\xi_j}}_{(B)} + \underbrace{
  \overset{\beta-1}{\xi^j} \overset{\alpha}{\xi_{j,k}} -
  \overset{\beta}{\xi^j}\overset{\alpha- 1}{\xi_{j,k}}}_{(C)}=0.
\end{equation}
Now we  denote the left hand side of \eqref{T} by $\overset{(\alpha, \beta)}T$ and  consider the  sum 
\begin{equation} \label{T1}
\sum_{\begin{array}{c} \alpha + \beta=m+1 \\ 1\le \alpha \le m\end{array} } \overset{(\alpha, \beta)}T. 
\end{equation}
This sum  is of course $0$ since each $\overset{(\alpha, \beta)}T$ is zero. 

From the other side, the sum of the $(C)$-terms of \eqref{T1} is
zero. In order to see this observe that
the sums  
 $$\sum_{\begin{array}{c} \alpha + \beta=m+1 \\ 1\le \alpha \le m-1\end{array} }  
   \overset{\beta-1}{\xi^j} \overset{\alpha}{\xi_{j,k}} \ \ \textrm{   and  }  \  
   \sum_{\begin{array}{c} \alpha + \beta=m+1 \\ 2\le \alpha \le m \end{array}}  \overset{\beta}{\xi^j}\overset{\alpha- 1}{\xi_{j,k}}$$  are equal  and,  since they come with different signs, cancel each other. 
 The two
remaining terms are $\overset{0}{\xi^j}
\overset{m}{\xi_{j,k}}$
 and $ \overset{m}{\xi^j}\overset{0}{\xi_{j,k}}$ and vanish because
$ \overset{0}{\xi^j}=0 $.

   The sum of the  $(A)$  terms is  
$$\sum_{\begin{array}{c} \alpha + \beta=m+1 \\ 1\le \alpha \le
       m\end{array} }{\overset{\alpha}{\xi_k} \overset{\beta}{\xi^j}
     \lambda_j + \overset{\beta}{\xi_k} \overset{\alpha}{\xi^i}
     \lambda_i}$$ and is manifestly a linear combination of the vectors
   $\overset{1}{\xi},..., \overset{m}{\xi}$.  The sum of the remaining
   (B)-terms
$$
\sum_{\begin{array}{c} \alpha + \beta=m+1 \\ 1\le \alpha \le m\end{array} } \rho_{,k} \overset{\beta}{\xi^j} \overset{\alpha}{\xi_j}
$$    is equal, in view of condition \eqref{condition},  to  $m\varepsilon \rho_{,k}$. Putting these results together
 we obtain that $ \rho_{,k}$ is a linear combinations of  $\overset{\alpha}{\xi_k}$ and $\overset{\beta}{\xi_k}$ which was our goal. Lemma \ref{eigenvalues}
is proved. 

\end{proof}

Let us now assume that the eigenvalue is complex-valued: $\rho= a +
\cplxi b$, where $a, b$ are real-valued functions and $\cplxi$ is the
imaginary unit and $b\ne 0$ at some point, and hence in a neighbourhood that we now work in. 
 We consider real vector fields
$\overset{\alpha} x,\overset{\alpha}y$, $\alpha= 1,...,m$ such that
\begin{equation}\label{conditionsC1} g(\overset{\alpha}x, \overset{\beta}x)= 0, \  g(\overset{\alpha}y, \overset{\beta}y)= 0,   \  g(\overset{\alpha}x, \overset{\beta}y)= \left\{\begin{array}{cc}  1 & \textrm{ if $\alpha+\beta= m+1$} \\ 0  & \textrm{ if $\alpha+\beta\ne  m+1$} \end{array}\right. \end{equation}
and such that for the complex-valued vectors $\overset{\alpha}
\xi:=\overset{\alpha}x + \cplxi \overset{\alpha}y$ we have
\begin{equation} A\overset{\alpha}\xi  = \rho\overset{\alpha}\xi +  \overset{\alpha-1}\xi \quad \textrm{ where }\quad  \overset{0}\xi:=0.
\label{conditionsC2} \end{equation} 
This is the natural complex analogue of the condition
\eqref{condition}.  The existence of such vector fields follows again
from \cite[Theorem 12.2]{lancaster}. It is easy to check, by repeating
the arguments from the proof for the real eigenvalue, that the
equations \eqref{T0} and \eqref{T1} holds: the fact that $\rho$ and
$\overset{\alpha}\xi$ are complex-valued changes nothing. We again
consider the sum $$\sum_{\begin{array}{c} \alpha + \beta=m+1 \\ 1\le
    \alpha \le m\end{array} } \overset{(\alpha,\beta)}T$$ which is
zero.

 The sum of $(A)$ terms with a raised index is a linear combination of
 the (possibly complex-valued) vectors from the generalized eigenspace
 of $\rho$ with possibly complex-valued coefficients. For example, the
 first term of $\overset{(\alpha,\beta)}T$ is proportional to the
 (complex valued) $\overset{\alpha}\xi_k $ with the coefficient
 $\overset{\beta}{\xi^j}\lambda_j$, which is a complex function.

The sum of the $(C)$ terms is zero by the same argument as when the eigenvalue was real. Now, the sum of the $(B)$-terms is, in view of 
$$
\overset\alpha{\xi^j}\overset\beta\xi_j = g(\overset{\alpha}x + \cplxi \overset{\alpha}y , \overset{\beta}x + \cplxi \overset{\beta}y)=g(\overset{\alpha}x, \overset{\beta}x)+\cplxi g(\overset{\alpha}x  ,   \overset{\beta}y)+
 \cplxi g(  \overset{\alpha}y , \overset{\beta}x ) - g(\overset{\alpha}y , \overset{\beta}y) \stackrel{\textrm{\tiny $\alpha+ \beta= m+1$}}= 2\cplxi  
$$ equal to $ 2 m \cplxi \rho_{,k}$ as we want. Thus, the following analog of Lemma \ref{eigenvalues} for complex-valued $\rho$ 
is proved:

\begin{lemma}\label{eigenvaluesC}
In the notation above, the gradient ${\rho_,}^i$ (which is now a
complex-valued vector) of $\rho= a + \cplxi b$ is, pointwise, a linear
combination of the vectors $\overset{1}{\xi},..., \overset{m}{\xi}$.
\end{lemma}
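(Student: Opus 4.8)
The plan is to rerun the proof of Lemma~\ref{eigenvalues} verbatim with complex-valued data, checking that every step there is purely algebraic in the entries (hence insensitive to replacing real scalars by complex ones) with a single exception: the evaluation of the pairing $\overset{\alpha}{\xi^j}\overset{\beta}\xi_j$ appearing in the $(B)$-terms, where the complex normalisation \eqref{conditionsC1} must be invoked in place of \eqref{condition}.

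Concretely, I would first take the complex vector fields $\overset{\alpha}\xi=\overset{\alpha}x+\cplxi\,\overset{\alpha}y$, $\alpha=1,\dots,m$ (with $\overset{0}\xi:=0$), satisfying \eqref{conditionsC1} and \eqref{conditionsC2}; their existence is again \cite[Theorem~12.2]{lancaster}, and one works on a neighbourhood where $b\neq 0$, so that $\rho=a+\cplxi b$ is genuinely non-real there. Starting from the defining relation $a_{ji}\overset{\alpha}{\xi^i}-\rho\,\overset{\alpha}\xi_j-\overset{\alpha-1}\xi_j=0$, I would differentiate covariantly and substitute \eqref{Mbasic}; since the manipulation is formal, this reproduces \eqref{T0} with the understanding that all quantities are now complex-valued. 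Contracting $\overset{\beta}{\xi^j}$ and using $(a_{ij}-\rho g_{ij})\overset{\beta}{\xi^j}=\overset{\beta-1}\xi_i$ yields the complex analogue of \eqref{T}, whose left-hand side I call $\overset{(\alpha,\beta)}T$; each term vanishes.

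Then I would form the sum $\sum_{\alpha+\beta=m+1,\ 1\le\alpha\le m}\overset{(\alpha,\beta)}T=0$ and split it into the same three groups as before. The $(C)$-terms telescope and cancel using only $\overset{0}\xi=0$, exactly as in the real case, so they contribute nothing. The $(A)$-terms form a manifest linear combination of $\overset{1}\xi,\dots,\overset{m}\xi$, with complex coefficients such as $\overset{\beta}{\xi^j}\lambda_j$, which is harmless. The one genuinely different step is the $(B)$-terms: using \eqref{conditionsC1} one computes $\overset{\alpha}{\xi^j}\overset{\beta}\xi_j=g(\overset{\alpha}x,\overset{\beta}x)-g(\overset{\alpha}y,\overset{\beta}y)+\cplxi\big(g(\overset{\alpha}x,\overset{\beta}y)+g(\overset{\alpha}y,\overset{\beta}x)\big)$, which equals $2\cplxi$ when $\alpha+\beta=m+1$ and $0$ otherwise; hence the $(B)$-contribution is $2m\cplxi\,\rho_{,k}$, a nonzero scalar multiple of $\rho_{,k}$. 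Collecting the three groups then shows $\rho_{,k}$ is a complex-linear combination of $\overset{1}{\xi_k},\dots,\overset{m}{\xi_k}$, and raising the index with the (non-degenerate) metric gives the statement for ${\rho_,}^i$.

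I do not anticipate a real obstacle. In the proof of Lemma~\ref{eigenvalues} reality of the data was used in exactly one place — the $(B)$-term pairing — and the conditions \eqref{conditionsC1} are set up precisely so that the same conclusion (a nonzero multiple of $\rho_{,k}$, now with a factor $2m\cplxi$ rather than $m\varepsilon$) still emerges. The only points needing mild care are the existence of the $\overset{\alpha}\xi$ (cited above), the localisation to where $b\neq0$ so the complex construction applies, and bookkeeping of the factors of $\cplxi$, none of which affects the final linear-combination statement.
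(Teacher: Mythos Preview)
Your proposal is correct and follows essentially the same approach as the paper: rerun the real-eigenvalue argument with complex $\overset{\alpha}\xi$, observe that the $(A)$ and $(C)$ terms behave identically, and use \eqref{conditionsC1} to evaluate the $(B)$-term pairing as $2\cplxi$ so that the sum contributes $2m\cplxi\,\rho_{,k}$. The paper does exactly this computation, with the same factor $2m\cplxi$ in place of $m\varepsilon$.
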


\begin{lemma}\label{drhozero}
At a generic point, $d\rho=0$ for eigenvalues $\rho$ of $A$ with
geometric multiplicity $\ge 2$. 
\end{lemma}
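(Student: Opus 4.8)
The plan is to obtain the statement as a short corollary of Lemmas~\ref{eigenvalues} and~\ref{eigenvaluesC} together with the trivial-intersection observation recorded in Remark~\ref{rem:eigenvalues}. I would fix a generic point $p$ and work in a neighbourhood of it, so that all eigenvalue multiplicities of $A=a^i{}_j$ are locally constant and the relevant (possibly complex-valued) eigenvector fields and Jordan chains can be chosen smoothly, exactly as in the set-up preceding Lemma~\ref{eigenvalues}.

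Let $\rho$ be an eigenvalue of $A$ whose geometric multiplicity is $\ge 2$ at $p$; then $\ker(A_p-\rho\,\mathrm{Id})$ has dimension at least two, over $\mathbb{R}$ if $\rho$ is real and over $\mathbb{C}$ if $\rho$ is complex. I would choose two non-proportional eigenvectors $v,u$ in this kernel. By Remark~\ref{rem:eigenvalues}, there are chains of vector fields $X_1,\dots,X_{m_v}$ and $Y_1,\dots,Y_{m_u}$ satisfying \eqref{condition} (respectively \eqref{conditionsC1}--\eqref{conditionsC2} in the complex case) with $X_1$ proportional to $v$ and $Y_1$ proportional to $u$. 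Applying Lemma~\ref{eigenvalues} (respectively Lemma~\ref{eigenvaluesC}) first to $X_1,\dots,X_{m_v}$ and then to $Y_1,\dots,Y_{m_u}$ shows that the gradient ${\rho_,}^i$ lies simultaneously in $\mathrm{span}\{X_1,\dots,X_{m_v}\}$ and in $\mathrm{span}\{Y_1,\dots,Y_{m_u}\}$.

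By the trivial-intersection property of distinct Jordan chains noted in Remark~\ref{rem:eigenvalues}, since $v$ and $u$ are non-proportional these two spans meet only in $0$; hence ${\rho_,}^i=0$ at $p$, and since the metric is non-degenerate this is exactly $d\rho=0$ at $p$ (in the complex case, the simultaneous vanishing of the real and imaginary parts of $d\rho$). As $p$ was an arbitrary generic point, this proves the lemma. The only point needing care is a uniform bookkeeping of the real and complex cases; since all the substance is already contained in Lemmas~\ref{eigenvalues}--\ref{eigenvaluesC} and in Remark~\ref{rem:eigenvalues}, I do not expect any further obstacle.
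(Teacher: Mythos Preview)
Your proof is correct and follows essentially the same approach as the paper: choose two non-proportional $\rho$-eigenvectors, apply Lemmas~\ref{eigenvalues}/\ref{eigenvaluesC} via the chains provided by Remark~\ref{rem:eigenvalues} to place ${\rho_,}^i$ in both spans, and then use the trivial-intersection property from that remark to conclude ${\rho_,}^i=0$. The paper's argument is identical in structure, only more tersely stated.
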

\begin{proof}
 We work at a generic point of $M$. Suppose an 
eigenvalue $\rho$ has geometric multiplicity $\ge 2$. Then, one finds
two nonproportional $\rho$-eigenvectors $v, u$. Combining Lemma
\eqref{eigenvalues} (resp. Lemma \eqref{eigenvaluesC}, if $\rho$ is a
complex-valued) with Remark \ref{rem:eigenvalues}, we see that the
gradient ${\rho_,}^i$ lies in each of two eigenspaces whose
intersection is trivial; thus, as claimed, $d\rho=0$ for eigenvalues $\rho$ of
geometric multiplicity $\ge 2$.
\end{proof}

\begin{corollary} \label{cor:eigenvalues}
Suppose that on a pseudo-Riemannian manifold $(M,g)$ the metrisability
equation \nn{Mbasic} holds. Then, at a generic point, $\lambda^i$ lies
in the direct sum of the generalized eigenspaces whose geometric
multiplicity is one.
\end{corollary}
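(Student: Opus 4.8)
The plan is to write $\lambda^i$ as a linear combination of the gradients of the eigenvalues of $A=a^i{}_j$ and then to feed in Lemmas \ref{eigenvalues}, \ref{eigenvaluesC} and \ref{drhozero}. Throughout one works near a generic point, so that the distinct eigenvalues $\rho$ of $A$ together with their algebraic multiplicities $n_\rho$ vary smoothly, with the $n_\rho$ locally constant. Since $a^i{}_j$ is a real $(1,1)$-tensor which is self-adjoint with respect to $g$ (because $\si^{ij}$, hence $a^{ij}$, is symmetric), its non-real eigenvalues occur in conjugate pairs of equal algebraic multiplicity, so $\trace(A)=\sum_\rho n_\rho\rho$ is a real-valued function; comparing with \nn{lam} gives $2\lambda=\trace(A)=\sum_\rho n_\rho\rho$, and hence, differentiating and using that the $n_\rho$ are locally constant, $2\lambda^i=\sum_\rho n_\rho\,{\rho_,}^i$, where for a conjugate pair $\{\rho,\bar\rho\}$ the two corresponding terms combine into the real vector $n_\rho({\rho_,}^i+{\bar\rho_,}^i)$.

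Next I would localise each summand inside a generalized eigenspace. By Lemma \ref{eigenvalues} (for a real eigenvalue $\rho$) and Lemma \ref{eigenvaluesC} (for a complex eigenvalue $\rho$) the gradient ${\rho_,}^i$ lies in the span of a Jordan chain $\overset{1}{\xi},\dots,\overset{m}{\xi}$ for $\rho$, and such a chain is contained in the generalized eigenspace $V_\rho$ of $\rho$; for a conjugate pair the real vector ${\rho_,}^i+{\bar\rho_,}^i$ accordingly lies in the real generalized eigenspace attached to $\{\rho,\bar\rho\}$. On the other hand, Lemma \ref{drhozero} says that ${\rho_,}^i=0$ at a generic point whenever $\rho$ has geometric multiplicity $\ge 2$, so exactly the eigenvalues of geometric multiplicity one survive in the sum for $2\lambda^i$.

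Putting these two facts together, $2\lambda^i$ is a finite sum of vectors, each lying in the generalized eigenspace of an eigenvalue of geometric multiplicity one. Since the generalized eigenspaces of distinct eigenvalues of $A$ are linearly independent and their sum is direct, this realises $\lambda^i$ as an element of the direct sum of those generalized eigenspaces whose geometric multiplicity is one, which is exactly the claim. The only step that needs a little care is the bookkeeping for non-real eigenvalues --- one must verify that a conjugate pair contributes a real multiple of $\operatorname{Re}\rho$ to $\trace(A)$ and that the gradient of this real part lands in the corresponding real generalized eigenspace --- but this follows directly from the canonical form used in the proofs of Lemmas \ref{eigenvalues} and \ref{eigenvaluesC}, so it presents no genuine obstacle; the real content of the Corollary is carried entirely by those lemmas together with Lemma \ref{drhozero}.
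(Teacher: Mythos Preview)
Your proof is correct and follows essentially the same approach as the paper: write $\lambda^i$ as a linear combination of the gradients of the eigenvalues of $A$ via $2\lambda=\trace(A)$, then invoke Lemmas \ref{eigenvalues}, \ref{eigenvaluesC}, and \ref{drhozero} exactly as you do. The paper's version is simply a terse one-paragraph summary of the same argument, and your additional care with the complex eigenvalue bookkeeping and the local constancy of the multiplicities just makes explicit what the paper leaves to the reader.
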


\begin{proof} We work at a generic point of $M$. 
Since, as we explained at the beginning of this section, $\lambda^i$
is the gradient of the function $\lambda= \trace{A}$, $\lambda^i$ is
therefore a linear combination of the gradients of nonconstant
eigenvalues of $A$, which by Lemmas \ref{eigenvalues},
\ref{eigenvaluesC} and \ref{drhozero} lie in the
direct sum of the generalized eigenspaces whose geometric multiplicity
is one. Corollary \ref{cor:eigenvalues} is proved.
\end{proof}

We need one further result that does not use Weyl nullity. Namely we
want integrability conditions for the equation \eqref{Mbasic}. One
obtains these by substituting the derivatives of $a^{ij}$, given by
\eqref{Mbasic}, into the Ricci identity $a^{ij}_{\ \ ,\ell k}-
a^{ij}_{\ \ ,k\ell}= a^{i p }R^{j}_{\ pk\ell} + a^{p
  j}R^{i}_{\ pk\ell}$ (which of course holds for every $(2,0)-$tensor
$a^{ij}$) to obtain:
 \begin{equation}  a^{i p }R^{j}_{\ pk\ell} +  a^{p j}R^{i}_{\ pk\ell} 
=\lambda^i_{\ ,  \ell} \delta^{j}_{k}+\lambda^j_{\ , \ell} \delta^i_{k}
-\lambda^i_{\ ,k} \delta^j_{\ell}-\lambda^{j}_{\ ,k} \delta^i_{\ell}. 
\label{int2} 
\end{equation}  
 The integrability condition in this form was obtained by Sinjukov
 \cite{Sinjukov}; in an equivalent form, it was known to Solodovnikov
 \cite{s1}.

\smallskip

Now let again consider the situation of possible Weyl nullity and 
the proof of Theorem \ref{lambda_nullity}. It will be
convenient to actually prove that
\begin{equation}\label{nullity3}\lambda^jZ^i_{\ jkm}=0,\end{equation} where $Z$ is given by \eqref{nullity2}; as we explained in Remark \ref{rem:nullity2} this condition is equivalent to the condition $
\lambda^s W^i_{\ sjk}=0.  
$

First let us observe that using \nn{int2} we recover Proposition \ref{altmid}:
If we contract \eqref{int2} with any nowhere zero vector $v^k $ in
the Weyl nullity then, using the symmetries of the curvature tensor, we
obtain
\begin{equation}\label{trace}
(\lambda^i_{\ , \ell } + B a^i_{\ \ell})v^j + (\lambda^j_{\ , \ell } + B a^j_{\ \ell})v^i= U^i \delta^j_\ell + U^j \delta^i_\ell 
\end{equation}
for $U^i=\frac{1}{n+1}( \lambda^i_{\ ,k} v^k + B a^{ik}v_k)$.  Now, the equation
\eqref{trace} immediately implies that the trace-free part of
$(\lambda^i_{\ , \ell } + B a^i_{\ \ell})$ is zero, so we have recovered Proposition \ref{altmid}. 
This implies that
the covariant derivative $\lambda^i_{\ , j}$ satisfies the equation
\begin{equation}\label{vnb}
 \lambda^i_{\ , j} = \rho' \delta^i_j - B a^{i}_{j},
\end{equation}
for some function $\rho'$ that absorbs the trace terms.

\begin{remark} \label{secondstep} The equation
    \eqref{vnb} is the essential result that we need from the Weyl
    nullity. First yields the critical algebraic equation \nn{acom}
    below, but more than this where $\lambda^i$ is nowhere zero it
    {\em implies Weyl nullity}, see Proposition \ref{con2null} below.
\end{remark}

Now substituting \eqref{vnb} in \eqref{int2}, we see that all
terms with $B$ vanish and obtain
\begin{equation} \label{acom}  a^{i p }Z^{j}_{\ pk\ell} +  a^{p j}Z^{i}_{\ pk\ell}= 0.\end{equation}

We will now establish the following linear algebraic result:
\begin{lemma}\label{keys} 
For $x\in M$, suppose that $A\in \End(T_x M)$ satisfies \nn{acom}, for
$Z^{j}_{\ pk\ell}$ at $x$, and that $v^i$ is a vector from a
generalised eigenspace of $A$ corresponding to an eigenvalue $\rho$ of
geometric multiplicity  one.  Then $v^s Z^i_{\ sjk}=0$.
\end{lemma}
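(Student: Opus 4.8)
The plan is to argue purely algebraically at the point $x$. First lower all indices with $g$; since $a^{ij}$ is symmetric one has $a_\mu{}^{p}=a^{p}_{\ \mu}$, so \eqref{acom} is equivalent to its lowered form
$$a^{p}_{\ \mu}\,Z_{\nu pk\ell}+a^{p}_{\ \nu}\,Z_{\mu pk\ell}=0. $$
Lowering the free index $i$ in the desired conclusion $v^{s}Z^{i}_{\ sjk}=0$ and using the skew-symmetry of $Z$ in its first index pair (Remark \ref{rem:nullity2}), the conclusion is equivalent to $v^{i}Z_{ijk\ell}=0$ for all $j,k,\ell$; contracting the free index $j$ against an arbitrary vector and decomposing $T_{x}M\otimes\mathbb{C}$ into the generalised eigenspaces of $A$, it suffices to prove $v^{i}w^{j}Z_{ijk\ell}=0$ when $w$ is a vector of a Jordan basis of $A$. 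Finally, writing $v\in V_{\rho}$ in terms of the Jordan chain $\overset{1}{\xi},\dots,\overset{m}{\xi}$ of the block $V_{\rho}$ (so $\{\overset{1}{\xi},\dots,\overset{m}{\xi}\}$ is a basis of $V_{\rho}$) — the chain of \eqref{condition}, or of \eqref{conditionsC2} when $\rho$ is complex, of which only the Jordan relations $A\overset{\alpha}{\xi}=\rho\,\overset{\alpha}{\xi}+\overset{\alpha-1}{\xi}$, $\overset{0}{\xi}:=0$, will be used and not the metric normalisation — it is enough to show $\overset{s}{\xi^{\,i}}\,w^{j}Z_{ijk\ell}=0$ for each $s\in\{1,\dots,m\}$ and each Jordan-basis vector $w$.

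The engine is to contract the lowered identity against two Jordan-chain vectors and convert it, via the Jordan relations, into a first-order recursion. For $w=\overset{t}{\xi}$ in the same block, contract with $\overset{s}{\xi^{\,\mu}}\,\overset{t}{\xi^{\,\nu}}$ and set $f^{(a,b)}_{k\ell}:=Z_{ipk\ell}\,\overset{a}{\xi^{\,i}}\,\overset{b}{\xi^{\,p}}$; this is skew, $f^{(a,b)}=-f^{(b,a)}$, since $Z$ is skew in its first pair, and a short computation using $A\overset{\alpha}{\xi}=\rho\,\overset{\alpha}{\xi}+\overset{\alpha-1}{\xi}$ collapses the contracted identity to $f^{(s,t-1)}=f^{(s-1,t)}$. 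Hence $f^{(a,b)}_{k\ell}$ is constant along each anti-diagonal $a+b=\text{const}$, and since $(a,b)$ and $(b,a)$ lie on the same anti-diagonal, skewness forces $f^{(s,t)}\equiv0$. For $w=\overset{\beta}{\eta}$ belonging to a Jordan chain of a different eigenvalue $\rho'\neq\rho$ (so $A\overset{\beta}{\eta}=\rho'\overset{\beta}{\eta}+\overset{\beta-1}{\eta}$, $\overset{0}{\eta}:=0$), contract with $\overset{s}{\xi^{\,\mu}}\,\overset{\beta}{\eta^{\,\nu}}$ and set $g^{(s,\beta)}_{k\ell}:=Z_{ipk\ell}\,\overset{s}{\xi^{\,i}}\,\overset{\beta}{\eta^{\,p}}$; the identity becomes the recursion $(\rho'-\rho)\,g^{(s,\beta)}=g^{(s-1,\beta)}-g^{(s,\beta-1)}$ with $g^{(0,\beta)}=g^{(s,0)}=0$, whence a double induction on $s+\beta$, using $\rho'\neq\rho$, gives $g^{(s,\beta)}\equiv0$. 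Since every $w$ is a sum of Jordan-basis vectors and $v$ is a combination of the $\overset{\alpha}{\xi}$, these two cases yield $v^{i}Z_{ijk\ell}=0$, i.e.\ the lemma.

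The main obstacle is organisational rather than conceptual: one must get the contraction that produces the recursions exactly right, and, within the same-block case, circumvent the fact that a naive induction on $s+t$ stalls once $s+t>m$ — this is what the observation "$f$ is constant along anti-diagonals and skew" handles in one stroke. Everything else is sign bookkeeping, repeatedly using the skew-symmetry of $Z$ in its first pair to recognise the various contractions as $\pm f^{(a,b)}_{k\ell}$ or $\pm g^{(s,\beta)}_{k\ell}$. I note that the argument uses only \eqref{acom}, the skew-symmetry of $Z$ in its first index pair, the Jordan relations for $A$, and the symmetry of $a^{ij}$ — in particular neither the first Bianchi identity, nor the pair symmetry of $Z$, nor the metric normalisation of the Jordan chains — and that it is $\mathbb{C}$-bilinear, so it applies uniformly whether $\rho$ is real or complex; one need only observe that \eqref{acom}, derived there for real tensors, persists under complexification, which is immediate.
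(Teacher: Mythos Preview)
Your argument is correct. Both proofs rest on the same two ingredients --- the identity \eqref{acom} and the skew-symmetry of $Z$ in its first index pair --- but they are packaged differently.

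The paper fixes $X,Y$ and sets $\tilde Z:=Z^{i}{}_{jk\ell}X^{k}Y^{\ell}$, observes that \eqref{acom} is precisely the statement that $\tilde Z$ commutes with $A$, and that $\tilde Z$ is $g$-skew; hence $g(A^{r}\tilde Z\,\cdot,\cdot)$ is skew for every $r\ge 0$. It then uses the \emph{metric-normalised} Jordan chain \eqref{condition0} (after shifting $\rho$ to $0$) to compute $g(\tilde Z\overset{\alpha}{\xi},\overset{\beta}{\xi})=g(A^{\beta-\alpha}\tilde Z\overset{\beta}{\xi},\overset{\beta}{\xi})=0$, and handles vectors outside $V_\rho$ via the $g$-orthogonality of generalised eigenspaces together with $\tilde Z(V_\rho)\subset V_\rho$. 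Your route contracts \eqref{acom} directly against pairs of Jordan-chain vectors, obtaining the first-order recursions $f^{(s,t-1)}=f^{(s-1,t)}$ and $(\rho'-\rho)g^{(s,\beta)}=g^{(s-1,\beta)}-g^{(s,\beta-1)}$, and then kills $f$ by the ``constant on anti-diagonals plus skew'' trick and $g$ by a straightforward double induction. What your approach buys is that it never needs the metric normalisation of the chain, never needs the $g$-orthogonality of eigenspaces (which for complex or distinct-signature blocks is a nuisance), and treats real and complex $\rho$ uniformly by $\mathbb{C}$-bilinearity; the paper's version, by contrast, is more conceptual and makes the role of the commutation $[\tilde Z,A]=0$ explicit. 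The recursion $f^{(s,t-1)}=f^{(s-1,t)}$ is, of course, exactly the commutation relation evaluated on chain vectors, so at bottom the two arguments coincide.
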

\begin{proof}
 For simplicity we assume that the eigenvalue $\rho$ is real, the
 proof for complex-valued eigenvalues is essentially the same and will
 be left to the reader.  We take a point $x\in M$.  Without loss of
 generality we may assume that the eigenvalue $\rho$ is actually equal
 to $0$ at $x$, since adding a constant multiple of $\delta^i_j$ to
 $A= a^i_{\ j}$ does affect \eqref{acom}.  Let $m$ be the dimension of
 the generalized eigenspace corresponding to the eigenvalue $0$, and
 let us denote this generalized eigenspace by $V\subseteq T_xM$.  We
 consider a basis \{$\overset{1}\xi$, $\overset{2}\xi$,...,
 $\overset{m}\xi\}$ in $V$ such that
\begin{equation} \label{condition0} 
g(\overset{\alpha}\xi, \overset{\beta}\xi)=\left\{ \begin{array}{c}
  \varepsilon, \textrm{ if $\alpha+\beta=m+1$ }\\ 0, \textrm{
    otherwise }\end{array}\right.  \textrm{ and } A\overset{\alpha}\xi
=  \overset{\alpha-1}\xi,
\end{equation} 
where $\varepsilon$ is plus or minus one (and is the same for all
$\alpha, \beta =1,...,m$), and $\overset{0}\xi:=0$.
The
existence of such a basis follows again from \cite[Theorem
  12.2]{lancaster} in view of the condition that the geometric
multiplicity of the eigenvalue $0$ is one.

Consider now two arbitrary vectors $X, Y\in T_xM$ 
 and the endomorphism $$\tilde Z:=  Z^i_{\ j k\ell } X^kY^\ell : T_xM\to T_x M.
$$ 
Let us note first that \eqref{acom} implies that $\tilde Z$
 commutes with $A$, i.e.,
 $$
 A \tilde Z = \tilde Z A.
 $$
 Now, from the definition of $Z$ we see that $\tilde Z$ is $g$-skew symmetric, in the sense that the bilinear form  
$g(\tilde Z \cdot , \cdot)$ is  
skew-symmetric. Then, for any integer $r\ge 0$, we have 
$$
 g(A^r\tilde Z \cdot , \cdot)=  g(\tilde Z \cdot ,A^r \cdot)=-g(\cdot, \tilde ZA^r \cdot)= -g(\cdot, A^r\tilde Z \cdot ), 
$$ 
  so the bilinear form $g(A^r\tilde Z \cdot , \cdot)$ is skew symmetric and in particular 
  $$g(A^r\tilde Z U , U)=0 \textrm{ \ \ for any $U\in T_xM$}.$$
  Then, for every $\alpha$ and $\beta \in \{1,..., m\} $ such that $\alpha\ne m$ and $\beta\ge \alpha$, we have 
  $$ g(\tilde Z \overset{\alpha}\xi, \overset{\beta}\xi)= g(\tilde Z A
  \overset{\alpha+1}\xi, \overset{\beta}\xi)= g(\tilde Z
  A^{\beta-\alpha} \overset{\beta}\xi, \overset{\beta}\xi)= 0.$$ On
  the other hand, for any vector $\eta\in T_xM$ orthogonal to $V$, we
  have
  $$ g(\tilde Z \overset{\alpha}\xi, \eta)= 0, $$ since $\tilde Z
  \overset{\alpha}\xi \in V$ because $\tilde Z$ and $A$ commute.
Thus, the
  1-form $g(\tilde Z \xi , \cdot)$ vanishes for any $\xi \in V$, which
  implies that any vector $v=v^i$ of $V$ lies in the nullity of $Z$,
  as we claimed.

 In summary, we have shown that   every vector $v^i$ from a  generalized eigenspace of $A$ such that  the 
  geometric multiplicity is one satisfies 
$v^s Z^i_{\ sjk}=0$, so we are done.
\end{proof}

We are now ready to prove the main theorem from this section: 
\begin{proof}[Proof of Theorem \ref{lambda_nullity}]
From Lemma \ref{cor:eigenvalues} we have that $\lambda^i$ is a linear
combination of vectors from generalized eigenspaces of $A$ of
geometric multiplicity $1$.  Thus from Lemma \ref{keys} $\lambda^s
Z^i_{\ sjk}=0,$ and Theorem \ref{lambda_nullity} is proved.
\end{proof}

To close this part let us observe some consequences of the equation \nn{vnb} (i.e.\ \nn{prekey}),
$$
 \lambda^i_{\ , j} = \rho' \delta^i_j - B a^{i}_{j}. 
$$
First, near the points where $g$ is not proportional to (its projectively
equivalent) $\bar g$ we have that $\delta^i_j$ and $a^i_j$ are
linearly independent, at each point $x\in M$. Thus the coefficients
$B$ and $\rho$ evidently are smoothly point dependent (which we did
not assume a priori) as $\lambda^i$ is smooth and $B$, $\rho$ are the
coefficients of two smooth tensor fields which are linearly
independent at each point. 

Finally here we note that the equation \nn{vnb} is intimately related to
Weyl nullity. It was obtained by assuming that on a metric projective
manifold with Weyl nullity there is a solution $a^{ij}$ to
metrisability equation. On the other hand there is also a converse: 
\begin{proposition}\label{con2null} On a pseudo-Riemannian manifold $(M,g)$, 
suppose that \nn{Mbasic} holds, that is 
$$ 
a^{ij}_{\ \ ,k}= \lambda^i \delta^j_k + \lambda^j \delta^i_k, 
$$
and that the $\lambda^i$ here satisfies 
\nn{vnb}.  
Then
$$
\lambda^pW^i{}_{pjk}=0, \quad \mbox{at } x.
$$ 
So if $\lambda^i (x)\neq 0$, for some $x\in M$, then there is
projective Weyl nullity at $x$.
\end{proposition}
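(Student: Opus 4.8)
The plan is to re-enter the argument that proves Theorem~\ref{lambda_nullity} at the level of its central algebraic identity \eqref{acom}, rather than at Weyl nullity. The key preliminary observation is that the integrability condition \eqref{int2} for the metrisability equation is a consequence of \eqref{Mbasic} alone: it is obtained by feeding the prescribed derivatives $a^{ij}_{\ \ ,k}=\lambda^i\delta^j_k+\lambda^j\delta^i_k$ into the Ricci identity for $a^{ij}$, and so it is available here without any hypothesis beyond \eqref{Mbasic}. (The naive alternative of differentiating \eqref{vnb} directly and antisymmetrising does not close by itself: it yields a curvature term on $\lambda^i$ together with terms proportional to $\nabla_aB$ times $a^i_{\ b}$ which cannot be discarded without further input.)

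The one genuinely computational step is then to substitute the hypothesis \eqref{vnb}, $\lambda^i_{\ ,j}=\rho'\delta^i_j-Ba^i_{\ j}$, into the right-hand side of \eqref{int2}. The four terms carrying $\rho'$ assemble into $\rho'\bigl(\delta^i_\ell\delta^j_k+\delta^j_\ell\delta^i_k-\delta^i_k\delta^j_\ell-\delta^j_k\delta^i_\ell\bigr)$, which vanishes identically by the symmetry of the Kronecker deltas. The remaining terms are $B$ times $a$-times-$\delta$ products, and, using the identity $a^{ip}K^j_{\ pk\ell}=\delta^j_\ell\,a^i_{\ k}-\delta^j_k\,a^i_{\ \ell}$ together with its $i\leftrightarrow j$ counterpart, they reorganise precisely into $\mp B\bigl(a^{ip}K^j_{\ pk\ell}+a^{pj}K^i_{\ pk\ell}\bigr)$; moving this across to join the curvature terms on the left promotes $R$ there to $Z=R+BK$, so \eqref{int2} collapses to $a^{ip}Z^j_{\ pk\ell}+a^{pj}Z^i_{\ pk\ell}=0$, which is exactly \eqref{acom}. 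I expect this sign- and index-bookkeeping to be the only delicate point, since the whole reduction rests on the $B$-terms assembling exactly into the $K$-contractions; everything else is routine.

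Once \eqref{acom} holds at every point the conclusion follows exactly as in the proof of Theorem~\ref{lambda_nullity}. Corollary~\ref{cor:eigenvalues}, which uses only \eqref{Mbasic}, places $\lambda^i$ in the sum of the generalised eigenspaces of $A=a^i_{\ j}$ of geometric multiplicity one, while Lemma~\ref{keys} shows that, for an endomorphism obeying \eqref{acom}, every vector of such an eigenspace is annihilated by $Z$. Hence $\lambda^s Z^i_{\ sjk}=0$ at every generic point, and therefore everywhere, by smoothness of $\lambda^i$ and of the curvature. By Remark~\ref{rem:nullity2} this is the statement that \eqref{Knullity} holds with $v=\lambda$ and the given function $B$, so the converse part of Theorem~\ref{WeqK} --- whose proof uses only the single metric $g$ --- yields $\lambda^pW^i_{\ pjk}=0$ everywhere. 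In particular, at any point where $\lambda^i$ is nonzero this vector witnesses projective Weyl nullity, as claimed.
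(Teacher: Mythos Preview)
Your proposal is correct and follows exactly the same approach as the paper: the paper's own proof is the single sentence that the argument for Theorem~\ref{lambda_nullity} goes through verbatim once \eqref{vnb} is assumed, since Weyl nullity was used there only to obtain \eqref{vnb}. You have simply unpacked that sentence, correctly identifying that \eqref{int2} needs only \eqref{Mbasic}, that substituting \eqref{vnb} into \eqref{int2} yields \eqref{acom}, and that Corollary~\ref{cor:eigenvalues} together with Lemma~\ref{keys} then gives $\lambda^s Z^i_{\ sjk}=0$, which is equivalent to the Weyl nullity statement via Remark~\ref{rem:nullity2} (or, as you phrase it, the converse direction of Theorem~\ref{WeqK}).
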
 
\begin{proof}
This follows at once from the proof of Theorem \ref{lambda_nullity}
above, as in that proof Weyl nullity was only used to obtain \nn{vnb}.
\end{proof}

\begin{remark} 
The tensor $Z=Z^{i}_{\ jk\ell}$ played important role in this section. 
 Let us explain its geometric sense. Consider the  projectively invariant 
connection $\nabla^{\cT_1}$ (e.g. from \eqref{tconn1}). Its curvature, naturally projected to the manifold, is precisely the tensor $Z$. For example, if our metric 
 has constant sectional curvature, then $\nabla^{\cT_1}$ is flat, so its curvature vanishes which implies that  $Z$ vanishes -- which of course follows trivially from the definition of $Z$. 
\end{remark}

\subsection{Strictly nonproportional projectively equivalent metrics with Weyl nullity have constant curvature.} 
As a byproduct of the technical results obtained in section \ref{Vsec}
we obtain the following result. We say two metrics $g$ and $\bar g$
{\em strictly non-proportional} at a point, if the minimal polynomial
of the $(1,1)$-tensor $g^{is}\bar g_{sj} $ has degree $n=
\operatorname{dim} M$, at the given point.  In the case that one of $g$ or $\bar g$ has Riemannian signature,
strict non-proportionality of $g$ and $\bar g$ is
equivalent to the existence of $n$ different eigenvalues of
$g^{is}\bar g_{sj} $. In any signature, it is equivalent to the
property of each eigenvalue to have geometric multiplicity one.

\begin{theorem} \label{new}   On an connected manifold metric projective structure $(M,\bm) $ of dimension $n\ge 3$, suppose that  $g,\bar g\in \bm$
are strictly non-proportional, at least at one point. If $\bm$ has Weyl
nullity, then $g$ and $\bar g$ have constant
curvature.
\end{theorem}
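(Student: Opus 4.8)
The hypotheses involve only the metric projective structure, so fix $g,\bar g\in\bm$ that are strictly non-proportional at a point $x_0$, and work throughout in the scale of $g$, using its volume density to trivialise density bundles. Then $\bar g$ is equivalent to a solution $a^{ij}$ of the metrisability equation \nn{Mbasic}, and strict non-proportionality at $x_0$ means that the $g$-self-adjoint endomorphism $A=a^i{}_j$ has every eigenvalue of geometric multiplicity one there; since this is an open condition we may pass to a connected neighbourhood $U$ of $x_0$, consisting of generic points, on which the same holds. Because $\bm$ has Weyl nullity everywhere, contracting the integrability condition \nn{int2} of the metrisability equation with a (locally defined) nowhere-zero nullity vector yields \nn{vnb}, i.e.\ $\lambda^i{}_{,j}=\rho'\delta^i_j-Ba^i{}_j$ (equivalently Proposition \ref{altmid}); substituting this back into \nn{int2}, all terms involving $B$ cancel and we obtain the pointwise algebraic identity \nn{acom}, $a^{ip}Z^j{}_{pk\ell}+a^{pj}Z^i{}_{pk\ell}=0$, at every point of $U$.

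On $U$ every eigenvalue of $A$ has geometric multiplicity one, so Lemma \ref{keys} (including the complex-eigenvalue version, where the argument is identical) shows that every vector lying in a generalised eigenspace of $A$ is annihilated by $Z$ in its first index. The generalised eigenspaces span the (complexified) tangent space at each point, and $Z$ has the algebraic symmetries of a curvature tensor, so this forces $Z=0$ on $U$. By the definition \nn{nullity2} of $Z$ this reads $R^i{}_{jk\ell}=-B\,K^i{}_{jk\ell}$ on $U$; contracting gives $R_{jk}=(n-1)B\,g_{jk}$, so $g|_U$ is Einstein, and since $n\ge 3$ the contracted Bianchi identity forces $B$ to be a constant $c$ on $U$. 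Hence $R^i{}_{jk\ell}=-c\,K^i{}_{jk\ell}$ on $U$: the metric $g$ has constant sectional curvature there (equivalently $\phi_{ab}=0$ and $W^i{}_{jk\ell}=0$ on $U$).

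It remains to propagate this to all of $M$. The set $\Omega\subseteq M$ of points at which $g$ and $\bar g$ are strictly non-proportional is open and contains $x_0$; by the structural theory of solutions of the metrisability equation — the nonconstant eigenvalues of $A$ are functionally independent, and its constant eigenvalues, being distinct at $x_0$ and constant on the connected $M$, stay distinct, so the locus where eigenvalues merge is a locally finite union of hypersurfaces (cf.\ Lemma \ref{drhozero}, Corollary \ref{cor:eigenvalues}, and \cite{pseudosymmetric,mikesSur}) — $\Omega$ is dense in $M$. Running the argument of the previous two paragraphs near each generic point of $\Omega$ shows $Z=0$, hence $R^i{}_{jk\ell}=-B\,K^i{}_{jk\ell}$ and $B=\Sc^g/\!\big(n(n-1)\big)$, on a dense subset of $M$. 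The tensor $R^i{}_{jk\ell}+\big(\Sc^g/(n(n-1))\big)K^i{}_{jk\ell}$ is globally smooth and vanishes on this dense set, so it vanishes on all of $M$; thus $\Sc^g/(n(n-1))$ plays the role of $B$ globally, $g$ is Einstein on $M$, and, as $n\ge 3$ and $M$ is connected, $B\equiv c$ is constant. Therefore $g$ has constant sectional curvature on $M$, so its projective structure — which is exactly that of $\bm$ — is projectively flat; applying the Beltrami Theorem (Corollary \ref{Bel}) to $\bar g$, which determines the same flat projective structure, we conclude that $\bar g$ also has constant sectional curvature. (Alternatively, $g$ Einstein together with Weyl nullity gives $\bar g$ Einstein by Corollary \ref{1ein2}, and then $W=0$ forces $\bar g$ to be of constant sectional curvature.)

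The robust part of this argument is local: the passage from \nn{acom} and geometric multiplicity one to $Z=0$ via Lemma \ref{keys}, followed by the Einstein/Schur step. The main obstacle is the global one — showing that strict non-proportionality (equivalently $Z=0$) holds on a dense subset of $M$, so that the local conclusion can be upgraded to all of $M$ by continuity of the curvature. This is exactly where the eigenvalue analysis of Section \ref{Vsec} together with standard facts about solutions of the metrisability equation are needed, to rule out the degeneracy locus having nonempty interior.
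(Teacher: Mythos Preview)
Your proof is correct and follows the same approach as the paper: both use \eqref{acom} together with Lemma \ref{keys}, applied at points where every eigenvalue of $A$ has geometric multiplicity one, to force $Z=0$ and hence constant curvature. The paper is more terse, invoking \cite[Proposition 2.1]{unpublished} directly for the density of the strictly non-proportional locus and leaving the passage to $\bar g$ implicit, whereas you spell out the density sketch, the Schur step, and the conclusion for $\bar g$ via Corollary \ref{Bel}.
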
 

{\bf Proof.} By \cite[Proposition 2.1]{unpublished}, the metrics are
strictly non-proportional at almost every point. 
Let us work in the scale $g$ and
let the tensor $a^{ij}$ (satisfying \nn{Mbasic}) correspond to the
metric $\bar g$.  As proved in Section \ref{Vsec}, the existence of a
nullity implies that the tensor $a_{ij}$ satisfies \eqref{acom}.
Furthermore, by the assumptions of the Theorem, at almost every point
each eigenvalue of $A=(a^i_{\ j})$ has geometric multiplicity one. It
then follows easily from Lemma \ref{keys} that the tensor $Z^i_{
  \ jk\ell}$ vanishes identically. Thus, the curvature tensor of $g$
is constant.  Theorem \ref{new} is proved.

\subsection{ If $\phi$ is the same  for two non-affinely projectively equivalent metrics, then there exists Weyl nullity} 

In Theorem \ref{fth} we have proved that if a metric projective
structure $(M,\bm)$ has Weyl nullity, then the tensor $\phi_{ij}$ is
an invariant of $(M,\bm)$. In particular, for two projectively equivalent
metrics $g$ and $\bar g$ we have
\begin{equation} \label{fedo}  
 \P_{ij} - B g_{ij}=  \bar \P_{ij} - \bar B \bar g_{ij},
\end{equation} 
in the setting of Weyl nullity.
The goal of this section is to prove the converse (assuming smoothness
of $B$ and that the projective equivalence is non-affine).
\begin{theorem}  \label{converse} 
Suppose $g$ and $ \bar g$ are projectively equivalent metrics on a manifold 
$M^n$ of dimension $n\ge 2$.  If \eqref{fedo} holds (for a smooth function
$B$), then \eqref{lambda_0} holds, that is,
$$
\lambda^s  W^i_{ \ s jk}=0
$$
 where, in local coordinates, 
$$
\lambda^s=-e^{2\Upsilon} \bar g^{s i}\frac{\partial}{\partial x^i} \Upsilon, \qquad  \Upsilon= \frac{1}{2(n+1)} \log\left(\left|\frac{\det(\bar g)}{\det( g)}\right|\right).
$$ 
If, in particular, the metrics are non-affinely projectively
related then there is Weyl nullity on the open set where $\Upsilon$ is
non-constant.
\end{theorem}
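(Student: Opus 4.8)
The plan is to reduce the statement to the algebraic identity \eqref{acom} and then invoke the linear algebra already assembled in Section \ref{Vsec}. First I would pass from the metric pair $(g,\bar g)$ to the standard solution of the metrisability equation: set $a^{ij}:= e^{2\Upsilon}\bar g^{ij}$ (up to the usual density normalisation), so that $a^{ij}$ satisfies \eqref{Mbasic}, $a^{ij}_{\ \ ,k}=\lambda^i\delta^j_k+\lambda^j\delta^i_k$, with $\lambda^i$ as displayed in the statement; this is the classical computation relating projectively equivalent metrics to solutions of Sinjukov's equation, recalled around \eqref{taudef}--\eqref{invm}. The next step is to show that the hypothesis \eqref{fedo} forces $\lambda^i$ to satisfy \eqref{vnb}, i.e.\ $\lambda^i_{\ ,j}=\rho'\delta^i_j - B a^i_{\ j}$ for some function $\rho'$. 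In the scale of $g$, equation \eqref{fedo} says exactly $\bar\P_{ij}-\P_{ij}=\bar B\bar g_{ij}-Bg_{ij}$, while the projective change-of-Schouten formula \eqref{pcurvtrans} gives $\bar\P_{ij}-\P_{ij}=-\nabla_i\Upsilon_j+\Upsilon_i\Upsilon_j$; combining these and using $\bar g^{ij}=e^{-2\Upsilon}a^{ij}$ together with $\lambda^i=-e^{2\Upsilon}\bar g^{is}\Upsilon_s$, one re-expresses the symmetric $2$-tensor $-\nabla_i\Upsilon_j+\Upsilon_i\Upsilon_j+Bg_{ij}-\bar B e^{2\Upsilon}a_{ij}$ as zero, which after raising an index and unwinding is precisely \eqref{vnb} (the function $\rho'$ absorbing the trace contribution and $\bar B e^{2\Upsilon}$). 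This is a routine but slightly fiddly index manipulation; I would present it compactly.

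Once \eqref{vnb} is in hand, the remainder is exactly the argument already run in Section \ref{Vsec} but read in the reverse direction, and this is the content of Proposition \ref{con2null}. Concretely: substitute \eqref{vnb} into the integrability condition \eqref{int2} for \eqref{Mbasic}; as noted just before \eqref{acom}, every term containing $B$ cancels and one is left with the purely algebraic identity $a^{ip}Z^j_{\ pk\ell}+a^{pj}Z^i_{\ pk\ell}=0$, i.e.\ \eqref{acom}, where $Z^i_{\ jk\ell}=R^i_{\ jk\ell}+B K^i_{\ jk\ell}$ as in \eqref{nullity2}. Now Corollary \ref{cor:eigenvalues} (whose proof uses only \eqref{Mbasic}, not Weyl nullity) shows $\lambda^i$ lies, at a generic point, in the sum of the generalised eigenspaces of $A=(a^i_{\ j})$ of geometric multiplicity one, and Lemma \ref{keys} shows that every vector in such an eigenspace is annihilated by $Z$. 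Hence $\lambda^s Z^i_{\ sjk}=0$ at every generic point, and since $\lambda$ and $Z$ are smooth this holds everywhere; by Remark \ref{rem:nullity2} this is equivalent to $\lambda^s W^i_{\ sjk}=0$, which is the asserted \eqref{lambda_0}.

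For the final clause: where $\Upsilon$ is non-constant, $d\Upsilon\neq 0$ on an open set, hence $\lambda^i$ is nowhere zero there (as $\lambda^i=-e^{2\Upsilon}\bar g^{is}\Upsilon_s$ and $\bar g$ is nondegenerate); a nonzero vector $\lambda^i$ with $\lambda^s W^i_{\ sjk}=0$ is by definition a projective Weyl nullity vector, so $\bm$ has Weyl nullity on that open set. The main obstacle I anticipate is purely bookkeeping rather than conceptual: carefully tracking the density weights and the conformal factor $e^{2\Upsilon}$ through the passage $(g,\bar g)\rightsquigarrow(a^{ij},\lambda^i)$ so that \eqref{fedo} really does deliver \eqref{vnb} on the nose, with the correct function $\rho'$; once \eqref{vnb} is secured, everything else is quotation of Proposition \ref{con2null}, Corollary \ref{cor:eigenvalues} and Lemma \ref{keys}.
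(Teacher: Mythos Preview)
Your proposal is correct and mirrors the paper's proof: combine \eqref{fedo} with the Schouten transformation \eqref{pcurvtrans} to obtain $\Upsilon_{i,j}-\Upsilon_i\Upsilon_j=Bg_{ij}-\bar B\bar g_{ij}$, then differentiate $\lambda_i$ and substitute this together with \eqref{LC} to arrive at \eqref{vnb}, whence Proposition \ref{con2null} delivers \eqref{lambda_0}. The only caveat in your sketch is that $\bar g_{ij}\neq e^{2\Upsilon}a_{ij}$ (indices on $a$ are moved with $g$, not by matrix inversion), so the passage to \eqref{vnb} really does require covariantly differentiating $\lambda_i=-e^{2\Upsilon}\Upsilon_s\bar g^{sp}g_{pi}$ rather than a purely algebraic rearrangement; the paper's computation \eqref{f2} shows this goes through cleanly with $\rho'=e^{2\Upsilon}\bigl(\bar B+\bar g^{sp}\Upsilon_s\Upsilon_p\bigr)$.
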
 
\noindent In the Theorem here, and below, $\det( g)$ denotes the
determinant of the metric component matrix $(g_{ij})$, in the given
coordinates. Note that the ratio of determinants $\frac{\det(\bar
  g)}{\det( g)}$ is coordinate independent.

For the purposes of our calculations here we will calculate in the
scale of the metric $g$ which we will regard as the background metric
and denote  by ``comma'' the covariant differentiation with respect
to the Levi-Civita connection of $g$.  As preparation for proving
Theorem \ref{converse} let us describe an equation the covariant
derivative $\bar g_{ij, k} $ satisfies, and also identify the objects
of equation \ref{Mbasic} in our current terms.

As we recalled in section \ref{pg}, if two affine connections $\nabla$ and $\bar
\nabla$ are projectively equivalent, then they are related by
\eqref{ptrans}. In terms of the connection coefficients $\Gamma^i
_{jk}:=dx^i(\nabla_{\frac{\partial}{\partial
    x^k}}\frac{\partial}{\partial x^j} )$ this reads
 \begin{equation} \label{c1} 
 \bar \Gamma_{jk}^i  = \Gamma_{jk}^i + \delta_{\ k}^i\Upsilon_{j} + \delta_{\ j}^i\Upsilon_{k}.    
   \end{equation} 
 
  If $\nabla$ and $\bar \nabla$ are Levi-Civita connections of metrics
  $g$ and $\bar g$ respectively, then one can find explicitly
  (following Levi-Civita \cite{Levi-Civita}) a function $\Upsilon$ on
  the manifold such that its differential $\Upsilon_{,i}$ coincides
  with the $(0,1)$-tensor $\Upsilon_i$: indeed, contracting \eqref{c1}
  with respect to $i$ and $j$, we obtain $\bar \Gamma_{s i}^s =
  \Gamma_{s i}^s + (n+1) \Upsilon_{i}$.  From the other side, for the
  Levi-Civita connection $\Gamma$ of a metric $g$ we have $ \Gamma_{s
    k}^s = \tfrac{1}{2} \frac{\partial \log(|\det(g)|)}{\partial x_k}
  $.  Thus,
 \begin{equation} \label{c1,5}  \Upsilon_{i}= \Upsilon_{,i} \end{equation}
  for the function $\Upsilon:M\to \mathbb{R}$ given by 
  \begin{equation} \label{phi-2}  \Upsilon:= \frac{1}{2(n+1)} \log\left(\left|\frac{\det(\bar g)}{\det( g)}\right|\right). \end{equation}  In particular, the derivative of $\Upsilon_i$ is  symmetric, i.e., $\Upsilon_{i,j}= \Upsilon_{j,i}$. 
 
We can now use this to characterise projectively equivalent metrics:
The formula \eqref{c1} implies that two metrics $g$ and $\bar g$ are
geodesically equivalent if and only if for $\Upsilon_{i}$, the
differential of $\Upsilon$ given in \eqref{phi-2}, we have
\begin{equation}\label{LC}
    \bar g_{ij, k} - 2 \bar g_{ij} \Upsilon_{k}- \bar
    g_{ik}\Upsilon_{j} - \bar g_{jk}\Upsilon_{i}= 0.
\end{equation}

Next, note that the function $\Upsilon$ and the determinant of the (1,1)-tensor 
$a^i_{\ j}$ constructed by $g$  and $\bar g$ 
by the formula \begin{equation} \label{a} 
a_{ij}: =   e^{2\Upsilon} \bar g^{s q} g_{s i} g_{q j}\end{equation}
 are closely related, namely 
 $\exp(-2\Upsilon)= \det(a^i_{\ j})$. 
    Differentiating \eqref{a} and using \nn{LC}, we obtain \nn{Mbasic}, with 
  \begin{equation} \label{lambda} 
\lambda_{i} = - \Upsilon_s a^s{}_{i}= -e^{2\Upsilon}\Upsilon_s \bar
g^{s p} g_{p i}.
\end{equation} 
We are now ready to prove the Theorem. 

\begin{proof}[Proof of Theorem \ref{converse}] Combining \eqref{fedo} with \eqref{Btrans},
 we note that 
\begin{equation}\label{f11} 
 \Upsilon_{i,j} - \Upsilon_{i} \Upsilon_{j} = B g_{ij}-\bar B \bar g_{ij}. 
 \end{equation} 

Now we first covariantly differentiate \eqref{lambda}, then we
use the expression \eqref{LC} for $\bar g_{ij,k}$, and finally
we substitute \eqref{f11} to obtain
 \begin{equation} \label{f2} \begin{array}{ccl}
 \lambda_{i,j} &=& -2 e^{2\Upsilon}\Upsilon_{j} \Upsilon_s \bar g^{s p} g_{p i}-e^{2\Upsilon}\Upsilon_{s,j} \bar g^{s p} g_{p i}+e^{2\Upsilon}\Upsilon_s  \bar g^{s q} \bar g_{q \ell,j} \bar g^{\ell p} g_{p i} \\
 &\stackrel{\eqref{LC}}{=}&   -e^{2\Upsilon} \bar g^{s p} g_{p i}(\Upsilon_{s,j} -  \Upsilon_s\Upsilon_j)+e^{2\Upsilon}\Upsilon_s \Upsilon_p \bar g^{s p}  g_{ i j }    \\
 &\stackrel{\eqref{f11}}{=}& -e^{2\Upsilon} \bar g^{s p} g_{p i}
 \left(B g_{s j}-\bar B \bar g_{s j}\right)
 +e^{2\Upsilon}\Upsilon_s \Upsilon_p \bar g^{s p}   g_{ i j }, 
    \end{array}     
 \end{equation} 

Finally we use \eqref{a}  to re-express this as 
\begin{equation}\label{vb} 
 \lambda_{i,j}= \rho' g_{ij}- B a_{ij}   
 \end{equation}
where $\rho'= e^{2\Upsilon}(\bar B+ \Upsilon_s \Upsilon_p \bar g^{s p} )$, 
is smooth. 
 This  is clearly equivalent to
\eqref{prekey} and \nn{vnb}.

Now, by Proposition \ref{con2null}, we have \eqref{lambda_0}. Theorem
\ref{converse} is proved.
\end{proof}

\section{Local and global structure for $B$ non-constant}\label{BnC}

 In this section we consider metric projective
  structures $(M,\bm)$.  We assume that the manifold $M$ is connected,
  and that $n=\dim M\ge 3$ (though some results trivially hold for
  $n=2$).  We also assume the existence of metrics $g, \bar g\in \bm$
  which are not affinely equivalent; as we know, from the previous
  section, this corresponds to the existence of a solution
  $\sigma^{ab}$ of the metrisability equation such that it is not
  parallel w.r.t.  the Levi-Civita connection of $g$.

 Our goal is to describe such $( M,\bm)$ locally (i.e.\ in an
 neighborhood of almost every point) and globally (assuming the
 manifold is closed, i.e., compact and without boundary) assuming the
 existence of a nullity of the Weyl tensor such that $B$ (constructed
 by $g$) is not constant.  We will see that near almost every point in
 a certain coordinate system the metric $g$ (up to a multiplication by
 a constant) has the warped product form
 \begin{equation}\label{warped}  
 g=  (dt)^2 + f(t) \sum_{i,j=1}^{n-1} h(x^1,...,x^{n-1})_{ij}dx^idx^j. 
 \end{equation} 
 We will also obtain a description, up to an isometry, 
  of all possible metrics $g$ (Riemannian, with nullity, admitting a nonparallel solution of the metrisability equation, with nonconstant $B$)  on  closed manifolds (of dimension $\ge 3$).

  \subsection{ Local theory if $B\ne \textrm{const}$}   \label{Bnichtcon}

We will work on the scale of the metric $g$ and use the Levi-Civita connection of $g$ for covariant differentiation;  then the equation \eqref{key} reads 
\begin{equation}\label{key1}
\delta^i_k ( \rho'_{,j} +2 B\lambda_j )- \delta^i_j
( \rho'_{, k}+ 2 B\lambda_k )= a^i{}_j  B_{,k}-a^i{}_k  B_{j}  .
\end{equation} 
Here we, as  usually,  denote by $a^{ij}$ the tensor  obtained when we  multiply $\sigma^{ab}$ by the weight parallel w.r.t. the volume form of $g$ (so $a^{ij}$ satisfies \eqref{Mbasic}),  and by $\lambda^i$ the 
 tensor  obtained when we  multiply $\mu^a $ by the weight parallel w.r.t. the volume form of $g$. We keep the notation $\rho'$ but now it is a function and is not a weight.

At the points such that $B_{,a}\ne 0$, the equation \eqref{key1}
immediately implies
\begin{equation}  
a^i{}_j=\nu\delta^i_j+ \varepsilon {B_,}^iB_{,j}\label{mm} 
\end{equation} 
for certain $\nu$ and $\varepsilon$ which are now functions (and not
weights). We will assume later that $\varepsilon \ne 0$; this is a
generic condition, since if $\varepsilon\equiv 0$ in a neighborhood
then $a_{ij}$ is proportional to $g_{ij}$ in this neighborhood which
implies that it is proportional to $g_{ij}$ on the whole manifold, which 
we assume to be connected.  Then, the tensor ${a^i}_j$ has (at most) two
eigenvalues at every point.  One of these eigenvalues  is $\nu$, it
has geometric multiplicity $n-1$. Indeed, any vector orthogonal to
${B_,}^i$ is an eigenvector of $ a^i{}_j$. 

   Let us now observe that the case when $ B_{,}^{\ i}$ is lightlike
   and nonzero on some open nonempty subset is not possible. Indeed,
   in this case $\nu$ is an eigenvalue whose geometric multiplicity is
   $n-1$ and algebraic multiplicity is $n$.  Then, by
   \cite[Proposition 2.1]{unpublished}, the function $\nu$ is
   constant.  Moreover, at every point of the manifold the constant
   $\nu$ is an eigenvalue of $a^i_j$ of geometric multiplicity at
   least two.  Then, the trace of $a^i{}_j$ is constant which implies
   that $a^i{}_j$ is parallel.  But then $\lambda^i{}_{,j}$ in
   \eqref{vnb} equals to zero which implies that $a^i{}_j$ is
   proportional to $\delta^i{}_j$ which contradicts \eqref{mm}
   (assuming $\varepsilon\ne 0$).

Now, at the points where ${B_,}^s B_{,s}\ne 0$ we have that
$B_{,}^{\ i}$ is an eigenvector with eigenvalue $\nu + \varepsilon
{B_,}^s B_{,s}$ of algebraic and geometric multiplicity $1$.

The case when ${B_,}^s B_{,s}\ne 0$ was considered in \cite{KM2014}. 
 By  \cite[Lemma 2]{KM2014}  the metric $g$ has (in a certain local coordinate system defined almost everywhere)    the warped form \eqref{warped}   and the solution $a^i_{ \ j}$ is given by the diagonal matrix  
   \begin{equation}\label{diag}
   \operatorname{diag}\bigl(\textrm{const} \cdot  f(x^1)+ \nu, \underbrace{\nu,...,\nu }_{n-1} \bigr). 
   \end{equation}

    Note that in the Riemannian case ${B_,}^s B_{,s}\ne 0$ for $
    B_{,}^{\ i}\ne 0$ so the (1,1)-tensor $a^i{}_j$ has precisely two
    eigenvalues in our neighborhood: $0$ of multiplicity $n-1$, and
    $\nu + \varepsilon {B_,}^s B_{,s}$ of multiplicity $1$.  By
    \cite[Corollary 1]{hyperbolic}, the metric has two eigenvalues  (one of multiplicity $1$ and another of multiplicity $n-1$) at
    almost every point, which implies that the metric has warped
    product structure at almost every point, which implies that it has
    nullity at almost every point  and hence at every point. Actually, this observation holds
    for metrics of arbitrary signature, but we do not prove it here.

   \begin{remark} \label{forfuture} 
Combining \eqref{mm}, the condition that $\nu$ is a constant,
\eqref{warped} and \eqref{diag} we see that the differentials $dB$ and
$df$ are linearly dependent.
   \end{remark}

\subsection{  Global theory if $B\ne \textrm{const}$}  \label{glth} 
 We again assume that $B$ is not constant in a neighborhood of $M$
 which is now assumed to be closed, and we will work in the notation
 of the previous section. Our goal is to describe all closed Riemannian
 manifolds admitting simultaneously both nonaffine projective
 equivalence and Weyl nullity with nonconstant $B$.
We begin by constructing two large classes of such Riemannian manifolds.

Take any $n-1$-dimensional Riemannian manifold $(N,h)$ equipped with a
positive function $f$ on $\mathbb{R}$, periodic with period $1$, and
consider the product $\mathbb{R}\times N$ with the warped product
metric
    \begin{equation} \label{inherit} dt^2 
   + f(t) \sum_{i,j=1}^{n-1} h_{ij}dx^idx^j\end{equation} (where
   $x^{1},...,x^{n-1}$ denote local coordinates on $N$ and $t$ is the standard coordinate on $\mathbb{R}$).  Next, take
   an isometry $I:N\to N$ and consider the action of the group
   $\mathbb{Z}$ generated by the isometry $(t, x)\mapsto (t+ 1,
   I(x))$. The quotient will be denoted by $M$, it is clearly a closed manifold. 
   For example, one can take $I= \operatorname{Id}$; in this case the manifold $M$ is topologically  the direct product $S^1\times N$. 
     Since the group
   $\mathbb{Z}$ acts by isometries, the metric \eqref{inherit} induces
   a metric on $M$ which we denote by $g$. The metric $g$ has Weyl  nullity
   at every point and admits non-trivial projective equivalence. Note
   that if $f\ne \operatorname{const}$,  there must exist a neighborhood such that
   $B_{i}\ne 0$.

 Let us now construct the next class of examples. Take the standard polar
 coordinates on the standard sphere. This coordinate system has two
 singularities that are traditionally called the north and south
 poles; the standard sphere metric has the warped product structure
 $dt^2 + \sin^2(t) \sum_{i,j=1}^{n-1} h_{ij}dx^idx^j$, where $h$ is
 the standard metric of the $n-1$-dimensional sphere and $t\in
 [0,\pi]$ is the altitudinal polar coordinate.  Now, replace the
 function `$\sin$' in this formula by any other smooth function f(t)
 such that it is positive outside of $0,\pi$, vanishes at $0,\pi$, and
 such that its derivative at $0$ is $1$ and at $\pi$ equals $-1$,
 i.e., consider the metric $dt^2 + f(t)^2 \sum_{i,j=1}^{n-1}
 h_{ij}dx^idx^j$. It is a smooth Riemannian metric on the sphere which
 is a warped product metric everywhere, except  possibly at the poles.

\begin{remark}
Note that, if $f\ne \operatorname{const}$, the degree of mobility (i.e.\ the dimension of the space of metrics projectively equivalent metrics) of
the metric on $(\mathbb{R}\times N)_{/\mathbb{Z}}$ constructed above
is precisely two, so any solution $a^i_{\ j}$ of the metrisability
equation has the form \eqref{diag} is this coordinate system.  The
degree of mobility of the metric on the sphere $S^n$ constructed above
is also precisely two if the function $f(t)$ is not equal to $\sin(t)$
(which would imply that the metric has constant sectional curvature),
and any solution $a^i_{\ j}$ of the metrisability equation has the
form \eqref{diag} is this coordinate system.
\end{remark}

The next theorem shows that the two classes of examples above effectively
capture all cases:

\begin{theorem} \label{globalBnC}
Suppose $g$ is a Riemannian metric on a closed connected manifold
$M$. Assume $g$ has Weyl nullity in all points of a certain
neighborhood, and assume that $B$ is not constant. Suppose there exists a
metric $\bar g$ that is projectively equivalent to $g$ and is not proportional to $g$.

 Then, for
a certain positive constant $C$, a finite  (at most, double) cover of $M$
equipped with a metric that is $C$ times  the lift of $g$  is isometric to one of
the examples above.
  \end{theorem}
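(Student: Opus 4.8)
The plan is to combine the local warped-product structure established in Section \ref{Bnichtcon} with a global argument about the behaviour of the warping function along the distinguished direction. First I would recall from Remark \ref{forfuture} and \eqref{diag}--\eqref{mm} that, on the open dense set where $B_{,i}\neq 0$, the metric $g$ has (up to a constant scale) the warped form $dt^2 + f(t)^2 h_{ij}(x)\,dx^i dx^j$ with $a^i{}_j=\operatorname{diag}(\operatorname{const}\cdot f+\nu,\nu,\dots,\nu)$, that $\nu$ is a genuine constant, and that $df$ and $dB$ are proportional; in particular the level sets of $t$ coincide (locally) with the level sets of $B$, and $\operatorname{grad} B$ is proportional to $\partial_t$. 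The coordinate $t$ is globally defined up to an additive constant and an orientation flip because it is (a reparametrisation of) the distance-to-a-level-set of the function $B$, which is an invariantly defined function on $M$; since $M$ is closed the gradient flow of $B$ is complete, so $t$ extends as a smooth function either to all of $\mathbb{R}$ (possibly periodic after passing to the orbit space) or to a closed interval $[t_-,t_+]$ whose endpoints are the critical points of $B$.

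The second step is the analysis at the critical set of $B$. At a point $p$ where $B_{,i}=0$, formula \eqref{mm} degenerates, so $a^i{}_j=\nu\,\delta^i_j$ there and $\operatorname{grad} B$ vanishes; a standard Morse-type argument (using that in the Riemannian case a warped product $dt^2+f(t)^2 h$ closes up smoothly at a value $t_0$ exactly when $f(t_0)=0$, $f'(t_0)=\pm1$ and the fibre $(N,h)$ is a round sphere of the appropriate radius — cf.\ the construction above the theorem) shows that near such a critical point either the warped product degenerates to a single smooth pole (the sphere-type example) or $\operatorname{grad} B$ simply has an isolated zero of the shape that cannot occur without $f$ vanishing. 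Thus two cases arise: (i) $B$ has no critical points, so $\operatorname{grad} B$ is nowhere zero and $t:M\to\mathbb{R}$ is a submersion; (ii) $B$ has exactly two critical points and they are ``poles'' forcing the sphere example.

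The third step treats case (i). Here the fibres $\{t=\text{const}\}$ are all diffeomorphic, $t$ is a fibre bundle over $S^1$ (since $M$ is closed and connected, and $\mathbb{R}$-valued submersions on closed manifolds have circle image after the obvious period identification), and the warped metric $dt^2+f(t)^2 h$ together with the fact that the fibre metric $h$ is, up to the warping, parallel along $\partial_t$ (this is exactly the content of \eqref{LC} and \eqref{diag}) shows that the fibre Riemannian manifold $(N,h)$ is the same at every $t$ and that $f$ is periodic with some period $T$. Rescaling by the constant $C:=1/T^2$ normalises the period to $1$, and the holonomy of the bundle around $S^1$ is an isometry $I$ of $(N,h)$; unwinding this identification exhibits the normalised $g$ as precisely the first family of examples, after passing to an at most double cover if $I$ reverses a chosen orientation of the $t$-direction. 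In case (ii), rescaling so that $f'=\pm1$ at the two poles and invoking the smoothness condition at a pole (which forces the fibre to be the round $(n-1)$-sphere) identifies $g$ with the sphere-type example. The main obstacle I anticipate is the rigidity argument pinning down the fibre data as genuinely $t$-independent and matching the holonomy isometry $I$ precisely — i.e.\ showing that the local warped structures from Section \ref{Bnichtcon}, which are only asserted ``near almost every point'', glue into a single global warped structure with a well-defined constant period and a well-defined fibre $(N,h)$; this requires propagating the relation $df\parallel dB$ and the constancy of $\nu$ across the whole of $M$ and ruling out any further degeneracy of $a^i{}_j$ using \cite[Proposition 2.1]{unpublished} and the connectedness of $M$, exactly as done in the local section.
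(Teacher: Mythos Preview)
Your proposal has a genuine gap in the global case analysis. You organise the argument around the critical set of $B$, claiming that at a point where $B_{,i}=0$ formula \eqref{mm} forces $a^i{}_j=\nu\delta^i_j$, so the warped structure must cap off to a pole there. But \eqref{mm} is only derived at points where $dB\neq 0$; the coefficient $\varepsilon$ is not controlled as $dB\to 0$ and may diverge. Comparing \eqref{mm} with \eqref{diag} shows that the nonconstant eigenvalue of $a^i{}_j$ equals $\nu+\textrm{const}\cdot f(t)$, which is tied to the warping function $f$, not to $|dB|^2$. In the first family of examples---the quotients $(\mathbb{R}\times N)/\mathbb{Z}$ with $f>0$ periodic and non-constant---the function $B$ is a non-constant periodic function of $t$ and therefore \emph{does} have critical points, yet $a^i{}_j$ retains two distinct eigenvalues everywhere and there are no poles. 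Your dichotomy ``(i) $B$ has no critical points'' versus ``(ii) two critical points which are poles'' thus excludes exactly the first class of examples you are trying to recover; indeed any smooth function on a closed manifold has critical points, so case (i) is empty as stated.

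The paper's argument avoids $B$ entirely in the global step. After subtracting $\nu\,\delta^i_j$ it writes the shifted tensor as $a^{ij}=v^iv^j$ for a vector field $v$ defined up to sign (hence on at most a double cover) away from the zero set ${\bf Sing}=\{f=0\}$; it then uses the quadratic integral of the geodesic flow \eqref{integral} to prove that ${\bf Sing}$ consists of isolated points, and shows $v$ extends smoothly across them. Differentiating $a^{ij}=v^iv^j$ and comparing with \eqref{Mbasic} forces the concircular equation $v^i{}_{,j}=\eta\,\delta^i_j$, and the conclusion is then read off from Tashiro's classification of closed Riemannian manifolds admitting a concircular field. The correct global dichotomy is whether ${\bf Sing}$ is empty (yielding the $S^1$-type examples) or not (yielding the sphere example), not whether $dB$ vanishes; if you want to repair your approach, this is the set whose structure you should analyse.
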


\begin{proof}
We consider the solution
$a^{ij}$ of \eqref{Mbasic} corresponding to the metric $\bar g$.
Since the metric $g^{ij}$ itself satisfies \eqref{Mbasic}, for some constant $\nu$ we can
subtract $\nu\cdot \delta^i_j$ from $a^i_{\ j} $ so that, without loss
of generality, at every point of the manifold the
solution $a^i_{\ j}$ has, in a certain basis, the form
 \begin{equation}\label{diag0}
   \operatorname{diag}\bigl(f, \underbrace{0,...,0 }_{n-1} \bigr)  
   \end{equation}
  for some function $f$ which is nonzero almost everywhere. This uses
  Theorem \ref{mformthm}. Moreover, if the manifold is closed, the
  function $f$ is either non-positive everywhere or nonnegative
  everywhere by \cite[Corollary 1]{hyperbolic} and we may assume
  without loss of generality that it is nonnegative almost everywhere.

Let us show that one can construct, at least on the 2-cover of the
manifold, a smooth vector field $v^i$ such that
$$a^{ij} = v^iv^j.$$ This vector field is defined up to sign and
vanishes at the points where $f=0$.

Near the points where $f\ne 0$, the existence of such a vector field
is evident: at every point we take an $f$-eigenvector of ${a^i}_j$
normalized such that its length is $\sqrt{f}$. There are precisely two
choices for it; it is clear that, lifting to a 2-cover if required,
one can make the choices so that the resulting vector field is smooth
near every point where $f\ne 0$.

  In order to understand that one can extend the vector field to all
  the manifold, let us first explain that the points such that $f=0$
  are isolated.  We call such points {\em singular points}, and denote
  the set of such points by ${\bf Sing}$.
  
We will use that for every $t$ the function 
  \begin{equation} \label{integral} 
  I_t:TM\to \mathbb{R}, \ \  I(\xi)= g(\textrm{comatrix}(A- t\operatorname{Id} )\xi, \xi),  
  \end{equation}
  where we denote by $A$ the (1,1)-tensor  ${a^{i}}_{j}$ viewed as endomorphism, is an integral of the geodesic  flow of $g$, see \cite[Theorem 4]{hyperbolic} and the references inside (the fact is actually due to  \cite{MT} but is written in other notation there). Recall that a function $I$ on the tangent bundle 
   is an integral of the geodesic flow, if for any geodesic $\gamma(s)$ the function $t\mapsto I(\gamma'(s))$ does not depend on $s$. 
   
     Clearly, the   family of the   functions  $I_t$ is polynomial in $t$ of degree $n-1$.  Then, the function 
 $$
 \tilde I= \tfrac{1}{(n-2)!}\tfrac{d^{n-2}}{dt^{n-2}}_{|t=0} I_t: TM\to \mathbb{R}
 $$ is also an integral. 
 
  In an orthonormal  basis such that $A$ is given by \eqref{diag0}, the values of the 
  functions $I_t$ and $\tilde I$ on a tangent vector $\xi= (\xi^1,...,\xi^n)$,  are given  by 
  $$
  I_t(\xi)= t^{n-1} (\xi^1)^2 + (f-t)t^{n-2}\bigl((\xi^2)^2  +...+(\xi^n)^2  \bigr), \ \   \tilde I(\xi)=
   f \cdot \bigl((\xi^2)^2 +...+(\xi^n )^2\bigr). 
  $$
 We see that at the points such that $f=0$, the function $\tilde I$ vanishes for all tangent vectors. 
  We also see that at the points such that   $f\ne 0$, the vanishing of the function on a vector $\xi$ implies that 
  $\xi^2 =...= \xi^n=0$  implying that the vector $\xi$ is an $f$-eigenvector of $A$.

  Then, the existence of two points  $x_1$, $x_2\in {\bf Sing}$ 
   in a small neighborhood 
   implies the following contradiction: if we take a generic point $x$ of this neighborhood 
   (such that $f(x)\ne 0$ and such that this point does not lie on the  geodesic connecting $x_1$ and $x_2$) 
    and connect it by a geodesic $\gamma_1$ with $x_1$ and $\gamma_2$ with $x_2$, then the value of the integral $\tilde I$ on the velocity vectors of these  geodesics is zero because the geodesics contain  points such that $f=0$. Then, at the point $x$,  the velocity vectors of these 
     geodesics are $f$-eigenvectors of $A$ which is impossible since by assumption they are not proportional and the $f$-eigenspace  of $A$ is one-dimensional. 
   
   The contradiction shows that the points such that $f=0$ are isolated. 

\begin{remark} \label{byproduct}  
As a byproduct we obtained, that for geodesics passing through a singular point,  the velocity vector is an $f$-eigenvector of $a^i_{\ j}$. 
\end{remark}

Since the dimension of our manifold is at least 3, the compliment
$M\setminus {\bf Sing}$ is locally simply-connected, and in a
sufficiently small neighborhood $U$ of every point there are precisely
two possibilities for the choice of vector fields $v^i$ on $U\setminus
{\bf Sing}$ such that $v^iv^j= a^{ij}$. Then, at least on the 2-cover
of the manifold we can construct a smooth vector field $v^i$ on the
compliment to the singular set. We will think that the vector field is
constructed already on the $M\setminus {\bf Sing}$, and show that the
manifold $(M,g)$ is as in examples above.

First let us show that the vector field $v^i$ can be smoothly extended
to the points of ${\bf Sing}$. Of course, there is no problem at all
to extend it to ${\bf Sing}$ continuously, in order to do it we simply
define $v^i=0$ at the points of ${\bf Sing }$, but we would like to
have a smooth and not a merely continuous vector field $v^i$, so our
goal to show that this continuous extension is actually smooth.

In order to do it, let us first observe that in the coordinate system
where the metric has the form \eqref{warped} and $a^i_{ \ j} $ has the
form \eqref{diag0}, the vector field $v^i$ is given, up to sign, by
$\sqrt{f} \tfrac{\partial }{\partial t}$. Then, the orthogonal
distribution to $v^i$ is integrable and the function $f$ is constant
along it.

We consider now a singular point $p$, the geodesics passing through
the point, and spheres of small radii (in the distance function
corresponding to the metric $g$) around this point. These spheres are
orthogonal to these geodesics. But by Remark \ref{byproduct}, the
velocity vectors of such geodesics are proportional to $v^i$. So the
function $f$ is constant on the sphere.  Thus, in a neighborhood of
$p$, $f$ is a function of the distance to the point $p$, 
which we denote by $t$, i.e., $f= f(t)$.  This notation is compatible
with \eqref{warped}, since in the the exponential polar coordinates
$(r, x^1,...,x^{n-1})$ ( where $x^1,..., x^{n-1}$ are local
coordinates on the unit sphere in $T_pM$) the metric has the warped
product form \eqref{warped}.  
Then, the function $f(t)$ is a an smooth function of $t\geq 0$ and
since it is nonnegative and vanishes only at $t=0$ (at least, for
small $t$), it follows that the vector field $\sqrt{f(t) }
\tfrac{\partial }{\partial t}$ is a smooth vector field as we claimed.

 Let us now find out an equation the vector field satisfies.  
  In order to do it, we observe that 
  the covariant derivative of 
  $a^{ij}=v^iv^j$ is equal to \begin{equation} \label{bas1}  (v^i v^j)_{, k}=v^i_{\ , k} v^j+ v^j_{\ , k} v^i \end{equation} 
  but should, in view of \eqref{Mbasic}, be equal to 
  $
  \lambda^j \delta^i_k + \lambda^i \delta^j_k
  $  for some vector field $\lambda^i$. Comparing this with \eqref{bas1}, we see that 
 $v^i_{\ , k}$ is proportional to $\delta^i_k$: there exists a smooth function $\eta$ such that    
  \begin{eqnarray}
v^i_{\ ,j}&=& \eta \delta^i_j , \label{first}
\end{eqnarray} 
  i.e.\ it satisfies the equation studied in Section \ref{BGG}. 
The vector fields satisfying \eqref{first} were extensively studied in
the literature under different names, see \cite{KR} for references.
By the result of Tashiro \cite[Lemma 2.2]{Tashiro} (who called such
vector fields {\em concircular vector fields}), a compact manifold
admitting such a vector field is as we claimed.  Theorem
\ref{globalBnC} is proved.
\end{proof}

 Note that the equation \eqref{first} is equivalent to
 \eqref{psys1}. Thus by Proposition \ref{psysth} a warped product
 metric has nullity at every point.

 \subsection{If $B\ne \const$ in a neighborhood, then Weyl  nullity exists on the whole manifold} 
 
Here we prove the following statement. 
 
 \begin{theorem} \label{lllast}Let $g$ be a metric of arbitrary signature  on 
a connected $M$, and suppose $\sigma$ is a nonparallel solution of the
metrisation equation. Assume $g$ has a Weyl nullity in a certain
neighborhood, and suppose the corresponding $B=B^g$ is not
constant. Then, $g$ has a Weyl nullity on the whole manifold.
 \end{theorem}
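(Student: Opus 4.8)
The plan is to build, on (a suitable shrinking of) $U$, a nowhere‑zero solution $v$ of the first BGG equation \nn{psys1}, to extend it to the universal cover $\pi\colon\widetilde M\to M$ using that solutions of \nn{psys1} form a local system, and then to apply Proposition \ref{psysth} on $\widetilde M$: the extended field $\widehat v$ will be non‑trivial, so $W^a{}_{bcd}\widehat v^b=0$ everywhere on $(\widetilde M,\pi^*\bp)$; since Weyl nullity is a pointwise local condition it then descends to $M$, which is the assertion.

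To construct $v$: on $U$ we have Weyl nullity, so by Section \ref{mpsub} and Theorem \ref{mid2n} the vector field $\mu^a=-\lambda^a$ lies in the nullity and equations \nn{vnb}, \nn{key1} hold on $U$. Since $B$ is non‑constant we may shrink $U$ so that $\nabla B$ is nowhere zero there; then \nn{key1} forces $a^i{}_j=\nu\,\delta^i_j+\varepsilon\,B^{,i}B_{,j}$ on $U$. As $n\geq 3$, the eigenvalue $\nu$ of $a^i{}_j$ has geometric multiplicity $n-1\geq 2$, so Lemma \ref{drhozero} gives $d\nu=0$ at generic points, whence $\nu$ is a constant on the component of $U$ we work on. Shrinking once more we may take $\varepsilon\neq 0$ on $U$: if $\varepsilon\equiv 0$ on a nonempty open set, then $a^{ij}=\nu g^{ij}$ there, so the metrisability solution $a^{ij}-\nu g^{ij}$ vanishes on an open set and hence vanishes identically by Corollary \ref{spott}, contradicting that $\sigma$ is non‑parallel. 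Now set $v^i:=\sqrt{|\varepsilon|}\,\nabla^i B$, a nowhere‑zero smooth vector field on $U$, so that $\tilde a^{ij}:=a^{ij}-\nu g^{ij}=\pm\,v^iv^j$ on $U$. Since $g^{ij}$ solves the metrisability equation with $\lambda\equiv 0$ and $\nu$ is constant, $\tilde a^{ij}$ also solves \nn{Mbasic} with the same $\lambda^i$; substituting $\tilde a^{ij}=\pm v^iv^j$ into \nn{Mbasic} and comparing terms exactly as in the proof of Theorem \ref{globalBnC} forces $v^i{}_{,j}=\eta\,\delta^i_j$ for a smooth function $\eta$, i.e.\ $v$ solves \nn{psys1} on $U$.

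For the extension: \nn{psys1} is a first‑order overdetermined system of finite type whose solutions correspond, via the discussion of Section \ref{BGG}, to sections of the tractor bundle $\cT$ that are parallel for the normal tractor connection; hence its local solution sheaf is locally constant. Pulled back to the simply connected $\widetilde M$ this sheaf is constant, so the restriction of $\pi^*v$ to a connected component of $\pi^{-1}(U)$ extends uniquely to a solution $\widehat v$ of \nn{psys1} on all of $\widetilde M$, and $\widehat v$ is non‑trivial since it coincides there with $\pi^*v\neq 0$. Applying Proposition \ref{psysth} to $\widehat v$ then finishes the argument as described above.

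The crux is the local step: reducing $a^i{}_j$ to the form $\nu\,\delta^i_j+\varepsilon B^{,i}B_{,j}$ with $\nu$ constant, and clearing the degenerate sub‑cases, relies on the eigenvalue analysis of Sections \ref{Vsec} and \ref{Bnichtcon} — in particular on Lemma \ref{drhozero} and the cited control on the eigenvalues of solutions of the metrisability equation. The propagation step is soft once \nn{psys1} is in hand, and the argument is insensitive both to the signature of $g$ and to whether $\nabla B$ is null.
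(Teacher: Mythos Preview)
Your propagation step contains a genuine gap. You assert that the solution sheaf of \nn{psys1} is locally constant, equivalently that local solutions form a local system and therefore extend over the universal cover. This would hold only if the normal tractor connection on $\cT$ were flat, which it is not: its curvature involves the projective Weyl and Cotton tensors, and you have not shown these vanish on $M\setminus U$. What the finite-type nature of \nn{psys1} buys you is \emph{uniqueness} of extension --- a solution is determined by its $1$-jet at a point, so two global solutions agreeing on an open set agree everywhere --- but not \emph{existence}: given the parallel tractor $V=(v,\rho)$ on $U$, parallel transport of $V(p)$ along two homotopic paths from $p\in U$ to $q\notin U$ can yield different values whenever the tractor curvature is nonzero on the region swept by the homotopy. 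There is thus no reason your local $v$ should extend, and without a global solution Proposition \ref{psysth} cannot be invoked. (Your local construction of $v$ on $U$ is fine and essentially coincides with the argument in Section \ref{glth}.)

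The paper circumvents this by a different mechanism. Rather than extend a local object, it works with the \emph{globally defined} Killing $(0,2)$-tensors $Q_t(\cdot,\cdot)=g\bigl(\operatorname{comatrix}(A-t\operatorname{Id})\cdot,\cdot\bigr)$, available on all of $M$ because $A$ and $g$ are. Lemma \ref{lem2} shows that near a generic point the number of linearly independent $Q_t$ equals $\deg_{\min}(A)$; since the Killing equation is of finite type, a linear dependence among these global Killing tensors holding on one open set holds everywhere, so this number is constant on an open dense set (Corollary \ref{llll}). On your $U$ one has $\deg_{\min}(A)=2$, hence $\deg_{\min}(A)=2$ almost everywhere on $M$, forcing $A$ into the warped-product form \nn{diag} (giving nullity) or the lightlike case with $v$ parallel (again nullity). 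The crucial distinction is that finite type lets one propagate \emph{linear relations among already-global solutions}, which is far weaker than extending a merely local solution; your argument requires the latter.
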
 
 
Examples show that the assumption that $B$ is not constant is
essential.

Recall that \emph{ Killing}  $(0,2)$ tensors are symmetric tensors  $Q_{ij}$  satisfying the Killing equation
$$
Q_{ij,k}+ Q_{jk,i}+ Q_{ki,j}=0.
$$ It is well known that Killing $(0,2)$ tensors are essentially the
same as integrals for the geodesic flow that are quadratic in
velocities: for a Killing tensor $Q$ the quadratic in velocities
function $\xi\mapsto Q(\xi, \xi)$ is an integral.

We will consider the Killing tensors corresponding to the integrals
$I_t$ given by \eqref{integral}, and denote them by $Q_t$. The tensors
$Q_t$ are given by the formula
\begin{equation}\label{Q} Q_t(\cdot,\cdot)= g(\mathrm{comatrix}(A- t\mathrm{Id})\cdot,\cdot).\end{equation}

In the proof of Theorem \ref{lllast} the main role is played by the
following result:

  \begin{lemma} \label{lem2} 
  The number of linear independent Killing tensors among the $Q_t$, in a
  small neighborhood of a generic point, is equal to the degree of the
  minimal polynomial of $A$ (i.e., the nonzero polynomial of the
  smallest degree that annihilate $A$).
 \end{lemma}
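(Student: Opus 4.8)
The plan is to fix a generic point $x\in M$ and to analyse the family $t\mapsto Q_t$ of Killing tensors at $x$ purely as a family of bilinear forms built from the single endomorphism $A=(a^i{}_j)$ via \eqref{Q}, $Q_t(\cdot,\cdot)=g(\mathrm{comatrix}(A-t\,\mathrm{Id})\cdot,\cdot)$. Since $g$ is nondegenerate, the $g$-bilinear forms $Q_t$ are linearly independent (as $t$ ranges over the reals, or equivalently the coefficients of the polynomial $\sum Q_t$-expansion are independent) if and only if the corresponding $(1,1)$-tensors $\mathrm{comatrix}(A-t\,\mathrm{Id})$ are linearly independent endomorphisms. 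So the statement reduces to a purely linear-algebraic claim about one endomorphism: \emph{the dimension of the span of $\{\mathrm{comatrix}(A-t\,\mathrm{Id}) : t\in\mathbb{R}\}$ in $\End(T_xM)$ equals $\deg m_A$, the degree of the minimal polynomial $m_A$ of $A$.}

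First I would write $\mathrm{comatrix}(A-t\,\mathrm{Id}) = \mathrm{adj}(A-t\,\mathrm{Id})$ and use the classical identity $(A-t\,\mathrm{Id})\,\mathrm{adj}(A-t\,\mathrm{Id}) = \chi_A(t)\,\mathrm{Id}$, where $\chi_A$ is the characteristic polynomial. Expanding $\mathrm{adj}(A-t\,\mathrm{Id}) = \sum_{k=0}^{n-1} t^{n-1-k} B_k$ for suitable endomorphisms $B_k$ that are polynomials in $A$ (this is the standard recursion: $B_0=\mathrm{Id}$, $B_{k}=A B_{k-1} - c_k\,\mathrm{Id}$ with $c_k$ the coefficients of $\chi_A$), one sees that $\mathrm{span}\{\mathrm{adj}(A-t\,\mathrm{Id})\}=\mathrm{span}\{B_0,\dots,B_{n-1}\}\subseteq \mathbb{R}[A]$, the unital subalgebra generated by $A$. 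Hence the span is at most $\dim\mathbb{R}[A]=\deg m_A$. For the reverse inequality, I would argue that the $B_k$ already generate all of $\mathbb{R}[A]$: from $B_0=\mathrm{Id}$ and $B_k = A B_{k-1} - c_k\,\mathrm{Id}$ one gets inductively that $\mathrm{span}\{B_0,\dots,B_{k}\}=\mathrm{span}\{\mathrm{Id},A,\dots,A^{k}\}$, so $\mathrm{span}\{B_0,\dots,B_{n-1}\}=\mathrm{span}\{\mathrm{Id},A,\dots,A^{n-1}\}=\mathbb{R}[A]$, which has dimension exactly $\deg m_A$. Putting the two inequalities together gives the claim pointwise; smoothness and the word ``generic'' (where the multiplicities of the eigenvalues of $A$, hence $\deg m_A$, are locally constant) then promote it to the stated statement in a neighbourhood.

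The main obstacle I anticipate is purely bookkeeping: getting the adjugate expansion and the recursion for the $B_k$ exactly right, keeping track of signs and of the fact that we want linear independence of the $Q_t$ as a \emph{family in $t$} (equivalently, linear independence of the polynomial coefficients $B_k$ up to the first $\deg m_A$ of them) rather than for finitely many specific values of $t$ — though since the entries of $\mathrm{adj}(A-t\,\mathrm{Id})$ are polynomials in $t$ of degree $\le n-1$, evaluation at $n$ distinct values of $t$ recovers all the $B_k$, so the two formulations coincide. A secondary point, worth a sentence, is why $\mathrm{comatrix}$ and $\mathrm{adj}$ agree here (the metric lets us pass between the $(0,2)$ and $(1,1)$ pictures) and why $g$ nondegenerate lets us transfer linear (in)dependence between $Q_t$ and $\mathrm{adj}(A-t\,\mathrm{Id})$; after that the argument is the standard fact that $\mathbb{R}[A]$ has dimension $\deg m_A$.
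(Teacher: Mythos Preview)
Your proposal is correct and in fact cleaner than the paper's own argument. Both proofs begin the same way: since $g$ is nondegenerate, linear independence of the $Q_t$ is equivalent to linear independence of the endomorphisms $A_t=\mathrm{comatrix}(A-t\,\mathrm{Id})$, and one must show that the span of $\{A_t:t\in\mathbb{R}\}$ has dimension $\deg m_A$.

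From that point the arguments diverge. The paper analyses the eigenvalues of $A$: for an eigenvalue $\rho$ with algebraic multiplicity $m_\rho$ and maximal Jordan block height $h_\rho$, it shows that $t=\rho$ is a zero of the matrix-valued polynomial $A_t$ of order $m_\rho-h_\rho$, hence $A_t$ factors as $P_{\mathrm{MAT}}(t)\,P_{\mathrm{const}}(t)$ with $P_{\mathrm{const}}$ a scalar polynomial of degree $n-\deg m_A$; this yields the upper bound, while the lower bound is left as an exercise in a Jordan basis. Your route instead invokes the Faddeev--LeVerrier recursion for the adjugate: writing $\mathrm{adj}(A-t\,\mathrm{Id})=\sum_{k=0}^{n-1}t^{\,n-1-k}B_k$ with $B_0=\pm\mathrm{Id}$ and $B_k$ differing from $AB_{k-1}$ by a scalar multiple of $\mathrm{Id}$, each $B_k$ is a monic degree-$k$ polynomial in $A$, so $\mathrm{span}\{B_0,\dots,B_{n-1}\}=\mathrm{span}\{\mathrm{Id},A,\dots,A^{n-1}\}=\mathbb{R}[A]$, which has dimension exactly $\deg m_A$. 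This gives both inequalities at once and needs no case analysis of the Jordan structure. The paper's route has the advantage of producing the explicit scalar factor $P_{\mathrm{const}}=\prod_\rho (t-\rho)^{m_\rho-h_\rho}$, but this extra information is not used in the sequel (Corollary~\ref{llll} only needs the equality of dimensions), so your more elementary argument loses nothing for the purposes of the paper.
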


 In the language of integrals (linearly independent Killing tensors
 correspond to functionally independent integrals) this statement is
 known, \cite[Theorem 2 and Proposition 3]{Topalov}.

\begin{proof}[Proof of Lemma \ref{lem2}]  We show first that the degree of
 the minimal polynomial is greater than or equal to the number of
 linearly independent Killing tensors from the family $Q_t$. Because
 of \eqref{Q}, it is sufficient to consider (1,1)-tensors
 $$A_t:=\mathrm{comatrix}(A-t \mathrm{Id})$$ instead of $Q_t$, and show that
 the number of linear independent tensors among $A_t$ is at most the degree of the minimal polynomial of $A$.

   Let us first note that the family $A_t$ is polynomial in $t$ of
   degree $n-1$ (the coefficients of the polynomial are (1,1)-tensors,
   if we fix a point $x\in M$ and a basis in $T_xM$, $A_t$ is a
   polynomial in $t$ of degree $n-1$ whose coefficients are matrices).
   
   We call $t_0\in \mathbb{C}$ a \emph{zero} of the polynomial at
   point $x$, if the tensor $A_{t_0}(x)={\bf 0}$, and a \emph{zero} of
   the polynomial of order $k\ge 1$, if at the point $x$ we have that
   $\tfrac{d^{\ell}}{dt^{\ell}}_{|t=t_0}\bigl(A_t\bigr)={\bf 0}$ for
   all $\ell=0,...,k-1$.  Let us observe that if the geometric
   multiplicity of an eigenvalue $\rho$ of $A$ is greater than $1$,
   then $\rho$ is a zero of the polynomial $A_{t} $; on the way we
   will also see what is the order of the zero.

     Indeed, suppose an eigenvalue $\rho$ of $A$ has algebraic
     multiplicity $m_\rho $ and the maximal height of the Jordan block
     corresponding to $\rho$ is $h_\rho$. The assumption that the
     geometric multiplicity is at least two implies that $m_\rho-h_\rho\ge
     1$.  Then, $\rho$ is a zero of ${\det(A_t)}$ of multiplicity
     $m_\rho$, and is a pole of $(A_t)^{-1}$ (considered as a
     matrix-valued function) of multiplicity at most $h_\rho$. Then,
     $t_0=\rho$ is a zero of $ {\det(A_t)}
     (A_t)^{-1}=\mathrm{comatrix}(A-t \mathrm{Id}) $ of multiplicity
     $m_\rho-h_\rho$. Hence, $t_0=\rho$ is a zero of the multiplicity
     $m_\rho-h_\rho\ge 1$ of $A_t$.

       Note that by Lemma \ref{drhozero} for  $m_\rho> h_\rho$ we have that  $\rho$ is a constant. 
       
   Since the sum of $h_\rho$ over    all   eigenvalues $\rho$ is  the degree of the minimal polynomial of $A$ which we denote by $\deg_{\min}$, and the sum  
   of $m_\rho$ over    all   eigenvalues $\rho$ is $n$, we obtain the existence 
   of      $n-\deg_{\min}$  constant\footnote{in the sense that in all points $x$ of a small neighborhood  the same numbers are zeros}   zeros, counted with the multiplicities,     of  the polynomial $\mathrm{comatrix}(A-t \mathrm{Id}) $. Then, there exists a decomposition 
   $$
   A_t = P_{\textrm{MAT}} P_{\const}.
    $$
  Here   $P_{\textrm{MAT}}$ is a polynomial in $t$ of degree $\deg_{\min}-1$ whose coefficients are (1,1)-tensors, and $P_{\const}$ is a polynomial in $t$ of degree $n-\deg_{\min}$ whose coefficients are (constant real) numbers. In fact, the polynomial  $P_{\const}$ is the product of $(t- \rho)^{m_\rho - h_\rho}$ over all eigenvalues $\rho$ of $A$. 
  
  The proof of the existence of such decomposition is more or less the
  standard proof of the known statement that if a polynomial $P$ has
  zeros $\rho_1,..., \rho_\ell$ it is divisible by
  $(t-\rho_1)...(t-\rho_\ell)$, and the fact that in our case our
  polynomial has matrix coefficients does not really affect the proof,
  since the proof only needs the polynomial  remainder theorem.

   Then, each (1,1)-tensor among $A_t$ is a linear combination of  
    the   coefficients of the polynomial $P_{\textrm{MAT}}$, which implies that there is at most  $\deg_{\min}$ linearly independent $Q_t$.

   Let us now explain that the degree of the minimal polynomial is
   less than or equal to the number of linearly independent
   $Q_t$. Actually, it is a simple exercise in the linear algebra: we
   need to show that the number of linearly independent matrices among
   the matrices of the form $\textrm{comatrix}(A - t \textrm{ Id})$ is
   at least (in fact, precisely, since above we explained the ``at
   most'' direction) $\deg_{\min} A$.  We leave this exercise to the
   reader, and recommend to do calculations in the basis such that $A$
   has Jordan normal form.  Lemma \ref{lem2} is proved. \end{proof}

   \begin{corollary} \label{llll}
   The minimal polynomial of $A$ has the same degree at each point of an open everywhere dense subset of $M$. 
   \end{corollary}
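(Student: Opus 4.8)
The plan is to reduce the statement to the elementary fact that a bounded, integer-valued, lower semicontinuous function on a manifold is locally constant on an open dense subset. For $x\in M$ write $d(x)$ for the degree of the minimal polynomial of the endomorphism $A=(a^i{}_j)$ at $x$. First I would record the purely linear-algebraic observation that $d(x)$ equals the dimension of the span of $\{\mathrm{Id},A(x),A(x)^2,\dots\}$ inside $\End(T_xM)$ and that, by the Cayley--Hamilton theorem, this span is already generated by $\mathrm{Id},A(x),\dots,A(x)^{n-1}$. Hence $d(x)$ is the rank of the matrix whose rows are the components, in any local frame, of $\mathrm{Id},A,A^2,\dots,A^{n-1}$; since those entries are universal polynomials in the (smooth) components of $a^i{}_j$, the function $d$ is the rank of a matrix of smooth functions.

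Next I would invoke lower semicontinuity of matrix rank: $d(x)\ge k$ is equivalent to the nonvanishing at $x$ of some fixed $k\times k$ minor, an open condition, so each superlevel set $\{x\in M: d(x)\ge k\}$ is open. Since $1\le d\le n$ everywhere, the final step is elementary: given any nonempty open $U\subseteq M$, put $k^*:=\max_{x\in U}d(x)$, which is attained because the range is finite; then $\{x\in U:d(x)=k^*\}=\{x\in U:d(x)\ge k^*\}$ is open, nonempty, and $d$ is constant on it. Thus every nonempty open set meets the interior of a level set of $d$, which is exactly the assertion that $d$ is locally constant on an open everywhere dense subset of $M$.

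I do not expect a genuine obstacle here; the points needing care are bookkeeping ones. One is to read the phrase ``the minimal polynomial has the same degree at each point'' as meaning that the degree agrees, near each point, with its value on a fixed open dense set, which is what the argument above yields and what is used afterwards. The other is to make sure lower semicontinuity is asserted via a globally defined smooth object: either the components of $a^i{}_j$ directly, or, in the spirit of Lemma \ref{lem2}, the globally defined Killing tensors $Q^{(0)},\dots,Q^{(n-1)}$ obtained as the coefficients of the polynomial family $Q_t$ of \eqref{Q}. By Lemma \ref{lem2}, on the resulting open dense set the common local value of $d$ coincides with the number of linearly independent Killing tensors among the $Q_t$, so that number is locally constant there too, which is the form convenient for the proof of Theorem \ref{lllast}.
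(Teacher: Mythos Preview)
Your lower-semicontinuity argument is sound and does establish that $d(x)$, the degree of the minimal polynomial of $A$ at $x$, is locally constant on an open dense subset of $M$. But the corollary, as stated and as used in the proof of Theorem~\ref{lllast}, asserts more: that $d$ takes a \emph{single} value on an open dense set. These differ. A lower semicontinuous integer-valued function on a connected manifold can easily take distinct values on disjoint open sets whose union is dense; take for instance $d=1$ on $(-\infty,0]$ and $d=2$ on $(0,\infty)$ on $M=\mathbb{R}$. Your argument cannot exclude this behaviour for $A$, and the application needs precisely the global statement: one starts from $\deg_{\min}A=2$ in a small neighbourhood and must conclude $\deg_{\min}A=2$ at almost every point of $M$.

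The paper closes this gap not by pointwise linear algebra but by exploiting that the $Q_t$ are \emph{Killing tensors}, i.e.\ solutions of an overdetermined system of finite type. For such systems a solution vanishing on an open set vanishes on all of (connected) $M$; hence the number of linearly independent sections among the coefficients $Q^{(0)},\dots,Q^{(n-1)}$ of $Q_t$ is the same whether computed over any small neighbourhood or over all of $M$ --- it is a single global integer. Lemma~\ref{lem2} then identifies that integer with $d(x)$ at every generic point, yielding global constancy on the set of generic points. Your closing paragraph gestures at the Killing tensors but stops at ``that number is locally constant there too''; the missing ingredient is precisely the unique-continuation property coming from finite type, which is what upgrades ``locally constant'' to ``constant''.
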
 
   
   \begin{proof}
 It is known, see e.g. \cite{Wolf}, that   the Killing equation  is of finite type. Then, the dimension of the space of Killing tensors $Q_t$ is the same in each neighborhood and the claim follows from  Lemma \ref{lem2}. 
   \end{proof} 
   
   \vspace{1ex} 
   {\bf Proof of Theorem \ref{lllast}.}   In dimension $n$ of the manifold is  two, it is nothing to show. We assume $n\ge 3$. 
   
    If $B$ is not constant in some neighborhood, as we explained in \S \ref{Bnichtcon}, the tensor $A$ is given by \eqref{diag}, and ${\deg}_{\min}(A)= 2$.   Then, by Corollary \ref{llll},  the degree  of the  minimal polynomial of $A$  is $2$ at almost every point, which implies that $A$ is as in \eqref{diag}, so the metrics has warped product form and there exists a nullity, or  $a^{ij} = v^iv^j + g^{ij},$  where $v^i$ is a light like vector field. But in the last case, as explained in  \S \ref{Bnichtcon}, the tensor $a^{ij}$ is parallel which implies that $v^i$ is parallel. Then, it lies in the nullity of the curvature tensor. Theorem \ref{lllast} is proved.

 Let us also note that under the assumption that there exists a nullity in some  neighborhood and a nonparallel solution of the metrisation equation we obtain the local description of the metric almost everywhere: it is either warped product metric, or has a parallel light like vector field; recall that 
   the 
 descriptions of metrics admitting a parallel vector 
 is know since at least \cite{eiparallel}.

 \section{ Projectively equivalent metrics on closed manifolds  if $B=\const$ and applications.}\label{BC} 
 
Here we assume that $\lb g \rb$ has Weyl nullity at almost every (and
therefore, every) point and that $\bar g\in \lb g \rb$ is non-affinely
(projectively) equivalent to $g$. We will also assume that the
function $B$ is constant.  Then, by Theorem \ref{4.7}, the equations
\eqref{npsys-BC} hold, which in the notation $(a,\lambda, \rho')$ take the form
 \begin{equation}\label{vnB}\begin{array}{lcr} 
 a_{ij,k}& = &  \lambda_i g_{jk} + \lambda_jg_{ik}\\
 \lambda_{i,j}& =&\rho' g_{ij} -B a_{ij}\\ 
 \rho'_{,k}&= &-2B\lambda_k.
 \end{array}.\end{equation}

 {Combining this with Proposition \ref{GOT=met}, Proposition \ref{mettoGOT} and result \cite[Theorem 1]{mounoud},  we obtain the
 following result:}
  \begin{theorem}  \label{Bcon} Assume $(M, g)$ 
(where $g$ has arbitrary signature) 
is  of dimension $n \ge 3$ and is closed. Assume that $(M,\lb g\rb )$ has Weyl  nullity 
 and that the coefficient $B^g$ is constant.  Let $\bar g$ be a metric
  that is projectively equivalent to $g$, but is not affinely equivalent
 to $g$.  Then, after multiplication by a constant, $g$ is a Riemannian metric of
 constant positive sectional curvature.
\end{theorem}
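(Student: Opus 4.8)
The plan is to reduce the statement to the classification of closed pseudo-Riemannian manifolds carrying a non-constant solution of the Gallot-Obata-Tanno equation \eqref{one}, and then to quote \cite[Theorem 1]{mounoud}.

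First I would record the structure at hand. Since $g$ and $\bar g$ are projectively but not affinely equivalent, the solution $\sigma$ of the metrisability equation \eqref{metr} determined by $\bar g$ (equivalently the unweighted symmetric tensor $a^{ij}$ of \eqref{Mbasic}) is not parallel for $\nabla^g$; hence the gradient field $\lambda^i$, equivalently $\mu^i$, is not identically zero. Because $B$ is constant, Theorem \ref{4.7} applies and the prolonged system \eqref{vnB} holds for the triple $(a,\lambda,\rho')$.

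Next I would pass to \eqref{one}. If $B\neq 0$, then Proposition \ref{GOT=met} gives a bijection between solutions of \eqref{metr} and solutions of the Gallot-Obata-Tanno equation \eqref{one} with $B_\circ=B$; and, by the final clause of that proposition, the solution $f$ corresponding to $\sigma$ has $df_x\neq 0$ at any point $x$ where $\mu^a_x\neq 0$ — such a point exists since $\mu^a\not\equiv 0$ (and in fact, by Theorem \ref{muae}, $\mu^a$ is nonzero on an open dense set), so $f$ is non-constant. If instead $B=0$, then, since $\sigma$ is not parallel, Proposition \ref{mettoGOT} directly produces a non-constant solution $f$ of \eqref{one} with $B_\circ=0$. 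In either case $(M,g)$ is a closed pseudo-Riemannian manifold of dimension $n\geq 3$ admitting a non-constant solution of \eqref{one}.

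Finally I would invoke \cite[Theorem 1]{mounoud}: a closed pseudo-Riemannian manifold of dimension $\geq 3$ admitting a non-constant solution of \eqref{one} is, up to multiplication by a constant, a Riemannian metric of constant positive sectional curvature. This is exactly the claimed conclusion, and as a byproduct it forces $B>0$ after rescaling, so the case $B=0$ never actually arises under the hypotheses. The substantive content of the argument sits entirely in the results being assembled — Theorem \ref{4.7} (which trades the Cotton-and-Weyl prolongation \eqref{psys} for the parallel system \eqref{vnB}), Propositions \ref{GOT=met} and \ref{mettoGOT} (the dictionary between \eqref{metr} and \eqref{one}), the analytic input Theorem \ref{muae}, and the external classification \cite[Theorem 1]{mounoud}. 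The one point needing care is the split at $B=0$, where the correspondence with \eqref{one} runs only from the metrisability side, so one must feed in the non-parallelism of $\sigma$ rather than a GOT solution; everything else is routine assembly.
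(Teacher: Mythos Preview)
Your proof is correct and matches the paper's own argument essentially line for line: the paper likewise derives \eqref{vnB} from Theorem \ref{4.7}, then combines Proposition \ref{GOT=met}, Proposition \ref{mettoGOT}, and \cite[Theorem 1]{mounoud} to conclude. Your explicit case split at $B=0$ versus $B\neq 0$ and the remark that $B=0$ is a posteriori excluded are a welcome clarification of what the paper leaves implicit.
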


\subsection{Sasakian manifolds are geodesically rigid.}

Sasakian manifolds have Weyl nullity and $B=1$. 
Thus we immediately have the following result.
\begin{corollary}\label{sasy}
On a closed  Sasakian manifold $(M,g )$ of arbitrary signature, any metric in
$\lb g \rb$ is affinely equivalent to $g$,  unless for a certain constant $c\ne 0$ the metric $c g$ is the Riemannian metric of constant sectional  curvature equal to 1.
\end{corollary}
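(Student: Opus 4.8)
The plan is to obtain Corollary \ref{sasy} as an immediate consequence of Theorem \ref{Bcon}, once the two structural facts recalled just above the statement are in place: a Sasakian metric has projective Weyl nullity at every point, and its associated function $B$ is identically $1$. So first I would record these facts. A Sasakian manifold has odd dimension $2m+1$, hence $n:=\dim M\ge 3$, which is exactly the dimension restriction required in Theorem \ref{Bcon}. Moreover, for a Sasakian structure the characteristic (Reeb) vector field $\xi$ satisfies the classical curvature identity $R(X,Y)\xi=\eta(Y)X-\eta(X)Y$, where $\eta$ is the metric-dual one-form of $\xi$; written in abstract indices this is precisely the nullity condition \eqref{Knullity} with nullity vector $\xi$ and with $B\equiv 1$. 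By Theorem \ref{WeqK} this says that $(M,\lb g\rb)$ has projective Weyl nullity, and by \eqref{Pe} the canonical function $B$ determined by $g$ is the constant $1$. The same short computation is valid in arbitrary signature.

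Next, suppose $\bar g\in\lb g\rb$ is \emph{not} affinely equivalent to $g$. Then all hypotheses of Theorem \ref{Bcon} are satisfied: $M$ is closed, $n\ge 3$, $\lb g\rb$ has Weyl nullity, $B^g$ is constant, and $\bar g$ is projectively but not affinely equivalent to $g$. Theorem \ref{Bcon} then produces a constant $c_0\ne 0$ such that $c_0 g$ is a Riemannian metric of constant positive sectional curvature, say equal to $K_0>0$.

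Finally I would carry out the harmless renormalisation that pins the curvature to $1$. Since rescaling a metric by a positive factor $\lambda$ rescales its sectional curvature by $\lambda^{-1}$, the metric $cg$ with $c:=K_0c_0$ — which has the same sign as $c_0$, so $c\ne 0$ — is a Riemannian metric of constant sectional curvature exactly $+1$. Hence for every $\bar g\in\lb g\rb$ either $\bar g$ is affinely equivalent to $g$, or such a constant $c$ exists, which is the assertion. I do not anticipate a real obstacle here: the corollary is essentially a dictionary entry for Theorem \ref{Bcon}. The only point needing genuine care is the clean verification that a (pseudo-)Riemannian Sasakian metric satisfies the nullity condition with $B\equiv 1$, which is a standard contraction of the defining Sasakian curvature identity; everything afterward is a direct invocation of Theorem \ref{Bcon} together with a one-line scaling argument.
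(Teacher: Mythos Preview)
Your proposal is correct and follows exactly the paper's route: the paper's entire argument is the one-line observation ``Sasakian manifolds have Weyl nullity and $B=1$'' followed by an appeal to Theorem \ref{Bcon}. You have simply made explicit the Reeb-vector curvature identity that underlies the first fact and spelled out the rescaling to pin the sectional curvature at $+1$, both of which the paper leaves to the reader.
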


 Note that the statement of Corollary \ref{sasy} does not hold
 locally, as it follows from \cite[\S3]{conification} that there exist
 local Sasakian manifolds, of any odd dimension $\ge 3$ and of
 nonconstant curvature, admitting projectively but not affinely
 equivalent metrics.  More precisely, it was shown there these
 Sasakian manifolds are such that the cone over them admits
 nontrivial parallel symmetric $(0,2)$ tensors.

\subsection{Closed K\"ahler  manifolds  do not admit nontrivial projective equivalence} 
\begin{theorem}\label{kahler}
On a closed  K\"ahler  manifold $(M,g )$ of arbitrary signature, any metric in
$\lb g \rb$ is affinely equivalent to $g$. 
\end{theorem}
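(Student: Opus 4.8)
The plan is to argue by contradiction. Suppose $\bar g\in\lb g\rb$ is not affinely equivalent to $g$. By Theorem \ref{EM1} and the discussion of Section \ref{prol} this is equivalent to the existence of a solution $\sigma^{ab}$ of the metrisability equation \nn{metr} that is not parallel for $\nabla^g$; in the notation of Section \ref{Vsec} I would work instead with the corresponding non-parallel symmetric tensor $a^{ij}$ satisfying \nn{Mbasic}, and with its associated gradient vector field $\lambda^i=\nabla^i\lambda$, where $\lambda=\tfrac12\trace A$. Since $\nabla^g a\neq 0$ somewhere, $\lambda^i$ is not identically zero.

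The first and main step is to show that a K\"ahler metric $g$ carrying such a non-parallel $a^{ij}$ has projective Weyl nullity with $B^g\equiv 0$; I expect this to be the only real difficulty. The input is the parallelism of the K\"ahler form $\omega$ (equivalently $\nabla^g J=0$): feeding $\nabla\omega=0$ into the metrisability equation \nn{Mbasic} and its integrability condition \nn{int2}, one finds that $a(J\cdot,\cdot)$ is a $2$-form of Hamiltonian type, and its structure equations force the Hessian of $\lambda$ to be pure trace, i.e.\ $\lambda^i{}_{,j}=\rho'\delta^i_j$ for some function $\rho'$ — which is exactly \nn{vnb} with $B=0$, since $\delta^i_j$ and $a^i{}_j$ are pointwise independent where $g$ and $\bar g$ are non-proportional. (Alternatively one invokes the classical structure theory of geodesically equivalent K\"ahler metrics, by which in the non-affine case $g$ is locally a metric cone over a Sasakian manifold, and metric cones have $B=0$ nullity; cf.\ the remarks in Section \ref{intro}.) Thus $\lambda^i$ is a non-trivial solution of the system \nn{psys1}, so Proposition \ref{psysth} gives Weyl nullity \emph{everywhere} with $W^a{}_{bcd}\lambda^b=0$ everywhere; a short computation with $\nabla_a\lambda^b=\rho'\delta^b_a$ and its integrability (comparing with \nn{Knullity} via Theorem \ref{WeqK}) then identifies the associated scalar as $B^g$ and shows $B^g\equiv 0$ on the dense open set where $\lambda^i\neq 0$, hence on all of $M$ by continuity. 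The propagation of nullity and of $B\equiv 0$ off $\{\lambda\neq 0\}$ is handled by the density statement in Corollary \ref{spott} (or the finite-type nature of \nn{psys1}) together with Proposition \ref{Bfn}.

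It remains to derive a contradiction, which is now immediate. The metric projective structure $\lb g\rb$ has Weyl nullity, $B^g\equiv 0$ is constant, $M$ is closed, $\dim M\geq 3$ (a K\"ahler manifold has even real dimension, so in fact $\dim M\geq 4$), and $\bar g\in\lb g\rb$ is projectively but not affinely equivalent to $g$. Theorem \ref{Bcon} therefore applies and shows that, after multiplication by a nonzero constant $c$, $g$ is a Riemannian metric of constant positive sectional curvature. If $g$ has indefinite signature this already contradicts the conclusion that $cg$ is Riemannian; in the Riemannian case, constant positive sectional curvature means $R^i{}_{jkl}=\kappa K^i{}_{jkl}$ for some $\kappa>0$ (with respect to $g$, after rescaling $\kappa$), so contracting a nullity vector $v$ into this and comparing with \nn{Knullity} gives $\kappa\,v^jK^i{}_{jkl}=-B^g\,v^jK^i{}_{jkl}$, whence $B^g=-\kappa<0$, contradicting $B^g\equiv 0$. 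Either way no such $\bar g$ exists, so every metric in $\lb g\rb$ is affinely equivalent to $g$.
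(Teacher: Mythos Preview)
Your overall architecture matches the paper's: reduce to showing that a K\"ahler metric with a non-parallel metrisability solution has Weyl nullity with $B^g\equiv 0$ (the paper isolates this as Proposition \ref{mainkahler}), then feed this into Theorem \ref{Bcon}.

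However, your justification of the key step is not an argument. The tensor $a(J\cdot,\cdot)$ is \emph{not} a $2$-form in general: $a_{ik}J^k{}_j$ is skew only when $a$ is hermitian, which you have no reason to assume. The paper's mechanism is more delicate: it symmetrises to the hermitian part $\hat a^{ij}=a^{ij}+J^i{}_{i'}J^j{}_{j'}a^{i'j'}$, observes that $\hat a$ satisfies the c-projective (Hamiltonian $2$-form) analogue of the metrisability equation, imports from that theory the identity \nn{mik2} for $\lambda^i{}_{,j}$, and then takes a specific linear combination of \nn{mik1}, its $J$-conjugate \nn{mik3}, and \nn{mik2} so that the curvature terms cancel, leaving $n\lambda^i{}_{,j}=\tilde\mu\,\delta^i_j$. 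Without this c-projective input (or an equivalent computation) you have no route from ``$\nabla J=0$ plus \nn{Mbasic}/\nn{int2}'' to ``$\lambda^i{}_{,j}$ is pure trace''. Your alternative --- that $g$ is locally a cone over a Sasakian manifold --- presupposes exactly the $B=0$ nullity you are trying to establish; it is not an independent classical fact about projectively equivalent K\"ahler metrics.

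Your endgame differs from the paper's and is fine in spirit: the paper concludes via holonomy (constant positive curvature forces local holonomy $SO(2m)$, which cannot preserve $J$), whereas you note that a metric of constant nonzero sectional curvature has $B\neq 0$, contradicting $B^g\equiv 0$. That works, but be careful with signs: with the paper's convention $K^i{}_{jkl}=\delta^i_l g_{jk}-\delta^i_k g_{jl}$, constant sectional curvature $c$ gives $R^i{}_{jkl}=-c\,K^i{}_{jkl}$, so your ``$\kappa>0$'' is actually negative; what survives (and is all you need) is $B^g=c\neq 0$.
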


Note that locally there are K\"ahler metrics that are projectively equivalent, 
but not affinely equivalent. A simple example is the flat
metric. Examples with nonconstant sectional curvature also exist.

Note that there exist closed K\"ahler manifolds admitting an affinely
 equivalent nonproportional metric. Indeed, take two compact
K\"ahler manifolds $(M_1, g_1)$ and $(M_2, g_2)$. The metrics $g_1 +
g_2$ and $g_1 + 2 g_2$ on the direct product $M_1\times M_2$ are
affinely equivalent.
 
Theorem \ref{kahler} is an easy corollary of the following proposition:
\begin{proposition} \label{mainkahler} Let $(M^{n} , g, J)$, $n=2m\ge 4$,   be a connected K\"ahler manifold admitting a solution $a^{ij}$ 
 of the metrisability equation    which is not parallel. Then, at every point there exists a nullity and $B=0$. 
\end{proposition}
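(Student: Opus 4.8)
The plan is to work throughout in the scale of $g$, writing the metrisability equation for the associated unweighted tensor $a^{ij}$ as in \eqref{Mbasic}, so that $a^i{}_{j,k}=\lambda^i g_{jk}+\lambda_j\delta^i_k$ with $\lambda^i=\nabla^i\lambda$ and $\lambda=\tfrac12\operatorname{trace}(a^i{}_j)$. Everything is driven by the two defining features of a K\"ahler metric: $\nabla J=0$, and the $J$-invariance of the curvature, $R(JX,JY,Z,W)=R(X,Y,Z,W)$ (equivalently $R(JX,Y,Z,W)=-R(X,JY,Z,W)$, and the same in the last pair). The first and central step is to show that $\lambda$ satisfies the Hessian equation \eqref{vnb}, i.e.
\begin{equation}\label{vnbK}
\lambda_{i,j}=\rho'\,g_{ij}-B\,a_{ij}
\end{equation}
for smooth functions $\rho'$, $B$. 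Here one starts from the trace of the integrability condition \eqref{int2} over one index pair; on any pseudo-Riemannian manifold this already gives an identity of the shape $n\,\lambda_{i,j}=\sigma\,g_{ij}+a^{p}{}_{i}R_{pj}+a^{pq}R_{ipqj}$ with $\sigma:=g^{pq}\lambda_{p,q}$. One then differentiates $J^m{}_i a^k{}_m$ and the $J$-rotated gradient $\mu_i:=J^m{}_i\lambda_m$, using $\nabla J=0$ to produce companion identities, and feeds these back into \eqref{int2} together with the $J$-invariance of $R$; the effect is to force the trace-free part of $a^{p}{}_{i}R_{pj}+a^{pq}R_{ipqj}$ to be proportional to the trace-free part of $a_{ij}$, which is exactly \eqref{vnbK}. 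Once \eqref{vnbK} holds, Proposition \ref{con2null} applies verbatim and gives $\lambda^{p}W^i{}_{pjk}=0$ at every point where $\lambda\ne0$; in particular $g$ has Weyl nullity on the open set $\{\lambda\ne0\}$, with $B$ the corresponding Schouten eigenvalue there.

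The second step pins down $B$, and uses the K\"ahler structure a second time. Differentiating \eqref{vnbK} and commuting covariant derivatives gives the curvature identity $R_{pijk}\lambda^{p}=(\rho'_{,j}+B\lambda_j)g_{ik}-(\rho'_{,k}+B\lambda_k)g_{ij}+B_{,k}a_{ij}-B_{,j}a_{ik}$, while the same computation for $\mu_i$ --- whose Hessian is $\nabla_j\mu_i=\rho'\,\omega_{ij}-B\,(Ja)_{ij}$, with $\omega_{ij}=g_{ik}J^k{}_j$ --- gives a second expression for $R_{pijk}\mu^{p}$. Since $R_{pijk}\mu^{p}$ is determined by $R_{pijk}\lambda^{p}$ through $\mu_i=J^m{}_i\lambda_m$ and the $J$-invariance of $R$, equating the two expressions produces (up to nonzero universal constants in the various terms) the relation
\begin{equation}\label{keyB}
B\,(\lambda_j\omega_{ik}-\lambda_k\omega_{ij})=B_{,k}(Ja)_{ij}-B_{,j}(Ja)_{ik}.
\end{equation}
If $B$ is constant the right-hand side vanishes, so $\lambda_j\omega_{ik}=\lambda_k\omega_{ij}$ for all indices; contracting with the inverse K\"ahler form and using $\omega_{ik}\omega^{ik}=n$, $\omega_{ij}\omega^{ik}=\delta^k_j$, this gives $(n-1)\lambda_j=0$, hence $\lambda\equiv0$ and $a$ parallel --- contrary to hypothesis unless $B=0$.

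It remains to exclude non-constant $B$. On the open set where $B_{,j}\ne0$ the local results of \S\ref{Bnichtcon} --- which use only Weyl nullity and non-constancy of $B$ --- show that, in suitable coordinates, $g$ has the warped-product form $dt^2+f(t)\,h_{ij}dx^idx^j$ with $f$ non-constant and $a$ of the form \eqref{diag}; equivalently, via the rescaled field $W:=\sqrt{f}\,\partial_t$, one obtains a non-trivial solution of the concircular equation \eqref{psys1}. On a K\"ahler manifold such a $W$ forces $JW$ to be a Killing field whose covariant derivative is a function times the parallel K\"ahler form; combining this with the curvature identity satisfied by a Killing field and with the $J$-invariance of the Ricci tensor, evaluated in the warped coordinates (or, alternatively, exploiting the algebraic content of \eqref{keyB} together with \eqref{mm}), one is forced to conclude that $f$ is of cone type, i.e.\ that $g$ is locally a metric cone. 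But a metric cone has $B\equiv0$ (cf.\ the Introduction), contradicting non-constancy of $B$. Hence $B$ is constant, so $B\equiv0$ by the previous step. Finally, with $B=0$ equation \eqref{vnbK} reads $\lambda_{i,j}=\rho'\,g_{ij}$; since $a$ is not parallel we have $\lambda\not\equiv0$, and $\lambda$ cannot vanish on a nonempty open set without $a$ being parallel there and hence --- by the finite type of the prolonged metrisability system --- everywhere, so $\{\lambda\ne0\}$ is open and dense and the Weyl nullity of the first step extends by continuity (closedness of the nullity condition) to all of $M$, with $B=0$. The principal obstacle is the first step: one genuinely has to use that $a$ solves the full metrisability equation --- not merely its first integrability condition --- together with the K\"ahler curvature symmetry, in order to collapse the curvature terms into the form $-B\,a_{ij}$; verifying that a K\"ahler warped product with non-constant warping is a metric cone is the other point that requires real work.
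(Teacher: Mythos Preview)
Your outline has the right starting point --- Sinjukov's traced identity $n\lambda^i{}_{,j}=\mu\,\delta^i_j-a^{is}R_{sj}-a^{sp}R^i{}_{spj}$ --- but the crucial step~1 is only gestured at. You say that ``differentiating $J^m{}_ia^k{}_m$ and the $J$-rotated gradient \ldots\ produces companion identities'' which ``force the trace-free part of $a^p{}_iR_{pj}+a^{pq}R_{ipqj}$ to be proportional to the trace-free part of $a_{ij}$.'' This is the entire content of the proposition, and you have not shown how it follows. The $J$-rotated tensor $J^i{}_{i'}J^j{}_{j'}a^{i'j'}$ does \emph{not} satisfy the metrisability equation (its covariant derivative produces $J$'s rather than $\delta$'s), so there is no second copy of Sinjukov's identity for it, and merely feeding the K\"ahler curvature symmetry into \eqref{int2} does not by itself collapse the curvature terms to a multiple of $a_{ij}$.

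The paper's mechanism is different and decisive: one forms the \emph{hermitian} average $\hat a^{ij}=a^{ij}+J^i{}_{i'}J^j{}_{j'}a^{i'j'}$, observes that $\hat a$ satisfies the \emph{c-projective} (Hamiltonian 2-form) equation \eqref{mik}, and invokes the known c-projective analogue \eqref{mik2} of Sinjukov's identity, $n\lambda^i{}_{,j}=\hat\mu\,\delta^i_j-\hat a^{is}R_{sj}-\hat a^{sp}R^i{}_{spj}$. Rotating \eqref{mik1} by $J$ (using $\nabla J=0$ and the K\"ahler curvature symmetry) gives a third identity \eqref{mik3}; the linear combination \eqref{mik1}+\eqref{mik3}$-$\eqref{mik2} cancels \emph{all} curvature terms because $\hat a=a+JaJ$, yielding $n\lambda^i{}_{,j}=\tilde\mu\,\delta^i_j$ directly. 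This is \eqref{vnb} with $B=0$ from the outset, and Proposition~\ref{con2null} finishes. Your architecture --- first obtain \eqref{vnbK} with an unknown $B$, then rule out $B\ne 0$ constant via \eqref{keyB}, then rule out $dB\ne 0$ by arguing a K\"ahler warped product must be a cone --- is both more laborious and incomplete: \eqref{keyB} is asserted only ``up to nonzero universal constants,'' and the cone claim in step~3 is flagged as requiring ``real work'' but not carried out. The missing idea is precisely the c-projective input \eqref{mik2}; once you use it, steps~2 and~3 evaporate.
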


The assumption that there exists a solution of the metrisability
equation which is not parallel is important: a generic K\"ahler
metric, and even the Fubini-Study metric, does not have nullity.

Let us now explain why Proposition \ref{mainkahler} implies Theorem
\ref{kahler}: Assume that on a closed K\"ahler manifold $(M^{2m},g,J)$
we have a non-parallel solution of the metrisability equation.
Combining Proposition \ref{mainkahler} with Theorem \ref{Bcon} we that
there is a constant $C$ such that $Cg$ is the Riemannian metric of
constant sectional curvature $+1$. Thus the local holonomy group of
the metric $g$ is the whole $SO(2n)$, which is impossible since it
should preserve the complex structure.

\begin{proof}[Proof of Proposition \ref{mainkahler}.]  We will work in the
scale of the metric $g$. Let $a^{ij}$ be a solution of the metrisability
equation \eqref{Mbasic}.  Then, as it was shown in \cite[page
  133]{Sinjukov} (see alternatively \cite[Eq. (1.4)]{mikesSur}), there
exists a function $\mu$ such that the following equation holds:

\begin{equation} \label{mik1}
n\lambda^i_{ \ , j}= \mu \delta^i_{\ j}- a^{is} R_{sj} - a^{sp} R^i_{\ sp j}.  
\end{equation}

Let us now consider the (2,0)-tensor $\hat a^{ij}:= a^{ij} +J^j_{\ j'} a^{i'j'}J^i_{\ i'}.$ It is hermitian by construction. 
Since the complex structure $J$  is parallel by the definition of K\"ahler manifolds, the    tensor $\hat a$ clearly satisfies 
\begin{equation}  \label{mik}
\hat a^{ij}_{\ \ , k}= \lambda^ig^{jk} + \lambda^jg^{ik} +\bigl(\lambda^{i'}g^{j'k} + \lambda^{j'}g^{i'k} \bigr)J^i_{\ i'} J^j_{ \ j'}.
\end{equation} 

Hermitian tensors satisfying \eqref{mik} were actively studied in the context of  the
 so-called  h- or c-projectively equivalent metrics and of so-called Hamiltonian 2-forms. It is known (see e.g. \cite[Lemma 1]{FKMR} or more classical references given there) that the (1,1)-tensor 
  $\lambda^i_{\ , j}$ commutes   with the complex structure $J$. It is also known (see e.g. 
  \cite[page 216]{Sinjukov} or \cite[page 1336]{mikes1}) that  if a hermitian $\hat{a}^{ij}$ satisfies \eqref{mik} 
    there exists a function $\hat \mu$ such that the following equation holds:
    \begin{equation} \label{mik2}
n\lambda^i_{ \ , j}= \hat \mu \delta^i_{\ j}- \hat a^{is} R_{sj} - \hat a^{sp} R^i_{\ sp j}.  
\end{equation}

Let us now multiply \eqref{mik1}  by $J^{i'}_{ \ i} J^{j}_{\  j' }$; using the symmetries of the curvature tensor for K\"ahler manifolds  
we obtain 
after renaming the indexes  $i' \to i$ and $j'\to j$:
\begin{equation} \label{mik3}
n\lambda^i_{ \ , j}= \mu \delta^i_{\ j}- J^i_{\ i'} J^s_{\ s'}a^{i's'} R_{sj} - J^p_{\ p'} J^s_{\ s'} a^{s'p'} R^i_{\ sp j}.  
\end{equation}
Now, adding \eqref{mik1} and \eqref{mik3} and subtracting \eqref{mik2} we obtain  
$$
n\lambda^i_{ \ , j} = \tilde \mu \delta^i_{\ j}. 
$$
We  see that $(a^{ij}, \lambda^i)$ satisfies \eqref{vnb} with $B=0$; in view of Proposition \ref{con2null} our metric has nullity with $B=0$. Proposition \ref{mainkahler} is proved.  \end{proof}

\subsection{Projectively equivalent metrics with the same trace-free Ricci} 
\label{finalS}

Our goal is to prove the following Theorem. 

\begin{theorem}  \label{last} 
On a connected $M$ of dimension $n\ge 3$, suppose that $g$ and $\bar
g$ are non-affinely projectively equivalent and have the same
trace-free Ricci tensor, that is
\begin{equation} \label{fedo1} 
R_{ij}- \tfrac{R}{n} g_{ij}=\bar  R_{ij}- \tfrac{\bar R}{n} \bar{g}_{ij},\end{equation} 
 where $R_{ij}$ (resp. $\bar R_{ij}$) is the Ricci-curvature tensor
 and $R= R_{ij}g^{ij}$ (resp. $\bar R$) is the scalar curvature for
 $g$ (resp. $\bar g$).  Then, the metric-projective class $\lb g \rb $ has 
projective Weyl
 nullity at each  point $p$ with constant  $B$. 
 \end{theorem}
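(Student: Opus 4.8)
The plan is to translate the hypothesis into condition \eqref{fedo} and then invoke Theorem \ref{converse}, afterwards upgrading its conclusion to hold at every point and extracting the constancy of $B$. Since $\P_{ab}=\tfrac1{n-1}R_{ab}$, the metrics $g,\bar g$ have the same trace-free Ricci tensor if and only if $\mathring{\P}_{ab}=\mathring{\bar\P}_{ab}$. Putting $\J:=g^{ab}\P_{ab}$, $\bar\J:=\bar g^{ab}\bar\P_{ab}$ and $B:=\tfrac1n\J$, $\bar B:=\tfrac1n\bar\J$ (smooth functions on $M$), this says
$$\P_{ab}-Bg_{ab}=\mathring{\P}_{ab}=\mathring{\bar\P}_{ab}=\bar\P_{ab}-\bar B\bar g_{ab},$$
which is exactly \eqref{fedo}. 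Hence Theorem \ref{converse} applies: writing $\lambda^i$ and the $(1,1)$-tensor $a^i{}_j$ for the objects built from the pair $(g,\bar g)$, the equation \eqref{vnb}, $\lambda^i{}_{,j}=\rho'\delta^i_j-Ba^i{}_j$, holds on all of $M$, and by Proposition \ref{con2null} there is projective Weyl nullity, with nullity vector $\lambda$, at every point where $\lambda\neq0$.

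The next step is to show $\{\lambda\neq0\}$ is dense in $M$. If $\lambda$ vanished on a nonempty open set $W$, then $\operatorname{tr}A=2\lambda$ would be constant on $W$ and \eqref{vnb} would force $a^i{}_j$ to be a constant multiple of $\delta^i_j$ on $W$, i.e.\ $\bar g$ a constant multiple of $g$ on $W$; then, exactly as in the proof of Corollary \ref{Bs} (which uses only Corollary \ref{spott}, not Weyl nullity), the metrisability solutions attached to $g$ and $\bar g$ would differ by a constant on an open set and hence everywhere, so $g$ and $\bar g$ would be globally affinely equivalent — contrary to hypothesis. (On the part of $W$ where $B$ vanishes one argues instead that $a^i{}_j$ is there $\nabla^g$-parallel and uses the propagation of affine equivalence.) With $\{\lambda\neq0\}$ dense, the continuous algebraic invariant $\stackrel{n}{\mathcal{W}}$ of \S\ref{Wn} that detects Weyl nullity vanishes on this dense set, hence vanishes on all of $M$; therefore $\lb g\rb$ has projective Weyl nullity at every point $p$.

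It remains to prove that $B$ is constant, and I expect this to be the main obstacle. With nullity everywhere, Theorem \ref{fth} supplies the invariant $\phi_{ab}$ with $\P_{ab}=B^gg_{ab}+\phi_{ab}$ and $\bar\P_{ab}=B^{\bar g}\bar g_{ab}+\phi_{ab}$ (same $\phi$); subtracting and using \eqref{fedo} on the dense set where $g,\bar g$ are non-proportional gives $(B^g-\tfrac1n\J)g_{ab}=(B^{\bar g}-\tfrac1n\bar\J)\bar g_{ab}$, whence $B^g=\tfrac1n\J$, so $g^{ab}\phi_{ab}=0$ and, since $\phi=\mathring{\bar\P}$, also $\bar g^{ab}\phi_{ab}=0$. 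On the open set where $dB^g\neq0$ — which I want to show is empty — Theorem \ref{mformthm} and \S\ref{Bnichtcon} put $g$ in warped-product normal form $dt^2+\psi(t)^2h$ with $B^g=-\psi''/\psi$, and $\bar g$ is the explicit geodesic partner; imposing equality of the trace-free Ricci tensors of $g$ and $\bar g$, together with the relation forced by $g^{ab}\phi_{ab}=0$, yields ordinary differential equations for $\psi$ incompatible with $dB^g\neq0$, so $dB^g\equiv0$. (Alternatively one argues analytically: the contracted second Bianchi identity for $\P=B^gg+\phi$ gives $\nabla^a\phi_{ab}=\tfrac{n-2}2\nabla_bB^g$ and likewise $\bar g^{ac}\bar\nabla_c\phi_{ab}=\tfrac{n-2}2\nabla_bB^{\bar g}$; rewriting the latter in the scale of $g$ kills every transformation term, since $\bar g^{ac}=e^{-2\Upsilon}a^{ac}$, $a^{ac}\Upsilon_c=-\lambda^a$, and $\phi$ is annihilated by $\lambda$ and trace-free for both metrics, leaving $a^{ac}\nabla_c\phi_{ab}=\tfrac{n-2}2e^{2\Upsilon}\nabla_bB^{\bar g}$; combining with $a^a{}_b=\nu\delta^a_b+\varepsilon(\nabla^aB^g)(\nabla_bB^g)$ from Theorem \ref{mformthm} forces $dB^{\bar g}$ proportional to $dB^g$, and substituting into \eqref{Btrans} over-determines the system and gives $dB^g\equiv0$.) Combining the three steps, $\lb g\rb$ has Weyl nullity at every point with $B$ constant, which is exactly the hypothesis under which Theorem \ref{Bcon} is applied in the sequel.
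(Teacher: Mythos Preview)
Your opening reduction of \eqref{fedo1} to \eqref{fedo} and the appeal to Theorem \ref{converse} match the paper exactly, and your first proposed route to the constancy of $B$ (pass to the warped-product normal form and derive an ODE incompatible with $dB\neq0$) is precisely the paper's main computation: from \eqref{fedo1} in the form \eqref{warped2}--\eqref{bargwarped} one obtains $(n-1)(n-2)(ff''-(f')^2)+\overset{0}{R}=0$, then checks directly that $B=-f''/f$ has vanishing $t$-derivative along any solution of this ODE.

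The real gap is in your step propagating Weyl nullity to all of $M$. You attempt to show $\{\lambda\neq0\}$ is dense. On an open $W$ where $\lambda\equiv0$ and $B\neq0$, \eqref{vnb} indeed forces $a^i{}_j=(\rho'/B)\delta^i_j$ and the Corollary \ref{spott} argument globalises. But on the part of $W$ where $B=0$ you only obtain that $a$ is $\nabla^g$-parallel; this does \emph{not} make $a$ a multiple of $\delta$, and ``propagation of affine equivalence'' is not a result available here (the prolonged system for $(a,\lambda,\rho')$ is closed only where nullity is already known). So density is not established.

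The paper avoids density altogether. On the open complement of $\overline{\{\lambda\neq0\}}$ the metrics are affinely equivalent, hence $R_{ij}=\bar R_{ij}$, and \eqref{fedo1} gives $Rg_{ij}=\bar R\bar g_{ij}$; if $R\neq0$ somewhere there, Weyl's theorem (conformally plus affinely related implies homothetic) together with Corollary \ref{spott} yields a global contradiction, so $R=0$ on that set. Once $R$ (hence $B$) is shown to be constant, the paper exhibits $f=\tfrac12 g_{ij}a^{ij}$ as a nonconstant solution of the Gallot--Obata--Tanno equation \eqref{one} with $B_\circ=B$ (via Propositions \ref{GOT=met}, \ref{mettoGOT} on $\{\lambda\neq0\}$, trivially where $\lambda=0$) and invokes Theorem \ref{Tae}: a nonconstant GOT solution forces Weyl nullity at \emph{every} point with $B^g=B_\circ$. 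This single device delivers both nullity everywhere and constancy of $B$ without needing $\{\lambda\neq0\}$ dense. Your alternative Bianchi-identity route to constancy is too schematic; the claims that transformation terms cancel and that the resulting system ``over-determines'' each require a genuine calculation which you have not supplied.
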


 \begin{proof} We see that the condition \eqref{fedo1} is a special
case of the condition \eqref{fedo}. So, by Theorem \ref{converse},
$\lambda^i$ lies in a projective Weyl nullity.  Let $U$ denote the
open set of points $p$ where $\lambda^i_p\ne 0$. On the closure
$\overline{U}$ of $U$ we have therefore projective Weyl nullity, with
$B= \tfrac{R}{2n(n-1)} $ on $U$. 

On the open complement of $\overline{U}$, if it is non-empty,
$\lambda^i$ is everywhere zero and so the metrics $g$ and $\bar{g}$
are affinely related. This implies $R_{ij}= \bar R_{ij}$ and then, by
\eqref{fedo1}, that the metrics are related by $R g_{ij}=\bar{R}
\bar{g}_{ij}$. If $R\neq 0$ at some point, and hence on some
neighbourhood, of $M\setminus \ol{U}$ then on that neighbourhood $g$
and $\bar{g}$ are both affinely and conformally related. Then, it
follows that $g$ and $\bar g$ are related by constant dilation; this
result of Weyl \cite{Weyl} is easily verified. But then, by Corollary
\ref{Bs}, the metrics are related by constant dilation on $M$. This
contradicts our assumptions in the Theorem here. So the only
possibility is that $R=0$ everywhere on $M\setminus \ol{U}$, and hence
also on its closure. In particular $R$ is constant there.

Suppose that the scalar curvature $R$ is constant on $\ol{U}$ then, using the
observations just made, it is constant on $M$.  Note then, on all of
$M$, we have a solution of the Gallot-Obata-Tanno equation \nn{one}
with $B_{\circ}:= \frac{R}{2n(n-1)}$ and
$f=\frac{1}{2}g_{ij}a^{ij}$. This holds trivially on $M\setminus
U$, as $\lambda_i=0$ there, while on $U$ it follows from
Proposition \ref{GOT=met} (and its proof which shows that $\mu=-\lambda$ and $df$ agree up to a constant factor) 
and Proposition \ref{mettoGOT}. By our
assumptions in the Theorem, this solution is not constant.  So, by
Theorem \ref{Tae}, the metric has the nullity at every point,
$B^g=B_\circ$ and we are done.

 It remains to show that $dR=0$ at every point; we will do it by
contradiction.  Suppose now we have a point such that $dR\ne 0$;    in a
neighborhood of such a point we also have  then $dB\ne 0$.  Then the
metric has (in some open nonempty subset of this neighborhood, and up
to multiplication of the  metric by a constant) the form
\eqref{warped}, which, for cosmetic reasons we rewrite as
 \begin{equation}\label{warped2}  
 g=  (dt)^2 + f(t)^2 \sum_{i,j=1}^{n-1} h(x^1,...,x^{n-1})_{ij}dx^idx^j,  
 \end{equation} 
 and the solution $a^i_{\ j}$ 
has a diagonal form
  (cf \eqref{diag}) which, after multiplication  by an
  appropriate constant, is given by
$$
a^i{}_j = \operatorname{diag}\left( f^2(t) + C, \underbrace{C, ... , C}_{n-1}\right),$$ where $C$ is a constant. 
Then, in view of \eqref{phi-2} and \eqref{a}, in these coordinates the metric $\bar g$, up to  multiplication by a constant,  is given by 
\begin{equation} \label{bargwarped} 
\bar g=  \tfrac{C}{(C+ f(t)^2)^2} dt^2 +  \tfrac{f(t)^2}{f(t)^2 + C}\sum_{i,j=1}^{n-1} h(x^1,...,x^{n-1})_{ij}dx^idx^j .
\end{equation}
For the (warped product)  metrics $g$ and $\bar g$, one may explicitly calculate the Ricci and the scalar curvatures and therefore the equation \eqref{fedo1}.  
Both metrics are actually warped product metrics, and their curvatures were calculated many times in the literature and easily can be done by computer algebra software; let us explain the idea we used in our calculations, since it will be used below and also in the next section.

  We will use that the conformally equivalent metric $\frac{1}{f^2} g
   $ is a direct product metric so its Ricci tensor has the form
 \begin{equation}  \left(\begin{array}{c|ccc}
          0 &&&\\
        \hline
        &&& \\
        &&\overset{0}{R}_{ij} &  \\
        &&&
      \end{array}\right)\label{curvH} \end{equation}
   where  $\overset{0}{R}_{ij}$ is the Ricci-tensor of the $(n-1)$-dimensional metric $h_{ij}$, and its scalar curvature is simply the scalar curvature of $h_{ij}$.  
   Now, it is well known (see e.g. \cite{wiki}) that the Ricci-tensors and the scalar curvatures  of any the conformally equivalent metrics  $g $ and 
   $ \hat g:= e^{2\psi} g$ 
   are related by 
   \begin{equation}\label{conf-2} 
   \begin{array}{ccl}  \hat R_{ij} &=&  R_{ij} - (n-2)(\psi_{,ij}- \psi_{,i} \psi_{,j}) - (\Delta_2  + (n-2)\Delta_1)g_{ij} , \\ \hat R&=& -e ^{-2\psi} ( R+ 2(n-1)\Delta_2 + (n-1)(n-2)\Delta_1 ), 
  \end{array} \end{equation}
   where $\Delta_2$ is the Laplacian of $\psi$, $\Delta_2= \psi_{,ij}
   g^{ij}$, and $\Delta_1$ is the square of the length of $\psi_{,i}$ in
   $g$, $\Delta_1:= g^{ij}\psi_{,i} \psi_{,j}$.  We apply these
   formulae with the metric $g$ in \eqref{conf-2} replaced by the direct
   product metric $\frac{1}{f^2} g $ and with $\psi=  \log f .$
   After some relatively simple calculations we obtain $R_{ij}$ as an
   algebraic expression in $\overset{0}{R}_{ij}$, $h_{ij}$, $f$, $f'$
   and $f''$, and also $\overset{0}{R} $ as an algebraic expression in
   $\overset{0}{R} $, $f$, $f'$ and $f''$.

   Similarly, the metric $\frac{  f^2+ C } {f^2}\bar g $ which is
   conformally equivalent to the metric $\bar g$ is also the direct
   product metric so its Ricci curvature also takes the form given in
   \eqref{curvH}. We again combine it with \eqref{conf-2} and calculate
   the scalar and the Ricci curvatures of $\bar g$.  Substituting the
   result of the calculation into \eqref{clc}, we obtain that   
   the matrix of 
\begin{equation} \label{clc} R_{ij}-  \tfrac{R}{n} g_{ij}-  \bar R_{ij}- \tfrac{\bar R}{n} g_{ij}\end{equation}   
is given by
\begin{equation} \label{dclc} 
\operatorname{diag}\left( \left( \tfrac{1}{f^2(f^2+C)}\right) \left(\tfrac{(n-1)(n-2)}{n}(ff''-  (f')^2)+ \tfrac{1}{n} \overset{0}{R}\right),  \underbrace{0,..,0}_{n-1}\right), 
\end{equation}
where $\overset{0}{R}$ is the scalar curvature of the $(n-1)$-dimensional metric $h_{ij}$. So only the ($i=1,j=1$)-component of the obtained matrix may be nonzero. 
We see that the condition \eqref{fedo1} is reduced to one ODE, namely to the ODE 
\begin{equation} \label{ODEf}
 {(n-1)(n-2)} (ff''-  (f')^2)+   \overset{0}{R}=0.
\end{equation} Note that the function $f$ depends only on the variable $t$ and the function   $\overset{0}{R}$  on the variables $x^1,..., x^{n-1}$, this implies that the scalar curvature of the $(n-1)$-dimensional metric $h_{ij}$ is locally a constant.

Let us now show that $B$ is constant.  We need  to calculate    calculate $\psi_{,ij}$ first: the only  Christoffel symbol we need is $\Gamma^0_{00}$  (we think that the index $0$ corresponds to the variable $t$) and it is given by  
$$
\Gamma^0_{00} = \tfrac{1}{2} {f^2} \tfrac{d}{dt}\left(\tfrac{1}{f^2}\right)= -\tfrac{f'}{f}. 
$$
Then, $$ \psi_{,00}= \tfrac{f''}{f} \  \textrm{and} \    \psi_{,0} = \tfrac{ f'}{f}  \ \textrm{so} \  \psi_{,00}- \psi_{,0}\psi_{,0}= \tfrac{ff''-(f')^2}{f^2}.  $$
All other components of $\psi_{,ij}- \psi_{,i}\psi_{,j}$ are zero. This implies 
$$
\Delta_2= ff'' \ \textrm{and } \ \Delta_1= (f')^2.  
$$ Consider the vector field $T^i:= \tfrac{\partial } {\partial t}$;
we know that it lies in the nullity and is therefore an eigenvector of
the Ricci tensor with eigenvalue $ (n-1)B$.  Clearly, $T^jR_{ij}=0$,
 so substituting
\eqref{conf-2} in the condition that $T^i$ is an eigenvector of the
Ricci tensor $\hat R_{ij}$ (of $g$) with eigenvalue $(n-1)B$ we obtain
$$  -(n-2) \tfrac{ f'' f-(f')^2}{f^2}  - \tfrac{1}{f^2}(ff'' + (n-2)(f')^2) =  (n-1)B,$$ 
which after simplifications gives us 
\begin{equation} \label{lastB}
B= -\tfrac{f''}{f}. 
\end{equation}

Now it is an easy exercise to show that for any solution of \eqref{ODEf} the function $B$ given by \eqref{lastB} is a constant. In order to do it, we combine 
 \eqref{ODEf} with \eqref{lastB} to obtain 
$$ B= -\tfrac{\overset{0}R}{(n-2)(n-1)}\tfrac{1}{f^2}+ \left( \tfrac{f'}{f}\right)^2.$$
 Then, 
 $$
\begin{aligned}
\tfrac{d}{dt}B &=  -2\tfrac{\overset{0}{R}}{(n-2)(n-1)}\tfrac{f'}{f^3}+ 2 \tfrac{f'}{f} \tfrac{ f'' f-(f')^2}{f^2}\\& = \tfrac{2}{(n-2)(n-1)}\tfrac{f'}{f^3} \bigl((n-2)(n-1) (f''f- (f')^2)+ \overset{0}R \bigr)\stackrel{\eqref{ODEf}}{=}0 .
\end{aligned}
$$
This shows that $B$ is constant. This is a contradiction as the result was derived by assuming $B$ not constant.  
\end{proof}  

\begin{corollary} \label{ccccc}
On a closed connected $M$ of dimension $n\ge 3$, suppose that $g$ and $\bar
g$ are non-affinely projectively equivalent and have the same
trace-free Ricci tensor.  Then,  for a certain constant $C\ne 0$ the metric $C g$ is the Riemannian metric of  constant
 positive sectional curvature. 
 \end{corollary}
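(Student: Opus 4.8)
The plan is to derive this immediately from Theorem \ref{last} together with the closed-manifold rigidity result Theorem \ref{Bcon}. First I would apply Theorem \ref{last} to the pair $g,\bar g$: the hypotheses are exactly those of the corollary (non-affine projective equivalence and equality of the trace-free Ricci tensors, i.e.\ \eqref{fedo1}, which is the special case of \eqref{fedo} with $B^g=\tfrac{R}{2n(n-1)}$, $\bar B=\tfrac{\bar R}{2n(n-1)}$, and $n\ge 3$). Theorem \ref{last} then tells us that $\lb g\rb$ has projective Weyl nullity at every point of $M$ and that the function $B=B^g$ is constant. This is the only real input; all the hard work sits upstream, inside the proof of Theorem \ref{last} --- the production of the nullity via Theorem \ref{converse}, and the elimination of non-constant $B$ through the warped-product normal form of Section \ref{BnC} and the explicit ODE computation carried out there.

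With that in hand, the remaining step is a direct application of Theorem \ref{Bcon}. We now have a closed connected manifold $M$ of dimension $n\ge 3$, a metric $g$ (of arbitrary signature) such that $\lb g\rb$ has Weyl nullity with $B^g$ constant, and a metric $\bar g\in\lb g\rb$ that is projectively but not affinely equivalent to $g$. Theorem \ref{Bcon} concludes that, after multiplying by a suitable constant, $g$ is a Riemannian metric of constant positive sectional curvature; choosing this constant as the $C\ne 0$ of the corollary completes the argument.

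The only care needed is bookkeeping: matching \eqref{fedo1} to \eqref{fedo}, and noting that the ``almost everywhere versus everywhere'' passages for both the nullity and the constancy of $B$ are already incorporated into the statement of Theorem \ref{last}. Consequently there is no genuine obstacle in the proof of the corollary itself; the substantive difficulties live in Theorem \ref{last} (constancy of $B$) and in Theorem \ref{Bcon}, the latter resting on Propositions \ref{GOT=met} and \ref{mettoGOT} and on the result of \cite{mounoud}.
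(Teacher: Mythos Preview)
Your proposal is correct and matches the paper's intended route: the corollary is stated immediately after Theorem \ref{last} without a separate proof, precisely because it follows by combining Theorem \ref{last} (which supplies Weyl nullity everywhere with constant $B$) with Theorem \ref{Bcon}. Your bookkeeping remarks about matching \eqref{fedo1} to \eqref{fedo} and the handling of ``almost everywhere'' issues inside Theorem \ref{last} are accurate and complete the argument.
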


\begin{remark} \label{liouville} Above, in \eqref{lastB}, we have a formula for $B$ for the metric \eqref{warped2}. By essentially the same calculations  one can obtain the formula for $\bar B= B_{\bar g}$ for $\bar g$ given by \eqref{bargwarped}: 
$$
\bar B= -{\frac {  \left(f^{2}+ C\right)f''-f   \left( f'
  \right)^{2}  }{f   }}. 
$$ 
By direct calculation we see that the Liouville tensors $L_{ijk}$ defined in Remark \ref{liu} constructed for $g$ and for $\bar g$ coincide, which proves that 
 $L_{ijk}$ is a metric projective invariant as  we claimed in Remark \ref{liu}. 
\end{remark}

\begin{remark} \label{liouville2} As we recalled  in Remark \ref{liu}, in dimension 2, vanishing of $L_{ijk}$ implies that the metric has constant curvature. For higher dimensions, it is not the case anymore locally, but still since vanishing of $L_{ijk}$ implies that $B=\operatorname{const}$, the following  
 global analogue is true: 

{\it Let a closed connected $(M, g)$  have Weyl nullity. Assume   $L_{ijk}\equiv 0$ on $M$. If    $\bar
g$ is  non-affinely projectively equivalent to $g$, then, 
 for a certain constant $C\ne 0$,  the metric $C g$ is the Riemannian metric of  constant
 positive sectional curvature. 
}  
\end{remark}

\subsection{Metrics with two dimensional nullity and a nonparallel 
solution of the metrisability equation have $B= \operatorname{const}$} \label{kathi}

As an easy by-product of calculations in the previous section we
obtain the following statement:

\begin{lemma}  Assume dimension is  $n\ge 3$. Suppose $g$  has nullity. Assume $dB\ne 0$ at a point. Then,  there exists an open set  $U$ containing this point and an open $U'\subset U$  such that  $U'$ is everywhere dense in $U$ and such that at each point  of $U'$  the nullity space is precisely  1-dimensional
\end{lemma}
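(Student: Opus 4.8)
The plan is to reduce the claim to a statement about an open set on which the dimension of the Weyl nullity is constant, and there to run the second Bianchi identity for the invariant curvature-type tensor $Z^i{}_{jk\ell}$ of Proposition \ref{Zprop}. First, by continuity fix an open $U\ni p$ on which $dB\neq 0$ everywhere; recall $B$ is smooth wherever the nullity dimension is locally constant (Proposition \ref{Bfn}), so $B$ is smooth on all the sets below. For $x\in M$ let $N_x\subseteq T_xM$ be the Weyl nullity, the kernel of $v\mapsto W^i{}_{jk\ell}(x)v^j$. As the kernel of a continuous family of linear maps, $x\mapsto\dim N_x$ is upper semicontinuous, and it is $\geq 1$ everywhere by hypothesis, so $U':=\{x\in U:\dim N_x=1\}=\{x\in U:\dim N_x<2\}$ is open. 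Hence it suffices to show $U'$ is dense in $U$, i.e.\ that $\{\dim N\geq 2\}$ has empty interior in $U$. If not, some nonempty open subset of $U$ has $\dim N\geq 2$ throughout; shrinking it to where $\dim N$ attains its minimum (open, by upper semicontinuity) gives a nonempty open $V\subseteq U$ with $\dim N\equiv m\geq 2$, on which $N$ is a smooth rank-$m$ distribution. I will derive $dB\equiv 0$ on $V$, contradicting $V\subseteq U$.

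Work in the scale of some $g\in\bm$. On $V$ the smooth tensor $Z^i{}_{jk\ell}=R^i{}_{jk\ell}+B\,K^i{}_{jk\ell}$, with $K^i{}_{jk\ell}=\delta^i_\ell g_{jk}-\delta^i_k g_{j\ell}$ (Remark \ref{rem:nullity2}), has the algebraic symmetries of a curvature tensor and satisfies $Z^i{}_{jk\ell}v^j=0$ for nullity vectors $v$; by its skew- and pair-symmetries, $Z$ is annihilated by contracting any one of its indices against a nullity vector. Since $K$ is parallel and $R$ obeys the second Bianchi identity, cyclising over $(k,\ell,m)$ gives
$$\nabla_m Z^i{}_{jk\ell}+\nabla_k Z^i{}_{j\ell m}+\nabla_\ell Z^i{}_{jmk}=(\nabla_m B)K^i{}_{jk\ell}+(\nabla_k B)K^i{}_{j\ell m}+(\nabla_\ell B)K^i{}_{jmk}.$$
Contracting with $w_iv^j$ for smooth nullity fields $v,w$ on $V$ that are independent at each point, each of the three left-hand terms vanishes after a Leibniz expansion (it becomes $\nabla$ of the identically-zero scalar $w_iv^jZ^i{}_{jk\ell}$, or of a cyclic relabelling, minus two terms in which $Z$ is contracted with $v$ in its second slot or with $w$ in its first). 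The right side collapses to $dB\wedge v^\flat\wedge w^\flat=0$, where $v^\flat$ is the $g$-dual $1$-form. Since $g$ is nondegenerate and $v,w$ are independent, $v^\flat\wedge w^\flat\neq 0$, so $dB\in\operatorname{span}(v^\flat,w^\flat)$; as $v,w$ vary over $N$ this says $dB$ lies everywhere in $N^\flat:=\{X^\flat:X\in N\}$, equivalently $\nabla B\in N$. If $m\geq 3$ this already gives $dB\equiv 0$ on $V$: a third independent nullity vector $u$ forces $dB\in\operatorname{span}(v^\flat,w^\flat)\cap\operatorname{span}(v^\flat,u^\flat)\cap\operatorname{span}(w^\flat,u^\flat)=\{0\}$.

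The case $m=2$ requires more. Here I would use that a nullity distribution of constant rank is integrable with totally geodesic leaves (a classical fact for such distributions; see e.g.\ \cite{ferus,rosental}), so a local nullity section $v$ may be chosen with $\nabla_v v\in N$. Contracting the displayed Bianchi identity with $v^jv^m$ and using $\nabla_v v\in N$ together with the vanishing of $Z$ on every contraction with $v$, all Leibniz terms on the left drop out, leaving
$$0=(v^aB_a)\bigl(\delta^i_\ell v_k-\delta^i_k v_\ell\bigr)+B_k\bigl(v^iv_\ell-\delta^i_\ell|v|^2\bigr)+B_\ell\bigl(\delta^i_k|v|^2-v^iv_k\bigr),$$
with $B_a:=\nabla_aB$, $v_a:=g_{ab}v^b$, $|v|^2:=v^av_a$. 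Tracing over $(i,\ell)$ and using $n\geq 3$ gives $|v|^2\,dB=(v^aB_a)\,v^\flat$ for every nullity vector $v$. Then if $dB\neq 0$ at a point $x\in V$: as $N_x$ has rank $2$, pick two linearly independent non-null vectors in $N_x$ (possible whenever $g$ is not identically zero on $N_x$, in particular always in Riemannian signature); each forces $dB_x$ into its $\flat$-line, whence $dB_x=0$, a contradiction. This settles the Riemannian case — and, more generally, every case where the form induced on the nullity is not totally isotropic.

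The routine parts are the semicontinuity reduction and the $m\geq 3$ wedge argument. The main obstacle is the $m=2$ case: invoking (or re-deriving, as a Codazzi-type consequence of the Bianchi identity for $Z$) the total geodesy of the rank-two nullity leaves, and then handling the one remaining indefinite-signature configuration in which $g$ restricts to the zero form on a rank-two nullity plane over an open set — there the trace identity only yields that $B$ is locally constant along the leaves, and to rule it out one must fall back on the structural description of Section \ref{finalS} (locally a warped product \eqref{warped2}, whose Weyl nullity is one-dimensional unless the metric has constant sectional curvature, in which case $B$ is constant), or argue directly that a totally isotropic nullity plane cannot fill an open set. I expect the difficulty to concentrate in that corner.
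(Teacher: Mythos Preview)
Your approach is genuinely different from the paper's, and where it works it is cleaner. The paper does not use the Bianchi identity for $Z$ at all: it first invokes the local warped-product description \eqref{warped2} established in \S\ref{Bnichtcon} (which, note, relies on the standing hypothesis of that section that there is a nonparallel solution of the metrisability equation), and then runs an explicit computation with the conformal rescaling $\tfrac{1}{f^2}g$ and the Ricci-eigenvector equation for a putative second nullity vector orthogonal to $\partial/\partial t$. This produces an identity equivalent to \eqref{cer}, and the ODE argument at the end of the proof of Theorem \ref{last} then forces $B$ constant. So the paper's proof is an ODE calculation resting on the warped-product normal form; yours is a direct Bianchi argument that does not use the metrisability equation, and would therefore make the lemma strictly stronger if it went through.

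Your semicontinuity reduction and the $m\ge 3$ wedge argument are fine. The real gaps are both in the $m=2$ case. First, the total-geodesy input: the references you cite (\cite{ferus}, \cite{rosental}) treat curvature nullity or $K$-nullity with \emph{constant} $K$; for varying $B$ the statement is not in the literature you quote and does not follow from what you have written. Concretely, your contraction with $v^jv^m$ leaves $-Z^i{}_{jk\ell}(\nabla_v v)^j$ on the left, and you need this to vanish before you can extract the trace identity $|v|^2\,dB=(v\cdot B)\,v^\flat$; without an independent proof that $\nabla_v v\in N$ when $B$ is nonconstant, this step is circular. Second, even granting total geodesy, the totally isotropic $2$-plane case in indefinite signature is unresolved: your trace identity becomes vacuous there, since $|v|^2=0$ and, by your own earlier step $\nabla B\in N$, one has $v\cdot B=g(v,\nabla B)=0$ automatically when $g|_N\equiv 0$. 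Falling back on the warped-product description of \S\ref{Bnichtcon} to close that corner is legitimate, but it is exactly the paper's route and re-imports the metrisability hypothesis, so the generality gained by your Bianchi approach is lost. In short: the $m=2$ case hinges on a total-geodesy lemma for nonconstant $B$ that you have not supplied, and the isotropic sub-case still needs a separate argument.
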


An extension of this result to include dimension 2  is evidently impossible. In the case of dimension 2 the nullity space is 2-dimensional at every point, but $B$ is not necessary constant. 

\begin{proof} In \S \ref{Bnichtcon} we have seen that if $dB\ne 0$,  then  near this point  the metric is given by the warped product formula 
\eqref{warped2}. As in the previous section, consider the conformally   equivalent metric $\tfrac{1}{f^2} g$. It is the product metric, 
  $$\tfrac{1}{f^2} g = \tfrac{1}{f^2} dt^2 + \sum_{i,j=1}^{n-1} h(x)_{ij}dx^i dx^j.$$ 
  Its Ricci curvature   and the Ricci curvature of the initial metric $g$ are related   by \eqref{conf-2}  (note that in \eqref{conf-2} the Ricci curvature of    $\tfrac{1}{f^2} g$ is called $R_{ij}$ and  the Ricci curvature of    $ g$ is called $\hat R_{ij}$).  The function  $\psi:=  \log f$ depends on the variable $t$ only, which implies that    $\Delta_2 + (n-2)\Delta_1$ also depends on the variable $t$ only, if fact below we derive formulas for $\Delta_2$ and $\Delta_1$.

  Should the nullity space be at least two-dimensional (at every point
  of the neighborhood we are working in), then there would exist a
  vector field $X^i$ in the nullity such that it is orthogonal to
  $\tfrac{\partial } {\partial t}$. We know from Proposition
  \ref{mweyl} that this vector is an eigenvector of the Ricci tensor
  $\hat R_{ij}$ (of $g$) with eigenvalue $(n-1)B$. Since $g$ is a
  product metric, for any vector $Y^i$ orthogonal to $\tfrac{\partial
  } {\partial t}$ we have $\psi_{,ij}Y^i= \psi_i\psi_{,j} Y^i=0$, we
  obtain that 
  $$
 (n-1)B X^i=\hat R^{i}{}_jX^j \stackrel{\eqref{conf-2}}{=} \tfrac{1}{f^2}\overset{0}R{}^{i}{}_jX^j-  \tfrac{1}{f^2}(\Delta_2  + (n-2)\Delta_1)X^j  .   
  $$
  The factor $\tfrac{1}{f^2}$ appeared in the right hand side   because of the conformal coefficient $ \tfrac{1}{f^2} $.  By $\overset{0}R{}^{i}{}_j$ we understand the Ricci tensor of  the metric $\tfrac{1}{f^2} g$ with an index raised with the help of the metric $\tfrac{1}{f^2} g$; the notation is compatible with that one in the previous section. 
  
We see that $X^i$ is an eigenvector of $\overset{0}R{}^{i}{}_j$ with
eigenvalue $ (\Delta_2 + (n-2)\Delta_1)+{(n-1)}{f^2} B$.  By
construction the components of $\overset{0}R{}^i{}_j$ depend on the
coordinates $ x_1,...,x_{n-1}$. We also know that $ f^2 $, $ (\Delta_2
+ (n-2)\Delta_1) $ depend on the coordinate $t$ only. Further the same
is true of $B$,  see Remark \ref{forfuture} in Section \ref{Bnichtcon}. Thus,
\begin{equation} \label{cer}  (\Delta_2  + (n-2)\Delta_1) +(n-1)f^2  B = C  \end{equation}  for a  certain  constant $C$.

This is essentially the same equation as \eqref{ODEf}, and we already
know that for any solution of this equation the function $B$ is
actually a constant, see the last part of the proof of Theorem
\ref{last}. We see that also in this case $B$ is constant.
\end{proof}

\begin{corollary} \label{cccccc}
On a closed connected $M$ of dimension $n\ge 3$, suppose that $g$ and
$\bar g$ are non-affinely projectively equivalent. Assume that Weyl
nullity space is at least two-dimensional at every point.  Then, for a
certain constant $C\ne 0$ the metric $C g$ is the Riemannian metric of
constant positive sectional curvature.
 \end{corollary}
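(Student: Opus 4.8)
The plan is to reduce the Corollary to Theorem~\ref{Bcon} by showing that under the standing hypotheses the scalar $B=B^g$ is forced to be constant. First I would record the two immediate consequences of the hypotheses. Since $g$ and $\bar g$ are projectively but not affinely equivalent, Theorem~\ref{EM1} together with the opening discussion of Section~\ref{prol} supplies a solution $\sigma^{ab}$ of the metrisability equation \nn{metr} that is \emph{not} parallel for the Levi-Civita connection $\nabla^g$, and moreover $g$ and $\bar g$ are non-proportional (constant-proportional metrics being affinely equivalent, and non-constant conformal rescalings not preserving the projective class by Weyl's theorem); hence by Corollary~\ref{Bs} the function $B^g$ is smooth on an open dense subset $U\subseteq M$. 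Second, the hypothesis that the projective Weyl nullity space has dimension $\ge 2$ at every point in particular says that $(M,\lb g\rb)$ has Weyl nullity everywhere, so the constructions of Sections~\ref{weylB}--\ref{prol} apply.

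The heart of the argument is the claim that $B$ is constant on $M$. I would argue by contradiction: if $dB\neq 0$ at some point, that point necessarily lies in $U$, and then the Lemma immediately preceding this Corollary produces a neighbourhood of that point together with an open dense subset of it on which the Weyl nullity space is \emph{exactly} one-dimensional. This contradicts the standing assumption that the nullity space is at least two-dimensional everywhere. Hence $dB\equiv 0$ on $U$, so $B$ is locally constant there; since $U$ is dense in the connected manifold $M$ and $B$ is continuous on all of $M$ (at each point it is the eigenvalue of the smooth endomorphism $\P^i{}_j$ whose eigenspace contains the two-dimensional nullity, cf.\ Proposition~\ref{B!}), it follows that $B\equiv B_0$ for a single constant $B_0$.

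With $B^g$ now known to be constant, all hypotheses of Theorem~\ref{Bcon} are in place: $M$ is closed and connected of dimension $n\ge 3$, $\lb g\rb$ has Weyl nullity, $B^g$ is constant, and $\bar g$ is projectively but not affinely equivalent to $g$. Theorem~\ref{Bcon} then yields a constant $C\neq 0$ such that $Cg$ is the Riemannian metric of constant positive sectional curvature, which is the assertion of the Corollary.

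The substantive content is entirely hidden in the preceding Lemma --- the warped-product normal form of Section~\ref{Bnichtcon} valid near a point with $dB\neq 0$, and the conformal/ODE computation showing that a nullity vector orthogonal to $\partial/\partial t$ would force $B$ to be constant there, contradicting $dB\neq 0$. Granting that Lemma, the only point that still needs a little care is the passage from $dB\equiv 0$ on the dense open set $U$ to genuine global constancy of $B$ on $M$: one must check that $B$ extends to a continuous (hence, being locally constant on $U$, a constant) function on $M$. In the Riemannian case this is automatic since $g(v,v)>0$ for every nonzero nullity vector $v$, so $B=\P_{ij}v^iv^j/g_{ij}v^iv^j$ depends continuously on a locally chosen nullity direction; in general it follows from the smoothness of $\P^i{}_j$ and of the Weyl curvature together with Proposition~\ref{B!}. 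This promotion step, rather than anything in the reduction to Theorem~\ref{Bcon}, is where I expect the only friction.
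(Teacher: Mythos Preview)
Your argument is correct and follows exactly the route the paper intends: the preceding Lemma forces $B$ to be constant (since a point with $dB\neq 0$ would have nullity of dimension exactly one nearby, contradicting the hypothesis), and then Theorem~\ref{Bcon} finishes the job. The paper gives no separate proof of the Corollary, treating it as immediate from the Lemma and Theorem~\ref{Bcon}; your added discussion of why $B$ is globally continuous (so that local constancy on the dense open set promotes to a single constant) is a legitimate detail the paper leaves implicit.
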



\begin{thebibliography}{99}
 
\bibitem{Leist} D.V. Alekseevsky, V. Cortes, A.S. Galaev, T. Leistner,
  \emph{Cones over pseudo-Riemannian manifolds and their holonomy},
  J. Reine Angew. Math. (Crelle's journal), Band 2009, Heft 635,
  23--69, arXiv:0707.3063v2

\bibitem{Armstrong} S.\ Armstrong, {\em Projective holonomy. I.
     Principles and properties}, Ann.\ Global Anal.\ Geom., {\bf 33}
   (2008), 47--69.

\bibitem{BEG} T.N.\ Bailey, M.G.\ Eastwood, and A.R.\ Gover, {\em
    Thomas's structure bundle for conformal, projective and related
    structures}, Rocky Mountain J.\ Math.\ {\bf 24} (1994),
  1191--1217.


\bibitem{BEbook} R.J.\ Baston, M.G.\ Eastwood, The Penrose
  transform. Its interaction with representation theory. Oxford
  Mathematical Monographs. Oxford Science Publications. The Clarendon
  Press, Oxford University Press, New York, 1989.


\bibitem{BCEG} T.\ Branson, A.\ \v{C}ap, M.\ Eastwood, A.R.\ Gover, {\em Prolongations of geometric overdetermined systems}, Internat.\ J.\ Math., {\bf 17} (2006), 641--664.

\bibitem{brinkmann} H. W. Brinkmann, {\it Einstein spaces which are mapped conformally on each other.}  Math. Ann. {\bf 94}(1925)
119--145. 

\bibitem{kiosak}  A. V. Bolsinov, V. Kiosak, V. S. Matveev, 
    \emph{ A Fubini theorem for pseudo-Riemannian geodesically equivalent metrics,}  
Journal of the London Mathematical Society {\bf 80}(2009) no. (2), 341--356,  arXiv:0806.2632.

\bibitem{splitting} A. V. Bolsinov, V. S.  Matveev,
{\it Splitting and gluing lemmas for geodesically equivalent pseudo-Riemannian metrics,}  Transactions of the American Mathematical Society, {\bf  363}(2011), no 8, 4081--4107. 	


\bibitem{unpublished} A. V. Bolsinov, V. S.  Matveev,
{\it Local normal forms for geodesically equivalent pseudo-Riemannian metrics,} Trans. Amer. Math. Soc. {\bf 367}(2015), 6719--6749.





\bibitem{bryant}   R. L. Bryant,
G. Manno,
V.  S. Matveev, \emph{A solution of a problem of Sophus Lie: Normal forms of 2-dim metrics admitting two projective vector fields},  Math. Ann. {\bf 340}(2008), no. 2, 437--463,
    arXiv:0705.3592.

\bibitem{bryantDE} R.\ Bryant, M.\ Dunajski, M.\ Eastwood, {\em Metrisability of two-dimensional projective structures}, 
J.\ Differential Geom., {\bf 83} (2009),  465--499. 

\bibitem{CD} D.M.J.\ Calderbank, and T.\ Diemer, {\em Differential
  invariants and curved Bernstein-Gelfand-Gelfand sequences}, J.\ Reine
  Angew.\ Math.\ {\bf 537} (2001), 67--103.

\bibitem{CDS} D.M.J.\ Calderbank, T.\ Diemer, V.\ Sou\v cek,
{\em Ricci-corrected derivatives and invariant differential operators},
Differential Geom.\ Appl.\ {\bf 23} (2005),  149--175. 

\bibitem{CapGoTAMS} A.\ \v Cap, and A.R.\ Gover, {\em Tractor calculi
    for parabolic geometries}, Trans.\ Amer.\ Math.\ Soc.\ {\bf 354}
  (2002), 1511--1548.

\bibitem{CapGoCrelle} A.\ \v Cap, and A.R.\ Gover, {\em Projective
  compactifications and Einstein metrics}, {Journal f\"{u}r die reine
  und angewandte Mathematik}, in Press. \quad arXiv:1304.1869

\bibitem{CGHjlms} A.\ \v Cap, A.R.\ Gover, M.\ Hammerl, {\em Projective BGG equations, algebraic sets, and compactifications of Einstein geometries}, J.\ Lond. Math.\ Soc., (2) {\bf 86} (2012), 433--454. 

\bibitem{CGHduke} A.\ \v Cap, A.R.\ Gover, M.\ Hammerl, {\em Holonomy
  reductions of Cartan geometries and curved orbit
  decompositions}. Duke Math. J. {\bf 163} (2014), no. 5, 1035--1070.


\bibitem{CGMac} A.\ \v{C}ap, A.R. Gover, H.R.\ Macbeth, {\em Einstein metrics in projective geometry}, Geom.\ Dedicata, {\bf 168} (2014), 23--244. 


\bibitem{CSSann} A.\ \v Cap, J.\ Slov\'{a}k, and V.\ Sou\v cek, {\em
         Bernstein-Gelfand-Gelfand sequences}, Ann.\ of Math., (2)
       {\bf 154} (2001), 97--113.

\bibitem{Cartan} E.\ Cartan, {\em Sur les vari\'{e}t\'{e}s \`{a} connexion
  projective}, Bull.\ Soc.\ Math.\ France {\bf 52} (1924), 205--241.

\bibitem{chern} S.  Chern, N.  Kuiper, {\em 
Some theorems on the isometric imbedding of compact Riemann manifolds in euclidean space,}
Ann. of Math. (2) {\bf 56}(1952) 422--430.

\bibitem{Couty}  R. Couty, {\it Transformations infinit$\acute{e}$simales projectives,}  C. R. Acad. Sci. Paris {\bf 247}(1958) 804--806.

\bibitem{deRham} G.\ de Rham, {\em Sur la reductibilit\'{e} d`un espace de
  Riemann},  Comment.\ Math.\ Helv.\ {\bf 26} (1952), 328--344.

\bibitem{eastwood1} M. Eastwood, \emph{ Notes on projective differential geometry}, Symmetries and Overdetermined Systems of Partial Differential Equations (Minneapolis, MN, 2006), 41-61,
 IMA Vol. Math. Appl.,    {\bf
144}(2007),   Springer, New York.


\bibitem{EM} M. Eastwood, V. S. Matveev, \emph{ Metric connections in
  projective differential geometry,} Symmetries and Overdetermined
  Systems of Partial Differential Equations (Minneapolis, MN, 2006),
  339--351, IMA Vol. Math. Appl., {\bf 144}(2007), Springer, New York,
  arXiv:0806.3998.


\bibitem{Eisenhart53A45} L.P. Eisenhart, {\em Symmetric tensors of the second order whose first covariant derivatives are zero}, Trans.\ Amer.\ Math.\ Soc.\ {\bf 25} (1923), 297--306.


\bibitem{EisenhartBook} L.P. Eisenhart, Non-Riemannian
  geometry. Reprint of the 1927 original. American Mathematical
  Society Colloquium Publications, 8. American Mathematical Society,
  Providence, RI, 1990. viii+184 pp.

 \bibitem{eiparallel} L. P. Eisenhart, \emph{Fields of parallel vectors in Riemannian space},
Ann. of Math. (2) 39 (1938), no. 2, 316--321, MR1503409

\bibitem{devries} H.\  L.\ de Vries, {\em \"Uber Riemannsche R\"{a}ume,
  die infinitesimale konforme Transformation gestatten}, Math.\ Z.,
  {\bf 60}, (1954), 328--347.

\bibitem{FKMR} A. Fedorova, V. Kiosak, V. Matveev, S. Rosemann, {\it The only K\"ahler manifold with degree of mobility at least 3 is $(CP(n), g_{Fubini-Study})$}, Proc. London Math. Soc., {\bf  105}(2012),  no. 1, 153--188. 

\bibitem{ferus} 
D. Ferus, {\em On the completeness of nullity foliations,} 
Michigan Math. J. {\bf  18}(1971) 61--64. 


 
 \bibitem{Fubini} G. Fubini, {\it Sui gruppi transformazioni geodetiche, }
Mem. Acc. Torino  {\bf 53}(1903), 261--313.



 \bibitem{Ga} S. Gallot, \emph{\'Equations diff\'erentielles caract\'eristiques de la sph\`ere}, Ann. scient. \'Ec. Norm. Sup. 4$^{e}$ s\'erie  {\bf 12}(1979) 235--267.

\bibitem{Gqjm} R.\ Gover, {\em Conformally invariant operators of standard type}, Quart.\ J.\ Math., Oxford Ser.\ (2) {\bf 40} (1989), 197--207. 

\bibitem{GMac} A.R.\ Gover, H.R.\ Macbeth, {\em Detecting
  Einstein geodesics: Einstein metrics in projective and conformal
  geometry}, Differential Geom.\ Appl.,
{\bf 33} (2014), 44--69

\bibitem{Gray}  A. Gray, {\em  Spaces of constancy of curvature operators,} Proc. Amer. Math. Soc.  {\bf 17}(1966) 897--902.


\bibitem{hiramatu} H.   Hiramatu, {\it Riemannian manifolds admitting a projective vector field,}   Kodai Math. J.  {\bf 3}(1980), no. 3, 397--406.


\bibitem{einstein} 
V. Kiosak,  V. S. Matveev,  \emph{ Complete Einstein metrics are geodesically rigid,}    Comm. Math. Phys. {\bf 289}(2009), no. 1, 383-400, 
  	arXiv:0806.3169.
  	
  	
  	
  	
  	
\bibitem{pseudosymmetric} V. Kiosak, V. S. Matveev, \emph{Proof Of The
    Projective Lichnerowicz Conjecture For Pseudo-Riemannian Metrics
    With Degree Of Mobility Greater Than Two}, Comm. Mat. Phys. {\bf  297}(2010),
   401--426.
  
  
 \bibitem{KM2014} V. Kiosak, V. S. Matveev,  \emph{There exist no 4-dimensional geodesically equivalent metrics
with the same stress-energy tensor,}
Journal of Geometry and Physics
{\bf 78}(2014), 1--11. 


\bibitem{KR} W. Kuehnel, H.-B. Rademacher, {\em 
Essential conformal fields in pseudo-Riemannian geometry,} 
J. Math. pures appl. {\bf 74}(1995) 453--481. 


  \bibitem{lancaster} P. Lancaster, L. Rodman, \emph{Canonical forms for
    hermitian matrix pairs under strict equivalence and congruence},
  SIAM Review, {\bf 47}(2005), 407--443.


 \bibitem{Levi-Civita}
 T. Levi-Civita, {\it Sulle trasformazioni delle equazioni
 dinamiche}, Ann. di Mat., serie $2^a$, {\bf 24}(1896), 255--300. 




\bibitem{liouville}
R. Liouville,
\emph{Sur les invariants de certaines \'equations diff\'erentielles
et sur leurs applications},
Journal de l'\'Ecole Polytechnique \textbf{59} (1889), 7--76.

 
  

\bibitem{MT} V. S. Matveev, P. J. Topalov, 
 {\it Trajectory equivalence and corresponding integrals},
 Regular  and Chaotic Dynamics,  {\bf
3}(1998), no. 2, 30--45.  
  
  
  \bibitem{starrheit} V. S. Matveev, {\it
 Geschlossene hyperbolische 3-Mannigfaltigkeiten sind geod\"atisch starr,}
 Manuscripta Math. {\bf 105}(2001), no. 3, 343--352, MR1856615, Zbl 1076.53520.	
  	
\bibitem{hyperbolic}  V. S. Matveev,
{\it Hyperbolic manifolds are geodesically rigid,}
 Invent. math. {\bf 151}(2003), 579--609, MR1961339, Zbl 1039.53046.

\bibitem{conification} V. S. Matveev, S. Rosemann, \emph{ Conification construction for K\"ahler manifolds and its application in c-projective geometry,}   Adv.  Math.   {\bf 274}(2015),  1--38.

\bibitem{mounoud} V. S. Matveev, P. Mounoud,
 \emph{Gallot-Tanno Theorem for closed incomplete pseudo-Riemannian
    manifolds and applications},  Ann. Glob. Anal. Geom. {\bf 38}, 259--271, 2010
  \bibitem{relativity} V. S. Matveev, {\it Geodesically equivalent metrics in general relativity,} J. Geom. Phys. {\bf  62}(2012), no.  3,  675--691. 
  
  \bibitem{mikesSur} J.\ Mike\v{s}, {\em Geodesic mappings of
  affine-connected and Riemannian spaces. Geometry, 2},
  J.\ Math.\ Sci., {\bf 78} (1996), 311--333.
  
\bibitem{mikes1} J. Mike\v{s}, \emph{Holomorphically projective mappings
    and their generalizations.}, J. Math. Sci. (New
  York) {\bf 89}(1998), no. 3, 1334--1353.
  
\bibitem{MS} A. 
Moroianu, U.  Semmelmann, {\em  Clifford structure on Riemannian manifolds.}  Adv. Math. {\bf 228}(2011), no. 2, 940--967.


  
  


\bibitem{Nur} P.\ Nurowski, {\em Projective versus metric structures. J. Geom. Phys.}, {\bf 62} (2012), 657--674.

\bibitem{Obata} M.  Obata, \emph{
Riemannian manifolds admitting a solution of a certain system of differential equations,}  Proc. U.S.-Japan Seminar in Differential Geometry (Kyoto, 1965) pp. 101--114. 
\bibitem{obata1} M.  Obata, \emph{
Certain conditions for a Riemannian manifold to be isometric with a sphere,} 
J. Math. Soc. Japan {\bf 14}(1962),  333--340. 
  
  
 \bibitem{Otsuki}  T. Otsuki, \emph{Isometric imbedding of Riemann manifolds in a Riemann manifold,}  J.
Math. Soc. Japan  {\bf 6}(1954), 221--234.


\bibitem{palais} R.S.\ Palais, Seminar on the
  Atiyah-Singer index theorem. With contributions by M. F. Atiyah,
  A. Borel, E. E. Floyd, R. T. Seeley, W. Shih and R. Solovay. Annals
  of Mathematics Studies, No. 57 Princeton University Press,
  Princeton, N.J. 1965 x+366 pp.
  
\bibitem{rosental} A.  Rosenthal, {\em 
K\"ahler manifolds of constant nullity,}
Michigan Math. J. {\bf 15}(1968) 433--440.
  
 \bibitem{rosental2} A.  Rosenthal, {\em 
Riemannian manifolds of constant k-nullity,}
Proc. Amer. Math. Soc. {\bf 22}1969 473--475.
  
\bibitem{Sinjukov} N. S.  Sinjukov,
 {\it Geodesic mappings of Riemannian spaces},  (in Russian)
``Nauka'', Moscow, 1979, MR0552022, Zbl 0637.53020.


\bibitem{s1} A. S. Solodovnikov, {\it Projective transformations of Riemannian spaces,}
 Uspehi Mat. Nauk (N.S.) {\bf 11}(1956), no. 4(70), 45--116, MR0084826, Zbl 0071.15202.
 
\bibitem{Ta} S. Tanno, \emph{Some differential equations on Riemannian manifolds}, J. Math. Soc. Japan {\bf  30}(1978), no. 3, 509--531.

\bibitem{Tashiro}
 Y. Tashiro, {\em  Complete Riemannian manifolds and some vector fields,} 
Trans.Amer.Math.Soc. {\bf  117}(1965) 251--275. 


\bibitem{Thomas} T.Y. Thomas, {\em Announcement of a projective theory
  of affinely connected manifolds}, Proc.\ Nat.\ Acad.\ Sci., {\bf 11}
  (1925), 588--589.

\bibitem{thurston}  W. Thurston, \emph{Three dimensional Geometry and Topology, vol. 1},  Princeton Mathematical Series, vol. 35, Princeton University Press, Princeton, NJ,1997, edited by Silvio Levy.


 \bibitem{Topalov} P. Topalov, {\it Geodesic hierarchies and involutivity, } 
J. Math. Phys. {\bf 42}(2001),  3898--3914.

 
 \bibitem{dedicata}
P. J. Topalov and V. S. Matveev,
\emph{Geodesic equivalence via integrability},
Geometriae Dedicata \textbf{96} (2003), 91--115.




 \bibitem{Weyl}  H. Weyl, {\it Zur Infinitisimalgeometrie: Einordnung der projektiven
und der  konformen Auffasung,} Nachrichten von der K. Gesellschaft
der Wissenschaften zu G\"ottingen, Mathematisch-Physikalische
Klasse, 1921;
 ``Selecta Hermann Weyl'', Birkh\"auser Verlag,
   Basel und Stuttgart,
1956. 

\bibitem{wiki} Wikipedia, \emph{List of formulas in Riemannian geometry},  \url{http://en.wikipedia.org/wiki/List_of_formulas_in_Riemannian_geometry}.


 \bibitem{Wolf} Th. Wolf, {\it  Structural equations for Killing tensors of arbitrary rank.}  Comput. Phys. Comm. {\bf 115}(1998),   316--329.

\end{thebibliography}
\end{document}